\newif\ifpreprint

\preprinttrue  % Preprint publication. Comment for Journal version

\ifpreprint

% If preprint version
\documentclass[12pt]{article}
\pdfoutput=1

% Prevent words to overflow over the margin
% \sloppy  % Alternative command
\emergencystretch 3em

% Math
% the journal version already loads these packages
\usepackage{amsmath,amssymb,amsthm}

\usepackage{fullpage}

% Bibliography
\usepackage[round,authoryear]{natbib}
\bibliographystyle{abbrvnat}

\else

%%%%%%%%%%%%%%%%%%%%%%%%%%%%%%%%%%%%%%%%%%%%%%%%%%%%%%%%%%%%%%%%%%%%%%%%%%%%
%% Author template for INFORMS Journal on Optimization (ijoo)
%%   [interim solution; new styles under construction]
%% Mirko Janc, Ph.D., INFORMS, mirko.janc@informs.org
%% ver. 0.93, November 2017
%%%%%%%%%%%%%%%%%%%%%%%%%%%%%%%%%%%%%%%%%%%%%%%%%%%%%%%%%%%%%%%%%%%%%%%%%%%%
\documentclass[ijoo,nonblindrev]{informs-ijoo}

\OneAndAHalfSpacedXI
%%\OneAndAHalfSpacedXII % Current default line spacing
%%\DoubleSpacedXII
%%\DoubleSpacedXI

% Private macros here (check that there is no clash with the style)

% Natbib setup for author-year style
\usepackage{natbib}
 \bibpunct[, ]{(}{)}{,}{a}{}{,}%
\bibliographystyle{informs2014} % outcomment this and next line in Case 1

%% Setup of theorem styles. Outcomment only one.
%% Preferred default is the first option.
\TheoremsNumberedThrough     % Preferred (Theorem 1, Lemma 1, Theorem 2)
%\TheoremsNumberedByChapter  % (Theorem 1.1, Lema 1.1, Theorem 1.2)
\ECRepeatTheorems

%% Setup of the equation numbering system. Outcomment only one.
%% Preferred default is the first option.
\EquationsNumberedThrough    % Default: (1), (2), ...
%\EquationsNumberedBySection % (1.1), (1.2), ...

% For new submissions, leave this number blank.
% For revisions, input the manuscript number assigned by the on-line
% system along with a suffix ".Rx" where x is the revision number.
\MANUSCRIPTNO{}

\fi

%\documentclass[11pt,fullpage]{article}

% Extend margins
% \usepackage{fullpage}

% Prevent words to overflow over the margin
% \sloppy
% \emergencystretch 3em
% \usepackage{setspace}
% % \doublespacing
% \usepackage[a4paper, total={6in, 9.5in}]{geometry}
% Images
\usepackage{graphicx,subcaption,pgfplots}
\graphicspath{{./figures/}}

% Make sections heading bold sans serif
% \usepackage{abstract} % Make abstract
% % Make section titles bold
% \renewcommand\abstractnamefont{\bfseries\sffamily}
% % section title fonts
% \usepackage{titlesec}
% \def\sectionfont{\sffamily\Large\bfseries\boldmath}
% \def\subsectionfont{\sffamily\large\bfseries\boldmath}
% \def\paragraphfont{\sffamily\normalsize\bfseries\boldmath}
% \titleformat*{\section}{\sectionfont}
% \titleformat*{\subsection}{\subsectionfont}
% \titleformat*{\subsubsection}{\paragraphfont}
% \titleformat*{\paragraph}{\paragraphfont}
% \titleformat*{\subparagraph}{\paragraphfont}
% \usepackage[font=small,labelfont={bf,sf}]{caption}  % Change captions

% % Math
\usepackage{amsmath,amssymb,enumitem, mathtools}
\usepackage{multirow}
\usepackage{comment}
\usepackage{algorithm}% http://ctan.org/pkg/algorithms
\usepackage[noend]{algpseudocode}
\usepackage{wasysym}
% Nice tables
\usepackage[round-mode=places, round-precision=2,
            scientific-notation = true,
            %scientific-notation = fixed
            table-format=-1.2e-1
            ]{siunitx}
\usepackage{csvsimple}  % reading CSV files in tables
\usepackage{booktabs}   % Nicer Tables
\usepackage{adjustbox}  % To adjust table length

% % Tikz
\usepackage{tikz}
\usetikzlibrary{patterns}
\usetikzlibrary{calc}
\usetikzlibrary{positioning}
\definecolor{splitColor}{RGB}{202, 200, 200}
\definecolor{classColor1}{RGB}{163, 31, 52}
\definecolor{classColor2}{RGB}{4, 30, 65}
\definecolor{classColor3}{RGB}{0, 105, 143}
\definecolor{classColor4}{RGB}{4, 30, 65}
\usepackage{tkz-euclide}
% \usetkzobj{all}
\usepackage{wrapfig}
\usepackage{tikz-3dplot}

\usepackage{multicol}
\usepackage{xcolor}
\usepackage{authblk}

%compile pics
% \usetikzlibrary{external}
% \tikzexternalize

% % Acronyms
\usepackage[xindy, acronyms]{glossaries}   % Glossaries and Acronyms
\makeglossaries

\usepackage{booktabs, csvsimple}
\usepackage{siunitx}

% % Todo notes
\definecolor{borderBoxTodo}{rgb}{0,0.309,0.6}
\definecolor{backgroundBoxTodo}{rgb}{0.701,0.854,1}
\definecolor{lineColorTodo}{rgb}{0.701,0.854,1}
\RequirePackage[colorinlistoftodos, backgroundcolor=backgroundBoxTodo, bordercolor = borderBoxTodo, linecolor = lineColorTodo]{todonotes}

% % Definitions
% Commands

\newcommand{\ie}{{\it i.e.}}

\renewcommand{\vec}[1]{\boldsymbol{#1}}

% vectors / matrices

% Sets

%\newcommand{\reals}{{\mbox{\bf R}}}
\newcommand{\reals}{{\mathbb{R}}}

  % symmetric matrices
  % symmetric matrices

 % convex hull

% \newcommand{\argmin}{\mathop{\rm argmin}}
% \newcommand{\argmax}{\mathop{\rm argmax}}
 % epigraph

 % domain

\newcommand{\define}{\coloneqq}

% new uncertainty set
\newcommand{\cp}{\overline{U}}

% set of coupling constraint
\newcommand{\cpc}{C}
\newcommand{\zcp}{z_{\rm cp}}
\newcommand{\zro}{z_{\rm ro}}
\newcommand{\zacp}{z_{\rm acp}}
\newcommand{\zaro}{z_{\rm aro}}
\newcommand{\uri}{V_i}
\newcommand{\usi}{W_i}
\newcommand{\xro}{\boldsymbol{x}^\star_{\rm ro}}
\newcommand{\yro}{\boldsymbol{y}^\star_{\rm ro}}
\newcommand{\xcp}{\boldsymbol{x}^\star_{\rm cp}}
\newcommand{\ycp}{\boldsymbol{y}^\star_{\rm cp}}
\newcommand{\xaro}{\boldsymbol{x}^\star_{\rm aro}}
\newcommand{\yaro}{\boldsymbol{y}^\star_{\rm aro}}
\newcommand{\uaro}{\boldsymbol{u}^\star_{\rm aro}}
\newcommand{\xacp}{\boldsymbol{x}^\star_{\rm acp}}
\newcommand{\yacp}{\boldsymbol{y}^\star_{\rm acp}}
\newcommand{\uacp}{\boldsymbol{u}^\star_{\rm acp}}

\newcommand{\zsi}{z_W}

\newcommand{\q}{q}
\newcommand{\ri}{\rho_{\rm ro}}
\newcommand{\ari}{\rho_{\rm aro}}
\newcommand{\arho}{\rho_{\rm adapt}}
\newcommand{\gi}{\gamma_{\rm ro}}
\newcommand{\agi}{\gamma_{\rm aro}}
% Trees

% Strategy

% Machine learning

% Sections (for citation)

% Create theorems and other environments
% \declaretheoremstyle[
% headfont =\sffamily\bfseries,
% notefont=\normalfont, notebraces={(}{)},
% bodyfont=\normalfont\itshape,
% %headpunct={},
% headformat=\NAME~\NUMBER\NOTE
% ]{plain}
% \declaretheoremstyle[
% headfont = \sffamily\bfseries,
% notefont=\normalfont, notebraces={(}{)},
% bodyfont=\normalfont\itshape,
% headpunct={},
% headformat=\NAME~\NUMBER\NOTE
% ]{notnum}

% \declaretheorem[numberwithin=section,style=plain]{theorem}
% \declaretheorem[numberwithin=section,style=plain]{corollary}
% \declaretheorem[numberwithin=section,style=plain]{lemma}
% \declaretheorem[numberwithin=section,style=plain]{definition}
% \declaretheorem[numberwithin=section,style=plain]{assumption}
% \declaretheorem[numbered=no,style=notnum]{remark}
% \declaretheorem[numberwithin=section,style=plain]{proposition}
% \declaretheorem[numbered=no,style=notnum]{example}

% \newtheorem{theorem}{Theorem}[section]  % Restart counter every section
% \newtheorem{lemma}{Lemma}[section]  % Restart counter every section
% \newtheorem{corollary}{Corollary}[theorem]  % Restart counter every theorem
% \newtheorem{proposition}{Proposition}[section]
% \newtheorem{assumption}{Assumption}[section]
% \newtheorem{definition}{Definition}[section]
% \newtheorem{example}{Example}[section]
% \newtheorem{exercise}{Exercise}[section]
% \newtheorem{remark}{Remark}[section]
% \newtheorem{problem}{Problem}[section]

\ifpreprint
% Create theorems and other environments
\newtheorem{theorem}{Theorem}[section]  % Restart counter every section
\newtheorem{lemma}{Lemma}[section]  % Restart counter every section
\newtheorem{corollary}{Corollary}[theorem]  % Restart counter every theorem

\newtheorem{assumption}{Assumption}[section]
\newtheorem{definition}{Definition}[section]
\newtheorem{example}{Example}[section]

\fi

% Acronyms
\newacronym{RO}{RO}{robust optimization}
\newacronym{aro}{ARO}{adaptive robust optimization}
\newacronym{cw}{constraint-wise}{constraint-wise}

% % Bibliography
% % \usepackage[
% %     backend=biber,
% %     style=ieee,
% %   ]{biblatex}
% % \usepackage{biblatex}
% % \addbibresource{bibliography.bib}

% \usepackage[toc]{appendix}
\usepgfplotslibrary{fillbetween}
\pgfplotsset{
        % filter symbolic values
        discard if not symbolic/.style 2 args={
            % suppress LOG messages about the filtered points
            filter discard warning=false,
            x filter/.append code={
                \edef\tempa{\thisrow{#1}}
                \edef\tempb{#2}
                \ifx\tempa\tempb
                \else
                    
                \fi
            },
        },}

\pgfplotsset{every axis/.append style={
		no markers,
		{thick}
}}

\ifpreprint \newcommand{\reviewChanges}[1]{{#1}}
\else
\newcommand{\reviewChanges}[1]{{#1}}
\fi

\begin{document}

\newcommand{\myabstract}{%
Despite the modeling power for problems under uncertainty, \gls{RO} and \gls{aro} can exhibit too conservative solutions in terms of objective value degradation compared to the nominal case.
One of the main reasons behind this conservatism is that, in many practical applications, uncertain constraints are directly designed as constraint-wise without taking into account couplings over multiple constraints.
In this paper, we define a coupled uncertainty set as the intersection between a \acrlong{cw} uncertainty set and a coupling set.
We study the benefit of coupling in alleviating conservatism in \gls{RO} and \gls{aro}.
We provide theoretical tight and computable upper and lower bounds on the objective value improvement of \gls{RO} and \gls{aro} problems under coupled uncertainty over constraint-wise uncertainty.
In addition, we relate the power of adaptability over static solutions with the coupling of uncertainty set.
Computational results demonstrate the benefit of coupling in applications.
}

\ifpreprint
\title{The Benefit of Uncertainty Coupling in Robust and Adaptive Robust Optimization}
\author{Dimitris Bertsimas, Liangyuan Na, Bartolomeo Stellato, Irina Wang}
\date{}
\maketitle
\begin{abstract}
	\myabstract
\end{abstract}

\else

\RUNAUTHOR{Bertsimas, Na, Stellato, and Wang}
\RUNTITLE{The Benefit of Uncertainty Coupling in Robust and Adaptive Robust Optimization}
\TITLE{The Benefit of Uncertainty Coupling in Robust and Adaptive Robust Optimization}

\ARTICLEAUTHORS{
\AUTHOR{Dimitris Bertsimas}
\AFF{Sloan School of Management and Operations Research Center, Massachusetts Institute of Technology, Cambridge, MA 02139, \EMAIL{dbertsim@mit.edu}}
\AUTHOR{Liangyuan Na}
\AFF{Operations Research Center, Massachusetts Institute of Technology, Cambridge, MA 02139, \EMAIL{lyna@mit.edu}}
\AUTHOR{Bartolomeo Stellato}
\AFF{Department of Operations Research and Financial Engineering, Princeton University, Princeton, NJ 08544, \EMAIL{bstellato@princeton.edu}}
\AUTHOR{Irina Wang}
\AFF{Department of Operations Research and Financial Engineering, Princeton University, Princeton, NJ 08544, \EMAIL{iywang@princeton.edu}}
}

\ABSTRACT{
	\myabstract
}

\KEYWORDS{robust optimization, adaptive optimization, coupled uncertainty}

\maketitle

\fi

\section{Introduction}
Robust optimization (RO) and adaptive robust optimization (ARO) have become popular methods to deal with uncertainty in optimization problems~\citep{bertsimas2022book}.
In RO and ARO, the modeler constructs an uncertainty set to incorporate possible values for the uncertain parameters.
A robust optimal solution must satisfy the constraints under all realizations of the uncertain parameters in that set set while minimizing the worst case objective.
Despite its computational tractability and modeling power, RO can be overly conservative~\citep{bertsimas2004price,ben2017global}, especially for situations where some of the decisions can be made after the uncertainty is revealed.
In these cases, ARO can alleviate conservatism by allowing part of the decision variables (\ie, the wait and see decisions) to be functions of the uncertainty realizations.
However, it is often intractable to compute an optimal adaptive robust solution for all possible realizations of uncertainties since it is infinite-dimensional.

In many robust problems, the uncertain parameters take values independently across constraints.
We refer to such formulation as a \acrshort{cw} uncertainty set, which characterizes the uncertainty for each constraint and can be split into blocks such that each constraint depends solely on some components of the uncertainty~\citep[Section 14.2]{ben2009robust},~\citep{bertsimas2022book}.
On the contrary, in reality, the uncertain parameters in different constraints are often coupled due to common or related sources of uncertainty.
In these scenarios, the coupling restricts the worst case values the uncertain parameters can take, thereby reducing the overall size of the uncertainty set.
Therefore, considering the relationships between uncertain parameters in different constraints might result in less conservative solutions compared to a pure constraint-wise approach.
Previous appearances of coupled uncertainty include RO toy examples~\citep{GORISSEN2015practical,atamturk2007two} and application-specific ARO problems~\citep{ben2004adjustable,bertsimas2016duality,zhen2018adjustable}.
Yet, to our knowledge, there has been a limited focus in the literature on the generic framework of coupled uncertainty, and in particular, how uncertainty couplings across constraints benefit the optimal robust and adaptive robust solutions.
Furthermore, the coupling of the uncertainty is related to the improvement of adaptive upon static solutions.
For linear problems under \acrlong{cw} compact uncertainty set, it is known that a static robust solution gives a fully adaptive result~\citep{ben2004adjustable}.
When the uncertainties are coupled across constraints, adaptive solutions could improve the objective over static solutions.
This suggests possible relationships on how the coupling of the uncertainty quantitatively affects the improvement of adaptive over static solutions.

In this work, we study the performance gains in robust and adaptive robust optimization by taking into account the coupling between uncertain parameters.
We define {\it the coupled uncertainty set} as the intersection of a \acrlong{cw} uncertainty set and a coupling set, where the coupling set imposes constraints that model the relationship between uncertainties in different constraints.
We analyze and demonstrate the benefit of coupling on the objective of the optimal solution both theoretically and numerically.
We do so by first constructing tight upper and lower bounds on the objective improvement resulting from coupling, from which further bounds are derived to relate the power of adaptive over static solutions with the coupling of uncertainty set.
Numerically, we conduct computational experiments that quantify the practical benefits of coupling in various examples.

\subsection{Supply chain example}%
\label{sub:supply_chain_example}

\tikzstyle{startstop} = [rectangle, rounded corners, minimum width=3cm, minimum height=1cm,text centered, draw=black]
\tikzstyle{process} = [rectangle, minimum width=3cm, minimum height=1cm, text centered, draw=black=]
\tikzstyle{arrow} = [thick,->,>=stealth]

\begin{figure}[b]
\centering
\includegraphics[width=0.8\textwidth]{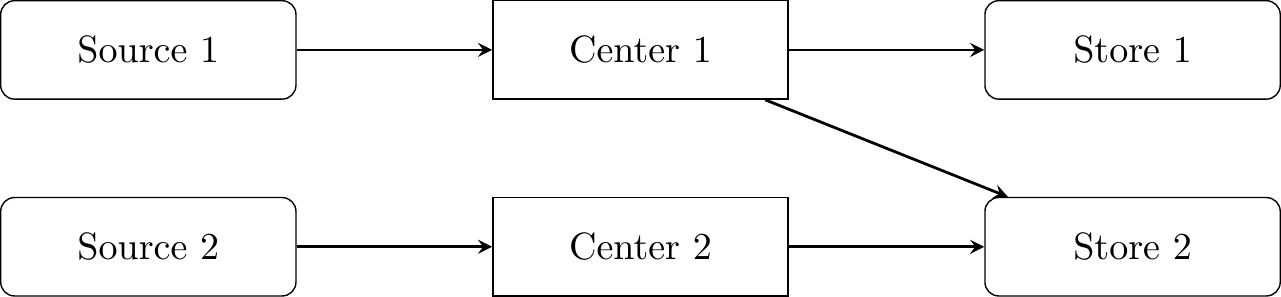}
\caption{Example of a supply chain network.}
\label{network}
\end{figure}

As an illustrative example of coupled uncertainty, we consider the supply chain problem in Figure~\ref{network}.
In this network, we ship products from two sources to two stores via two intermediate centers.
We formulate the problem as a robust optimization problem. Decision variables $x_{jk}$ and $y_{ki}$ denote the quantity of products to ship from source $j$ to center $k$, and from center $k$ to store $i$.
We define $c_{jk},s_{ki}$ denote the unit cost of the products shipped from source $j$ to center $k$, and from center $k$ to store $i$.
Also, $t,p$ denote the capacities of all routes in the first and second stage.
We define $u_i$ as the amount of uncertain demand at store $i$ which takes value from a \acrlong{cw} box uncertainty set
\begin{equation*}
    U = \left\{(u_1,u_2) \mid 0 \leq u_1 \leq 1, \quad 0 \leq u_2 \leq 1\right\}.
\end{equation*}
We have the following formulation,
\begin{equation}
\label{eq:scex}
{
\begin{array}{llclcl}
\displaystyle \text{minimize} & \multicolumn{3}{l}{c_{11} x_{11}+c_{22} x_{22} + s_{11} y_{11}+ s_{22}y_{22}+s_{12}y_{12}} \\
\text{subject to} & y_{11}  \geq u_1, y_{12}+y_{22}  \geq u_2, && \forall (u_1,u_2) \in U \\
& 0 \leq x_{11},x_{22} \leq t \\
& 0 \leq y_{11},y_{12},y_{22} \leq p\\
& x_{11} \geq y_{11}+y_{12}\\
&x_{22} \geq y_{22}.
\end{array}}
\end{equation}
The objective is to minimize the total transportation cost. The first constraint requires that each store satisfies the demand under uncertainty. The second and third constraints denote the shipping capacities on each route. The fourth and fifth constraints ensure that the outflow from the centers is no more than the inflow to the centers in the network.

In the constraint-wise uncertainty set, we do not model any correlation between $u_1$ and $u_2$. In reality, however, uncertainties are often correlated.
\reviewChanges{We hereby present two distinct scenarios in which the uncertainties are correlated, resulting in {\it coupled} uncertainty sets. Throughout the paper, we refer back to these scenarios to demonstrate conditions under which coupling may bring benefits, and where it does not. }

Consider scenario (\textit{a}) where the total demand of the two stores is less than a fixed amount $\eta$.
Then the demand is restricted in the coupled uncertainty set
\begin{equation*}
    \cp_a = \left\{(u_1,u_2) \mid 0 \leq u_1 \leq 1,\quad 0 \leq u_2 \leq 1, \quad u_1+ u_2 \leq \eta\right\}.
\end{equation*}
Consider scenario (\textit{b}) where store 2 has a fixed demand between $\alpha$ and $\beta$ from its own source, and both stores have a common portion of uncertain demand.
Then demands at the two stores satisfy a relationship $\alpha \leq u_2 - u_1 \leq \beta.$ This results in the coupled uncertainty set
\begin{equation*}
    \cp_b = \left\{(u_1,u_2) \mid 0 \leq u_1 \leq 1, \quad 0 \leq u_2 \leq 1, \quad \alpha \leq u_2 - u_1 \leq \beta\right\}.
\end{equation*}
The uncertainty sets are shown in Figure~\ref{fig:ualpha}.
\reviewChanges{We note that this is only one of numerous ways to model such correlation, as we can always define transformations of variables to adjust the bounds. In fact, the coupled uncertainty set models common problems in which the uncertainty appears in multiple constraints. By identifying the interactions between multiple uncertain parameters and summarizing them in a coupled uncertainty set, we generalize the modeling of such problems, and subsequently generalize the analysis of the impact on the optimal solutions.}

In the \acrlong{cw} problem under uncertainty set $U$, the worst case occurs at $\vec{u}^\star = (1,1)$, where both stores have demand 1.
In the problem under coupled uncertainty set, the worst case occurs at
$\vec{\overline{u}}^\star_a = (1,1)$ for scenario (\textit{a}) and at $\vec{\overline{u}}^\star_b = (1-\alpha,1)$ for scenario (\textit{b}).
With parameters $c_{11}=c_{22}= 100, s_{11}=s_{22}=s_{12}=200, t = 1, p = 1$, the original problem has an optimal cost of 600. With $\eta = 3/2, \alpha=1/2, \beta = 3/4$, scenario (\textit{a}) has cost 600 and scenario (\textit{b}) has cost 450 with $25\%$ improvement.
This suggests a potential benefit from coupling.

\begin{figure}
\centering
\begin{subfigure}{0.49\textwidth}
\centering
\includegraphics[width=0.7\textwidth]{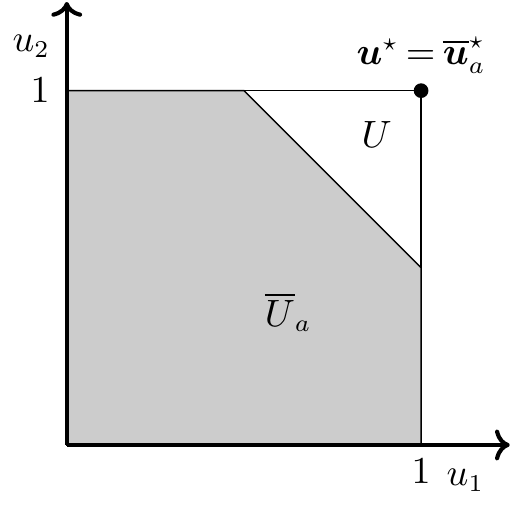}
\caption{$u_1+ u_2 \leq \eta = 3/2$ .}
\label{fig:ualphaa}
\end{subfigure}
\begin{subfigure}{0.49\textwidth}
\centering
\includegraphics[width=0.7\textwidth]{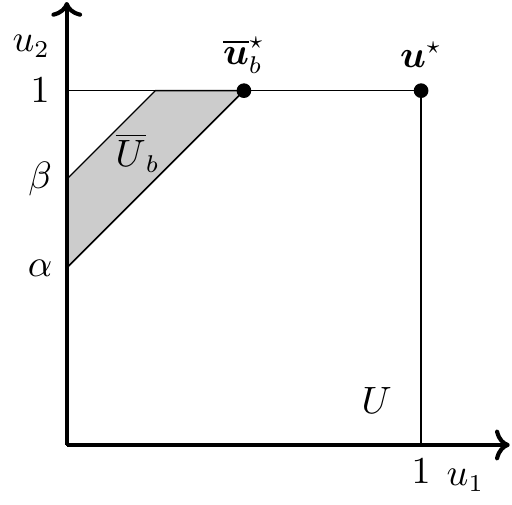}
\caption{$\alpha = 1/2 \leq u_2 - u_1 \leq 3/4 = \beta$.}
\label{fig:ualphab}
\end{subfigure}
\caption{Two scenarios of uncertainty sets for the supply chain example.}
\label{fig:ualpha}
\end{figure}

Moreover, if the second-stage decision on the shipment from centers to stores can be made after the demands are realized, we formulate the following adaptive robust problem:
\begin{equation}
\label{eq:scexadapt}
{
\begin{array}{llclcl}
\displaystyle \text{minimize} & \multicolumn{3}{l}
{c_{11} x_{11}+c_{22} x_{22} + \max\limits_{u \in U} s_{11} y_{11}(\vec{u})+ s_{22}y_{22}(\vec{u})+s_{12}y_{12}(\vec{u})} \\
\text{subject to} & y_{11}(\vec{u})  \geq u_1,y_{12}(\vec{u})+ y_{22}(\vec{u})  \geq u_2, && \forall \vec{u} \in U \\
& 0 \leq x_{11},x_{22} \leq t \\
& 0 \leq y_{11}(\vec{u}),y_{12}(\vec{u}),y_{22}(\vec{u}) \leq p, && \forall \vec{u} \in U\\
& x_{11} \geq y_{11}(\vec{u})+y_{12}(\vec{u}), && \forall \vec{u} \in U\\
&x_{22} \geq y_{22}(\vec{u}),&& \forall \vec{u} \in U.
\end{array}}
\end{equation}
For the adaptive case, the worst case could occur on the line segment where $u_1+ u_2 = \eta$ for scenario (\textit{a}) and at $(1-\alpha,1)$ for scenario (\textit{b}).
With the same parameters,
the \acrlong{cw} problem has an optimal cost of 600 and both scenarios under coupled uncertainty have cost 450 with $25\%$ improvement.
This suggests that coupling could potentially bring more benefits when the problem is adaptive.
We will analyze how coupling affects the objectives for static and adaptive problems and refer back to this supply chain example throughout the paper for illustration purposes.

\subsection{Related work}
Parameters in optimization problems are often subject to uncertainty from a variety of sources. The uncertainty can affect the solution to be infeasible or suboptimal, and optimization affected by parameter uncertainty therefore has been an important topic. Two main classes of methods that deal with uncertainty in optimization are stochastic optimization and robust optimization.

\paragraph{Stochastic optimization.}
In stochastic optimization, uncertain parameters are given with a probability distribution. The constraints and the objective are taken with regard to expectation over the distributions and processed with techniques to be solved. We refer to the textbooks by~\cite{kall1994stochastic,birge2011introduction,shapiro2014lectures} for a more detailed description.
Expectation constraints display great modeling power and ensure explicit probabilistic guarantees on constraint violation.
However, the main disadvantage of stochastic optimization is the dependence on inexact estimations of the true distribution of uncertain parameters.
Also, such programs are often hard to solve because of the difficulties involving expectations and the often nonconvexity of the resulting feasible regions.
An analogy of coupled uncertainty in a stochastic optimization setting could be a joint probability distribution for all uncertain parameters across constraints.

\paragraph{\Glsdesc{RO}.}
Instead of assuming a stochastic nature of the uncertainty, \gls{RO} characterizes the uncertainties in an uncertainty set. A robust optimal solution is found to satisfy all constraints under all uncertainties in the uncertainty set. An early idea is introduced in~\cite{soyster1973convex} to construct a solution which is feasible for all uncertain data from a convex set, and a theory for \gls{RO} is developed in~\cite{ben1998robust,ben1999robust,ben2000robust,el1997robust}. Robust optimization is computationally tractable for various types of problems and uncertainty sets,
and has a broad modeling power for applications~\citep{bertsimas2004price,bertsimas2011theory} such as~\cite{bertsimas2006robust,bertsimas2012adaptive}.

\paragraph{Alleviating conservatism in \glsdesc{RO}.}
There have been various efforts in alleviating the conservatism in \gls{RO}.
Since \gls{RO} enforces all constraints to be satisfied for all parameters with no tolerance, it often imposes stricter constraints than what occurs in practice.
Several methods address this issue by allowing some violations in the constraints.
Light robustness~\citep{Fischetti2009} minimizes a weighted sum of all constraint violations among solutions that attain a threshold objective value.
Globalized \glsdesc{RO}~\citep{ben2006extending,ben2017global} specifies two uncertainty sets, imposes hard constraints for a more normal range of uncertainties, and allows some violation in another set.
Robustness optimization~\citep{satisficing} specifies an objective target and bounds the violation by adversarial impact measure in the full support of the uncertainties.
These methods attempt to reduce conservatism by relaxing the constraints, but do not address another main cause of conservatism in \gls{RO}, which results from its constraint-wise nature.
By protecting the worst case for each constraint separately, \gls{RO} can impose stricter constraints if uncertainties across constraints are related in reality.
The constraint-wise issue is incorporated in~\cite{hertog2019reduce} from a Distributionally Robust Optimization~\citep{postek2018robust} perspective, which ensures constraints to hold for probability distributions in an ambiguity set.
Their formulation combines all constraints into one constraint and requires the total worst case expected violation of all constraints to be below a certain value.
However, such aggregation requires the modeler to rescale the constraints and might lose some individuality for each constraint.
In comparison, the coupling framework addresses the issue while retaining the original problem structure.

\paragraph{Adaptive robust optimization.}
For situations in which practitioners can make some of the decisions after knowing the values of some or all uncertain parameters, adaptive robust optimization can reduce conservatism by providing flexibility and adapting the values of some decisions to realizations of uncertainties.
The concept of ARO was first introduced in~\cite{ben2004adjustable}, and we refer to~\cite{YANIKOGLU2019survey} that surveys the literature on the theory, methodology, and applications of ARO.
In ARO, first-stage here-and-now decisions are made before the uncertainties are realized, and second-stage wait-and-see decisions can be adjusted to some or all parts of the revealed uncertain data.
The framework can also be extended to model multi-stage decision making~\citep{ben2004adjustable}.
Since ARO is a complex problem to solve in general, methods are developed to obtain approximate solutions, such as adopting linear decision rules~\citep{ben2004adjustable,chen2009uncertain}, finite scenario approach~\citep{hadjiyiannis2011finite}, and Benders decomposition~\citep{bertsimas2012adaptive}, among others.

\paragraph{Coupled uncertainty and its benefit in robust and adaptive optimization.}
Problems under coupled uncertainty \reviewChanges{have long since appeared as examples} in robust optimization.
For instance, an example to illustrate that RC may not be equivalent to a minmax reformulation if uncertainty is not constraint-wise appears in~\cite{GORISSEN2015practical}.
A network flow example is used in~\cite{atamturk2007two} to illustrate that adaptability can improve robust solutions when the uncertainty set is coupled.
To solve robust problems, the robust counterpart is obtained by projecting the uncertainty set onto the space of each constraint~\cite[Section 1.2]{ben2009robust}.
Therefore, the robust counterpart of coupled problems can be formulated by taking the projections directly, although it is in general intractable to compute projections of sets, especially in high dimensions.
% However, to the best of our knowledge, there are limited practical examples in static RO in which the uncertainty is coupled across constraints, or demonstration of the benefit of coupling in RO theoretically or numerically.
In adaptive robust optimization, numerical examples have demonstrated the benefit of adaptability over static solutions when the uncertainties across different constraints are dependent on each other.
In lot sizing on a network~\citep{bertsimas2016duality,zhen2018adjustable}, a budgeted uncertainty set~\citep{bertsimas2004price} restricts the total demand for all stores;
in the inventory management problem~\citep{ben2004adjustable}, robust constraint at each time period is affected by sum of previous uncertain demands up to the current period;
in the location transportation example~\citep{atamturk2007two}, a cardinality-restricted uncertainty set~\citep{bertsimas2004price} relates demand deviations in different locations.
\reviewChanges{The aforementioned studies model specific problems and applications, and one could contend that all data-driven applications in which the uncertainty set is determined from past data, whether through the use of statistical tests~\citep{bertsimas_data-driven_2018}, construction of a Wasserstein ball~\citep{mohajerin_esfahani_data-driven_2018}, or the intersection of different ambiguity sets~\citep{tanoumanddata}, make use of the coupling between different dimensions of the uncertainty. 
Yet,} the general framework of coupled uncertainty and how the coupling affects the improvement of objectives have not been studied in depth.
Recently, \cite{nohadani2022optimization} studies a general framework modeling multi-period uncertainty sets in which realizations from previous periods affect parameters of future uncertainty sets.
They reformulated several common families of uncertainty sets and demonstrated on numerical robust and distributionally robust optimization problems.
Similarly, our work studies application-independent general modeling frameworks where uncertain parameters are dependent or correlated with each other.
However, one fundamental difference is that we consider the coupling of uncertainties that appeared across multiple constraints, whereas they focus on connections between uncertainties in multiple time periods in a time series setting.
\reviewChanges{Lastly, we remark that finite scenario-based approaches, such as the one proposed by~\cite{hadjiyiannis2011finite}, where the uncertainty set is replaced by a finite discrete subset, implicitly satisfies coupling constraints. There is no need to model dependencies between uncertain parameters when scenarios are explicitly given. However, this approach could still be used as a baseline for comparisons against other methods. }

\paragraph{Performance of static solution in adaptive optimization.}
If a linear optimization problem has a constraint-wise and compact uncertainty set, the optimal values of RO and ARO are the same~\citep{ben2004adjustable}.
The result on the optimality of static solutions under constraint-wise uncertainty is also extended to nonlinear problems under certain conditions~\citep{marandi2018static}.
When the constraint-wise condition on uncertainty is not satisfied, the static solution is not necessarily optimal to the adaptive problem.
Several bounds have been studied to characterize the objective of static solutions relative to adaptive solutions.
An upper bound on the power of static solutions is developed in~\cite{bertsimas2010power,bertsimas2011geometric} based on a measure of the symmetry of a convex compact uncertainty set introduced in~\cite{minkowski1911allegemeine} for problems with right hand side uncertainty.
The results are later generalized to problems with convex constraints and objective functions with uncertain coefficients in~\cite{bertsimas2013approximability}, but the bound can be quite loose in some instances.
Another upper bound related to a measure of non-convexity of a transformation of the uncertainty set gives a tight characterization~\citep{bertsimas2015tight}, but it is in general not necessarily tractable to compute for an arbitrary convex compact set and the transformation is not very straightforward to interpret.
We relate the improvement of adaptive over static solutions to the coupling of the uncertainty set in comparison with constraint-wise uncertainty sets, in which the static solution is optimal.
Our bound can be obtained by solving convex programs and provides simple geometric insight into the power of adaptability.

\paragraph{\reviewChanges{Relationship to robust satisficing.}} 
\reviewChanges{There is a stream of research coined robust satisficing~\citep{satisficing,satisficing2},
in which, instead of restricting the radius of the uncertainty set, an acceptable cost target is provided. 
The size of the uncertainty set, or, in particular cases, a multiplicative scaling of the possible deviations from the empirical distribution, is calibrated as a variable within the optimization problem. 
Classical robust optimization problems can be rewritten in this framework, where the maximum uncertainty set is to be found given a target objective value~\citep{satisficing2}. 
The shrinkage factors we will define and use in later sections may seem, at first glance, similar to the radius calibrations done in the above manner. 
However, there is a fundamental difference between this line of research and ours; our shrinkage factors are obtained independently from the objective of the robust problem, as we focus only on the uncertainty sets, both nominal and coupled. 
Instead of identifying a maximum uncertainty set for a target objective value, we find shrinkage factors of fixed-radius uncertainty sets to establish a desired hierarchy, then use these shrinkage factors to infer the possible improvements in the objective brought by coupling.
The power of our approach lies in its simplicity, as the analysis is based on the structure of the uncertainty alone.}

\subsection{Our contributions}
In this paper, we study the theoretical and computational benefits of a generic uncertainty coupling framework in robust and adaptive robust optimization.
In comparison to a constraint-wise uncertainty set that considers uncertainty for each constraint separately, a coupled uncertainty set captures relationships between uncertainties for different constraints.
Our specific contributions include:
\begin{enumerate}
    \item We provide tight and easily computable theoretical upper and lower bounds on how much improvement of the objective value can be obtained from uncertainty coupling in~RO and~ARO.
These results apply to linear and nonlinear problems in both cases when the uncertainties appear on the right hand side and the coefficient of the constraints. We further generalize these results to (not necessarily linear) problems when the uncertainty affects both objectives and constraints.
    \item We characterize the performance of adaptive solutions relative to static solutions under coupled uncertainty.
     \item Our computational experiments show the effect of coupling parameters on the improvement, illustrate the bounds, and test different solution methods for linear RO and ARO problems.
    Numerical results in supply chain management, portfolio optimization, and lot sizing in a network demonstrate that incorporating coupling can bring benefits in practical applications.
\end{enumerate}

\subsection{Outline}
The structure of the paper is as follows.
In Section~\ref{sec:modeling}, we formulate robust and adaptive robust problems under constraint-wise and coupled uncertainty respectively with right hand side uncertainty.
In Section~\ref{sec:theory}, we derive theoretical bounds on the objective improvement of coupling, compare the improvement for static and adaptive cases, and characterize the power of adaptability over static solutions under coupled uncertainty.
In Section~\ref{sec:theorycoeff}, we extend the theoretical results for linear problems with constraint coefficient uncertainty.
We further generalize the results to nonlinear problems in Section~\ref{sec:theorynl}.
In Section~\ref{sec:computation}, we conduct computational experiments to demonstrate the numerical effects of coupling and test different solution methods in multiple example problems.
Section~\ref{sec:conclusion} summarizes our conclusions.

\section{Robust and adaptive robust modeling}
\label{sec:modeling}
In this section, we formulate RO and ARO problems with right hand side uncertainties in \acrlong{cw} and coupled uncertainty sets respectively.
In this paper, we use $[m]$ for $m \in \mathbb{Z}^+$ to denote the set $\left\{1,2,\dots,m\right\}$.

\subsection{Constraint-wise robust formulation}
We consider a linear robust optimization problem with $m$ robust constraints and uncertainty on the right hand side.
The constraint-wise formulation is of the form
\begin{equation*}
{
\begin{array}{llclcl}
\displaystyle
& \text{minimize} && {\vec{c}^T\vec{x} + \vec{d}^T\vec{y}} \\
& \text{subject to} &&
 \vec{a}_i^T\vec{x} + \vec{g}_i^T\vec{y}\geq  u_i , \quad \forall u_i \in U_i, \quad \forall i \in [m],
\end{array}}
\end{equation*}
where $\vec{x} \in \reals^{n_1}$ and $\vec{y} \in \reals^{n_2}$ are the decision variables, and $\vec{c} \in \reals^{n_1}, \vec{d} \in \reals^{n_2}, \vec{a}_i \in \reals^{n_1}, \vec{g}_i \in \reals^{n_2},u_i \in \reals$.
Let the uncertainty for the problem be $\vec{u} = (u_1,u_2,...,u_m) \in \reals^{m}$. The  uncertainty set for the problem is $$U = U_1 \times U_2 \times ... \times U_m = \left\{(u_1,u_2,...,u_m) \mid u_1 \in U_1, u_2 \in U_2, ..., u_m \in U_m\right\}.$$
The problem formulation is equivalently
\begin{equation}
\label{eqn:ro}
{
\begin{array}{llclcl}
\displaystyle \zro =
& \text{minimize} && {\vec{c}^T\vec{x} + \vec{d}^T\vec{y}} \\
& \text{subject to} &&
 \vec{a}_i^T\vec{x} + \vec{g}_i^T\vec{y}\geq u_i , \quad \forall \vec{u} \in U, \quad \forall i \in [m],
\end{array}}
\end{equation}
where $\zro$ is the objective value of the constraint-wise robust optimization.

\subsection{Robust modeling under coupled uncertainty}
We now introduce a set of coupling constraints that models the relationships between uncertainty across different constraints.
We denote the coupling uncertainty set by $\cpc$.
We formulate the {\em coupled uncertainty set} as the intersection $$\cp = U \cap \cpc.$$
The robust optimization problem under \reviewChanges{the} coupled uncertainty set is
\begin{equation}
\label{eqn:cp}
{
\begin{array}{llclcl}
\displaystyle \zcp =
& \text{minimize} && {\vec{c}^T\vec{x} + \vec{d}^T\vec{y}} \\
& \text{subject to} &&
 \vec{a}_i^T\vec{x} + \vec{g}_i^T\vec{y}\geq u_i, \quad \forall \vec{u} \in \cp, \quad \forall i \in [m].
\end{array}}
\end{equation}
where $\zcp$ is the objective value of the robust problem under coupled uncertainty.
By construction, $\cp \subseteq U$, and it follows directly $$\zcp \leq \zro.$$ The problem under coupled uncertainty can achieve an objective at least as good as the constraint-wise problem. \reviewChanges{There are two cases, which we illustrate with our motivating example.}

\begin{example}
For the supply chain example in the introduction, the two scenarios of coupling uncertainty sets as illustrated in Figure~\ref{fig:ualpha} are
$$\cpc_a = \left\{(u_1,u_2) \mid u_1 + u_2 \leq \eta\right\}, \quad \cpc_b = \left\{(u_1,u_2) \mid \alpha \leq u_2 - u_1 \leq \beta\right\}.$$
\begin{description}
	\item[No improvement.] When $\eta \geq 1$ for scenario (\textit{a}) and when $\alpha \leq 0, \beta \geq 0$ for scenario (\textit{b}), coupling cannot improve the objective value.
	\item[Improvement.] When $0 \leq \eta < 1$ for scenario (\textit{a}) and when $0 < \alpha \leq \beta $ or $ \alpha \leq \beta < 0$ for scenario (\textit{b}), there exist problems such that coupling improves the objective value.
\end{description}
\end{example}

\subsection{Adaptive robust modeling under constraint-wise uncertainty}

For cases when some decisions can adapt to realizations of some uncertainties, we consider an adaptive robust optimization problem under constraint-wise uncertainty set $U$
\begin{equation}
\label{eqn:aro}
{
\begin{array}{llclcl}
\displaystyle \zaro =
& \text{minimize} && {\vec{c}^T\vec{x} + \max\limits_{\vec{u} \in U} \vec{d}^T\vec{y}(\vec{u})} \vspace{3pt}\\
& \text{subject to} &&
 \vec{a}_i^T\vec{x} + \vec{g}_i^T\vec{y}(\vec{u})\geq u_i, \quad \forall \vec{u} \in U, \quad \forall i \in [m],
\end{array}}
\end{equation}
where $\vec{x} \in \reals^{n_1}$ is the first-stage decision which is made before $\vec{u}$ is realized, $\vec{y}(\vec{u}) \in \reals^{n_2}$ is the second-stage decision which can be adjusted based on the realization of $\vec{u}$, and
$\zaro$ is the objective value of the \acrlong{cw} adaptive robust optimization. 
% \reviewChanges{Whenever we solve an adaptive robust optimization problem, we require the following assumption.}

\subsection{Adaptive robust modeling under coupled uncertainty}
The adaptive robust optimization problem under coupled uncertainty set $\cp$ is
\begin{equation}
\label{eqn:acp}
{
\begin{array}{llclcl}
\displaystyle  \zacp = & \text{minimize} && {\vec{c}^T\vec{x} + \max\limits_{\vec{u} \in \cp} \vec{d}^T\vec{y}(\vec{u})} \vspace{3pt}\\
& \text{subject to} &&
 \vec{a}_i^T\vec{x} + \vec{g}_i^T\vec{y}(\vec{u})\geq u_i, \quad \forall \vec{u} \in \cp, \quad \forall i \in [m],
\end{array}}
\end{equation}
where $\zacp$ is the objective value of the adaptive robust problem under coupled uncertainty.
Similarly, as $\cp \subseteq U$, we have $$\zacp \leq \zaro.$$

\begin{example}
For the adaptive supply chain example, scenario (\textit{b}) has the same two cases as the static problem.
For scenario (\textit{a}), coupling cannot improve the objective when $\eta \geq 2$ and could improve when $0 \leq \eta < 2$.
Particularly, when $1 \leq \eta < 2$, coupling might improve the adaptive problem but not the static problem, suggesting a larger potential improvement in the adaptive case.
\end{example}

\subsection{\reviewChanges{Coupling versus obtaining the same worst case across all constraints}} 
\label{sec:worst-case}
\reviewChanges{We end this section with a discussion on coupling versus the idea of obtaining the same worst case across all constraints; notably, that the two concepts are not equivalent. Here, we consider general uncertain functions of $\vec{u}$, instead of only right hand side uncertainty. } 

\reviewChanges{Recall that, for static constraint-wise uncertainty, we require the uncertainty for individual constraints to be fully separable, \ie, the $i^{\text{th}}$ constraint involves only $u_i$, which lives in the set $U_i$, and which has no interactions with any $U_j$ for $j \neq i$.
It follows that the worst case values for different constraints are independent from each other. 
On the other hand, a coupled uncertainty set relates the uncertainty across different constraints, such that the possible values of $u_i$ for the $i^{\text{th}}$ constraint depend also on the possible values of $u_j$ for $j \neq i$. 
The uncertainty is no longer separable; for each constraint, instead of considering only $u_i$, $\vec{u}$ is considered holistically.
This is also the case for adaptive robust optimization problems. 
However, contrary to expectation, the worst case values for different constraints remain independent, even if the condition $u_1 = u_2 = \dots = u_m$ is enforced in the coupling set. 
Essentially, while for each constraint the coupling constraints must be satisfied, each constraint may still find a different worst case realization of $\vec{u}$.
For instance, even if the uncertainty is coupled with $\left\{(u_1,u_2) \mid u_1 + u_2 \leq \eta\right\}$, one constraint might have $\vec{u}$ take values $u_1 = \eta, u_2 = 0$, while a different constraint might have $u_1 = 0, u_2 = \eta$.
Therefore, while coupling restricts the worst case values that the uncertain parameters can take, it does not remove the individuality of the different constraints. 
Robust optimization problems under coupled uncertainty can be considered an intermediary formulation, where the possible worst case values are more restricted than in fully constraint-wise problems, but less restricted than when all constraints are aggregated into a single constraint, in which case the same worst case is obtained across all original constraints.}

\reviewChanges{We remark, however, that one of the $m$ realizations of $\vec{u}$ {\it can} be taken as the worst case across all constraints. Nonetheless, without aggregating all constraints, it is difficult to identify which of the $m$ constraints contributes this extreme worst case. }

\section{Theoretical improvement under right hand side uncertainty}
\label{sec:theory}
In this section, we study how much improvement can be achieved from coupling for problems with right hand side uncertainty. We provide an upper bound and a lower bound on the ratio of objectives for coupled and constraint-wise problems in static and adaptive cases. We illustrate the bounds in examples and show that our bounds are tight.
We further utilize the results to bound improvement of adaptive solutions over static solutions under coupled uncertainty set.

\subsection{Robust optimization}
\label{sec:ro_intro}
\begin{definition}
We define the projection of $U$ on the space of $\vec{u}_i$ as
$$\Pi_i(U) = \left\{\vec{u}_i \mid \exists \vec{u}_1,\dots,\vec{u}_{i-1},\vec{u}_{i+1},\dots,\vec{u}_m \text{ }\rm s.t. \text{ } (\vec{u}_1,\dots,\vec{u}_m) \in U\right\},$$
and we define $\Pi(U) = \Pi_1(U) \times \dots \times \Pi_m(U)$.
\end{definition} \reviewChanges{The hierarchy of the currently defined sets is
\begin{equation*}
\cp \subseteq \Pi(\cp) \subseteq \Pi(U) = U,
\end{equation*}
where the projection of the coupled set contains the coupled set, but cannot be larger than the original uncertainty set. The projection of the original uncertainty set is itself.
An illustration of the projections of a coupled uncertainty set $\cp$, with $\vec{u}_1 = (u_1^1,u_1^2) \in \reals^2$, $u_2 \in \reals$, and $\vec{u} = (\vec{u}_1,u_2) \in \reals^3$ is shown in Figure~\ref{fig:proj}.}
\begin{figure}[H]
    \centering{%
\includegraphics[width=0.25\textwidth]{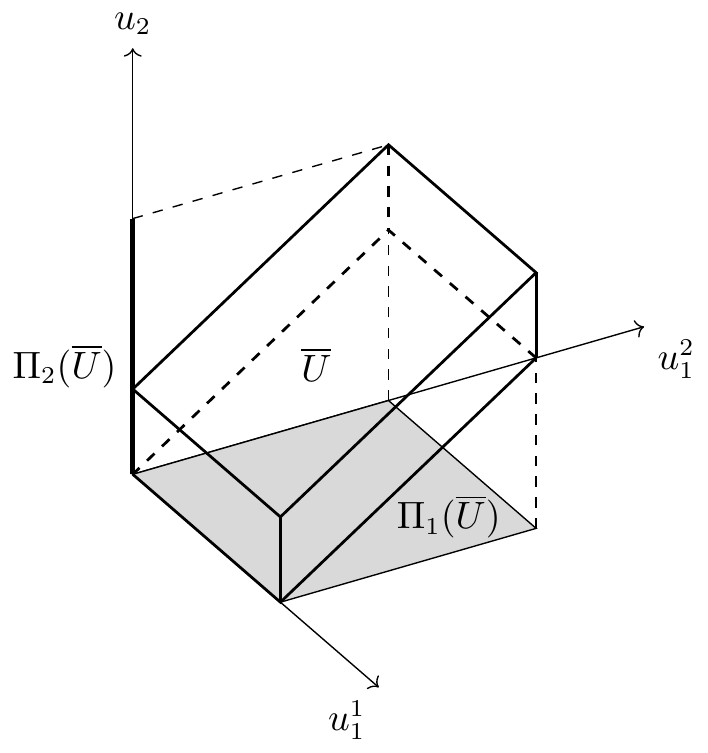}
}
\caption{\reviewChanges{Illustration of projections of a coupled set $\cp$. The set $\Pi(\cp)$ would be the cube $\Pi_1(\cp) \times \Pi_2(\cp)$, which contains $\cp$. }}
\label{fig:proj}
\end{figure}

\reviewChanges{For constraint-wise uncertainty, we assume $U \subset \reals_+^m$ is a convex set, and introduce the notion of a down-hull.} 
\begin{definition}
\label{def:downhull}
The down-hull of a set $S \subseteq \reals_+^m$ is defined as
$$S^\downarrow = \left\{\vec{t} \in \reals_+^m \mid \exists s \in S : \vec{t} \leq \vec{s}\right\} \subseteq \reals_+^m.$$
\end{definition}
\reviewChanges{We work with the down-hull of the sets and observe the hierarchy
\begin{equation*}
\cp \subseteq \cp^\downarrow \subseteq \Pi\left(\cp^\downarrow\right) \subseteq \Pi\left(U^\downarrow\right) =U^\downarrow.
\end{equation*}
Notably, solving a constraint-wise robust problem under an uncertainty set $U$ is equivalent to solving it under the down-hull $U^\downarrow$ (See Appendix~\ref{append:down}). Therefore, without loss of generality, in this section we solve the robust problems with respect to the down-hulls of the uncertainty sets.
Furthermore, by the constraint-wise nature of the uncertainty, we can assume without loss of generality that $\vec{u}_i \in \reals$ for all $i\in [m]$. It follows that 
$$\underset{\vec{u} \in \Pi\left(\cp^\downarrow\right)}{\text{maximize}}~u_i = \underset{\vec{u} \in \cp^\downarrow}{\text{maximize}}~u_i, \quad \forall i \in [m],$$
and that the robust problem~\eqref{eqn:cp} under $\cp$ is equivalent to}
\begin{equation}
\label{eqn:cp_proj}
{
\begin{array}{llclcl}
\displaystyle
\zcp =
& \text{minimize} && {\vec{c}^T\vec{x} + \vec{d}^T\vec{y}} \\
& \text{subject to} &&
 \vec{a}_i^T\vec{x} + \vec{g}_i^T\vec{y}\geq u_i, && \forall \vec{u} \in \reviewChanges{\Pi\left(\cp^\downarrow\right)}, \quad \forall i \in [m].
\end{array}}
\end{equation}
\reviewChanges{It appears that the relationship between $\zcp$ and $\zro$ depends on the relationship between $U^\downarrow$ and $\Pi\left(\cp^\downarrow\right)$, which we now seek to quantify.}

Let the rescaling of a set $S$ by $r \in \reals^+$ be $$ r S = \left\{rx \mid x \in S\right\}.$$
We define two auxiliary sets and shrinkage factors to bound the improvement in the objective value.

\begin{definition}
\label{def:rhogam_func}
For two sets $S_1$ and $S_2$,
we define the maximum shrinkage factor function of the two sets
$$\rho(S_1, S_2) = \max\left\{\rho \in \reals_+ \mid \rho S_1 \subseteq S_2\right\},$$
if such a number exists.
Similarly, we define the minimum shrinkage factor function of the two sets
$$\gamma(S_1, S_2) = \min\left\{\gamma \in \reals_+ \mid S_2 \subseteq \gamma  S_1\right\},$$
if such a number exists.
\end{definition}

\begin{definition}
\label{def:rhogam}
For uncertainty sets $U$ and $\cp$,
we define their maximum and minimum shrinkage factors,
\reviewChanges{$\ri = \rho\left(U^\downarrow, \Pi\left(\cp^\downarrow\right)\right)$ and $\gi = \gamma\left(U^\downarrow, \Pi\left(\cp^\downarrow\right)\right),$ from Definition~\ref{def:rhogam_func}.
We then construct sets
$V = \ri \, U^\downarrow$ and
$W = \gi \, U^\downarrow$,
such that the relationship between the sets is $V = \ri \, U^\downarrow \subseteq \Pi\left(\cp^\downarrow\right) \subseteq \gi \, U^\downarrow = W $.}
\end{definition}

\begin{lemma}
\label{lem:exist}
 \reviewChanges{$\ri$ exists; $\gi$ always exists.}
\end{lemma}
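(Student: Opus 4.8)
The plan is to verify, in each case, that the set of admissible scaling factors is a nonempty interval whose relevant endpoint is attained. First I would record the structural facts about the two sets in play, $A := U^\downarrow$ and $B := \Pi\!\left(\cp^\downarrow\right)$: under the paper's standing assumptions both are convex and compact (the down-hull of a compact subset of $\reals_+^m$ is compact, being $\reals_+^m$ intersected with the closed set $S - \reals_+^m$; and a coordinate projection of a compact set is compact), both contain the origin (the sets are nonempty and lie in $\reals_+^m$), and both are downward closed in $\reals_+^m$, so that $\lambda S \subseteq S$ for every $\lambda \in [0,1]$. The hierarchy recalled before Definition~\ref{def:rhogam} gives $B \subseteq A$.

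For $\gi$: from $\lambda A \subseteq A$ one gets $\gamma A \subseteq \gamma' A$ whenever $0 \le \gamma \le \gamma'$, so $\Gamma := \left\{\gamma \in \reals_+ \mid B \subseteq \gamma A\right\}$ is upward closed. It is nonempty since $B \subseteq A = 1\cdot A$, hence $\gi := \inf \Gamma \le 1$. To see $\gi \in \Gamma$, pick $\gamma_n \in \Gamma$ with $\gamma_n \to \gi$: if $\gi = 0$ then $B \subseteq \{0\}$ and $0 \in \Gamma$ trivially; if $\gi > 0$ then for each $x \in B$ we have $x/\gamma_n \in A$ and $x/\gamma_n \to x/\gi$, so closedness of $A$ yields $x/\gi \in A$, i.e. $x \in \gi A$. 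Thus $\gi$ exists. Note this used only that $A$ is closed, contains $0$, and contains $B$ --- no further nondegeneracy --- which is the ``always'' in the statement.

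For $\ri$: the set $R := \left\{\rho \in \reals_+ \mid \rho A \subseteq B\right\}$ is downward closed, and nonempty since $0\cdot A = \{0\} \subseteq B$. It is bounded above: when $A \neq \{0\}$, picking $a \in A$ with $\|a\| > 0$, the inclusion $\rho a \in B$ together with boundedness of $B$ forces $\rho \le \sup_{b \in B}\|b\| / \|a\| < \infty$. Writing $\ri := \sup R$, attainment follows as above: for $\rho_n \in R$ with $\rho_n \to \ri$ and each $x \in A$, $\rho_n x \in B$ and $\rho_n x \to \ri x$, so closedness of $B$ gives $\ri x \in B$, whence $\ri A \subseteq B$ and $\ri \in R$.

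The only real difficulty is the topological bookkeeping: one must justify that $A = U^\downarrow$ and, especially, $B = \Pi\!\left(\cp^\downarrow\right)$ are closed, which fails for projections of general closed sets and so genuinely relies on compactness of the uncertainty sets; and one must acknowledge the degenerate cases --- $U = \{0\}$ (then $R = \reals_+$ is unbounded) or $\cp = \emptyset$ (then $R = \emptyset$) --- in which $\max R$ does not exist. This is precisely why the lemma claims existence of $\ri$ only in the nondegenerate regime assumed throughout, whereas $\gi$ exists with no exceptions.
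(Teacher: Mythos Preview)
Your argument is correct and considerably more thorough than the paper's. The paper's proof is essentially a two-line check of feasibility: for $\ri$, it argues by contrapositive that if the set $\{\rho \in \reals_+ \mid \rho\, U^\downarrow \subseteq \Pi(\cp^\downarrow)\}$ were empty then in particular $\rho=0$ would fail, forcing $0 \notin \Pi(\cp^\downarrow)$ (which can only happen when $\cp$ is empty); for $\gi$, it simply observes that $\gamma=1$ is feasible because $\Pi(\cp^\downarrow) \subseteq U^\downarrow$. Attainment of the maximum and minimum is never addressed; the paper tacitly relies on the ambient hierarchy $0 \le \ri \le \gi \le 1$ (Lemma~\ref{lem:rigi}) and leaves closedness implicit.

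What you add is precisely that missing topological bookkeeping: monotonicity of the admissible sets in the scaling parameter, boundedness, and attainment via closedness of $U^\downarrow$ and $\Pi(\cp^\downarrow)$, together with an explicit treatment of the degenerate cases $U=\{0\}$ and $\cp=\emptyset$. This buys you a proof that actually establishes the $\max$/$\min$ are attained, not merely that the feasible sets are nonempty. One small caveat: you invoke compactness of $U$ and $\cp$ to get closedness of the down-hull and its projection, but in this part of the paper only convexity of $U \subset \reals_+^m$ is stated explicitly; compactness enters later (e.g.\ Assumption~\ref{ass:compact} and Section~\ref{sec:adap_static}). Your argument still goes through at the level of rigor the paper operates at, but if you want it to be self-contained you should either state compactness as a hypothesis or note that for the box-type sets $\Pi(\cp^\downarrow)=[0,\bar d_1]\times\cdots\times[0,\bar d_m]$ actually used downstream, closedness is immediate.
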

\ifpreprint \begin{proof} \else
\proof{Proof.}
\fi
If $\ri$ does not exist, then for all \reviewChanges{$\rho \in \reals_+$, $\rho \, U^\downarrow \not\subseteq \Pi\left(\cp^\downarrow\right)$.
Then $0 \cdot U^\downarrow \not\subseteq \reviewChanges{\Pi\left(\cp^\downarrow\right)}$ and $0 \notin \reviewChanges{\Pi\left(\cp^\downarrow\right)}$.
Since $\gamma =1$ satisfies $\reviewChanges{\Pi\left(\cp^\downarrow\right)} \subseteq \gamma \, U^\downarrow$, $\gi$ always exists.}
\ifpreprint \end{proof} \else
\Halmos
\endproof
\fi

\begin{lemma}
\label{lem:rigi}
Assume \reviewChanges{$U$ is a convex set.} Then $W \subseteq U^\downarrow$ and
\begin{equation*}
   0 \leq \ri \leq \gi \leq 1.
\end{equation*}
\end{lemma}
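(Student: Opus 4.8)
The plan is to unpack the two shrinkage factors from Definitions~\ref{def:rhogam_func}--\ref{def:rhogam} and exploit the elementary geometry of the down-hull. First I would record two structural facts. Since $U$ is nonempty and convex with $U \subseteq \reals_+^m$, its down-hull $U^\downarrow$ is convex; moreover $\vec{0} \in U^\downarrow$ (pick any $\vec{s} \in U$; then $\vec{0} \le \vec{s}$), and, being downward closed, $U^\downarrow$ is star-shaped about the origin, so that $r\,U^\downarrow \subseteq U^\downarrow$ for every $r \in [0,1]$. That star-shapedness is essentially the only property of $U^\downarrow$ I will use. The bound $\ri \ge 0$ is then immediate: $\ri$ exists by Lemma~\ref{lem:exist} and lies in $\reals_+$ by Definition~\ref{def:rhogam_func}.

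Next I would establish $\gi \le 1$ together with $W \subseteq U^\downarrow$. From the hierarchy recorded just before Definition~\ref{def:rhogam}, $\Pi\left(\cp^\downarrow\right) \subseteq \Pi\left(U^\downarrow\right) = U^\downarrow = 1\cdot U^\downarrow$, so $\gamma = 1$ is feasible in the minimization defining $\gi = \gamma\left(U^\downarrow,\Pi\left(\cp^\downarrow\right)\right)$; hence $\gi \le 1$. Since $\gi \in [0,1]$, star-shapedness gives $W = \gi\,U^\downarrow \subseteq U^\downarrow$, which is the first assertion of the lemma.

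The remaining --- and only delicate --- step is $\ri \le \gi$. By construction (Definition~\ref{def:rhogam}) $V \subseteq \Pi\left(\cp^\downarrow\right) \subseteq W$, i.e. $\ri\,U^\downarrow \subseteq \gi\,U^\downarrow$. I would discard the trivial case $U^\downarrow = \{\vec 0\}$ (there $\cp^\downarrow = \{\vec 0\}$ and there is nothing to prove). If $\gi = 0$, then $\gi\,U^\downarrow = \{\vec 0\}$ forces $\ri\,U^\downarrow = \{\vec 0\}$, hence $\ri = 0 = \gi$. If $\gi > 0$, rescaling the inclusion by $1/\gi$ gives $(\ri/\gi)\,U^\downarrow \subseteq U^\downarrow$, and I claim $\ri/\gi \le 1$: if $t := \ri/\gi > 1$, then iterating $t\,U^\downarrow \subseteq U^\downarrow$ and using that $U^\downarrow$ is downward closed shows $s\vec{x}\in U^\downarrow$ for every $\vec{x}\in U^\downarrow$ and every $s\ge 0$ (choose $k$ with $t^k \ge s$, so $s\vec{x}\le t^k\vec{x}\in U^\downarrow$), i.e. $U^\downarrow$ is scale-invariant; but then $\ri\,U^\downarrow = U^\downarrow = \Pi\left(\cp^\downarrow\right)$ (using $\ri > \gi > 0$ and $\ri\,U^\downarrow \subseteq \Pi(\cp^\downarrow)\subseteq U^\downarrow$), so every $\gamma > 0$ satisfies $\Pi\left(\cp^\downarrow\right) \subseteq \gamma\,U^\downarrow$, contradicting $\gi > 0$. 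This yields $\ri \le \gi$ and completes $0 \le \ri \le \gi \le 1$.

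The main obstacle is exactly this last implication, $\ri/\gi \le 1$. When $U$ is compact it is the familiar fact that $tK \subseteq K$ for a convex compact $K \ni \vec{0}$ with $K \neq \{\vec{0}\}$ forces $t \le 1$ (equivalently, comparing Minkowski gauges of the nested scaled copies), and the argument is short. For a merely convex, possibly unbounded $U$ one cannot invoke boundedness directly, and the point to get right is that $\ri/\gi>1$ would make $U^\downarrow$ a scale-invariant set, which is incompatible with the extremality built into both $\ri$ (it is the \emph{largest} admissible shrinkage) and $\gi$ (it is the \emph{smallest}) --- the contradiction above. If the surrounding development in fact assumes $U$ compact, the compact version above suffices and this subtlety disappears.
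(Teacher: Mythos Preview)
Your proposal is correct and follows the same underlying idea the paper invokes. The paper's own proof is the single sentence ``This follows from the convexity and hierarchy of the sets,'' so you have simply filled in the details it omits: using $\Pi(\cp^\downarrow)\subseteq U^\downarrow$ to get $\gi\le 1$, star-shapedness of the down-hull about the origin to get $W=\gi\,U^\downarrow\subseteq U^\downarrow$, and then extracting $\ri\le\gi$ from $\ri\,U^\downarrow\subseteq\gi\,U^\downarrow$.

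One remark on the last step. In the paper's right-hand-side setting each $u_i\in\reals$, so $U^\downarrow=[0,d_1]\times\cdots\times[0,d_m]$ and $\Pi(\cp^\downarrow)=[0,\bar d_1]\times\cdots\times[0,\bar d_m]$ are boxes; the explicit formulas $\ri=\min_i \bar d_i/d_i$ and $\gi=\max_i \bar d_i/d_i$ recorded in~\eqref{eq:gamma_static} make $\ri\le\gi$ immediate. Your contradiction argument via scale-invariance of $U^\downarrow$ is valid and handles the general (possibly unbounded) convex case, but it is more than the paper needs or attempts; in context, the box computation is what the one-line proof is gesturing at.
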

\ifpreprint \begin{proof} \else
\proof{Proof.}
\fi
\reviewChanges{This follows from the convexity and hierarchy of the sets.}
% We claim that for any $\alpha,\beta \in \reals^+$, $\alpha \leq \beta$ if and only if $\alpha \, U \subseteq \beta \, U$.
% To show one direction, we first suppose $\alpha \leq \beta$.
% Then for all $\vec{u} \in U$, $\beta\vec{u} \in \beta \, U$.
% As $0 \in U$, $0\in \beta \, U$.
% Since $U$ is convex and $0 \leq \alpha/\beta\leq 1$, $\alpha \vec{u} = (1-\alpha/\beta) \, 0 + \alpha/\beta \, \beta \vec{u} \in \beta \, U$ for all $\vec{u} \in U$.
% Then $\alpha \, U \subseteq \beta \, U$.
% To show the other direction, we now suppose $\alpha > \beta$.
% Then $\beta \, U \subseteq \alpha \, U$.
% As $\alpha\neq\beta, U \neq \left\{0\right\}$, then $\alpha \, U \neq \beta \, U$, then $\alpha \, U \supset \beta \, U$ and $\alpha \, U \not\subseteq \beta \, U$.
% Therefore we have proved the claim.
% As $0 \leq \gi \leq 1$, we take $\alpha =\gi, \beta=1$ and have $W = \gi \, U \subseteq U$.
% And since $\ri \, U \subseteq \reviewChanges{\Pi\left(\cp^\downarrow\right)} \subseteq \gi \, U$, therefore $0 \leq \ri \leq \gi\leq 1$.
\ifpreprint \end{proof} \else
\Halmos
\endproof
\fi

% Next, we define down-monotone sets.

% \begin{definition}
% A set $S \subseteq \reals_+^m$ is down-monotone if $\vec{s} \in S,\vec{t} \in \reals_+^m$ and $\vec{t} \leq \vec{s}$ implies $\vec{t} \in S$, or equivalently, if $S = S^\downarrow$~\citep{bertsimas2015tight}.
% \end{definition}

% We assume for the rest of this section that $U$ is convex and $\left\{0\right\} \subset U \subset \reals_+^m$.
% We assume without loss of generality that $U, \cp \subset \reals_+^m$ are down-monotone and thus $\Pi\left(\cp\right)$ is also down-monotone (see Corollary~\ref{cor:down_proj} and Lemma~\ref{lem:down} in Appendix~\ref{append:down}).

\paragraph{Obtaining shrinkage factors.}
 \reviewChanges{Given these definitions, let} $\reviewChanges{\Pi\left(\cp^\downarrow\right)} = [0,\overline{d}_1]\times\dots\times[0,\overline{d}_m]$
and $\reviewChanges{U^\downarrow} = [0,d_1]\times\dots\times[0,d_m]$.
The shrinkage factors can be obtained by 
\begin{equation}
\label{eq:gamma_static}
    \ri = \min\limits_{i \in [m]} \overline{d}_i/d_i, \quad \gi = \max\limits_{i \in [m]} \overline{d}_i/d_i.
\end{equation}
The interpretation is that $\ri$ and $\gi$ characterize the maximum and minimum levels of shrinkage obtained in the projections of coupled uncertainty set.

\begin{example}
\label{ex:scrho}
We illustrate the shrinkage factors with the supply chain example in Figure~\ref{fig:ualphaproj}. \reviewChanges{We note that in this case, $U = U^\downarrow$.}
For the case of no improvement in Figure~\ref{fig:noimp}, the projections are the same for $U$ and $\cp^\downarrow$.
Then $U = \reviewChanges{\Pi\left(\cp^\downarrow\right)}=V=W$ is the same line segment and $\ri = \gi = 1$.
For the case of an improvement in Figure~\ref{fig:imp}, we illustrate the projections of $\cp^\downarrow$.
Coupling shrinks the projections with $\ri = 1-\alpha~\reviewChanges{= 1/2} < 1$, $\gi = 1$.
\end{example}

\begin{figure}
\centering
\begin{subfigure}{0.49\textwidth}
\centering
\includegraphics[width=0.81\textwidth]{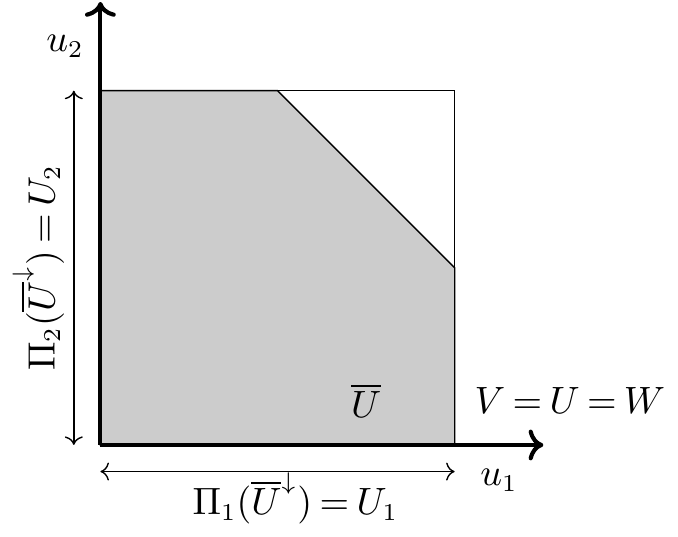}
\caption{\reviewChanges{$u_1+ u_2 \leq \eta = 3/2$.}}
\label{fig:noimp}
\end{subfigure}
\centering
\begin{subfigure}{0.49\textwidth}
\includegraphics[width=0.7\textwidth]{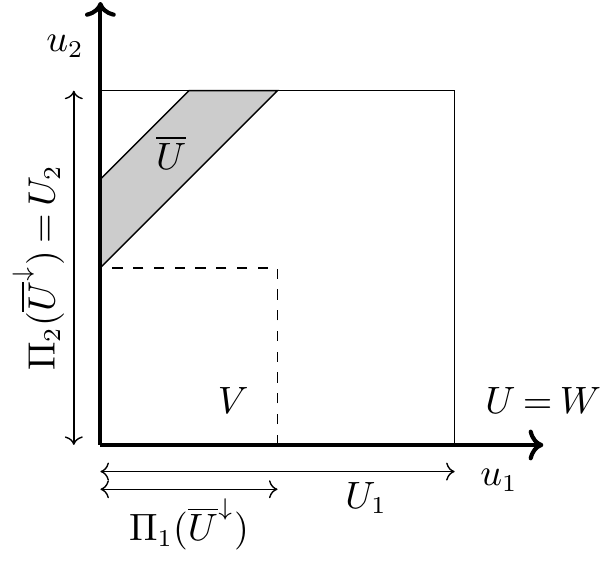}
\caption{\reviewChanges{$\alpha = 1/2 \leq u_2 - u_1 \leq 3/4 = \beta$}.}
\label{fig:imp}
\end{subfigure}
\caption{Illustration of shrinkage factors for the static supply chain example.}
\label{fig:ualphaproj}
\end{figure}

The projection of the uncertainty set is what affects the problem.
\reviewChanges{When the projections $\Pi_i\left(\cp^\downarrow\right)$ are equivalent to $U_i^\downarrow$ across all constraints $i$, $\ri=1$ and the coupled problem is equivalent to the constraint-wise formulation.}
However, when the projections shrink after coupling, $\ri<1$, and coupling might reduce the conservatism of the robust solution.
When $\gi<1$, an improvement through coupling is guaranteed, whereas $\gi=1$ does not provide additional information on the improvement.
Further results are stated in the following theorem.

\begin{theorem}
\label{thm:main}
Consider the robust problem~\eqref{eqn:ro} under \acrlong{cw} uncertainty and problem~\eqref{eqn:cp} under coupled uncertainty.
\reviewChanges{$U \subset \reals_+^m$ is convex}, and $0 < \zcp \leq \zro < \infty$.
Then the objective value is bounded as:
\begin{equation*}
\ri \leq \frac{\zcp}{\zro} \leq \gi.
\end{equation*}
\end{theorem}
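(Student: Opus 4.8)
\medskip

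The plan is to exploit the reformulation~\eqref{eqn:cp_proj}, which says that the coupled robust problem is equivalent to the constraint-wise robust problem over the projected down-hull $\Pi(\cp^\downarrow)$, together with the sandwich $V = \ri\,U^\downarrow \subseteq \Pi(\cp^\downarrow) \subseteq \gi\,U^\downarrow = W$ from Definition~\ref{def:rhogam}. The key observation is that for a constraint-wise right-hand-side robust problem, replacing the uncertainty set by a scaled copy $rU^\downarrow$ simply replaces each $u_i$ by $r\,\overline{u}_i$ where $\overline{u}_i = \max_{\vec u \in U^\downarrow} u_i$; that is, the constraint becomes $\vec a_i^T\vec x + \vec g_i^T\vec y \geq r\,\overline{u}_i$. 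So the first step is to define, for $r \in [0,1]$, the parametric problem $\zeta(r)$ that is problem~\eqref{eqn:ro} with right-hand sides $r\,\overline{u}_i$, and note $\zro = \zeta(1)$, while the problem over $V$ has value $\zeta(\ri)$ and the problem over $W$ has value $\zeta(\gi)$.

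\medskip

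The second step is monotonicity of $\zeta$ in the feasible-set sense: if $S_1 \subseteq S_2$ are two constraint-wise uncertainty sets then the robust problem over $S_1$ has a larger feasible region and hence objective value no larger than over $S_2$. Applied to $V \subseteq \Pi(\cp^\downarrow) \subseteq W$ this gives $\zeta(\ri) \leq \zcp \leq \zeta(\gi)$. It therefore remains to relate $\zeta(r)$ to $\zro = \zeta(1)$ for $r \in [0,1]$, and the claim $\ri \leq \zcp/\zro \leq \gi$ will follow once we show $\zeta(r) \geq r\,\zro$ for the lower side and $\zeta(r) \leq r\,\zro$ — wait, these go in opposite directions, so the correct route is: show $r\,\zro \leq \zeta(r)$ using that scaling a feasible $(\vec x,\vec y)$ for $\zro$ down by $r$ is feasible for $\zeta(r)$ (here we need $\zro > 0$ and the homogeneity of the linear constraints and objective, plus that the origin is feasible, which follows since $0 \in U^\downarrow$ so the constraints at $r=0$ read $\vec a_i^T\vec x + \vec g_i^T\vec y \geq 0$, satisfied by the zero solution). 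Symmetrically, a feasible point for $\zeta(r)$ scaled up by $1/r$ is feasible for $\zro$, giving $\zeta(r) \leq r\,\zro$ is \emph{false} in general — instead this scaling shows $\zro \leq \zeta(r)/r$, i.e. $\zeta(r) \geq r\,\zro$ again. The resolution is that $\zeta$ is in fact positively homogeneous of degree $1$ in $r$: $\zeta(r) = r\,\zeta(1) = r\,\zro$ exactly, by the bijection $(\vec x,\vec y) \leftrightarrow (r\vec x, r\vec y)$ between feasible sets scaling the objective by $r$. Combining $\zeta(r) = r\,\zro$ with $\zeta(\ri) \leq \zcp \leq \zeta(\gi)$ yields $\ri\,\zro \leq \zcp \leq \gi\,\zro$, and dividing by $\zro > 0$ completes the proof.

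\medskip

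The main obstacle is being careful that the exact homogeneity $\zeta(r) = r\,\zro$ holds: this requires that the constraint functions are linear (so that scaling the right-hand side is equivalent to scaling the decision variables), that the objective is linear (so scaling decisions scales the objective), and that there are no other constraints on $(\vec x,\vec y)$ — which is the case in formulation~\eqref{eqn:ro}. One must also check the degenerate edge cases $\ri = 0$ (where the bound reads $0 \leq \zcp/\zro$, trivially true since $\zcp > 0$) and confirm that $\zeta(\ri)$ and $\zeta(\gi)$ are finite, which follows from $\zcp \leq \zro < \infty$ sandwiched appropriately and from feasibility of the down-scaled nominal optimum. Finiteness and strict positivity of $\zcp$ are assumed in the hypothesis, so no separate argument is needed there.
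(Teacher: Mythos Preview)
Your approach is correct and essentially the same as the paper's: both exploit the sandwich $\ri\,U^\downarrow \subseteq \Pi(\cp^\downarrow) \subseteq \gi\,U^\downarrow$ together with the fact that scaling the right-hand side by $r$ in the linear homogeneous problem~\eqref{eqn:ro} is equivalent to scaling the decision variables (and hence the optimal value) by $r$. The only cosmetic difference is packaging: you introduce the parametric value function $\zeta(r)$ and prove the single identity $\zeta(r)=r\,\zro$, whereas the paper carries out the two scalings directly---taking $(\xcp/\ri,\ycp/\ri)$ to bound $\zro$ from above and $(\gi\xro,\gi\yro)$ to bound $\zcp$ from above---without naming the intermediate function.
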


\ifpreprint \begin{proof}[Proof]
\else
\proof{Proof.}
\fi
We first show the lower bound of the improvement.
Let $(\xcp,\ycp)$ be an optimal solution to~\eqref{eqn:cp}, \reviewChanges{which is then also an optimal solution to~\eqref{eqn:cp_proj}}. Since $\reviewChanges{\ri \, U^\downarrow \subseteq \Pi\left(\cp^\downarrow\right)}$, it is also feasible to problem
\begin{equation*}
{
\begin{array}{llclcl}
& \text{minimize} && {\vec{c}^T\vec{x} + \vec{d}^T\vec{y}} \\
& \text{subject to} &&
 \vec{a}_i^T\vec{x} + \vec{g}_i^T\vec{y}\geq u_i, \quad \forall \vec{u} \in \ri \, \reviewChanges{U^\downarrow}, \quad \forall i \in [m].
 \end{array}}
\end{equation*}
\reviewChanges{This problem is equivalent to}
\begin{equation*}
{
\begin{array}{llclcl}
& \text{minimize} && {\vec{c}^T\vec{x} + \vec{d}^T\vec{y}} \\
& \text{subject to} &&
 \vec{a}_i^T\vec{x} + \vec{g}_i^T\vec{y}\geq \ri u_i, \quad \forall \vec{u} \in \reviewChanges{U^\downarrow}, \quad \forall i \in [m].
 \end{array}}
\end{equation*}
Then $({\xcp}/{\ri},{\ycp}/{\ri})$ is feasible to~\eqref{eqn:ro} since
$$\vec{a}_i^T\xcp/\ri + \vec{g}_i^T\ycp / \ri \geq u_i, \quad \forall \vec{u} \in \reviewChanges{U^\downarrow}, \quad \forall i \in [m]. $$
Thus
\begin{equation*}
\zro \leq \zcp/\ri.
\end{equation*}

We next show the upper bound of the improvement in a similar approach. Consider the problem
\begin{equation*}
% \label{eqn:si}
{
\begin{array}{llclcl}
& \text{minimize} && {\vec{c}^T\vec{x} + \vec{d}^T\vec{y}} \\
& \text{subject to} &&
 \vec{a}_i^T\vec{x} + \vec{g}_i^T\vec{y}\geq u_i, \quad \forall \vec{u} \in W, \quad \forall i \in [m].
\end{array}}
\end{equation*}
Since $W = \gi \, \reviewChanges{U^\downarrow}$, the problem is equivalent to
\begin{equation*}
{
\begin{array}{llclcl}
\displaystyle \zsi =
& \text{minimize} && {\vec{c}^T\vec{x} + \vec{d}^T\vec{y}} \\
& \text{subject to} &&
 \vec{a}_i^T\vec{x} + \vec{g}_i^T\vec{y}\geq \gi u_i, \quad \forall \vec{u} \in \reviewChanges{U^\downarrow}, \quad \forall i \in [m].
\end{array}}
\end{equation*}
Let $(\xro,\yro)$ be an optimal solution to problem~\eqref{eqn:ro}.
Then ${\gi} (\xro,\yro)$ is a feasible solution to this rescaled problem since
$$\vec{a}_i^T\gi \xro + \vec{g}_i^T\gi\yro \geq \gi u_i, \quad \forall \vec{u} \in \reviewChanges{U^\downarrow}, \quad \forall i \in [m]. $$
As $\reviewChanges{\Pi\left(\cp^\downarrow\right)} \subseteq W$, thus
\begin{equation*}
\zcp \leq \zsi \leq{\gi} \zro,
\end{equation*}
which completes the proof.
\ifpreprint \end{proof} \else
\Halmos
\endproof
\fi

\begin{corollary}
\label{cor:main}
The upper and lower bounds constructed in Theorem~\ref{thm:main} are tight.
\end{corollary}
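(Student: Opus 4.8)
The plan is to exhibit, for both the lower and the upper bound of Theorem~\ref{thm:main}, an explicit pair of uncertainty sets $U$ and $\cp$ together with problem data $(\vec{c},\vec{d},\vec{a}_i,\vec{g}_i)$ for which the ratio $\zcp/\zro$ equals exactly $\ri$ (respectively $\gi$). Since $\ri$ and $\gi$ were computed in~\eqref{eq:gamma_static} as the coordinatewise extremes of $\overline d_i/d_i$, the natural strategy is to design a problem in which only the ``binding'' coordinate — the one achieving the min (for $\ri$) or the max (for $\gi$) — drives the objective.

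First, for the lower bound, I would take $m$ constraints with $\cp^\downarrow = [0,\overline d_1]\times\dots\times[0,\overline d_m]$ and $U^\downarrow = [0,d_1]\times\dots\times[0,d_m]$, so $\ri = \min_i \overline d_i/d_i$, achieved at some index $i^\star$. Set $\vec{x}$ to have a single component, no $\vec{y}$ variables, $\vec{c}=1$, and $\vec{a}_i = 1/d_i$ for every $i$ (so the $i$-th constraint reads $x/d_i \ge u_i$, i.e.\ $x \ge \max_{\vec u} u_i$). Under $U$ this forces $x = d_{i}\cdot 1 / 1$? — more carefully, with $\vec a_i^T\vec x \ge u_i$ for all $\vec u\in U^\downarrow$ we need $x \ge d_i \cdot$ (coefficient), and choosing coefficients so that the tightest constraint is the same index for both problems makes $\zro$ proportional to $\max_i$ of the relevant quantity over $U$ and $\zcp$ proportional to the same over $\Pi(\cp^\downarrow)$; picking the data so that index $i^\star$ is binding in both gives $\zcp/\zro = \overline d_{i^\star}/d_{i^\star} = \ri$. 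I would write this cleanly with a one-dimensional $\vec x$ and $m=1$, or, to make the ``min over coordinates'' visible, with $m\ge 2$ but data that zeroes out all but the binding coordinate's influence.

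Second, for the upper bound, I would do the symmetric construction: choose the data so that the constraint achieving $\gi = \max_i \overline d_i/d_i$ is the unique active one in both the $U$-problem and the $\cp$-problem, forcing $\zcp/\zro = \gi$. The simplest realization is again $m=1$ (where $\ri=\gi=\overline d_1/d_1$ and both bounds coincide and are trivially tight), but to show the two bounds are \emph{separately} tight and can differ, I would use $m=2$ with, say, $U^\downarrow = [0,1]^2$, $\cp^\downarrow$ chosen so that $\overline d_1 = \ri$ and $\overline d_2 = \gi$ with $\ri<\gi$ (e.g.\ the supply-chain scenario~(\textit b) set from Example~\ref{ex:scrho}, which has $\ri = 1/2$, $\gi=1$), and then two different objective-data choices: one that makes only constraint~1 matter (ratio $=\ri$) and one that makes only constraint~2 matter (ratio $=\gi$).

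The routine part is the feasibility/optimality bookkeeping showing the claimed constraint is the active one; the only real care needed is ensuring the construction respects all hypotheses of Theorem~\ref{thm:main} — namely $U\subset\reals_+^m$ convex and $0<\zcp\le\zro<\infty$ — which rules out degenerate data and is the one place I would slow down. A secondary subtlety is that one should either present one example attaining the lower bound and a (possibly different) example attaining the upper bound, or a single example in which the two extremal indices are separated, as above; I would favor the single separated example since it also illustrates that $\ri$ and $\gi$ genuinely bracket a nontrivial interval.
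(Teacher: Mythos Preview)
Your proposal is correct and converges on essentially the same approach as the paper: the paper uses precisely scenario~(\textit b) of the supply-chain example with $\alpha=1/2,\beta=3/4$ (so $\ri=1/2$, $\gi=1$), and then two different cost-coefficient choices---one that makes only the $u_1$-constraint matter (giving $\zcp/\zro=1/2=\ri$) and one that makes only the $u_2$-constraint matter (giving $\zcp/\zro=1=\gi$). Your early generic construction is a bit muddled, but your final recommendation matches the paper's proof exactly.
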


\ifpreprint \begin{proof} \else
\proof{Proof.}
\fi
We consider scenario (\textit{b}) of the supply chain problem~\eqref{eq:scex} with $\alpha=1/2, \beta=3/4$.
We show in Example~\ref{ex:scrho} that $\ri=1/2, \gi=1$.
Thus according to Theorem~\ref{thm:main}, $1/2 \leq \zcp/\zro \leq 1$.
If $c_{11}=s_{11}=c_{22}=0, s_{22}=s_{12}=1,t=p=1$, then $\zro=1,\zcp=1$, $\zcp/\zro= 1 = \gi$.
If $c_{11}=s_{11}=1,c_{22}=s_{22}=s_{12}=0,t=p=1$, then $\zro=2,\zcp=1$, $\zcp/\zro =1/2 =\ri$.
Therefore both bounds are tight.
\ifpreprint \end{proof} \else
\Halmos
\endproof
\fi

\paragraph{\reviewChanges{A remark on the attainability of the bounds.}}
\reviewChanges{We established above that the constructed bounds are tight. A natural follow-up to seek is then the {\it conditions} under which the bounds become tight, which are intrinsically tied to the {\it coefficients} of the problem. We give sufficient conditions here.}

\begin{corollary}
\label{cor:attain_bounds_lower}
Let $(\xcp,\ycp)$ be the optimal solution to the coupled problem~\eqref{eqn:cp}, and consider $(\xcp/\ri,\ycp/\ri)$, a feasible solution to the robust problem~\eqref{eqn:ro}, constructed in the proof 
of Theorem~\ref{thm:main}. 
If the following holds for the robust problem~\eqref{eqn:ro}:
\begin{enumerate}[label=(\roman*)]
    \item for any constraint $i \in [m]$ that is tight at the worst case realization of $\vec{u}$, \ie,
$$\vec{a}_i^T\xcp/\ri + \vec{g}_i^T\ycp/\ri = \underset{\vec{u} \in U^\downarrow}{\max}~u_i,$$
the non-zero positions of $\vec{a}_i$ and $\vec{g}_i$ also take non-zero values in the respective objective coefficient vectors $\vec{c}$ and $\vec{d}$, and
\item for any constraint that is not tight, \ie,
$$\vec{a}_i^T\xcp/\ri + \vec{g}_i^T\ycp/\ri > \underset{\vec{u} \in U^\downarrow}{\max}~u_i,$$
the non-zero positions of $\vec{a}_i$ and $\vec{g}_i$ take values of zero in the respective objective coefficient vectors $\vec{c}$ and $\vec{d}$, if these objective coefficients are not set to be non-zero due to a tight constraint,
\end{enumerate}
then the lower bound of Theorem~\ref{thm:main} is tight.
\end{corollary}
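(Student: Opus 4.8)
The plan is to show that the feasible solution $(\xcp/\ri,\ycp/\ri)$ constructed in the proof of Theorem~\ref{thm:main} is in fact \emph{optimal} for the robust problem~\eqref{eqn:ro}. Since its objective value is $\zcp/\ri$, this gives $\zro=\zcp/\ri$, i.e.\ $\zcp/\zro=\ri$, which is the asserted tightness; and since the proof of Theorem~\ref{thm:main} already establishes $\zro\le\zcp/\ri$, only the reverse inequality $\zro\ge\zcp/\ri$ needs to be shown.

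I would first pass to the reduced, scaled right-hand-side formulations from Section~\ref{sec:ro_intro}: by the constraint-wise and projection reductions, \eqref{eqn:ro} is equivalent to minimizing $\vec c^T\vec x+\vec d^T\vec y$ subject to $\vec a_i^T\vec x+\vec g_i^T\vec y\ge d_i$ with $d_i=\max_{\vec u\in U^\downarrow}u_i$, and \eqref{eqn:cp} is the same LP with the smaller right-hand sides $\overline d_i=\max_{\vec u\in\Pi(\cp^\downarrow)}u_i\le d_i$, so that $\ri=\min_i\overline d_i/d_i$. Next I would characterize tightness at $(\xcp/\ri,\ycp/\ri)$: combining $\vec a_i^T\xcp+\vec g_i^T\ycp\ge\overline d_i$ (coupled feasibility of $(\xcp,\ycp)$) with $\ri d_i\le\overline d_i$ shows that constraint $i$ is tight there exactly when $i$ attains the minimum ratio ($\overline d_i=\ri d_i$) \emph{and} the coupled constraint $i$ is active at $(\xcp,\ycp)$; in particular every tight constraint is a minimum-ratio constraint.

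I would then use the hypotheses to locate the objective's support. Finiteness of $\zro$ and positivity of $\zcp$ force every variable with a nonzero objective coefficient to appear in some constraint; combined with (ii) this puts $\mathrm{supp}(\vec c,\vec d)$ inside the set of variables occurring in tight constraints, while (i) gives the opposite containment. Hence the objective depends only on variables that occur in tight constraints, and by (i) those tight constraints involve only such variables. Finally I would build a dual certificate for~\eqref{eqn:ro}. The dual feasible set of~\eqref{eqn:ro} and of~\eqref{eqn:cp} is the \emph{same} polyhedron $\{\lambda\ge 0:\sum_i\lambda_i(\vec a_i,\vec g_i)=(\vec c,\vec d)\}$, the two problems differing only through the dual objectives $\sum_i d_i\lambda_i$ versus $\sum_i\overline d_i\lambda_i$. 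Take a dual optimum $\mu$ of the coupled problem; complementary slackness with $(\xcp,\ycp)$ together with the support structure above should let one choose $\mu$ supported on the minimum-ratio indices, and for any such $\mu$ one has $\sum_i d_i\mu_i=\ri^{-1}\sum_i\overline d_i\mu_i=\zcp/\ri$. Thus $\mu$ is a dual-feasible solution of~\eqref{eqn:ro} of value $\zcp/\ri$, giving $\zro\ge\zcp/\ri$, and hence equality.

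The main obstacle is precisely this last step: translating the coefficient-level hypotheses (i)--(ii) into the claim that \emph{some} dual optimum of the coupled problem carries no weight on constraints that are active at $(\xcp,\ycp)$ but not minimum-ratio. Complementary slackness only confines the dual support to the active set, which may strictly contain the set of minimum-ratio (tight) constraints; the role of (i) and (ii) is to force $(\vec c,\vec d)$ into the cone generated by the rows of the tight constraints so that dual weight can be redistributed onto them, and making this redistribution rigorous — rather than merely matching supports — is the crux. A purely primal alternative, namely showing directly that any $(\vec x,\vec y)$ feasible for~\eqref{eqn:ro} with objective below $\zcp/\ri$ would, after scaling by $\ri$ and adjusting the variables absent from every tight constraint, yield a coupled-feasible point cheaper than $\zcp$, runs into the same difficulty and would be the natural fallback.
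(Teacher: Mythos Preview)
Your approach is substantially more detailed than the paper's. The paper's entire proof is the single assertion ``If the above conditions are satisfied, then $(\xcp/\ri,\ycp/\ri)$ is an optimal solution to the robust problem~\eqref{eqn:ro}. Therefore, $\zro=\zcp/\ri$, and the lower bound is tight,'' followed by a remark that the example in Corollary~\ref{cor:main} satisfies the conditions. No mechanism is given for why conditions (i)--(ii) force optimality; the corollary is presented as a heuristic sufficient condition rather than a fully justified one.

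Your LP-duality route is a genuinely different and more careful attack: you correctly reduce both problems to finite LPs with right-hand sides $d_i$ and $\overline d_i$, observe they share the same dual feasible polyhedron, and try to transport a dual optimum $\mu$ of the coupled LP to a dual certificate for~\eqref{eqn:ro} supported on the minimum-ratio indices. The obstacle you flag is exactly the crux, and it is real: conditions (i)--(ii) are statements about \emph{supports} of the coefficient vectors, whereas what you need is that $(\vec c,\vec d)$ lies in the \emph{nonnegative cone} generated by the rows $(\vec a_i,\vec g_i)$ of the tight constraints. Support containment does not by itself give conic membership, and complementary slackness at the coupled optimum only pins $\mu$ to the active set, which can strictly contain the tight (minimum-ratio) set. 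The paper does not bridge this gap either; it simply does not engage with it.

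In short: your plan is sound up to the point you yourself identify, your diagnosis of the difficulty is accurate, and the paper's proof offers no additional leverage --- it asserts precisely the optimality conclusion you are trying to justify. If you want to close the gap, you will likely need a stronger hypothesis than (i)--(ii) (e.g., that $(\vec c,\vec d)$ is a nonnegative combination of the tight rows, which is the natural LP-optimality condition), or an additional structural assumption that lets you redistribute dual mass from non-minimum-ratio active constraints onto tight ones without changing the stationarity equation.
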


\ifpreprint \begin{proof} \else
\proof{Proof.}
\fi
If the above conditions are satisfied, then $(\xcp/\ri,\ycp/\ri)$ is an optimal solution to the robust problem~\eqref{eqn:ro}. Therefore,  $\zro = \zcp/\ri$, and the lower bound is tight. We note that the example given in Corollary~\ref{cor:main} for the lower bound satisfies such conditions.
\ifpreprint \end{proof} \else
\Halmos
\endproof
\fi

\begin{corollary}
\label{cor:attain_bounds_upper}
Let $(\xro,\yro)$ be the optimal solution to the robust problem~\eqref{eqn:ro}, and consider the modified solution $(\gi\xro,\gi\yro)$. If  $(\gi\xro,\gi\yro)$ is a feasible solution to the coupled problem~\eqref{eqn:cp}, and the following holds for the coupled problem:
\begin{enumerate}[label=(\roman*)]
    \item for any constraint $i \in [m]$ that is tight at the worst case realization of $\vec{u}$, \ie,
$$\vec{a}_i^T\gi \xro + \vec{g}_i^T\gi\yro = \underset{\vec{u} \in \Pi\left(\cp^\downarrow\right)}{\max}~u_i,$$
the non-zero positions of $\vec{a}_i$ and $\vec{g}_i$ also take non-zero values in the respective objective coefficient vectors $\vec{c}$ and $\vec{d}$, and
\item for any constraint that is not tight, the non-zero positions of $\vec{a}_i$ and $\vec{g}_i$ take values of zero in the respective objective coefficient vectors $\vec{c}$ and $\vec{d}$, if these objective coefficients are not set to be non-zero due to a tight constraint,
\end{enumerate}
then the upper bound of Theorem~\ref{thm:main} is tight.
\end{corollary}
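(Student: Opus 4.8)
The plan is to run the same scaling argument as in the proof of Corollary~\ref{cor:attain_bounds_lower}, now in the ``upward'' direction. From Theorem~\ref{thm:main} we already know $\zcp \le \gi\zro$ unconditionally, and by construction the candidate point $(\gi\xro,\gi\yro)$ has objective value $\gi(\vec{c}^T\xro+\vec{d}^T\yro)=\gi\zro$; since it is assumed feasible for~\eqref{eqn:cp}, it re-certifies $\zcp \le \gi\zro$. Hence the only thing left is to show that this point is \emph{optimal} for the coupled problem, i.e.\ $\zcp \ge \gi\zro$, after which $\zcp=\gi\zro$ and the upper bound of Theorem~\ref{thm:main} is attained.

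First I would pass to the equivalent reformulation~\eqref{eqn:cp_proj}, so that the coupled problem is the linear program $\min\{\vec{c}^T\vec{x}+\vec{d}^T\vec{y} : \vec{a}_i^T\vec{x}+\vec{g}_i^T\vec{y}\ge \max_{\vec{u}\in\Pi(\cp^\downarrow)}u_i,\ i\in[m]\}$. Partition $[m]$ into the index set $T$ of constraints that are tight at $(\gi\xro,\gi\yro)$ and the set $N$ of the remaining ones; conditions~(i)--(ii) say precisely that every coordinate of $(\vec{x},\vec{y})$ carrying a nonzero objective coefficient occurs with a nonzero coefficient in some constraint of $T$, while every coordinate occurring only in constraints of $N$ carries a zero objective coefficient. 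The cleanest way to convert this into optimality is LP duality: take an optimal dual solution $\lambda\ge 0$ of the robust problem~\eqref{eqn:ro} at $(\xro,\yro)$ -- so $\sum_i\lambda_i\vec{a}_i=\vec{c}$, $\sum_i\lambda_i\vec{g}_i=\vec{d}$, and $\lambda_i>0$ only on constraints tight for $(\xro,\yro)$ -- and verify that the \emph{same} $\lambda$ is dual-feasible for the reformulated coupled LP and satisfies complementary slackness there. Stationarity is untouched (it does not involve the right-hand side), and conditions~(i)--(ii) are exactly what forces the support of $\lambda$ to lie in $T$, so complementary slackness holds and $(\gi\xro,\gi\yro)$ is optimal, giving $\zcp=\sum_i\lambda_i\max_{\vec{u}\in\Pi(\cp^\downarrow)}u_i=\gi\zro$. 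A duality-free variant is a direct perturbation argument: any feasible descent direction for the coupled LP at $(\gi\xro,\gi\yro)$ must decrease the objective along some coordinate with nonzero objective coefficient, which by condition~(i) immediately violates a tight constraint, and convexity upgrades local to global optimality.

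The main obstacle is handling the full generality cleanly: coordinates that appear in several constraints (some in $T$, some in $N$), objective coefficients of either sign, and -- in the concrete instances -- the extra nonnegativity/box constraints on $\vec{x},\vec{y}$, which enlarge the dual. Routing everything through the rescaled robust dual solution is what makes these cases uniform, since scaling the primal by $\gi$ keeps the dual optimal and merely rescales the right-hand side of the dual objective. I would close, as in Corollary~\ref{cor:attain_bounds_lower}, by checking that the instance used for the upper bound in Corollary~\ref{cor:main} ($c_{11}=s_{11}=c_{22}=0$, $s_{22}=s_{12}=1$, $t=p=1$) satisfies conditions~(i)--(ii), so that the corollary is non-vacuous.
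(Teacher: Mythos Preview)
Your approach is essentially the same as the paper's: show that $(\gi\xro,\gi\yro)$ is optimal for~\eqref{eqn:cp} under conditions~(i)--(ii), whence $\zcp=\gi\zro$ and the upper bound is attained, then remark that the upper-bound instance in Corollary~\ref{cor:main} satisfies the hypotheses. The paper's proof is in fact a two-line assertion (``if the conditions are satisfied, then $(\gi\xro,\gi\yro)$ is optimal''), so your duality/perturbation outline supplies more justification than the original; either variant you sketch is adequate at the level of rigor the statement itself is given.
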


\ifpreprint \begin{proof} \else
\proof{Proof.}
\fi
If the above conditions are satisfied, then $(\gi\xro,\gi\yro)$ is an optimal solution to the coupled problem~\eqref{eqn:cp}. Therefore,  $\zcp = \gi\zro$, and the upper bound is tight. We note that the example given in Corollary~\ref{cor:main} for the upper bound satisfies such conditions.
\ifpreprint \end{proof} \else
\Halmos
\endproof
\fi

\reviewChanges{The sufficient conditions given above can be summarized as such: for the upper or lower bound to be tight, all variables with non-zero objective coefficients must be subject to the same scaling in the constructed optimal solution. In other words, the shrinkage factors of the uncertainty set along all relevant dimensions (dimensions $i$ that are associated with variables with non-zero objective coefficients) must be equivalent. 
In practice, the shrinkage factors along different dimensions are not equivalent, and thus optimization problems can be constructed to attain the full range of the bounds, by manipulating the comparative magnitudes of the coefficients values. 
As the optimization problem is invariant to scaling in the objective, an infinite number of coefficients can be constructed for the attainment of any value of $\zcp/\zro$ in the range $[\ri,\gi]$.}

\reviewChanges{In light of this observation, given an optimization problem, one may examine the shrinkage factors of the uncertainty sets along all dimensions, together with the comparative magnitudes of the corresponding constraint and objective coefficient values, to determine the possible extent of the coupling effect. 
In fact, by examining only the coefficients, it may be possible to identify dimensions along which an effective shrinkage factor may greatly impact the objective value. 
The decision-maker would then benefit from conducting a study into potential coupling effects in that dimension.
For instance, in the second example of Corollary~\ref{cor:main}, we note by the status of the coefficients that the uncertainty of significance is $u_1$, in which case a coupling set that restricts $u_1$ would bring about a significant reduction in cost.
}

\subsection{Adaptive robust optimization}
\label{sec:adapt}
We next study the effect of coupling on adaptive problems.
Whereas the projection of the uncertainty set is what affects the static problem, the uncertainty set itself affects the adaptive problem.
We define shrinkage factors for the adaptive case and derive theoretical bounds on the objectives.

\begin{definition}
\label{rhogamma}
We define
\reviewChanges{$\ari = \rho\left(U^\downarrow, \cp^\downarrow\right)$ and $\agi = \gamma\left(U^\downarrow, \cp^\downarrow\right)$} using functions from Definition~\ref{def:rhogam_func}.
We then construct sets
$V = \ari \, \reviewChanges{U^\downarrow}$ and
$W = \agi \, \reviewChanges{U^\downarrow}.$
\end{definition}

\reviewChanges{
\begin{lemma} 
Assume $U$ is a convex set.
% We assume without loss of generality that $U$ and $\cp$ are down-monotone (see Appendix~\ref{append:down}).
Analogous to the static case, $\ari$ and $\agi$ exist, and
\begin{equation*}
   0 \leq \ari \leq \agi \leq 1.
\end{equation*}
\end{lemma}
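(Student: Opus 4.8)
The plan is to transcribe the two static-case results, Lemma~\ref{lem:exist} and Lemma~\ref{lem:rigi}, essentially word for word, with $\cp^\downarrow$ playing the role of $\Pi\left(\cp^\downarrow\right)$. This works because every structural property of $\Pi\left(\cp^\downarrow\right)$ used in the static proofs is also enjoyed by $\cp^\downarrow$: from the hierarchy $\cp \subseteq \cp^\downarrow \subseteq \Pi\left(\cp^\downarrow\right) \subseteq \Pi\left(U^\downarrow\right) = U^\downarrow$ recorded earlier, the set $\cp^\downarrow$ is contained in $U^\downarrow$, is bounded (being a subset of the bounded set $U^\downarrow$), contains the origin (the down-hull of any nonempty subset of $\reals_+^m$ does), and is downward closed, hence star-shaped about $0$. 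The set $U^\downarrow$ has all of these properties too, and is in addition convex because $U$ is. So the first step is simply to state this observation, after which the static arguments apply verbatim.

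Next I would establish existence. For $\agi$: the value $\gamma = 1$ is feasible for the defining problem in Definition~\ref{def:rhogam_func}, since $\cp \subseteq U$ gives $\cp^\downarrow \subseteq U^\downarrow = 1\cdot U^\downarrow$; the feasible set $\{\gamma \in \reals_+ \mid \cp^\downarrow \subseteq \gamma\, U^\downarrow\}$ is therefore nonempty, bounded below by $0$, and closed, so its infimum is attained and $\agi$ exists. For $\ari$: the value $\rho = 0$ is feasible, since $0 \cdot U^\downarrow = \{0\} \subseteq \cp^\downarrow$ because $0 \in \cp^\downarrow$; the feasible set $\{\rho \in \reals_+ \mid \rho\, U^\downarrow \subseteq \cp^\downarrow\}$ is nonempty, bounded above (as $\cp^\downarrow \subseteq U^\downarrow$ is bounded), and closed, so its supremum is attained. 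This mirrors the static argument, whose only obstruction to existence of $\ri$ was the degenerate case $0 \notin \cp^\downarrow$, \ie, $\cp = \emptyset$, which we exclude.

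For the chain of inequalities I would argue exactly as in Lemma~\ref{lem:rigi}. That $\ari \geq 0$ is immediate from the definition of $\rho(\cdot,\cdot)$, and $\agi \leq 1$ because $\gamma = 1$ was just shown to be feasible. For $\ari \leq \agi$ I would combine the two attained inclusions, $\ari\, U^\downarrow \subseteq \cp^\downarrow \subseteq \agi\, U^\downarrow$, into $\ari\, U^\downarrow \subseteq \agi\, U^\downarrow$; since $U^\downarrow$ is bounded and star-shaped about $0$, this forces $\ari \leq \agi$ (if $\agi = 0$ the inclusion collapses $U^\downarrow$ to $\{0\}$, whence $\ari = 0$ as well; if $\agi > 0$ and one had $\ari > \agi$, iterating the absorption $(\ari/\agi)\, U^\downarrow \subseteq U^\downarrow$ would push any nonzero point of $U^\downarrow$ to infinity, contradicting boundedness). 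Along the way this also gives $W = \agi\, U^\downarrow \subseteq U^\downarrow$, the analogue of the static $W \subseteq U^\downarrow$, since $\agi \leq 1$ and $U^\downarrow$ is star-shaped about the origin.

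The only point requiring care — and the main, if modest, obstacle — is the attainment of the extrema in Definition~\ref{def:rhogam_func} and the separate treatment of degeneracies: the maximum and minimum are achieved because $U^\downarrow$ is closed and bounded, which is where the standing compactness assumption on the uncertainty set (beyond convexity) is used, and the trivial cases $\cp = \emptyset$ and $U = \{0\}$ must be ruled out by hand. Everything else is a direct copy of the static proof.
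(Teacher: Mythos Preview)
Your proposal is correct and follows essentially the same approach as the paper: the paper gives no proof at all for this lemma, simply stating it as ``analogous to the static case,'' and your plan is precisely to transcribe Lemma~\ref{lem:exist} and Lemma~\ref{lem:rigi} with $\cp^\downarrow$ in place of $\Pi\left(\cp^\downarrow\right)$, which is exactly what the authors intend. One small remark: you invoke a ``standing compactness assumption'' to justify attainment of the extrema, but the paper does not actually state such a standing hypothesis for this lemma (only convexity is assumed); the paper's own proofs of Lemmas~\ref{lem:exist} and~\ref{lem:rigi} gloss over attainment and closedness in the same way, so your treatment is in fact more careful than the original.
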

}
\paragraph{Obtaining shrinkage factors.}
\reviewChanges{We obtain $\ari$ by solving the robust program 
\begin{equation*}
{
\begin{array}{llclcl}
\displaystyle \ari = & \text{maximize} & \multicolumn{3}{l}{\rho} \\
& \text{subject to} &
\rho \, \vec{u} \in \cp^\downarrow, && \forall \vec{u}\in U^\downarrow,
\end{array}}
\end{equation*}
which is a convex program for convex $U$ and $\cp$.
On the other hand, $\agi$, the maximum amount we can shrink $U^\downarrow$ to still contain $\cp^\downarrow$, can be computed by examining the constraint-wise projections; it is limited by the dimension with the least amount of shrinkage. Therefore,
$$\agi = \gi,$$
where $\gi$ is obtained by equation~\eqref{eq:gamma_static}.}
% where
% $\reviewChanges{\Pi\left(\cp^\downarrow\right)} = [0,\overline{d}_1]\times\dots\times[0,\overline{d}_m]$
% and $U = [0,d_1]\times\dots\times[0,d_m]$. 
\begin{example}
\label{ex:scrhoadapt}
We illustrate the shrinkage factors with the supply chain example.
For scenario (\textit{a}) in Figure~\ref{fig:noimpadapt}, although coupling does not shrink the projections ($\ri = \gi = 1$ from the static case), the uncertainty set itself shrinks with
\reviewChanges{$\ari = \eta/2 = 3/4, \agi = 1$ in the adaptive case.
For scenario (\textit{b}) in Figure~\ref{fig:impadapt}, $\ari = 1-\alpha = 1/2 < 1$}, $\agi = 1$, same as the static case.
\end{example}

\begin{figure}[tb]
\centering
\begin{subfigure}{0.49\textwidth}
\centering
\includegraphics[width=0.7\textwidth]{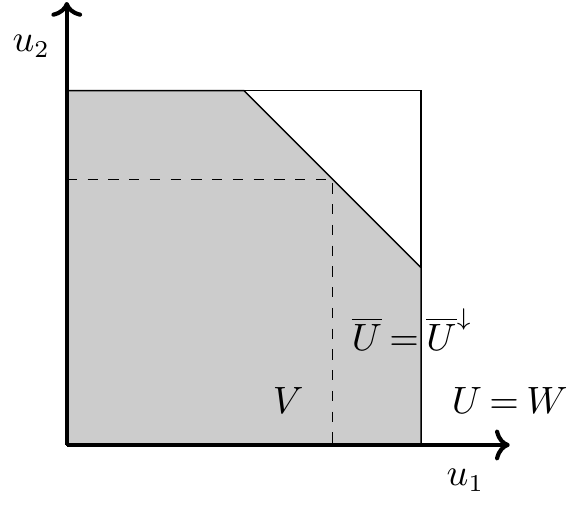}
\caption{\reviewChanges{$u_1+ u_2 \leq \eta = 3/2$}.}
\label{fig:noimpadapt}
\end{subfigure}
\begin{subfigure}{0.49\textwidth}
\centering
\includegraphics[width=0.7\textwidth]{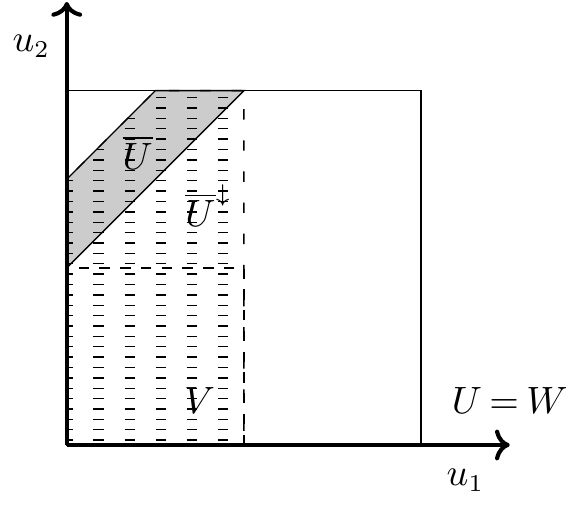}
\caption{\reviewChanges{$\alpha = 1/2 \leq u_2 - u_1 \leq 3/4 =\beta$.}}
\label{fig:impadapt}
\end{subfigure}
\caption{Illustration of shrinkage factors for the adaptive supply chain example.}
\end{figure}

\begin{theorem}
\label{thm:mainadapt}
Consider the adaptive robust problem~\eqref{eqn:aro} under \acrlong{cw} uncertainty and problem~\eqref{eqn:acp} under coupled uncertainty.
\reviewChanges{Assume $U \subset \reals_+^m$, $U$ is convex,} and $0 < \zacp \leq \zaro < \infty$. Then the objective value is bounded as:
\begin{equation*}
\ari \leq \frac{\zacp}{\zaro} \leq \agi.
\end{equation*}
\end{theorem}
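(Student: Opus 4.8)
The plan is to follow the proof of Theorem~\ref{thm:main} almost verbatim, the single essential change being that in the adaptive setting the wait-and-see rule must be \emph{reparametrized}, not merely rescaled: since $\vec y(\cdot)$ depends on the realized uncertainty, shrinking or inflating the uncertainty set forces us to compose $\vec y$ with the corresponding scaling of its argument. As in Section~\ref{sec:ro_intro}, I work throughout with the down-hulls $U^\downarrow$ and $\cp^\downarrow$; passing to down-hulls leaves $\zaro$ and $\zacp$ unchanged by the same monotonicity argument as in Appendix~\ref{append:down}, since reusing an already-realized value of $\vec y$ at any dominated point of $\reals_+^m$ preserves feasibility and does not increase the worst-case objective. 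I also use the containments $\ari\,U^\downarrow = V \subseteq \cp^\downarrow \subseteq W = \agi\,U^\downarrow$ from Definition~\ref{rhogamma}. The hypothesis $0 < \zacp \leq \zaro < \infty$ excludes the degenerate case $\cp = \{0\}$, so $\agi > 0$; and if $\ari = 0$ the lower bound holds trivially, so I also assume $\ari > 0$.

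\emph{Lower bound.} Let $(\bar{\vec x}, \bar{\vec y}(\cdot))$ be optimal for the coupled adaptive problem~\eqref{eqn:acp}, posed over $\cp^\downarrow$. Define a candidate policy for~\eqref{eqn:aro} by $\vec x = \bar{\vec x}/\ari$ and $\vec y(\vec u) = \bar{\vec y}(\ari\vec u)/\ari$; this is well defined on $U^\downarrow$ because $\ari\,U^\downarrow \subseteq \cp^\downarrow$. For every $\vec u \in U^\downarrow$ the point $\ari\vec u$ lies in $\cp^\downarrow$, so the coupled constraints hold there, and dividing by $\ari$ gives $\vec a_i^T\vec x + \vec g_i^T\vec y(\vec u) \geq u_i$ for all $i$; hence the candidate is feasible. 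For the objective, the substitution $\vec w = \ari\vec u$ gives $\max_{\vec u\in U^\downarrow}\vec d^T\vec y(\vec u) = \frac{1}{\ari}\max_{\vec w\in \ari U^\downarrow}\vec d^T\bar{\vec y}(\vec w) \leq \frac{1}{\ari}\max_{\vec w\in\cp^\downarrow}\vec d^T\bar{\vec y}(\vec w)$, again using $\ari\,U^\downarrow \subseteq \cp^\downarrow$, so the candidate attains at most $\zacp/\ari$. Therefore $\zaro \leq \zacp/\ari$, that is, $\ari \leq \zacp/\zaro$.

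\emph{Upper bound.} Symmetrically, let $(\vec x^\star, \vec y^\star(\cdot))$ be optimal for~\eqref{eqn:aro} over $U^\downarrow$, and set $\tilde{\vec x} = \agi\vec x^\star$ and $\tilde{\vec y}(\vec u) = \agi\,\vec y^\star(\vec u/\agi)$, which is well defined on $\cp^\downarrow$ because $\cp^\downarrow \subseteq \agi\,U^\downarrow$. For $\vec u \in \cp^\downarrow$ we have $\vec u/\agi \in U^\downarrow$, so feasibility of $(\vec x^\star, \vec y^\star(\cdot))$ at $\vec u/\agi$, multiplied through by $\agi$, yields $\vec a_i^T\tilde{\vec x} + \vec g_i^T\tilde{\vec y}(\vec u) \geq u_i$ for all $i$; and the same change of variables together with $(1/\agi)\cp^\downarrow \subseteq U^\downarrow$ bounds the worst-case objective of this policy by $\agi\,\zaro$. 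Hence $\zacp \leq \agi\,\zaro$. Combining the two bounds and dividing by $\zaro$ yields $\ari \leq \zacp/\zaro \leq \agi$.

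The main obstacle, and the only genuinely new point relative to Theorem~\ref{thm:main}, is verifying that the reparametrized rules are \emph{admissible} policies for the respective problems: one must check that $\vec u \mapsto \bar{\vec y}(\ari\vec u)$ is defined on all of $U^\downarrow$ and that $\vec u \mapsto \vec y^\star(\vec u/\agi)$ is defined on all of $\cp^\downarrow$ — which is exactly the content of $V \subseteq \cp^\downarrow \subseteq W$ — and that the worst-case objective transforms in the correct direction under the scaling of the domain, for which the monotonicity of $r \mapsto rS$ with respect to set inclusion (valid once $\ari, \agi > 0$) does the work. A secondary technical point worth recording explicitly is the down-hull reduction for the adaptive problems: it is the adaptive analogue of the static statement in Appendix~\ref{append:down}, and it matters here because, unlike the static case, the adaptive formulation depends on the uncertainty set itself rather than on its projection.
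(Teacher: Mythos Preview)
Your proof is correct and follows essentially the same approach as the paper's own proof: both construct the reparametrized policies $(\bar{\vec x}/\ari,\ \bar{\vec y}(\ari\vec u)/\ari)$ and $(\agi\vec x^\star,\ \agi\vec y^\star(\vec u/\agi))$, verify feasibility via the containments $\ari\,U^\downarrow\subseteq\cp^\downarrow\subseteq\agi\,U^\downarrow$, and bound the worst-case objective by the same change of variables. Your explicit remarks on the down-hull reduction and on the admissibility of the reparametrized rules match the paper's treatment.
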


\ifpreprint \begin{proof} \else
\proof{Proof.}
\fi
First to show the lower bound,
let $\left(\xacp,\yacp(\vec{u})\right)$ be an optimal solution to~\eqref{eqn:acp}.
\reviewChanges{Since $V \subseteq \cp^\downarrow$, it is also feasible for the problem under uncertainty set $V$, satisfying the constraints 
$$\vec{a}_i^T\xacp + \vec{g}_i^T \yacp\left(\vec{u}\right) \geq u_i, \quad \forall \vec{u} \in \ari U^\downarrow, \quad \forall i \in [m]. $$
Then, with a change of variables and dividing both sides by $\ari$, we have
$$\vec{a}_i^T\xacp/\ari + \vec{g}_i^T \yacp\left(\ari \vec{u}\right) / \ari \geq u_i, \quad \forall \vec{u} \in U^\downarrow, \quad \forall i \in [m]. $$
Therefore, $\left({\xacp}/{\ari},{\yacp\left(\ari \vec{u}\right)}/{\ari}\right)$ is feasible to~\eqref{eqn:aro}. As $\ari U^\downarrow \subseteq \cp^\downarrow$, we have
$$\max\limits_{\vec{u}\in\cp^\downarrow} \vec{d}^T \yacp(\vec{u}) \geq  \max\limits_{\vec{u}\in \ari U^\downarrow} \vec{d}^T \yacp(\vec{u}) = \max\limits_{\ari \vec{u}'\in \ari U^\downarrow} \vec{d}^T \yacp(\ari \vec{u}')= \max\limits_{\vec{u}\in U^\downarrow} \vec{d}^T \yacp(\ari \vec{u}).$$
Thus 
$$\zacp/\ari = \vec{c}^T \xacp/\ari + \max\limits_{\vec{u}\in\cp^\downarrow} \vec{d}^T \yacp(\vec{u})/\ari \geq  \vec{c}^T \xacp/\ari + \max\limits_{\vec{u}\in U^\downarrow} \vec{d}^T \yacp(\ari \vec{u}) \geq \zaro. $$
}
Next to show the upper bound,
suppose $\left(\xaro,\yaro(\vec{u})\right)$ is an optimal solution to~\eqref{eqn:aro}, \reviewChanges{the problem under $U^\downarrow$.
We must have 
$$\vec{a}_i^T\xaro + \vec{g}_i^T \yaro\left(\vec{u}\right)  \geq u_i, \quad \forall \vec{u} \in U^\downarrow, \quad \forall i \in [m],$$
which implies 
$$\agi \vec{a}_i^T\xaro + \agi \vec{g}_i^T \yaro\left(\vec{u}\right)  \geq \agi u_i, \quad \forall \vec{u} \in U^\downarrow, \quad \forall i \in [m].$$
Then, by a change of variables,
$$\agi \vec{a}_i^T\xaro + \agi \vec{g}_i^T \yaro\left(\vec{u}/\agi\right)  \geq u_i, \quad \forall \vec{u} \in \agi U^\downarrow, \quad \forall i \in [m].$$
Since $W =\agi U^\downarrow $, the solution $\left(\agi \xaro,\agi \yaro(\vec{u}/\agi)\right)$ is then feasible for the problem under $W$. Furthermore, as $\cp^\downarrow\subseteq W$, this solution is also feasible to~\eqref{eqn:acp}. Now, observe that 
$$ \max\limits_{\vec{u}\in U^\downarrow} \vec{d}^T \yaro(\vec{u}) = \max\limits_{\vec{u}\in \agi U^\downarrow} \vec{d}^T \yaro(\vec{u}/\agi) \geq \max\limits_{\vec{u}\in \cp^\downarrow} \vec{d}^T \yaro(\vec{u}/\agi),$$
and let the objective value of the problem under $W$ be $z_W$.
We then have
$$\agi \zaro = \agi \vec{c}^T \xaro + \agi \max\limits_{\vec{u}\in U^\downarrow} \vec{d}^T \yaro(\vec{u}) \geq z_W \geq \agi \vec{c}^T \xaro + \agi \max\limits_{\vec{u}\in \cp^\downarrow} \vec{d}^T \yaro(\vec{u}/\agi) \geq \zacp. $$
This completes the proof.}
\ifpreprint \end{proof} \else
\Halmos
\endproof
\fi

\begin{corollary}
\label{lem:tightadaptstatic_tight}
The upper and lower bounds constructed in Theorem~\ref{thm:mainadapt}
are tight.
\end{corollary}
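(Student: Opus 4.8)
The plan is to imitate the proof of Corollary~\ref{cor:main}: exhibit two instances of the adaptive supply chain problem~\eqref{eq:scexadapt} under scenario~(\textit{b}) with $\alpha = 1/2$, $\beta = 3/4$, one attaining the lower bound and one attaining the upper bound of Theorem~\ref{thm:mainadapt}. By Example~\ref{ex:scrhoadapt} this scenario has $\ari = 1-\alpha = 1/2$ and $\agi = 1$, so Theorem~\ref{thm:mainadapt} gives $1/2 \le \zacp/\zaro \le 1$. Since these are exactly the shrinkage factors appearing in Corollary~\ref{cor:main}, I expect the very same coefficient choices to serve as witnesses. The one preliminary fact to record is $\cp_b^\downarrow = [0,1-\alpha]\times[0,1] = [0,1/2]\times[0,1]$ (because $u_1 \le u_2 - \alpha \le 1-\alpha$ on $\cp_b$, with $(1-\alpha,1)\in\cp_b$), while $U^\downarrow = U = [0,1]^2$; hence $\max_{\vec u \in U} u_1 = 1$ but $\max_{\vec u \in \cp_b^\downarrow} u_1 = 1/2$, and $\max_{\vec u \in U} u_2 = \max_{\vec u \in \cp_b} u_2 = 1$.

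\textbf{Upper bound.} Take $c_{11} = s_{11} = c_{22} = 0$, $s_{22} = s_{12} = 1$, $t = p = 1$. Then both the constraint-wise objective in~\eqref{eqn:aro} and the coupled objective in~\eqref{eqn:acp} reduce to $\max_{\vec u}\bigl(y_{12}(\vec u) + y_{22}(\vec u)\bigr)$, which by the demand constraint at store~$2$ is at least $\max_{\vec u} u_2 = 1$ in both cases. I would then check that the policy $y_{11}(\vec u) = u_1$, $y_{12}(\vec u) = 0$, $y_{22}(\vec u) = u_2$ with $x_{11} = x_{22} = 1$ is feasible for both problems (all flow-balance and capacity constraints hold since $u_1,u_2 \le 1$) and attains objective value $1$. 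Hence $\zaro = \zacp = 1$, so $\zacp/\zaro = 1 = \agi$.

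\textbf{Lower bound.} Take $c_{11} = s_{11} = 1$, $c_{22} = s_{22} = s_{12} = 0$, $t = p = 1$. Both objectives reduce to $x_{11} + \max_{\vec u} y_{11}(\vec u)$, and the chain $x_{11} \ge y_{11}(\vec u) + y_{12}(\vec u) \ge y_{11}(\vec u) \ge u_1$ forces $x_{11} \ge \max_{\vec u} u_1$ and $\max_{\vec u} y_{11}(\vec u) \ge \max_{\vec u} u_1$, so the objective is at least $2\max_{\vec u} u_1$. I would check that the policy $y_{11}(\vec u) = u_1$, $y_{12}(\vec u) = 0$, $y_{22}(\vec u) = u_2$ with $x_{11} = \max_{\vec u} u_1$, $x_{22} = 1$ is feasible and attains equality in each problem. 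Using $\max_{\vec u\in U} u_1 = 1$ and $\max_{\vec u\in\cp_b^\downarrow} u_1 = 1/2$, this gives $\zaro = 2$ and $\zacp = 1$, so $\zacp/\zaro = 1/2 = \ari$. Together the two instances show both bounds are tight.

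\textbf{Main obstacle.} There is no conceptual difficulty; the only work is the bookkeeping verification that the candidate affine policies satisfy all five families of constraints in~\eqref{eqn:aro} and~\eqref{eqn:acp} for every $\vec u$ in the (down-hull of the) relevant uncertainty set, and that the stated lower bounds on the objective are actually met — which reduces to the elementary computation of $\max u_i$ over a box. Care is mainly needed to track the static/adaptive and constraint-wise/coupled variants simultaneously.
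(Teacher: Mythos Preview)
Your proposal is correct. Both bounds are indeed attained by the instances you construct, and your verification sketch is sound: the key observation that $\cp_b^\downarrow = [0,1/2]\times[0,1]$ is itself a box makes the adaptive coupled problem coincide with its static counterpart, so the exact witnesses from Corollary~\ref{cor:main} carry over verbatim.

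The paper takes a different route: it uses scenario~(\textit{a}) with $\eta=3/2$, where $\ari = 3/4$ and $\agi = 1$, and chooses coefficients $c_{11}=s_{11}=c_{22}=s_{22}=s_{12}=1$ (lower bound, $\zaro=4$, $\zacp=3$) and $c_{11}=s_{11}=c_{22}=s_{22}=1$, $s_{12}=100$ (upper bound, $\zaro=\zacp=4$). The salient difference is that in scenario~(\textit{a}) the set $\cp_a^\downarrow$ is \emph{not} constraint-wise (its projection is all of $U$, so $\ri=\gi=1$ but $\ari=3/4<1$), hence the paper's lower-bound witness is one where adaptivity genuinely matters --- coupling helps the adaptive problem even though it cannot help the static one. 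Your choice of scenario~(\textit{b}) has $\ari=\ri$ and $\agi=\gi$, so you are effectively re-proving the static tightness. Both arguments are valid; yours is more economical and reuses existing work, while the paper's better illustrates that the adaptive bound can be tight in an instance where the adaptive improvement is strictly greater than the static one.
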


\ifpreprint \begin{proof} \else
\proof{Proof.}
\fi
We consider scenario (\textit{a}) of the supply chain problem~\eqref{eq:scexadapt} with $\eta=3/2$.
We show in Example~\ref{ex:scrhoadapt} that $\ari=3/4, \agi=1$.
Thus according to Theorem~\ref{thm:mainadapt}, $3/4 \leq \zacp/\zaro \leq 1$.
If $c_{11}=s_{11}=c_{22}=s_{22}=s_{12}=1,t=p=1$, then $\zaro=4,\zacp=3$, $\zacp/\zaro= 3/4 = \ari$.
If $c_{11}=s_{11}=c_{22}=s_{22}=1,s_{12}=100,t=p=1$, then $\zaro=4,\zacp=4$, $\zacp/\zaro =1 =\agi$.
Therefore both bounds are tight.
\ifpreprint \end{proof} \else
\Halmos
\endproof
\fi

\paragraph{\reviewChanges{A remark on the attainability of the bounds.}}
\reviewChanges{
Analogous to the static case, we can establish sufficient conditions under which the bounds become tight. 
The conditions once again depend on the scaling of the uncertainty set along all dimensions. 
For simplicity, we give only the lower bound; the upper bound follows similarly.}

\begin{corollary}
\label{cor:attain_bounds_adapt}
Let $(\xacp,\yacp(\vec{u}))$ be the optimal solution to~\eqref{eqn:acp}, and consider $\left({\xacp}/{\ari},{\yacp\left(\ari \vec{u}\right)}/{\ari}\right)$, a feasible solution to~\eqref{eqn:aro}. If the following holds for the robust problem~\eqref{eqn:aro}:
\begin{enumerate}[label=(\roman*)]
    \item for any constraint $i \in [m]$ that is tight at the worst case realization of $\vec{u}$, \ie,
$$\underset{\vec{u} \in U^\downarrow}{\max}~\vec{a}_i^T\xacp/\ari + \vec{g}_i^T \yacp\left(\ari\vec{u}/\ari\right) = u_i,$$
the non-zero positions of $\vec{a}_i$ and $\vec{g}_i$ also take non-zero values in the respective objective coefficient vectors $\vec{c}$ and $\vec{d}$, and
\item for any constraint that is not tight (in which case the shrinkage factor for the dimension is different from $\ari$), the non-zero positions of $\vec{a}_i$ and $\vec{g}_i$ take values of zero in the respective objective coefficient vectors $\vec{c}$ and $\vec{d}$, if these objective coefficients are not set to be non-zero due to a tight constraint, and 
\item the solution satisfies $$\max\limits_{\vec{u}\in\cp^\downarrow} \vec{d}^T \yacp(\vec{u}) =  \max\limits_{\vec{u}\in \ari U^\downarrow} \vec{d}^T \yacp(\vec{u}),$$
\end{enumerate}
then the lower bound of Theorem~\ref{thm:mainadapt} is tight.
\end{corollary}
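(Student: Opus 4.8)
The plan is to adapt the static argument of Corollary~\ref{cor:attain_bounds_lower} to the adaptive setting. Recall from the proof of Theorem~\ref{thm:mainadapt} that $\bigl(\xacp/\ari,\,\yacp(\ari\vec{u})/\ari\bigr)$ is feasible for the constraint-wise adaptive problem~\eqref{eqn:aro}; the goal is to show that, under hypotheses (i)--(iii), this solution is in fact \emph{optimal} for~\eqref{eqn:aro}, whence $\zaro=\zacp/\ari$ and therefore $\zacp/\zaro=\ari$, i.e., the lower bound of Theorem~\ref{thm:mainadapt} is attained.

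First I would pin down the objective value of the constructed solution. Its first-stage cost is $\vec{c}^T\xacp/\ari$, and its worst-case second-stage cost is $\tfrac{1}{\ari}\max_{\vec{u}\in U^\downarrow}\vec{d}^T\yacp(\ari\vec{u})$; the change of variables $\vec{u}\mapsto\ari\vec{u}$ turns the latter into $\tfrac{1}{\ari}\max_{\vec{u}\in\ari U^\downarrow}\vec{d}^T\yacp(\vec{u})$, which hypothesis (iii) equates with $\tfrac{1}{\ari}\max_{\vec{u}\in\cp^\downarrow}\vec{d}^T\yacp(\vec{u})$. Summing the two pieces, the constructed solution attains objective exactly $\tfrac{1}{\ari}\bigl(\vec{c}^T\xacp+\max_{\vec{u}\in\cp^\downarrow}\vec{d}^T\yacp(\vec{u})\bigr)=\zacp/\ari$, already equal to the target; it then remains to certify that $\zaro$ cannot be smaller. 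For that I would use the constraint-wise structure of~\eqref{eqn:aro}, under which constraint $i$ has the decoupled scalar worst case $d_i=\max_{\vec{u}\in U^\downarrow}u_i$: every solution feasible for~\eqref{eqn:aro} obeys $\vec{a}_i^T\vec{x}+\vec{g}_i^T\vec{y}(\vec{u})\ge d_i$ at that worst case, which lower-bounds the contribution of the coordinates appearing in constraint $i$. Hypotheses (i) and (ii) are exactly what make these per-constraint lower bounds combine into $\zacp/\ari$: every coordinate carrying a non-zero entry of $\vec{c}$ or $\vec{d}$ appears in a constraint that is tight at the $\ari$-scaled worst case of the constructed solution --- and hence cannot be lowered below its value there --- whereas any coordinate appearing only in slack constraints (where the effective shrinkage is strictly above $\ari$) has zero objective weight, so its scaling is immaterial. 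Consequently no feasible point of~\eqref{eqn:aro} can undercut $\zacp/\ari$, and $\zaro=\zacp/\ari$.

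The main obstacle is making this optimality certification rigorous, since in~\eqref{eqn:aro} the recourse $\vec{y}(\cdot)$ ranges over arbitrary, possibly infinite-dimensional, policies: one has to control the worst-case value of every robust constraint and of the second-stage objective simultaneously, and the coupling between the second-stage objective's worst case over $\cp^\downarrow$ and over $\ari U^\downarrow$ is precisely what hypothesis (iii) neutralizes. A convenient cross-check is scenario~(\textit{a}) of the adaptive supply chain problem~\eqref{eq:scexadapt} with $c_{11}=s_{11}=c_{22}=s_{22}=s_{12}=1$ from Corollary~\ref{lem:tightadaptstatic_tight}: it satisfies (i)--(iii), and there $\zacp/\zaro=\ari=3/4$. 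The symmetric upper-bound statement would be proved analogously, starting from $\bigl(\agi\xaro,\,\agi\yaro(\vec{u}/\agi)\bigr)$.
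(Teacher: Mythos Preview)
Your proposal is correct and follows the same approach as the paper: argue that conditions (i)--(iii) make the constructed solution $\bigl(\xacp/\ari,\,\yacp(\ari\vec{u})/\ari\bigr)$ optimal for~\eqref{eqn:aro}, whence $\zaro=\zacp/\ari$ and the lower bound is attained, and cross-check against the scenario~(\textit{a}) example from Corollary~\ref{lem:tightadaptstatic_tight}. The paper's proof is in fact terser than yours---it simply asserts optimality under the stated conditions and cites the example---whereas you spell out explicitly how hypothesis~(iii) pins down the constructed solution's objective value at $\zacp/\ari$ and how (i)--(ii) prevent any feasible policy from undercutting it.
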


\ifpreprint \begin{proof} \else
\proof{Proof.}
\fi
If the above conditions are satisfied, then $\left({\xacp}/{\ari},{\yacp\left(\ari \vec{u}\right)}/{\ari}\right)$ is an optimal solution to the robust problem~\eqref{eqn:aro}. Therefore,  $\zaro = \zacp\ari$, and the lower bound is tight. We note that the example given in Corollary~\ref{lem:tightadaptstatic_tight} for the lower bound satisfies such conditions.
\ifpreprint \end{proof} \else
\Halmos
\endproof
\fi

% \reviewChanges{Given an optimal solution $(\xacp,\yacp(\vec{u}))$ to~\eqref{eqn:acp}, for any tight constraint of~\eqref{eqn:aro} with solution $\left({\xacp}/{\ari},{\yacp\left(\ari \vec{u}\right)}/{\ari}\right)$, \ie, 
% $$\underset{\vec{u} \in U^\downarrow}{\max}~\vec{a}_i^T\xacp/\ari + \vec{g}_i^T \yacp\left(\ari\vec{u}/\ari\right) = u_i,$$
% the non-zero positions of $\vec{a}_i$ and $\vec{g}_i$ must also take non-zero values in the respective objective coefficient vectors $\vec{c}$ and $\vec{d}$.
% On the other hand, for any constraint that is not tight (in which case the shrinkage factor for the dimension is different from $\ari$), the non-zero positions of $\vec{a}_i$ and $\vec{g}_i$ must be set to zero in the respective objective coefficient vectors $\vec{c}$ and $\vec{d}$, if they are not set to be non-zero by a tight constraint. 
% Furthermore, the solution must satisfy
% $$\max\limits_{\vec{u}\in\cp^\downarrow} \vec{d}^T \yacp(\vec{u}) =  \max\limits_{\vec{u}\in \ari U^\downarrow} \vec{d}^T \yacp(\vec{u}).$$
% This last condition is unique to the adaptive case, as the second-stage decision now also depends on the uncertainty.}

\reviewChanges{Similar to the static case, the full range of the bounds can be attained, with the effect of coupling dependent on the shrinkage factors along each dimension of the uncertainty set, as well as the comparative magnitudes of the associated objective coefficients.
Therefore, an a priori analysis of the problem coefficients and shrinkage factors of potential coupled sets may be informative for the decision-maker, before solving the optimization problem.}

\reviewChanges{The same results can be extended to the upcoming sections, for more general objective and constraint functions, which we omit for brevity.} 

\subsection{Improvement of adaptive over static problems under coupled uncertainty}
\label{sec:adap_static}
In previous subsections, we have characterized the improvement of coupling for RO and ARO.
We compare the bounds in static and adaptive cases in the next corollary and then characterize the improvement of adaptability over static problems.
\begin{corollary}
\label{lem:comp}
Let $\ri, \ari, \gi, \agi$ be defined as before for adaptive problems~\eqref{eqn:aro},~\eqref{eqn:acp} and their corresponding static problems.
Then $\ri \geq \ari$ and $\gi = \agi$.
\end{corollary}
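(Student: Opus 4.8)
The plan is to prove the two claims separately; both reduce to the set containments already recorded in the excerpt together with the elementary fact that any set sits inside the Cartesian product of its coordinate projections, $S \subseteq \Pi_1(S)\times\cdots\times\Pi_m(S)$.

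For $\ri \geq \ari$, I would unwind the definitions: $\ri = \rho\left(U^\downarrow,\Pi\left(\cp^\downarrow\right)\right)$ and $\ari = \rho\left(U^\downarrow,\cp^\downarrow\right)$ from Definitions~\ref{def:rhogam} and~\ref{rhogamma}. Since $\cp^\downarrow \subseteq \Pi\left(\cp^\downarrow\right)$, the inclusion $\ari\,U^\downarrow \subseteq \cp^\downarrow$ that defines $\ari$ immediately gives $\ari\,U^\downarrow \subseteq \Pi\left(\cp^\downarrow\right)$. Hence $\ari$ belongs to the set $\{\rho \in \reals_+ : \rho\,U^\downarrow \subseteq \Pi\left(\cp^\downarrow\right)\}$ whose maximum is $\ri$ (existence/attainment guaranteed by Lemma~\ref{lem:exist} and its adaptive analogue), so $\ri \geq \ari$. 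Intuitively, the static problem ``sees'' only the larger set $\Pi\left(\cp^\downarrow\right)$, into which a larger scaled copy of $U^\downarrow$ fits.

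For $\gi = \agi$, one inequality is free: from $\cp^\downarrow \subseteq \Pi\left(\cp^\downarrow\right) \subseteq \gi\,U^\downarrow$ (the last containment by definition of $\gi$), the value $\gi$ is admissible in the minimum defining $\agi = \gamma\left(U^\downarrow,\cp^\downarrow\right)$, so $\agi \leq \gi$. For the reverse, I would use that the constraint-wise set $U$ is a product $U_1\times\cdots\times U_m$, so $U^\downarrow$ — and hence every scaling $\gamma\,U^\downarrow = [0,\gamma d_1]\times\cdots\times[0,\gamma d_m]$ — is a box, and therefore equals the product of its own coordinate projections. Applying $\Pi_i(\cdot)$ to $\cp^\downarrow \subseteq \agi\,U^\downarrow$ yields $\Pi_i\left(\cp^\downarrow\right) \subseteq [0,\agi d_i]$ for every $i$, hence $\Pi\left(\cp^\downarrow\right) \subseteq \agi\,U^\downarrow$, so $\agi$ is admissible in the minimum defining $\gi$ and $\gi \leq \agi$. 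Combining the two inequalities gives $\gi = \agi$. (Alternatively, one can bypass this by invoking the closed form $\gi = \max_i \overline d_i/d_i$ of~\eqref{eq:gamma_static} and the parallel evaluation $\agi = \max_{u\in\cp^\downarrow}\max_i u_i/d_i = \max_i \overline d_i/d_i$.)

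I do not expect a real obstacle here; the only step needing care is the identity $\Pi\left(\gamma\,U^\downarrow\right) = \gamma\,U^\downarrow$ used in the reverse inequality for $\gamma$, which is precisely where the constraint-wise (product) structure of $U$ is essential and cannot be dropped. Everything else is a direct manipulation of the inclusions $\cp^\downarrow \subseteq \cp^\downarrow \subseteq \Pi\left(\cp^\downarrow\right) \subseteq U^\downarrow$ already established before the statement.
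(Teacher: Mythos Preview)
Your proposal is correct and matches the paper's own argument. The paper proves $\ri \geq \ari$ in exactly the way you do, via $\cp^\downarrow \subseteq \Pi(\cp^\downarrow)$ and the definition of $\rho$; for $\gi = \agi$ the paper simply cites the closed-form evaluation $\gi = \max_i \overline d_i/d_i$ from~\eqref{eq:gamma_static} together with the observation (stated in Section~\ref{sec:adapt}) that $\agi$ is limited by the coordinate with least shrinkage and hence equals the same quantity---precisely your ``alternative'' route.
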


\ifpreprint \begin{proof} \else
\proof{Proof.}
\fi
Since \reviewChanges{$\cp^\downarrow \subseteq \Pi\left(\cp^\downarrow\right)$, we have $\ri \geq \ari$ by definition. In addition, from Section~\ref{sec:adapt}, we have $\gi = \agi$.}
% It suffices to show $\gi \leq \agi$.
% Since $U$ is \acrlong{cw}, $U = \Pi(U) = \Pi_1(U) \times \dots \times \Pi_m(U)$.
% Suppose for the sake of contradiction that there exists ${\vec{u}} = (\vec{u}_1,\dots,\vec{u}_m) \in \Pi\left(\cp\right), \vec{u} \notin \agi \, U$.
% Then some component(s) $\vec{u}_j \notin \agi \, \Pi_j(U)$.
% Since $\vec{u}_j \in  \Pi_j(\cp)$, then there exists $\vec{v}_1,\dots,\vec{v}_{j-1},\vec{v}_{j+1},\dots,\vec{v}_m$ such that $\vec{v} =(\vec{v}_1,\dots, \vec{u}_j,\dots,\vec{v}_m) \in \cp$.
% But $\vec{v} \notin \agi\, U$ and thus $\cp \not \subseteq \agi\, U$, which contradicts with the definition of $\agi$.
% Thus $\Pi\left(\cp\right) \subseteq \agi\, U$.
% Since $\gi = \min\left\{\gamma \in \reals_+ \mid \Pi\left(\cp\right) \subseteq \gamma \, U\right\}$,
% $\gi \leq \agi$.
% Therefore $\gi = \agi$.
\ifpreprint \end{proof} \else
\Halmos
\endproof
\fi
The corollary says that the lower bound of improvement in the adaptive case in Theorem~\ref{thm:mainadapt} is lower or equal to the static case in Theorem~\ref{thm:main} and the upper bounds in the two cases are the same.
A comparison of Example~\ref{ex:scrho} and Example~\ref{ex:scrhoadapt} serves as an illustration.
Therefore, coupling brings a larger potential improvement in the adaptive case than in the static case.
Furthermore, given a coupled uncertainty set, we can utilize the results to characterize the improvement of the adaptive robust objective over the static objective.

\begin{definition}
\label{def:adapt_rho}
%For a coupled uncertainty set $\cp$, we define
$\reviewChanges{\arho = \rho\left(\Pi\left(\cp^\downarrow\right), \cp^\downarrow\right)}$ using the function from Definition~\ref{def:rhogam_func}.
\end{definition}

\reviewChanges{
We can use the previous definitions to directly define a bound on $\arho$.}
\reviewChanges{
\begin{lemma}
\label{lem:aro_arho}
     $\arho \geq \ari/\gi$.
\end{lemma}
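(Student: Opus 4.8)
The plan is to obtain $\arho \geq \ari/\gi$ purely by chasing the set inclusions that define the two shrinkage factors; no analysis is needed beyond keeping track of how rescaling interacts with inclusion.

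First I would record the two inclusions that come for free. From Definition~\ref{rhogamma} together with the existence lemma that follows it, the maximum defining $\ari = \rho\left(U^\downarrow,\cp^\downarrow\right)$ is attained, so $\ari\,U^\downarrow \subseteq \cp^\downarrow$. From Definition~\ref{def:rhogam} and Lemma~\ref{lem:exist}, the minimum defining $\gi = \gamma\left(U^\downarrow,\Pi\left(\cp^\downarrow\right)\right)$ is attained, so $\Pi\left(\cp^\downarrow\right) \subseteq \gi\,U^\downarrow$. Here I would also dispose of the degenerate case: if $\gi = 0$ then $\Pi\left(\cp^\downarrow\right) = \{0\}$ forces $\cp^\downarrow = \{0\}$ and the claimed bound is read off trivially, so I may assume $\gi > 0$, whence $\ari/\gi \geq 0$ is well defined.

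The key step is then to scale the second inclusion by the nonnegative constant $\ari/\gi$ and splice it with the first, using $r(sS) = (rs)S$ and the fact that $A \subseteq B$ implies $rA \subseteq rB$ for $r \geq 0$:
\[
\frac{\ari}{\gi}\,\Pi\left(\cp^\downarrow\right) \;\subseteq\; \frac{\ari}{\gi}\bigl(\gi\,U^\downarrow\bigr) \;=\; \ari\,U^\downarrow \;\subseteq\; \cp^\downarrow .
\]
This exhibits $\rho = \ari/\gi$ as a feasible point of the program $\arho = \max\left\{\rho \geq 0 : \rho\,\Pi\left(\cp^\downarrow\right) \subseteq \cp^\downarrow\right\}$ from Definition~\ref{def:adapt_rho} (the program is well posed because $0 \in \cp^\downarrow$ makes $\rho = 0$ feasible, and the hierarchy of the sets bounds the feasible $\rho$ from above), so $\arho \geq \ari/\gi$, as desired.

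I do not expect a substantive obstacle: the content is the inclusion chain above. The only points needing care — that the extremal shrinkage factors are actually attained (covered by the existence lemmas) and that the $\gi = 0$ case is meaningful — are each handled in one line, as indicated.
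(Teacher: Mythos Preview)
Your proposal is correct and follows essentially the same approach as the paper: both arguments combine the inclusions $\Pi\left(\cp^\downarrow\right) \subseteq \gi\,U^\downarrow$ and $\ari\,U^\downarrow \subseteq \cp^\downarrow$ to obtain $(\ari/\gi)\,\Pi\left(\cp^\downarrow\right) \subseteq \cp^\downarrow$, then invoke Definition~\ref{def:adapt_rho}. Your version is slightly more careful in explicitly disposing of the $\gi = 0$ case and noting attainment of the extremal factors, but the core inclusion chain is identical.
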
}

\reviewChanges{
\ifpreprint \begin{proof} \else
\proof{Proof.}
\fi
 Recall from Definition~\ref{def:rhogam} that $\Pi\left(\cp^\downarrow\right) \subseteq \gi U^\downarrow$. 
 Together with Definition~\ref{rhogamma}, 
we have
$$\frac{\ari}{\gi} \Pi\left(\cp^\downarrow\right) \subseteq   \ari U^\downarrow \subseteq \cp^\downarrow.$$
Then, the result follows from  Definition~\ref{def:adapt_rho}.
\ifpreprint \end{proof} \else
\Halmos
\endproof
\fi
}

\reviewChanges{In order to relate $\arho$ to the adaptive and static objectives, We require an assumption on the space for the adaptive solutions $\vec{y}(\vec{u})$. This is to establish the equivalence of the adaptive and static objectives under certain conditions.}
\begin{assumption}
\label{ass:compact}
Given an adaptive problem under uncertainty set $U$, assume that whenever $\vec{x}$ is feasible, there exists a compact set $Y_{\vec{x}} \subseteq Y$, \reviewChanges{where $Y$ is the space for $\vec{y}$}, such that for every $\vec{u} \in U$, satisfaction of the robust constraints implies $\vec{y}\in Y_{\vec{x}}$~\citep{ben2004adjustable}.
\end{assumption}

\reviewChanges{\begin{theorem}
\label{cor:mainadapt_rhs}
Consider the adaptive robust problem~\eqref{eqn:acp} with objective $\zacp$ and the corresponding static robust problem~\eqref{eqn:cp} with objective $\zcp$.
Assume $U \subset \reals_+^m$, $U$, $\cp$ are convex compact sets, and Assumption~\ref{ass:compact} holds for the adaptive problem under $\Pi\left(\cp^\downarrow\right)$. Suppose $0 < \zacp \leq \zcp < \infty$.
The objective value is bounded as:
\begin{equation*}
\frac{\zacp}{\zcp} \geq  {\arho}.
\end{equation*}
\end{theorem}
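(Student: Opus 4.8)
The plan is to reduce the statement to a comparison between the adaptive problem under $\Pi\left(\cp^\downarrow\right)$ and the adaptive problem under $\cp^\downarrow$, exploiting that $\Pi\left(\cp^\downarrow\right)$ is a box and hence a constraint-wise set. First I would assemble the two facts already available: (i) the static problem~\eqref{eqn:cp} under $\cp$ has the same optimal value $\zcp$ as problem~\eqref{eqn:cp_proj} under $\Pi\left(\cp^\downarrow\right)$; and (ii) $\Pi\left(\cp^\downarrow\right) = [0,\overline{d}_1]\times\cdots\times[0,\overline{d}_m]$ is a compact constraint-wise uncertainty set (compactness is inherited from $\cp$ being compact with $\cp\subset\reals_+^m$), so the static-optimality result of~\cite{ben2004adjustable}, which applies because Assumption~\ref{ass:compact} is assumed precisely for the adaptive problem under $\Pi\left(\cp^\downarrow\right)$, shows that the \emph{adaptive} value under $\Pi\left(\cp^\downarrow\right)$ also equals $\zcp$. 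It then suffices to prove that this adaptive value is at most $\zacp/\arho$; the degenerate case $\arho=0$ is trivial since $\zacp>0$, and $\arho\geq\ari/\gi$ by Lemma~\ref{lem:aro_arho}.

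For the remaining bound I would start from an optimal adaptive policy $(\xacp,\yacp(\cdot))$ for~\eqref{eqn:acp}, first extending it to a policy on $\cp^\downarrow$ with the same worst-case objective by the down-hull argument (for $\vec v\in\cp^\downarrow$ choose $\vec u\in\cp$ with $\vec v\leq\vec u$ and assign value $\yacp(\vec u)$; the right-hand-side constraints are preserved since $\vec a_i^T\xacp+\vec g_i^T\yacp(\vec u)\geq u_i\geq v_i$, and the set of attained $\vec y$-values is unchanged, so $\max_{\vec u\in\cp}\vec d^T\yacp(\vec u)$ is unchanged — this mirrors Appendix~\ref{append:down}). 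Since $\arho\,\Pi\left(\cp^\downarrow\right)\subseteq\cp^\downarrow$ by Definition~\ref{def:adapt_rho}, the rescaled, reparametrized policy $\bigl(\xacp/\arho,\ \yacp(\arho\vec u)/\arho\bigr)$ is well defined and feasible for the adaptive problem under $\Pi\left(\cp^\downarrow\right)$: for $\vec u\in\Pi\left(\cp^\downarrow\right)$ we have $\arho\vec u\in\cp^\downarrow$, hence $\vec a_i^T\xacp+\vec g_i^T\yacp(\arho\vec u)\geq\arho u_i$, and dividing by $\arho$ gives the constraints. With the change of variables $\vec v=\arho\vec u$ we get $\max_{\vec u\in\Pi(\cp^\downarrow)}\vec d^T\yacp(\arho\vec u)=\max_{\vec v\in\arho\Pi(\cp^\downarrow)}\vec d^T\yacp(\vec v)\leq\max_{\vec v\in\cp^\downarrow}\vec d^T\yacp(\vec v)$, so the objective of this feasible policy is at most $\tfrac{1}{\arho}\bigl(\vec c^T\xacp+\max_{\vec v\in\cp^\downarrow}\vec d^T\yacp(\vec v)\bigr)=\zacp/\arho$. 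Combining with step (i)–(ii), $\zcp\leq\zacp/\arho$, which is the claim.

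The step I expect to be the main obstacle is (ii): correctly collapsing the adaptive value under $\Pi\left(\cp^\downarrow\right)$ to the static value $\zcp$. This requires the product structure and compactness of $\Pi\left(\cp^\downarrow\right)$ and a careful invocation of Assumption~\ref{ass:compact} for that specific set, since this is where the constraint-wise-implies-static-optimal machinery of~\cite{ben2004adjustable} does the real work; the hypothesis is stated exactly for the adaptive problem under $\Pi\left(\cp^\downarrow\right)$, so it should go through. A secondary technical point is making the down-hull extension of the adaptive policy rigorous — no measurability issue arises since no structure is imposed on the policy class, but one must verify the worst-case objective is genuinely preserved — together with the trivial treatment of $\arho=0$.
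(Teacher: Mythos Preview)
Your proposal is correct and follows essentially the same approach as the paper: construct the rescaled policy $\bigl(\xacp/\arho,\ \yacp(\arho\vec u)/\arho\bigr)$, show it is feasible for the adaptive problem under $\Pi\left(\cp^\downarrow\right)$ with objective at most $\zacp/\arho$, and then collapse that adaptive value to $\zcp$ via the constraint-wise static-optimality result of \cite{ben2004adjustable}. You are simply more explicit than the paper about the down-hull extension of the policy and the treatment of $\arho=0$ (which in fact cannot occur by Lemma~\ref{lem:1/m}).
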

\ifpreprint \begin{proof} \else
\proof{Proof.}
\fi
The first inequality is established using the proof technique of Theorem~\ref{thm:mainadapt}. Starting from an optimal solution $\left(\xacp,\yacp(\vec{u})\right)$ to problem~\eqref{eqn:acp}, we construct a solution
 $\left({\xacp}/{\arho},{\yacp\left(\arho \vec{u}\right)}/{\arho}\right)$ feasible for the constraint-wise adaptive robust problem under $\Pi\left(\cp^\downarrow\right)$. Let the optimal value of this problem be $\hat{z}_{\text{acp}}$.
It follows that $\zacp/\arho  \geq \hat{z}_{\text{acp}}$.
 Since $\Pi\left(\cp^\downarrow\right)$ is constraint-wise, compact, and the adaptive problem under $\Pi\left(\cp^\downarrow\right)$ satisfies Assumption~\ref{ass:compact}, then
$\zcp = \hat{z}_{\text{acp}}$~\citep[Theorem 2.1]{ben2004adjustable}.
Therefore,
$$\zacp/\arho  \geq \hat{z}_{\text{acp}} = \zcp.$$
\ifpreprint \end{proof} \else
\Halmos
\endproof
\fi}

\reviewChanges{\begin{corollary}
\label{lem:tightadaptmain}
The bounds constructed in Theorem~\ref{cor:mainadapt_rhs} and Lemma~\ref{lem:aro_arho} are tight .
\end{corollary}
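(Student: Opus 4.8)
**Proof proposal for Corollary~\ref{lem:tightadaptmain} (tightness of the bounds in Theorem~\ref{cor:mainadapt_rhs} and Lemma~\ref{lem:aro_arho}).**

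The plan is to exhibit concrete instances of the supply chain problem~\eqref{eq:scex}/\eqref{eq:scexadapt} for which each of the two bounds holds with equality, reusing the shrinkage-factor computations already carried out in Examples~\ref{ex:scrho} and~\ref{ex:scrhoadapt}. First I would dispose of Lemma~\ref{lem:aro_arho}, whose bound $\arho \ge \ari/\gi$ is easiest: in scenario~(\textit{b}) with $\alpha=1/2,\beta=3/4$ we have computed $\ari = 1/2$ and $\gi = 1$, while $\Pi(\cp_b^\downarrow) = U^\downarrow$, so $\arho = \rho(U^\downarrow,\cp_b^\downarrow) = \ari = 1/2 = \ari/\gi$, giving equality directly. (Any scenario in which $\gi=1$, i.e. coupling does not enlarge any projection relative to $U^\downarrow$, makes this inequality tight, so one could alternatively note this holds generically.)

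Next, for the bound $\zacp/\zcp \ge \arho$ of Theorem~\ref{cor:mainadapt_rhs}, I would pick a scenario and a choice of cost coefficients forcing $\zacp = \arho\,\zcp$. The natural candidate is scenario~(\textit{a}) with $1 \le \eta < 2$, because there $\Pi(\cp_a^\downarrow) = U^\downarrow$ (so $\zcp = \zro$, the static problem sees no coupling), while the adaptive problem \emph{does} see the coupling $u_1+u_2\le\eta$; from Example~\ref{ex:scrhoadapt}, $\ari = \eta/2$, and since $\Pi(\cp_a^\downarrow)=U^\downarrow$ we get $\arho = \rho(U^\downarrow,\cp_a^\downarrow) = \ari = \eta/2$. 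Taking, say, $\eta = 3/2$ and the coefficients $c_{11}=s_{11}=c_{22}=s_{22}=s_{12}=1$, $t=p=1$, one checks (as in the proof of Corollary~\ref{lem:tightadaptstatic_tight}) that $\zacp = 3$, $\zcp = \zro = 4$, so $\zacp/\zcp = 3/4 = \arho$, and the bound is tight. The main verification step here is the computation of $\zcp$ and $\zacp$ for this instance; $\zcp$ is just the static constraint-wise cost $600$-type computation rescaled, and $\zacp$ is read off from the worst case lying on the segment $u_1+u_2=\eta$, exactly as discussed after~\eqref{eq:scexadapt}.

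The one point requiring genuine care — and the place I expect the argument to be delicate — is making sure the chosen instance simultaneously satisfies the hypotheses of Theorem~\ref{cor:mainadapt_rhs} (convex compact $U,\cp$; Assumption~\ref{ass:compact} for the adaptive problem under $\Pi(\cp^\downarrow)$) and that the inequality chain in its proof collapses to equalities. Equality in Theorem~\ref{cor:mainadapt_rhs} requires both that the rescaled solution $(\xacp/\arho,\ \yacp(\arho\vec u)/\arho)$ be \emph{optimal} (not merely feasible) for the constraint-wise adaptive problem under $\Pi(\cp^\downarrow)$ — this is an instance of the sufficient-coefficient conditions of Corollary~\ref{cor:attain_bounds_adapt}, which the all-ones cost vector satisfies since every variable with a nonzero objective coefficient is scaled identically — and that $\max_{\vec u\in\cp^\downarrow}\vec d^T\yacp(\vec u) = \max_{\vec u\in\arho U^\downarrow}\vec d^T\yacp(\vec u)$, which holds because for the all-ones instance the adaptive second-stage cost is driven by the binding demand constraints on the outer boundary $u_1+u_2=\eta$, i.e. on $\partial(\arho U^\downarrow)\cap\cp^\downarrow$. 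Spelling out these two optimality checks for the explicit instance, rather than the routine arithmetic, is the substantive content of the proof.
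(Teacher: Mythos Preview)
Your scenario~(\textit{a}) argument for the tightness of Theorem~\ref{cor:mainadapt_rhs} is correct and is precisely what the paper does: one observes that when $\Pi(\cp^\downarrow)=U^\downarrow$ (which holds for scenario~(\textit{a}) with $\eta\ge 1$) we have $\gi=1$, $\arho=\ari$, and $\zcp=\zro=\zaro$, so $\zacp/\zcp=\zacp/\zaro$, and tightness reduces to Corollary~\ref{lem:tightadaptstatic_tight}. This single instance actually proves tightness of \emph{both} bounds simultaneously, since $\arho=\ari=\ari/\gi$ there.

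Your separate treatment of Lemma~\ref{lem:aro_arho} via scenario~(\textit{b}), however, contains an error. You assert $\Pi(\cp_b^\downarrow)=U^\downarrow$, but Example~\ref{ex:scrho} already computed $\Pi_1(\cp_b^\downarrow)=[0,1-\alpha]=[0,1/2]$, so $\Pi(\cp_b^\downarrow)=[0,1/2]\times[0,1]\neq U^\downarrow$. In fact for scenario~(\textit{b}) one checks that $\cp_b^\downarrow=[0,1/2]\times[0,1]=\Pi(\cp_b^\downarrow)$, hence $\arho=1$, while $\ari/\gi=1/2$; the inequality $\arho\ge\ari/\gi$ is \emph{strict} here. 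Your parenthetical claim that ``any scenario with $\gi=1$ makes this inequality tight'' is likewise false: $\gi=1$ only says that \emph{some} coordinate projection is unshrunk, not that $\Pi(\cp^\downarrow)=U^\downarrow$. The correct sufficient condition is $\Pi(\cp^\downarrow)=U^\downarrow$ (equivalently $\ri=1$), which scenario~(\textit{a}) satisfies but scenario~(\textit{b}) does not. Simply drop the scenario~(\textit{b}) paragraph and note that scenario~(\textit{a}) already gives $\arho=\ari/\gi$.
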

\ifpreprint \begin{proof} \else
\proof{Proof.}
\fi
Let $\zro, \zaro$ denote the objectives of the static and adaptive robust problems~\eqref{eqn:ro} and~\eqref{eqn:aro} under the constraint-wise uncertainty set $U^\downarrow$, and consider the case where $\Pi\left(\cp^\downarrow\right) = U^\downarrow$. Then, by Definition~\ref{def:rhogam},  Theorem~\ref{thm:main}, and the proof of Theorem~\ref{cor:mainadapt_rhs}, $\hat{z}_{\text{acp}} = \zaro = \zro = \zcp$, and $\gi = 1$. By Definitions~\ref{rhogamma} and~\ref{def:adapt_rho}, $\arho = \ari$. Thus
\begin{equation*}
    \frac{\zacp}{\zcp} = \frac{\zacp}{\zro} = \frac{\zacp}{\zaro} \geq \ari = \arho,
\end{equation*}
where the inequality follows from Theorem~\ref{thm:mainadapt}, and is proven to be tight in Corollary~\ref{lem:tightadaptmain}.
\ifpreprint \end{proof} \else
\Halmos
\endproof
\fi
}

\begin{lemma}
\label{lem:1/m}
\reviewChanges{For any convex set $\cp \subset \reals^m_+$},
$$\arho \geq \frac{1}{m}.$$
\end{lemma}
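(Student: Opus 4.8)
The plan is to show that for any convex compact set $\cp \subseteq \reals_+^m$, the shrinkage factor $\arho = \rho\left(\Pi\left(\cp^\downarrow\right), \cp^\downarrow\right)$ is at least $1/m$. First I would reduce the problem: since solving robust problems is equivalent under down-hulls, I work directly with the down-closed convex set $K := \cp^\downarrow \subseteq \reals_+^m$. Let $\overline{d}_i = \max\{u_i : \vec{u} \in K\}$ for each $i \in [m]$; these are finite by compactness, and by down-closedness the box $B := \Pi(K) = [0,\overline{d}_1] \times \cdots \times [0,\overline{d}_m]$ equals the coordinatewise projection hull. I need to find the largest $\rho$ with $\rho B \subseteq K$, and show $\rho \geq 1/m$.

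The key step is a direct containment argument. Take any point $\vec{v} \in \frac{1}{m} B$, so $0 \le v_i \le \overline{d}_i/m$ for each $i$. For each coordinate $i$, by definition of $\overline{d}_i$ there is a point $\vec{w}^{(i)} \in K$ with $w^{(i)}_i = \overline{d}_i \geq m v_i$; in particular $m v_i \, \vec{e}_i \le \vec{w}^{(i)}$ coordinatewise, so since $K$ is down-closed, $m v_i \, \vec{e}_i \in K$ (here $\vec{e}_i$ is the $i$-th unit vector, and I use $m v_i / \overline{d}_i \le 1$ to scale $\vec{w}^{(i)}$ down if needed — actually cleaner: $m v_i \vec e_i \le \vec w^{(i)}$ componentwise since the other components of $m v_i \vec e_i$ are zero). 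Then $\vec{v} = \frac{1}{m} \sum_{i=1}^m (m v_i \, \vec{e}_i)$ is a convex combination of the $m$ points $m v_i \, \vec{e}_i \in K$, hence $\vec{v} \in K$ by convexity. This gives $\frac{1}{m} B \subseteq K$, i.e. $\frac{1}{m}\Pi(\cp^\downarrow) \subseteq \cp^\downarrow$, so by Definition~\ref{def:adapt_rho}, $\arho \ge 1/m$.

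I expect the main obstacle — really the only subtlety — to be making sure the two reductions are airtight: (i) that $0 \in K$ so that $\frac1m B$ makes sense as a shrinkage and the points $m v_i \vec e_i$ genuinely lie in $K$ (down-closedness of a nonempty set in $\reals_+^m$ forces $0 \in K$), and (ii) that $\Pi(K)$ really is the axis-aligned box $\prod_i [0, \overline d_i]$ rather than something smaller — this again uses down-closedness, since for any target coordinate value $t \le \overline d_i$ we can truncate a maximizer. Once these are in place, the convex-combination step is the heart of the argument and is short. I would also remark that the bound is attained by the standard simplex $\cp = \{\vec u \in \reals_+^m : \sum_i u_i \le 1\}$, for which $\Pi(\cp^\downarrow) = [0,1]^m$ and the largest scaled cube inside the simplex has edge $1/m$, so the factor $1/m$ cannot be improved in general; combined with Theorem~\ref{cor:mainadapt_rhs} this recovers the classical result that adaptivity can improve the static objective by at most a factor of $m$ under this type of coupling.
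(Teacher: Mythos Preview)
Your proposal is correct and follows essentially the same approach as the paper: both use that the axis points $\overline{d}_i\,\vec{e}_i$ lie in $\cp^\downarrow$ (via down-closedness) and then apply convexity to obtain the containment $\frac{1}{m}\Pi(\cp^\downarrow)\subseteq\cp^\downarrow$. The only cosmetic difference is the order of the two ingredients: the paper first takes the convex combination of the fixed axis points to land on the corner $(1/m)(\overline d_1,\dots,\overline d_m)$ and then invokes down-closedness for the whole box, whereas you use down-closedness first (to get $m v_i\,\vec{e}_i\in K$) and then convexity pointwise.
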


\ifpreprint \begin{proof} \else
\proof{Proof.}
\fi
\reviewChanges{Let $\Pi\left(\cp^\downarrow\right) = [0,\overline{d}_1] \times \dots \times [0,\overline{d}_m]$.
By definition, $\left(\overline{d}_1,0,\dots,0\right), \dots, \left(0,\dots,0,\overline{d}_m\right) \in \cp^\downarrow$.
Since $\cp^\downarrow$ is convex, we have $(1/m)(\overline{d}_1,\dots, \overline{d}_m) = (1/m) \left(\left(\overline{d}_1,0,\dots,0\right)+ \dots+ \left(0,\dots,0,\overline{d}_m\right)\right) \in \cp^\downarrow$.
Then $(1/m) \Pi\left(\cp^\downarrow\right) \subseteq \cp^\downarrow$.
Therefore, by Definition~\ref{def:adapt_rho}}, $\arho \geq 1/m$.
\ifpreprint \end{proof} \else
\Halmos
\endproof
\fi

The next example illustrates that there exist cases where ${\zacp}/{\zcp} = \arho = 1/m$.
\begin{example}
Consider scenario (\textit{a}) of the supply chain example with $\eta=1$, $\arho = \eta/2 = 1/2 = 1/m$.
If $c_{11}=s_{11}=c_{22}=s_{22}=s_{12}=t=p=1$, then $\zro=\zcp=4,\zacp=2$, and $\zacp/\zcp= \arho = 1/m$.
\end{example}
\paragraph{Obtaining the shrinkage factor.}
\reviewChanges{
We can also directly calculate $\arho$ using the robust program
\begin{equation}
\label{eq:calc_rhoadapt}
{
\begin{array}{llclcl}
\displaystyle \arho = & \text{maximize} & \multicolumn{3}{l}{\rho} \\
& \text{subject to} &
\rho \, \vec{u} \in \cp^\downarrow, && \forall \vec{u}\in \Pi\left(\cp^\downarrow\right),
\end{array}}
\end{equation}
which is a convex program for convex $\cp$.
}

\begin{example}
\label{ex:norm}
Consider the intersection of a constraint-wise set and a $\q$-norm based set
 $$\cp =  \left\{0 \leq \vec{u} \leq \alpha,\quad \|\vec{u}\|_\q \leq \beta\right\},$$
where $\alpha \leq \beta \leq \alpha m^{1/\q}$.
Then $\reviewChanges{\Pi\left(\cp^\downarrow\right)} = \left\{0 \leq \vec{u} \leq \alpha\right\}$.
Since $\arho$ is the maximum number such that
$V = \left\{\vec{u} \mid \|\vec{u}\|_\infty \leq \arho \alpha \right\} \subseteq \reviewChanges{\cp^\downarrow}$,
or equivalently $\|(\arho \alpha,\dots,\arho \alpha))\|_\q \leq \beta$, i.e.
$m (\arho \alpha)^\q \leq \beta^\q$.
Therefore $$\frac{\zacp}{\zcp} \geq \arho = \frac{\beta}{\alpha m^{1/\q}},$$
where the bound is tight.
If $\alpha=\beta$, then $$\frac{\zacp}{\zcp} \geq \arho = \frac{1}{m^{1/\q}}.$$
\end{example}

\paragraph{Comparison with other bounds.}
We compare our bound to other bounds from the literature that characterize the improvement of an adaptive problem over static problem.
Example~\ref{ex:norm} with $\alpha=\beta$ gives a family of uncertainty sets where our bound is tighter than the symmetry bound, $1/(1+m^{1/\q})$, in~\citep{bertsimas2011geometric,bertsimas2013approximability} and as tight as the approximation bound related to a measure of non-convexity of a transformation of the uncertainty set in~\cite{bertsimas2015tight}.
Although the approximation bound in~\cite{bertsimas2015tight} gives a tight characterization, it is not necessarily tractable to compute the measure of non-convexity for an arbitrary convex compact set.
Given the uncertainty set $\cp$ and $\vec{h}>0$, they define the transformation
\begin{equation*}
    T\left(\cp,\vec{h}\right) = \left\{\vec{B}^T \vec{\mu} \mid \vec{h}^T\vec{\mu} =1,\quad \vec{B}\in \cp,\quad \vec{\mu} \geq 0\right\}.
\end{equation*}
The measure of non-convexity for the transformation $T(\cp,\vec{h})$ is obtained by solving
\begin{equation*}
{
\begin{array}{clclcl}
\displaystyle  & \text{minimize} & \multicolumn{2}{l}{\alpha} &&   \\
&\text{subject to} &
\text{conv}\left(T\left(\cp,\vec{h}\right)\right) \subseteq \alpha T\left(\cp,\vec{h}\right), &&  \\
% & && \rho \geq 0
\end{array}}
\end{equation*}
where $T\left(\cp,\vec{h}\right)$ is nonconvex for some $\vec{h}>0$ if adaptability can improve the problem.
In comparison, our bound can be obtained \reviewChanges{by solving the convex program~\eqref{eq:calc_rhoadapt}.}

% $$\arho = \min\limits_{i \in [m]} \overline{d}_i/d_i,$$
% where
% $\Pi\left(\cp\right) = [0,\overline{d}_1]\times\dots\times[0,\overline{d}_m]$
% and $U = [0,d_1]\times\dots\times[0,d_m]$, which involves solving at most $m+1$ convex programs for a convex set $\cp$.
\reviewChanges{If the upper bounds of $\Pi\left(\cp^\downarrow\right) = [0,\overline{d}_1]\times\dots\times[0,\overline{d}_m]$ are unknown, we can first obtain $\overline{d}_i$ by solving the simple convex programs
\begin{equation}
\label{eq:maxu}
{
\begin{array}{llclcl}
\displaystyle  & \overline{d}_i =  & \text{maximize} & \multicolumn{2}{l}{u_i} \\
& &\text{subject to} &
\vec{u} \in \cp^\downarrow.
\end{array}}
\end{equation}}
Thus our bound is in general easier to compute.
Moreover, instead of constructing a transformation of the uncertainty set in~\cite{bertsimas2015tight}, our bound provides easy geometric interpretation directly on the uncertainty set itself and associates the improvement with the comparison with \acrlong{cw} sets.

\section{Theoretical improvement under constraint coefficient uncertainty}
\label{sec:theorycoeff}
In Section~\ref{sec:theory}, we consider problems where the uncertainty appears on the right hand side of the constraints.
We extend the result to problems with uncertain coefficients in the constraints.
Proof of results in this section can be found in Appendix~\ref{append:proofcoeff}.

\reviewChanges{In this section, we can no longer ensure that solving the robust optimization problem with respect to an uncertainty set $U$ is equivalent to solving it under its down-hull $U^\downarrow$. We thus redefine the shrinkage factors as follows.}
\reviewChanges{
\begin{definition}
    \label{def:shrinkage_new}
    Let $\ri = \rho\left(U, \Pi\left(\cp\right)\right)$,$\gi = \gamma\left(U, \Pi\left(\cp\right)\right)$,
$\ari = \rho\left(U, \cp\right)$ and $\agi = \gamma\left(U, \cp\right)$. In addition, let $\arho = \rho(\Pi\left(\cp\right),\cp)$.
\end{definition}
}
\reviewChanges{Now, to ensure that the shrinkage factors exist, $\ie$, $\ari \cp \subseteq \cp \subseteq \agi U \subseteq U$, we require that $\{0\} \subseteq \cp \subseteq U$. Given any convex sets $U',\cp'$, we can construct such sets $U,\cp$ through the use of a {\it translation factor}. To this end, we define the {\it symmetry} of an uncertainty set.}
\reviewChanges{\begin{definition}
    \label{def:sym}{Symmetry of a convex set~\citep{bertsimas2013approximability}.}
    Given a nonempty compact convex set $U \subseteq \reals^m$ and a
point $\vec{u} \in U$, we define the symmetry of $\vec{u}$ with respect to $U$ as follows:
$$ \textup{\textbf{sym}}(\vec{u},U) \define \max\{\alpha\geq 0 : \vec{u}+ \alpha(\vec{u}-\vec{u'}) \in U,~\forall \vec{u'}\in U\}.$$
\end{definition}}
\reviewChanges{
The \emph{point of symmetry} of a convex coupled set $\cp'$ is then $$ \vec{u}^s = \underset{\vec{u} \in \cp' }{\arg\max}~\textup{\textbf{sym}}(\vec{u},\cp').$$}
\reviewChanges{Setting the translation factor to $\vec{u}^s$, we note that as $\{\vec{u}^s\}\subseteq \cp' \subseteq U'$, we have
$$ \{0\} \subseteq U \define U'-\vec{u}^s,\quad \{0\} \subseteq \cp \define \cp'-\vec{u}^s,\quad \{0\} \subseteq \Pi\left(\cp\right) \define \Pi\left(\cp'\right)-\vec{u}^s.$$ }
\reviewChanges{
We can then reformulate the robust problems with respect to the newly defined sets, where all functions $f(\vec{u})$,~$\forall\vec{u} \in U'$ is expressed as $ f(\vec{u}^s+ \vec{u})$,~$\forall\vec{u} \in U$.}
\reviewChanges{Therefore, from here onwards we can assume without loss of generality that $\{0\} \subseteq \cp$.}

\reviewChanges{As a remark, the above translation can be done with any point in $\cp'$, not just the point of symmetry. The point of symmetry is chosen such that, for most cases, the shrinkage factors can be maximized. However, for sets $\cp'$ that could be translated to satisfy $\{0\} \subseteq \cp \subseteq U \subset \reals^m_+$, this translation should be used instead of the point of symmetry, to maximize the potential shrinkage factors.}

\subsection{Static robust optimization}
Consider robust problems where the coefficients in the constraints are uncertain
\begin{equation}
\label{eqn:ro2}
{
\begin{array}{llclcl}
\displaystyle \zro =
& \text{maximize} && {\vec{c}^T\vec{x}} \\
& \text{subject to} &&
 \vec{u}_i^T\vec{x}\leq b_i, \quad \forall \vec{u} \in U, \quad \forall i \in [m],
\end{array}}
\end{equation}
where $\vec{x} \in \reals^n$ is the decision variable and $\vec{c} \in \reals^n, b_i \in \reals, \vec{u}_i \in \reals^p, p = n$,
and
\begin{equation}
\label{eqn:cp2}
{
\begin{array}{llclcl}
\displaystyle \zcp =
& \text{maximize} && {\vec{c}^T\vec{x}} \\
& \text{subject to} &&
 \vec{u}_i^T\vec{x}\leq b_i, \quad \forall \vec{u} \in \cp, \quad \forall i \in [m].
\end{array}}
\end{equation}

\begin{theorem}
\label{thm:main2}
Consider the robust problem~\eqref{eqn:ro2} under \acrlong{cw} uncertainty and problem~\eqref{eqn:cp2} under coupled uncertainty.
Assume $\left\{0\right\} \subset \cp$, $U$ is convex,
$\ri > 0$, and $0 < \zro \leq \zcp < \infty$.
Then the objective value is bounded as:
\begin{equation*}
\frac{1}{\gi} \leq \frac{\zcp}{\zro} \leq \frac{1}{\ri}.
\end{equation*}
Furthermore, the bounds are tight.
\end{theorem}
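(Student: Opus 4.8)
The plan is to reduce the problem under coupled coefficient uncertainty to an equivalent problem under the block-wise projection $\Pi(\cp)$, and then move feasible solutions between the uncertainty sets $U$, $\ri U$, $\gi U$, and $\Pi(\cp)$ by rescaling the decision vector $\vec{x}$.

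First I would observe that each robust constraint $\vec{u}_i^T\vec{x}\le b_i$ in~\eqref{eqn:cp2} involves only the block $\vec{u}_i$, so ``$\vec{u}_i^T\vec{x}\le b_i$ for all $\vec{u}\in\cp$'' is equivalent to ``$\vec{w}^T\vec{x}\le b_i$ for all $\vec{w}\in\Pi_i(\cp)$''. Hence~\eqref{eqn:cp2} is the coefficient-robust problem over $\Pi(\cp)$, while~\eqref{eqn:ro2} is the one over $U=\Pi(U)$. By Definition~\ref{def:shrinkage_new} we have the sandwich $\ri U\subseteq\Pi(\cp)\subseteq\gi U$, so it suffices to compare optimal values of the coefficient-robust problem over these three nested sets. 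The crucial structural point, which distinguishes this from the right-hand-side case of Section~\ref{sec:theory}, is that with coefficient uncertainty a \emph{larger} uncertainty set means \emph{stricter} constraints and hence a \emph{smaller} feasible set; this monotonicity reversal is exactly why the bounds come out as the reciprocals $1/\gi$ and $1/\ri$ rather than $\gi$ and $\ri$.

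For the upper bound, I would take an optimizer $\xcp$ of~\eqref{eqn:cp2}; it is feasible over $\Pi(\cp)$, hence over the smaller set $\ri U$, and since $(\ri\vec{u}_i)^T\xcp=\vec{u}_i^T(\ri\xcp)$ for every $\vec{u}\in U$, this says precisely that $\ri\xcp$ is feasible for~\eqref{eqn:ro2}; evaluating the objective gives $\zro\ge\vec{c}^T(\ri\xcp)=\ri\zcp$. For the lower bound, I would take an optimizer $\xro$ of~\eqref{eqn:ro2} and check that $\xro/\gi$ is feasible over $\gi U$ (using $\gi>0$, which follows from $\{0\}\subsetneq\cp$), hence over the smaller set $\Pi(\cp)$, so that $\zcp\ge\vec{c}^T(\xro/\gi)=\zro/\gi$. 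Combining the two inequalities, and using $\ri>0$ together with $0<\zro\le\zcp<\infty$ to divide safely, yields $1/\gi\le\zcp/\zro\le1/\ri$. For tightness I would exhibit a small explicit instance in the spirit of Corollary~\ref{cor:main}: a coupled set whose projections shrink $U$ by factor $\ri$ along one coordinate and by factor $\gi$ along another, together with a choice of the objective vector $\vec{c}$ that makes only the first (respectively the second) coordinate matter, so that $\zcp/\zro$ equals exactly $1/\ri$ (respectively $1/\gi$).

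I expect the main obstacle to be bookkeeping rather than conceptual: keeping straight the reversed correspondence between set inclusion and feasibility so that each rescaled vector lands in the right problem, verifying feasibility for \emph{every} $\vec{u}$ in the relevant set after rescaling, and confirming that the hypotheses $\ri>0$, $\{0\}\subset\cp$, $U$ convex, and $0<\zro\le\zcp<\infty$ indeed rule out the degenerate cases (empty or degenerate projections, zero objective). Designing the tight instance so that \emph{both} endpoints of the interval $[1/\gi,1/\ri]$ are attained is the one place that calls for a little care in choosing the coefficients.
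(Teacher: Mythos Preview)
Your proposal is correct and follows essentially the same route as the paper: reduce~\eqref{eqn:cp2} to its projection form, sandwich $\ri U\subseteq\Pi(\cp)\subseteq\gi U$, and transport optimizers by rescaling $\vec{x}$ (take $\ri\xcp$ feasible for~\eqref{eqn:ro2} and $\xro/\gi$ feasible for~\eqref{eqn:cp2}). The only difference is that the paper supplies a concrete tightness instance (based on Example~\ref{ex:l1l2} with $\alpha=\beta=1$, $p=2$, and two choices of $\vec{c}$) rather than a sketch; your described construction would work equally well once written out.
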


\paragraph{Obtaining shrinkage factors.}
The shrinkage factors can be obtained by solving the following robust programs.
\begin{equation*}
{
\begin{array}{llclcl}
\displaystyle \ri = &  \text{maximize} & \multicolumn{3}{l}{\rho} \\
&\text{subject to} &
\rho \, \vec{u}_i \in \Pi_i\left(\cp\right), && \forall \vec{u}\in U, \quad \forall i \in [m],
\end{array}}
\end{equation*}
\begin{equation*}
{
\begin{array}{llclcl}
\displaystyle  \gi = & \text{minimize} & \multicolumn{3}{l}{\gamma} \\
& \text{subject to} &
\overline{\vec{u}}_i \in \gamma \, U, && \forall \overline{\vec{u}}_i\in \Pi_i\left(\cp\right), \quad \forall i \in [m].
\end{array}}
\end{equation*}
In other words, $$\ri = \min\limits_{i \in [m]} r_i, \quad \gi = \max\limits_{i \in [m]} s_i,$$
where for each $i$,
$r_i \in \reals_+$ is the maximum number such that
$$\uri := r_i \,\reviewChanges{U_i} \subseteq \Pi_i\left(\cp\right),$$
and $s_i \in \reals_+$ is the minimum number such that
$$\Pi_i\left(\cp\right) \subseteq s_i \,\reviewChanges{U_i} =: \usi.$$
These problems are convex since we assume that $U$ and $\cp$ are convex sets.

\begin{example}
\label{ex:l1l2}
Consider uncertainty sets $$U_1 = \left\{\vec{u}_1 \mid \|\vec{u}_1\|_1 \leq \alpha\right\}\subset \reals^p, \quad U_2 = \left\{\vec{u}_2 \mid \|\vec{u}_2\|_2 \leq \beta\right\}\subset \reals^p,$$
where $\beta \leq \alpha \leq \sqrt{p} \beta$, and
$$\cpc = \left\{\vec{u} \mid \vec{u}_1=\vec{u}_2\right\}, \quad \cp = U \cap \cpc = (U_1 \times U_2) \cap \cpc.$$
An illustration for $p=2$ is shown in Figure~\ref{ex1}, where the shaded region denotes
$$\Pi_i\left(\cp\right) = U_1 \cap U_2 = \reviewChanges{\left\{\vec{u}_i \mid \|\vec{u}_i\|_1 \leq \alpha, \quad \|\vec{u}_i\|_2 \leq \beta\right\}} , \quad \forall i \in [2].$$

\begin{figure}[tb]
\centering
\begin{subfigure}{0.49\textwidth}
\centering
\includegraphics[width=0.57\textwidth]{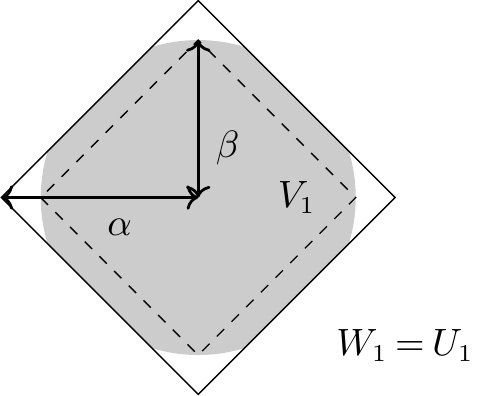}
\end{subfigure}
\begin{subfigure}{0.49\textwidth}
\centering
\includegraphics[width=0.57\textwidth]{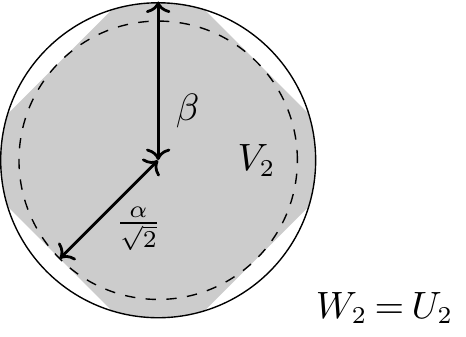}
\end{subfigure}
\caption{Illustration of shrinkage factors for Example~\ref{ex:l1l2} with $p=2$.}
\label{ex1}
\end{figure}

We have
$$\ri = \min \left\{\frac{\beta}{\alpha}, \frac{\alpha}{\sqrt{p}\beta}\right\},\quad \gi = 1, \quad 1 \leq \frac{\zcp}{\zro} \leq \max \left\{\frac{\alpha}{\beta},\frac{\sqrt{p}\beta}{\alpha}\right\}
.$$
Particularly, if $\alpha=\beta$, then $$\ri =\frac{1}{\sqrt{p}},\quad \gi = 1, \quad 1 \leq \frac{\zcp}{\zro} \leq \sqrt{p}.$$
Therefore we can potentially have a significant improvement as $p$ increases.
\end{example}

\subsection{Adaptive robust optimization}
Consider adaptive robust problems with uncertain constraint coefficients
\begin{equation}
\label{eqn:aro2}
{
\begin{array}{llclcl}
\displaystyle  \zaro = & \text{maximize} && \multicolumn{3}{l}{\vec{c}^T\vec{x} + \min\limits_{\vec{u}\in U} \vec{d}^T \vec{y}(\vec{u})}\vspace{3pt} \\
&\text{subject to} &&  \vec{u}_i^T
\begin{bmatrix*}[l] \vec{x}\\ \vec{y}(\vec{u}) \end{bmatrix*}
\leq b_i, && \forall \vec{u} \in U, \quad \forall i \in [m],
\end{array}}
\end{equation}
where $\vec{x} \in \reals^{n_1}$ is the first-stage decision variable and $\vec{y} \in \reals^{n_2}$ is the adaptive variable,
$\vec{c} \in \reals^{n_1}, \vec{d} \in \reals^{n_2}, b_i \in \reals, \vec{u}_i \in \reals^p$, $p=n_1+n_2$,
and
\begin{equation}
\label{eqn:acp2}
{
\begin{array}{llclcl}
\displaystyle  \zacp = & \text{maximize} && \multicolumn{3}{l}{\vec{c}^T\vec{x} + \min\limits_{\vec{u}\in \cp} \vec{d}^T \vec{y}(\vec{u})}\vspace{3pt} \\
&\text{subject to} &&  \vec{u}_i^T
\begin{bmatrix*}[l] \vec{x}\\ \vec{y}(\vec{u}) \end{bmatrix*}
\leq b_i, && \forall \vec{u} \in \cp, \quad \forall i \in [m],
\end{array}}
\end{equation}

\begin{theorem}
\label{thm:mainadapt2}
Consider the robust problem~\eqref{eqn:aro2} under \acrlong{cw} uncertainty and problem~\eqref{eqn:acp2} under coupled uncertainty.
Assume $\left\{0\right\} \subset \cp$, $U$ is convex,
$\ari > 0$, and $0 < \zaro \leq \zacp < \infty$. Then the objective value is bounded as:
\begin{equation*}
\frac{1}{\agi} \leq \frac{\zacp}{\zaro} \leq \frac{1}{\ari}.
\end{equation*}
Furthermore, the bounds are tight.
\end{theorem}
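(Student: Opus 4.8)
The plan is to mirror the argument for Theorem~\ref{thm:mainadapt} (the right-hand-side adaptive case), transposing it to the maximization-with-uncertain-coefficients setting and accounting for the fact that we no longer work with down-hulls but with the translated sets satisfying $\{0\}\subseteq\cp\subseteq U$. Since $\cp\subseteq U$, every solution feasible for~\eqref{eqn:aro2} is feasible for~\eqref{eqn:acp2} and the $\min$ over the smaller set $\cp$ is no smaller than over $U$, so $\zaro\leq\zacp$ is immediate; the content is the two-sided quantitative bound.

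\emph{Lower bound $\zacp/\zaro \ge 1/\agi$ (equivalently $\zaro \le \agi\,\zacp$).} First I would take an optimal solution $\left(\xacp,\yacp(\vec{u})\right)$ of~\eqref{eqn:acp2}. Because $\agi U$ contains $\cp$ by Definition~\ref{def:shrinkage_new}, I rescale the decisions by $\agi$ and perform the change of variables $\vec{u}\mapsto \vec{u}/\agi$: the candidate $\left(\agi\,\xacp,\;\agi\,\yacp(\vec{u}/\agi)\right)$ satisfies, for every $\vec{u}\in U$, the constraint $\vec{u}_i^\top[\agi\xacp;\agi\yacp(\vec{u}/\agi)] = \agi\,(\vec{u}/\agi)_i^\top[\agi\xacp;\agi\yacp(\vec{u}/\agi)]$ — here I must be careful: since the uncertainty multiplies $[\vec{x};\vec{y}(\vec{u})]$ bilinearly, the correct bookkeeping is that $(\vec{u}/\agi)\in\cp$ whenever $\vec{u}\in U$ ... actually the cleaner route is: for $\vec{u}\in U$, $\agi\vec{u}\in\agi U \supseteq \cp$ is the wrong direction, so instead use $\cp\subseteq\agi U$ to write any $\vec{u}'\in U$ as giving $\agi\vec{u}'$; I would replicate exactly the substitution in the proof of Theorem~\ref{thm:mainadapt}, obtaining feasibility of the rescaled-and-reparametrized solution for~\eqref{eqn:aro2}, and then chain the objective inequalities using $\min_{\vec u\in U}\vec d^\top \yacp(\vec u/\agi)\le \min_{\vec u\in\cp}\vec d^\top\yacp(\vec u/\agi)$ (min over a subset is larger, but with the sign flip in the maximization objective this lands correctly) to conclude $\agi\zacp\ge\zaro$.

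\emph{Upper bound $\zacp/\zaro \le 1/\ari$ (equivalently $\ari\,\zacp \le \zaro$).} Here I would start from an optimal $\left(\xaro,\yaro(\vec{u})\right)$ of~\eqref{eqn:aro2}. Since $\ari U\subseteq\cp$ (Definition~\ref{def:shrinkage_new}, and $\ari>0$ by hypothesis), the solution $\left(\xaro/\ari,\;\yaro(\ari\vec u)/\ari\right)$ is feasible for~\eqref{eqn:acp2}: for $\vec u\in\cp$ we have $\ari\vec u\in\cp$... again the precise claim is that for $\vec u\in\cp\subseteq U$ we can reparametrize via $\vec u = \ari\vec u'$ with $\vec u'$ ranging over a set containing $\cp/\ari\supseteq U$; feeding this through the constraints of~\eqref{eqn:aro2} and dividing by $\ari$ gives feasibility, exactly as in Theorem~\ref{thm:mainadapt}. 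The objective comparison then uses $\min_{\vec u\in\cp}\vec d^\top\yaro(\ari\vec u)\ge\min_{\vec u\in U}\vec d^\top\yaro(\vec u)$, yielding $\ari\zacp\le\zaro$.

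\emph{Tightness.} For both bounds I would exhibit the attaining instances using the supply chain template of~\eqref{eqn:scex}/\eqref{eqn:scexadapt} recast with uncertain coefficients, or more transparently use Example~\ref{ex:l1l2}: the $\ell_1/\ell_2$ pair with $\alpha=\beta$ gives $\ari=1/\sqrt p$, $\agi=1$, and one chooses the objective/constraint coefficients (as in the proof of Corollary~\ref{cor:main}) so that only the dimension realizing the extreme shrinkage factor is "active," forcing $\zcp/\zro$ (resp.\ $\zacp/\zaro$) to equal $1/\ari$ and $1/\agi$ respectively. The main obstacle I anticipate is purely notational rather than conceptual: keeping the direction of every inequality straight through the simultaneous (i) sign flip of the maximization objective, (ii) $\min$-over-subset reversal, and (iii) the bilinear change of variables $\vec u\mapsto\vec u/\agi$ on $\vec u_i^\top[\vec x;\vec y(\vec u)]$, where the same scalar rescales both the uncertainty and the decision and one must verify the two factors of $\ari$ (or $\agi$) cancel correctly against the $b_i$ on the right-hand side. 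Beyond that accounting, the structure is a direct transplant of the proof of Theorem~\ref{thm:mainadapt}.
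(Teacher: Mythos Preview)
Your plan has the starting points for the two bounds \emph{swapped}, and this is not just notational---it breaks both the feasibility checks and the direction of the final inequalities. Since \eqref{eqn:aro2} and \eqref{eqn:acp2} are \emph{maximization} problems, any feasible solution you construct yields a \emph{lower} bound on the corresponding optimal value. Hence to prove $\zaro\ge\ari\,\zacp$ (the upper bound on the ratio) you must construct a feasible point for \eqref{eqn:aro2} starting from the optimum of \eqref{eqn:acp2}, and to prove $\zacp\ge\zaro/\agi$ (the lower bound) you must construct a feasible point for \eqref{eqn:acp2} starting from the optimum of \eqref{eqn:aro2}. You do the reverse in both cases.

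Concretely, your candidate $\bigl(\xaro/\ari,\;\yaro(\ari\vec u)/\ari\bigr)$ for \eqref{eqn:acp2} is \emph{not} feasible: for $\vec u\in\cp$ you only know $\ari\vec u\in\cp\subseteq U$, which from the feasibility of $(\xaro,\yaro)$ gives $(\ari\vec u)_i^\top[\xaro;\yaro(\ari\vec u)]\le b_i$, i.e.\ $\vec u_i^\top[\xaro/\ari;\yaro(\ari\vec u)/\ari]\le b_i/\ari^2$, not $\le b_i$. Likewise, $\bigl(\agi\xacp,\;\agi\yacp(\vec u/\agi)\bigr)$ for \eqref{eqn:aro2} requires $\vec u/\agi\in\cp$ whenever $\vec u\in U$; but $\cp\subseteq\agi U$ only gives $\cp/\agi\subseteq U$, the wrong inclusion. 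Your own parenthetical ``$\vec u'$ ranging over a set containing $\cp/\ari\supseteq U$'' reveals the problem: you need $\vec u'\in U$, not $\vec u'$ in a \emph{superset} of $U$.

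The paper's proof uses exactly the mirrored assignments: from the optimum $(\xacp,\yacp)$ of \eqref{eqn:acp2} it builds $\bigl(\ari\xacp,\;\ari\yacp(\ari\vec u)\bigr)$, feasible for \eqref{eqn:aro2} because $\ari\vec u\in\ari U\subseteq\cp$, yielding $\zaro\ge\ari\zacp$; and from the optimum $(\xaro,\yaro)$ of \eqref{eqn:aro2} it builds $\bigl(\xaro/\agi,\;\yaro(\vec u/\agi)/\agi\bigr)$, feasible for \eqref{eqn:acp2} because $\vec u/\agi\in U$ for $\vec u\in\cp\subseteq\agi U$, yielding $\zacp\ge\zaro/\agi$. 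Once you swap the starting points, the rest of your outline (change of variables, the $\min$-over-subset comparison for the objective) goes through exactly as you describe. For tightness, the paper uses a small purely adaptive instance ($y_i(\vec u)=1/u_i$ against a box plus a simplex coupling), which is cleaner than recasting the supply chain or Example~\ref{ex:l1l2}.
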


\paragraph{Obtaining shrinkage factors.}
The shrinkage factors can be obtained by solving the robust programs
\begin{equation*}
{
\begin{array}{llclcl}
\displaystyle \ari = & \text{maximize} & \multicolumn{3}{l}{\rho} \\
& \text{subject to} &
\rho \, \vec{u} \in \cp, && \forall \vec{u}\in U,
\end{array}}
\end{equation*}
\begin{equation*}
{
\begin{array}{llclcl}
\displaystyle \agi = & \text{minimize} & \multicolumn{3}{l}{\gamma} \\
&\text{subject to} &
\overline{\vec{u}} \in \gamma \, U, && \forall \overline{\vec{u}} \in \cp,
\end{array}}
\end{equation*}
which are convex programs for convex $U$ and $\cp$.

We illustrate and compare the bounds in the static and adaptive cases by the following example.
\begin{example}
\label{ex:boundl2}
Consider uncertainty sets
$$U_i = \left\{\vec{u}_i \mid \|\vec{u}_i\|_\infty \leq \alpha\right\} \subset \reals^p, \quad \forall i \in [m],\quad \cpc = \left\{\vec{u} \mid \|\vec{u}\|_2 \leq \beta\right\},\quad \cp = U \cap \cpc,$$
with an illustration shown in Figure~\ref{fig:ex2}.
\begin{figure}[tb]
\centering
\begin{subfigure}{0.49\textwidth}
\centering
\includegraphics[width=0.7\textwidth]{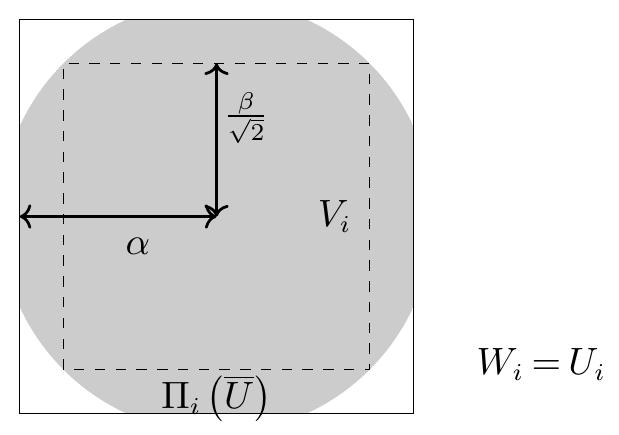}
\caption{Static case with $p=2$}
\label{ex2ro}
\end{subfigure}
\begin{subfigure}{0.49\textwidth}
\centering
\includegraphics[width=0.7\textwidth]{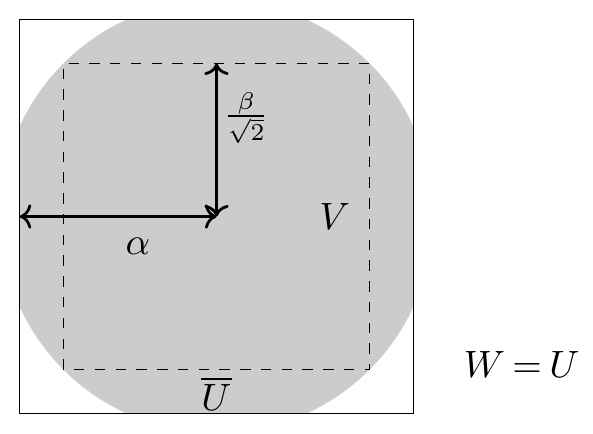}
\caption{Adaptive case with $p=1,m=2$}
\label{ex2aro}
\end{subfigure}
\caption{Illustration of shrinkage factors for Example~\ref{ex:boundl2}.}
\label{fig:ex2}
\end{figure}
For the static case, \reviewChanges{the projections do not depend on $m$, therefore the bounds are as follows,}
$$\max\left\{\frac{\alpha}{\beta},1\right\}  \leq \frac{\zcp}{\zro} \leq \max\left\{\frac{\sqrt{p}\alpha}{\beta},1\right\}.$$
For the adaptive case,
$$\max\left\{\frac{\alpha}{\beta},1\right\}  \leq \frac{\zacp}{\zaro} \leq \max\left\{\frac{\sqrt{mp}\alpha}{\beta},1\right\}.$$
The bounds on the improvement with varying parameters and sizes are illustrated in Figure~\ref{fig:bound}, with the static objective range shown in the brown shaded region and the additional range for the adaptive case shown in the red region.

\begin{figure}[tb]
\centering
\begin{subfigure}{0.31\textwidth}
\includegraphics[width=\textwidth]{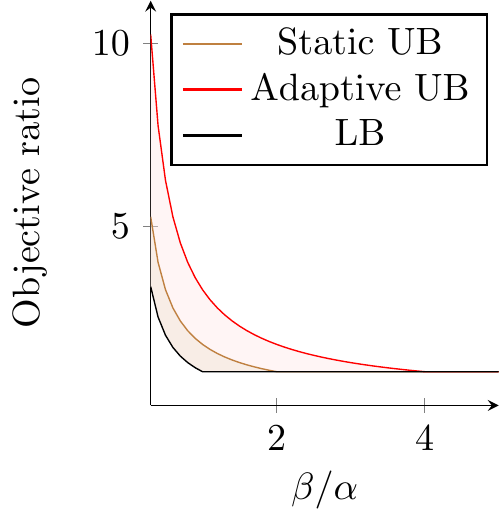}
    \caption{With $p=4,m=4$.}
    \label{fig:boundbeta}
\end{subfigure}
\begin{subfigure}{0.31\textwidth}
\includegraphics[width=\textwidth]{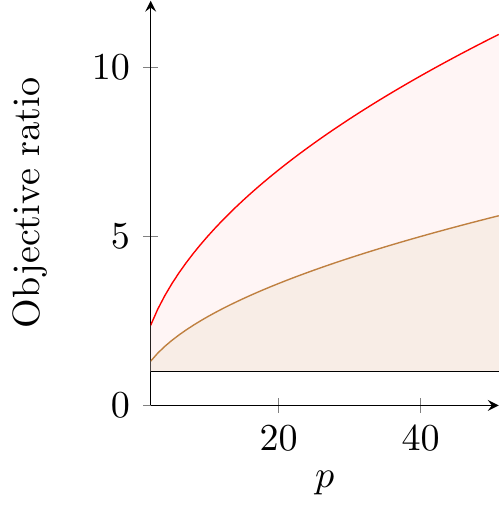}
    \caption{With $m=4,\beta/\alpha = 1$.}
     \label{fig:boundp}
\end{subfigure}
\begin{subfigure}{0.31\textwidth}
\includegraphics[width=\textwidth]{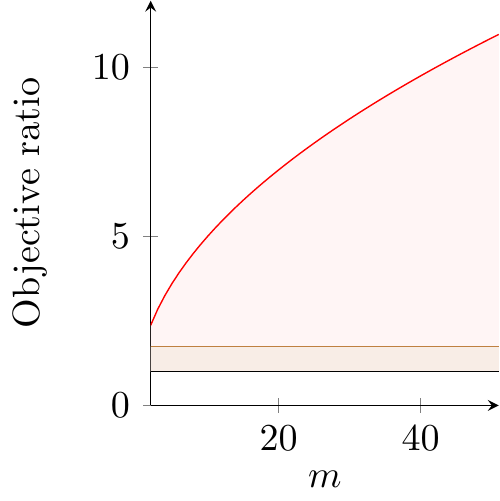}
    \caption{With $p=4,\beta/\alpha = 1$.}
     \label{fig:boundm}
\end{subfigure}
\caption{Upper and lower bounds vs coupling parameters for Example~\ref{ex:boundl2}.}
\label{fig:bound}
\end{figure}
\end{example}

\subsection{Improvement of adaptive over static problems under coupled uncertainty}
\label{sec:adap_static2}
Similar to the case when the uncertainty occurs on the right hand side, we characterize the benefit of adaptability of static solutions for coupled problems.

\begin{theorem}
\label{cor:mainadapt}
Consider the adaptive robust problem~\eqref{eqn:acp2} and the corresponding static robust problem with objective $\zcp$. 
Assume $\left\{0\right\} \subset \cp$, $\Pi\left(\cp\right)$ is compact and convex, and Assumption~\ref{ass:compact} holds for the adaptive problem under $\Pi\left(\cp\right)$.
The objective value is bounded as:
\begin{equation*}
\frac{\zacp}{\zcp} \leq \frac{1}{\arho} \reviewChanges{\leq \frac{\gi}{\ari}}.
\end{equation*}
Furthermore, the bounds are tight.
\end{theorem}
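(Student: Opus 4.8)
The plan is to transplant the proof of Theorem~\ref{cor:mainadapt_rhs} to the constraint-coefficient setting, and then append an elementary set-inclusion argument for the second inequality. For the first inequality $\zacp/\zcp \le 1/\arho$, I would start from an optimal adaptive policy $(\xacp,\yacp(\vec{u}))$ for the coupled problem~\eqref{eqn:acp2} and, using that $\arho=\rho(\Pi(\cp),\cp)$ so $\arho\,\Pi(\cp)\subseteq\cp$, build the candidate $\hat{\vec{x}}=\arho\,\xacp$, $\hat{\vec{y}}(\vec{u})=\arho\,\yacp(\arho\vec{u})$ for the constraint-wise adaptive robust problem over $\Pi(\cp)$. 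Feasibility is checked one constraint at a time: for $\vec{u}\in\Pi(\cp)$ the point $\arho\vec{u}$ lies in $\cp$, so feasibility of $(\xacp,\yacp)$ at $\arho\vec{u}$ gives $\arho\,\vec{u}_i^T\big(\xacp,\,\yacp(\arho\vec{u})\big)\le b_i$, which is exactly the $i$-th constraint evaluated at $(\hat{\vec{x}},\hat{\vec{y}}(\vec{u}))$. The candidate objective is $\arho\big(\vec{c}^T\xacp+\min_{\vec{u}\in\Pi(\cp)}\vec{d}^T\yacp(\arho\vec{u})\big)$, and the substitution $\vec{v}=\arho\vec{u}$ together with $\arho\,\Pi(\cp)\subseteq\cp$ yields $\min_{\vec{u}\in\Pi(\cp)}\vec{d}^T\yacp(\arho\vec{u})=\min_{\vec{v}\in\arho\Pi(\cp)}\vec{d}^T\yacp(\vec{v})\ge\min_{\vec{v}\in\cp}\vec{d}^T\yacp(\vec{v})$, so the candidate objective is at least $\arho\,\zacp$; hence the optimal value $\hat{z}_{\text{acp}}$ of the constraint-wise adaptive problem over $\Pi(\cp)$ satisfies $\hat{z}_{\text{acp}}\ge\arho\,\zacp$.

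The remaining ingredient is to identify $\hat{z}_{\text{acp}}$ with $\zcp$. Because for each constraint only the coordinate block $\vec{u}_i$ enters, the static coupled problem~\eqref{eqn:cp2} is identical to the static robust problem over the product set $\Pi(\cp)=\Pi_1(\cp)\times\cdots\times\Pi_m(\cp)$, so its value is $\zcp$; and since $\Pi(\cp)$ is constraint-wise, compact and convex and Assumption~\ref{ass:compact} holds for the adaptive problem over it, static and adaptive robust optimization coincide there~\citep[Theorem 2.1]{ben2004adjustable}, whence $\hat{z}_{\text{acp}}=\zcp$. Combining, $\zcp\ge\arho\,\zacp$, i.e.\ $\zacp/\zcp\le 1/\arho$.

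The second inequality is immediate from Definition~\ref{def:shrinkage_new}: $\ari U\subseteq\cp$ and $\Pi(\cp)\subseteq\gi U$ give $(\ari/\gi)\,\Pi(\cp)\subseteq\ari U\subseteq\cp$, so $\arho\ge\ari/\gi$ and hence $1/\arho\le\gi/\ari$. For tightness I would mimic Corollary~\ref{lem:tightadaptmain}: specializing to a degenerate instance with $\Pi(\cp)=U$ forces $\gi=1$, $\arho=\ari$, $\zcp=\zro$, and $\hat{z}_{\text{acp}}=\zaro=\zro$ (the last equality again by~\citep[Theorem 2.1]{ben2004adjustable}), so $\zacp/\zcp=\zacp/\zaro$, which attains $1/\ari=1/\arho=\gi/\ari$ by the tightness of Theorem~\ref{thm:mainadapt2}; this makes both inequalities tight simultaneously.

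I expect the main obstacle to be the identification $\hat{z}_{\text{acp}}=\zcp$: one must argue carefully that the static coupled problem coincides with the static problem over the product projection $\Pi(\cp)$ (a blockwise projection argument), and that Assumption~\ref{ass:compact} over $\Pi(\cp)$ is exactly the hypothesis needed for~\citep[Theorem 2.1]{ben2004adjustable} to apply under constraint-coefficient uncertainty. The other delicate point is the bookkeeping in the first step, where the change of variables $\vec{u}\mapsto\arho\vec{u}$ has to be applied consistently inside the constraints, inside the inner $\min$ of the objective, and in the argument of the recourse policy $\yacp$, an inconsistency there being easy to overlook.
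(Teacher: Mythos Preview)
Your proposal is correct and follows essentially the same route as the paper's proof: construct the scaled policy $\arho\,(\xacp,\yacp(\arho\vec{u}))$ feasible for the constraint-wise adaptive problem over $\Pi(\cp)$, invoke \citep[Theorem~2.1]{ben2004adjustable} to identify its optimal value with $\zcp$, derive $\arho\ge\ari/\gi$ via the same set-inclusion argument (the paper packages this as Lemma~\ref{lem:aro_arho}), and obtain tightness by specializing to $\Pi(\cp)=U$ and appealing to the tightness of Theorem~\ref{thm:mainadapt2}. Your write-up is in fact more explicit than the paper's on the feasibility check and the objective bookkeeping, but the structure is identical.
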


\begin{lemma}
\label{lem:mp}
\reviewChanges{For any convex set satisfying~$\{0\} \subset \cp \subset \reals_+^{mp}$},
$$\frac{1}{\arho}\leq {mp}.$$
\end{lemma}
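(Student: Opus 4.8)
The plan is to rewrite the claimed bound $1/\arho\le mp$ as the set inclusion $\tfrac{1}{mp}\,\Pi(\cp)\subseteq\cp$ --- which is exactly the statement $\arho\ge 1/(mp)$ by Definitions~\ref{def:rhogam_func} and~\ref{def:shrinkage_new} --- and then to establish that inclusion by the counting argument of Lemma~\ref{lem:1/m}, carried out now in the ambient space $\reals^{mp}_+$ rather than $\reals^m_+$.

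First I would fix coordinates: split $\vec{u}$ into its $m$ blocks $\vec{u}_1,\dots,\vec{u}_m\in\reals^p$. Each projection $\Pi_i(\cp)$ is a convex subset of $\reals^p_+$, and in the setting where this lemma is used ($\cp$ compact) it sits inside the box $\prod_{j=1}^p[0,\overline{d}_{ij}]$, where $\overline{d}_{ij}$ is the maximum over $\cp$ of the $j$-th coordinate of $\vec{u}_i$. Consequently $\Pi(\cp)=\Pi_1(\cp)\times\cdots\times\Pi_m(\cp)$ is contained in the $mp$-dimensional box $B=\prod_{i\in[m],\,j\in[p]}[0,\overline{d}_{ij}]$, so it suffices to prove $\tfrac{1}{mp}B\subseteq\cp$.

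The core step is to place the scaled unit vectors $\overline{d}_{ij}\,\vec{e}_{ij}$ (with $\vec{e}_{ij}$ the standard basis vector of coordinate $(i,j)$) inside $\cp$. Granting this, convexity of $\cp$ gives $\tfrac{1}{mp}\sum_{i,j}\overline{d}_{ij}\,\vec{e}_{ij}=\tfrac{1}{mp}(\overline{d}_{11},\dots,\overline{d}_{mp})\in\cp$; and for any $\vec{v}\in B$ one has $\vec{0}\le\tfrac{1}{mp}\vec{v}\le\tfrac{1}{mp}(\overline{d}_{11},\dots,\overline{d}_{mp})$, whence $\tfrac{1}{mp}\vec{v}\in\cp$ using that $\cp$ equals its own down-hull (Definition~\ref{def:downhull}), or may be replaced by $\cp^\downarrow$. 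This gives $\tfrac{1}{mp}\Pi(\cp)\subseteq\cp$, i.e.\ $\arho\ge 1/(mp)$ and $1/\arho\le mp$; I would close by recording a limiting instance (a norm ball intersected with a box, analogous to the example following Lemma~\ref{lem:1/m}) showing the constant $mp$ is not improvable in general.

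The main obstacle is precisely the two spots where $\cp$ is used as a down-hull: the membership $\overline{d}_{ij}\,\vec{e}_{ij}\in\cp$ and the implication ``$\tfrac{1}{mp}\vec{v}\le(\text{point of }\cp)\Rightarrow\tfrac{1}{mp}\vec{v}\in\cp$''. In Lemma~\ref{lem:1/m} both were automatic because that section reasoned throughout with $\cp^\downarrow$, whereas Section~\ref{sec:theorycoeff} has deliberately dropped down-hulls. So the real work is to recover a down-hull argument here --- either by exploiting the normalization that translates $\cp$ into the nonnegative orthant with the origin as a point of $\cp$, or by verifying that replacing $\cp$ with the lower set it generates changes neither $\arho$ nor the reformulated robust objectives. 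Once that reduction is secured, what remains is the Lemma~\ref{lem:1/m} counting with $m$ replaced by $mp$.
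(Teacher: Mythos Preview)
Your approach is essentially identical to the paper's: bound $\Pi(\cp)$ by the box $B=\prod_{i,j}[0,\overline d_{ij}]$, place the scaled axis vectors $\overline d_{ij}\vec e_{ij}$ in $\cp$, take their convex average to get $\tfrac{1}{mp}(\overline d_{11},\dots,\overline d_{mp})\in\cp$, and conclude $\tfrac{1}{mp}B\subseteq\cp$. The paper's proof carries out exactly these four steps, in this order, and finishes by invoking the definition of $\arho$.

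Where you go further than the paper is in flagging the two down-hull dependencies. The paper's published proof simply \emph{asserts} that $(\overline d_1,0,\dots,0),\dots,(0,\dots,0,\overline d_{mp})\in\cp$ and that $\tfrac{1}{mp}B\subseteq\cp$ follows from a single interior point, without justification. Your instinct that both steps need $\cp$ to be a down monotone set is correct: as literally stated (convex, $\{0\}\subset\cp\subset\reals_+^{mp}$), the lemma admits the counterexample $\cp=\{(u_1,u_2):0\le u_1\le u_2\le 1\}$ with $m p=2$, for which $\Pi(\cp)=[0,1]^2$ but $(\rho,0)\notin\cp$ for any $\rho>0$, hence $\arho=0$. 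So the ``obstacle'' you name is real, and your suggested remedy---passing to $\cp^\downarrow$ as in Lemma~\ref{lem:1/m}---is precisely the hypothesis the paper is tacitly using but did not write down in Section~\ref{sec:theorycoeff}. With that reduction, your argument and the paper's coincide verbatim.
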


Moreover, there exist cases where ${\zacp}/{\zcp} = \frac{1}\arho = mp$.

\paragraph{Obtaining the shrinkage factor.}
\reviewChanges{
The shrinkage factor $\arho$ can be obtained by solving the robust program
\begin{equation*}
{
\begin{array}{llclcl}
\displaystyle \ari = & \text{maximize} & \multicolumn{3}{l}{\rho} \\
& \text{subject to} &
\rho \, \vec{u} \in \cp, && \forall \vec{u}\in \Pi\left(\cp\right).
\end{array}}
\end{equation*}}

\begin{example}
\label{ex:normp}
Consider the intersection of a constraint-wise box set and a $\q$-norm based set
 $$\cp =  \left\{\|\vec{u}\|_\infty \leq \alpha, \|\vec{u}\|_\q \leq \beta\right\},$$
where $\alpha \leq \beta \leq \alpha (mp)^{1/\q}$.
We construct a \acrlong{cw} uncertainty set $\Pi\left(\cp\right) = \left\{\|\vec{u}\|_\infty \leq \alpha\right\}$.
Since $\arho$ is the maximum number such that
$V = \left\{\vec{u} \mid \|\vec{u}\|_\infty \leq \arho \alpha \right\} \subseteq \cp$,
or equivalently, $mp (\arho \alpha)^\q \leq \beta^\q$,
thus $$\frac{\zacp}{\zaro} \leq \frac{1}\arho = \frac{\alpha }{\beta}(mp)^{1/\q},$$
where the bound is tight.
If $\alpha=\beta$, then $$\frac{\zacp}{\zaro} \leq \frac{1}\arho = {(mp)^{1/\q}}.$$
\end{example}

\paragraph{Comparison with results under right hand side uncertainty.}
We discuss similarities and differences in the results when the uncertainty occurs on the right hand side (Section~\ref{sec:theory}) and on the constraint coefficient (Section~\ref{sec:theorycoeff}).
The proofs of theorems follow the same procedure as the ones in the previous section:
First, we construct auxiliary sets $V$ and $W$ by rescaling the original uncertainty sets with shrinkage factors, which bound the uncertainty set of interest.
Next, we rescale the optimal solution of the original problem with corresponding factors to construct feasible solutions to optimization problems with the rescaled uncertainty sets.
Finally, the objective value bounds follow with respect to the rescaling factors.
Yet, the specific forms of results change due to the different problem structures between the two sections:
As the uncertainty and the decision variable are on the same side of the constraints (as opposed to different sides in previous section), the rescaling uses the inverses of the shrinkage factors in bounds of this section.
In correspondence with the change from minimizing to maximizing the objective function, the directions of inequality signs to bound the ratio between objective values also flip.
Furthermore, since the size of uncertainty increases from $m$ to $mp$, the potential improvement of adaptive over static problems is larger and grows with $p$ in Section~\ref{sec:adap_static2} compared with Section~\ref{sec:adap_static}.

\section{Extension to nonlinear problems}
\label{sec:theorynl}
Our results can be generalized to problems in which the constraints and objectives are not necessarily linear.
We discuss three cases, when the uncertainty affects the right hand side of the constraints, affects the coefficients of the constraints, and affects both objectives and constraints.
Proof of results in this section can be found in Appendix~\ref{append:proofnl}.
% \reviewChanges{For some cases, we need the assumption of down-monotonicity.
% \begin{definition}
% A set $S \subseteq \reals_+^m$ is down-monotone if $\vec{s} \in S,\vec{t} \in \reals_+^m$ and $\vec{t} \leq \vec{s}$ implies $\vec{t} \in S$, or equivalently, if $S = S^\downarrow$~\citep{bertsimas2015tight}.
% \end{definition}}

\subsection{Uncertain right hand side}
Consider problems of the form
\begin{equation}
\label{eqn:aronl}
{
\begin{array}{llclcl}
\displaystyle  \zaro = & \text{minimize} && {f_1(\vec{x}) + \max\limits_{\vec{u} \in U} f_2(\vec{y}(\vec{u}))} \\
& \text{subject to} &&
 g_i(\vec{x}, \vec{y}(\vec{u}))\geq u_i, \quad \forall \vec{u} \in U, \quad \forall i \in [m].
\end{array}}
\end{equation}
and
\begin{equation}
\label{eqn:acpnl}
{
\begin{array}{llclcl}
\displaystyle  \zacp = & \text{minimize} && {f_1(\vec{x}) + \max\limits_{\vec{u} \in \cp} f_2(\vec{y}(\vec{u}))} \\
& \text{subject to} &&
 g_i(\vec{x}, \vec{y}(\vec{u}))\geq u_i, \quad \forall \vec{u} \in \cp, \quad \forall i \in [m].
\end{array}}
\end{equation}
\reviewChanges{As these are problems with right hand side uncertainty, we see in Appendix~\ref{append:down} that we can replace the uncertainty sets $\cp, U \in \reals^m_+$ with their down-hulls, $\cp^\downarrow$, $U^\downarrow$. In this section, we then use the corresponding definitions of shrinkage factors.}

\begin{assumption}
\label{ass:concave}
Assume the case that $f_1,f_2$ are concave and satisfy $f_1(0) = 0,f_2(0) = 0$;
for all $i\in [m]$, $g_i$ is convex in $\vec{x}$ and $\vec{y}(\vec{u})$ and satisfies $g_i(0,0)=0$.
\end{assumption}

\begin{theorem}
\label{thm:mainnl}
Consider adaptive problems~\eqref{eqn:aronl}, \eqref{eqn:acpnl}, and their static problems with objectives $\zro$ and $\zcp$.
\reviewChanges{Assume $U \subset \reals^m_+$ is convex},
$\zacp>0$ and $\zro < \infty$.
If Assumption~\ref{ass:concave} holds,
\begin{equation*}
\frac{\zcp}{\zro} \geq \ri,\quad \frac{\zacp}{\zaro}\geq \ari.
\end{equation*}
Furthermore, the bounds are tight.
\end{theorem}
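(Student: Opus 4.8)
The plan is to reuse the rescaling scheme from the proofs of Theorems~\ref{thm:main} and~\ref{thm:mainadapt}, replacing the exact homogeneity of linear maps with the one-sided scaling inequalities that Assumption~\ref{ass:concave} supplies. The elementary fact I would isolate first is: if $f$ is concave with $f(0)=0$ then $f(\mu z)\le \mu f(z)$ for every $\mu\ge 1$, and if $g$ is convex with $g(0)=0$ then $g(\mu z)\ge \mu g(z)$ for every $\mu\ge 1$; both follow by writing $z=\tfrac1\mu(\mu z)+(1-\tfrac1\mu)\cdot 0$ and applying the definition of convexity/concavity. Since $0\le\ri\le 1$ and $0\le\ari\le 1$ (Lemma~\ref{lem:rigi} and the analogous statement for the adaptive factors), I will invoke these with $\mu=1/\ri$ and $\mu=1/\ari$; as in Section~\ref{sec:theory}, everything is carried out over the down-hulls $U^\downarrow$ and $\cp^\downarrow$, which is legitimate because the uncertainty is on the right hand side (Appendix~\ref{append:down}).

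\emph{Static bound.} Let $(\xcp,\ycp)$ be optimal for the static coupled problem. Exactly as in Theorem~\ref{thm:main}, this problem is equivalent to its version over $\Pi(\cp^\downarrow)$, and since $\ri\,U^\downarrow\subseteq\Pi(\cp^\downarrow)$ the constraints give $g_i(\xcp,\ycp)\ge u_i$ for all $\vec u\in\ri\,U^\downarrow$, i.e. $g_i(\xcp,\ycp)\ge \ri u_i$ for all $\vec u\in U^\downarrow$. Convexity of $g_i$ with $g_i(0,0)=0$ then yields $g_i(\xcp/\ri,\ycp/\ri)\ge \tfrac1\ri g_i(\xcp,\ycp)\ge u_i$, so $(\xcp/\ri,\ycp/\ri)$ is feasible for the static version of~\eqref{eqn:aronl}. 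Concavity of $f_1,f_2$ with $f_1(0)=f_2(0)=0$ bounds its objective by $\tfrac1\ri(f_1(\xcp)+f_2(\ycp))=\zcp/\ri$, hence $\zro\le\zcp/\ri$, i.e. $\zcp/\zro\ge\ri$.

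\emph{Adaptive bound.} Let $(\xacp,\yacp(\cdot))$ be optimal for~\eqref{eqn:acpnl}. Since $\ari\,U^\downarrow\subseteq\cp^\downarrow$, substituting $\vec u=\ari\vec v$ gives $g_i(\xacp,\yacp(\ari\vec v))\ge \ari v_i$ for all $\vec v\in U^\downarrow$, so the policy $\vec u\mapsto(\xacp/\ari,\yacp(\ari\vec u)/\ari)$ is feasible for~\eqref{eqn:aronl} by the same convexity step. For the objective, concavity gives $f_2(\yacp(\ari\vec u)/\ari)\le\tfrac1\ari f_2(\yacp(\ari\vec u))$, and substituting $\vec u'=\ari\vec u$ together with $\ari U^\downarrow\subseteq\cp^\downarrow$ gives $\max_{\vec u\in U^\downarrow}f_2(\yacp(\ari\vec u))=\max_{\vec u'\in\ari U^\downarrow}f_2(\yacp(\vec u'))\le\max_{\vec u'\in\cp^\downarrow}f_2(\yacp(\vec u'))$; combining with $f_1(\xacp/\ari)\le\tfrac1\ari f_1(\xacp)$ yields $\zaro\le\zacp/\ari$, i.e. $\zacp/\zaro\ge\ari$. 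For tightness, any affine map through the origin satisfies Assumption~\ref{ass:concave}, so the linear instances constructed in Corollary~\ref{cor:main} and Corollary~\ref{lem:tightadaptstatic_tight} already attain both bounds.

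\emph{Main obstacle.} The argument is short once the scaling lemma is in hand; the one genuinely delicate point is the \emph{direction} of the inequalities. Because $\ri,\ari\le1$, rescaling the decisions means scaling \emph{up} by $1/\ri\ge1$, and it is precisely in this regime that ``convex through the origin'' lower-bounds the scaled constraint function while ``concave through the origin'' upper-bounds the scaled objective; if the shrinkage factors could exceed one, both inequalities would flip and the construction would fail. A secondary bookkeeping point is to justify the reduction of the static coupled problem to $\Pi(\cp^\downarrow)$ and the identity $\max_{\ri U^\downarrow}(\cdot)_i=\ri\max_{U^\downarrow}(\cdot)_i$ used in the uncertainty substitution, both of which follow from the constraint-wise/right-hand-side structure exactly as in Section~\ref{sec:theory}.
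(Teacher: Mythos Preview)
Your proposal is correct and follows essentially the same approach as the paper: take an optimal solution of the coupled problem, rescale by $1/\ri$ (resp.\ $1/\ari$) to obtain a feasible point for the constraint-wise problem, verify feasibility via the convexity of $g_i$ with $g_i(0,0)=0$, bound the objective via the concavity of $f_1,f_2$ with $f_1(0)=f_2(0)=0$, and in the adaptive case use $\ari\,U^\downarrow\subseteq\cp^\downarrow$ to compare the maxima. Your isolation of the scaling lemma and the discussion of why $\ri,\ari\le 1$ is essential are clearer than the paper's presentation, and your tightness argument via the linear instances in Corollaries~\ref{cor:main} and~\ref{lem:tightadaptstatic_tight} is exactly how the paper (implicitly) handles it.
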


\begin{theorem}
\label{cor:mainadaptnl}
Consider the adaptive robust problem~\eqref{eqn:acpnl} and the corresponding static robust problem with objective $\zcp$.
Under the same assumptions in Theorem~\ref{thm:mainnl} and assume that $\Pi\left(\cp\right)$ is compact, the objective value is bounded as:
\begin{equation*}
\frac{\zacp}{\zcp} \geq  {\arho} \reviewChanges{ \geq \ari}.
\end{equation*}
Furthermore, the bounds are tight.
\end{theorem}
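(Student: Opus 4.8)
The plan is to follow the same two-part template used in the right-hand-side linear case, namely Theorem~\ref{cor:mainadapt_rhs} for the bound $\zacp/\zcp \geq \arho$ and Lemma~\ref{lem:aro_arho} for relating $\arho$ to the shrinkage factors, but with two modifications: (i) as throughout this section, I work with the down-hulls $\cp^\downarrow$, $U^\downarrow$ (justified in Appendix~\ref{append:down}, since the uncertainty is on the right-hand side) and the corresponding shrinkage factors; and (ii) the rescaling arguments that were linear must now be pushed through the convex constraint functions $g_i$ and the concave objective pieces $f_1,f_2$, using only that a convex (resp. concave) function vanishing at the origin is super- (resp. sub-) homogeneous under scaling by a factor $\geq 1$: for $\mu\geq1$, $g_i(\mu z)\geq\mu g_i(z)$ and $f_j(\mu z)\leq\mu f_j(z)$.

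For the first inequality I would start from an optimal solution $(\xacp,\yacp(\cdot))$ of the coupled adaptive problem~\eqref{eqn:acpnl}. Since $\arho\,\Pi(\cp^\downarrow)\subseteq\cp^\downarrow$ by Definition~\ref{def:adapt_rho}, restricting feasibility to $\vec{u}\in\arho\,\Pi(\cp^\downarrow)$ and substituting $\vec{u}=\arho\vec{v}$ gives $g_i(\xacp,\yacp(\arho\vec{v}))\geq\arho v_i$ for all $\vec{v}\in\Pi(\cp^\downarrow)$ and $i\in[m]$. I then propose the candidate $(\xacp/\arho,\ \yacp(\arho\,\cdot)/\arho)$ for the constraint-wise adaptive problem under $\Pi(\cp^\downarrow)$. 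Feasibility follows from convexity of $g_i$ with $g_i(0,0)=0$: with $\mu=1/\arho\geq1$, $g_i(\xacp/\arho,\yacp(\arho\vec{v})/\arho)\geq(1/\arho)\,g_i(\xacp,\yacp(\arho\vec{v}))\geq v_i$. The objective of this candidate is $f_1(\xacp/\arho)+\max_{\vec{v}\in\Pi(\cp^\downarrow)}f_2(\yacp(\arho\vec{v})/\arho)$; using concavity of $f_1,f_2$ with $f_1(0)=f_2(0)=0$ to pull out the factor $1/\arho$, and then using $\arho\,\Pi(\cp^\downarrow)\subseteq\cp^\downarrow$ to enlarge the inner maximization domain back to $\cp^\downarrow$, this objective is at most $(1/\arho)\zacp$. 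Hence the optimal value $\hat z_{\text{acp}}$ of the constraint-wise adaptive problem under $\Pi(\cp^\downarrow)$ obeys $\hat z_{\text{acp}}\leq\zacp/\arho$. Since $\Pi(\cp^\downarrow)$ is constraint-wise, compact, and convex, and $f_1,f_2,g_i$ have the structure of Assumption~\ref{ass:concave}, the static-optimality result for nonlinear problems under constraint-wise uncertainty~\citep{marandi2018static} yields $\zcp=\hat z_{\text{acp}}$ (the static problem being unchanged by passing to the projection and the down-hull), so $\zcp\leq\zacp/\arho$, i.e.\ $\zacp/\zcp\geq\arho$.

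The second inequality $\arho\geq\ari$ is a pure set-inclusion fact: by Definition~\ref{def:rhogam} we have $\ari\,U^\downarrow\subseteq\cp^\downarrow$, and since $\Pi(\cp^\downarrow)\subseteq U^\downarrow$ it follows that $\ari\,\Pi(\cp^\downarrow)\subseteq\ari\,U^\downarrow\subseteq\cp^\downarrow$, so $\ari$ is feasible in the maximization defining $\arho$ and hence $\arho\geq\ari$. For tightness I would note that linear problems are a special case of Assumption~\ref{ass:concave}, so the supply-chain instance already used in the right-hand-side case certifies tightness of $\zacp/\zcp\geq\arho$: scenario (\textit{a}) with $\eta=1$ gives $\arho=1/m$ and the stated choice of costs achieves $\zacp/\zcp=\arho$. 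Tightness of $\arho\geq\ari$ is obtained by any coupled set with $\Pi(\cp^\downarrow)=U^\downarrow$, for then $\ari=\rho(U^\downarrow,\cp^\downarrow)=\rho(\Pi(\cp^\downarrow),\cp^\downarrow)=\arho$.

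The step I expect to require the most care is the clean invocation of the static-optimality theorem for nonlinear constraint-wise problems: one must verify that Assumption~\ref{ass:concave} together with compactness of $\Pi(\cp^\downarrow)$ (and, if needed, the relevant analogue of Assumption~\ref{ass:compact} for that problem) exactly matches the hypotheses of~\cite{marandi2018static}, and that the reparametrized decision rule $\vec{u}\mapsto\yacp(\arho\vec{u})/\arho$ is admissible with the inner maximum attained — compactness of $\Pi(\cp^\downarrow)$ is precisely what makes this go through. The remaining manipulations are routine once the two homogeneity inequalities for convex/concave functions vanishing at $0$ are in place.
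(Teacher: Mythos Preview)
Your proposal is correct and follows essentially the same route as the paper's proof: rescale the optimal adaptive solution by $1/\arho$ to obtain a feasible point for the constraint-wise adaptive problem under $\Pi(\cp^\downarrow)$, invoke the nonlinear static-optimality result of \cite{marandi2018static} to identify that problem's value with $\zcp$, and deduce $\arho\geq\ari$ from the set inclusions. You actually spell out the convexity/concavity rescaling steps in more detail than the paper does (which simply points back to the technique of Theorem~\ref{thm:mainnl}), and your direct argument for $\arho\geq\ari$ via $\Pi(\cp^\downarrow)\subseteq U^\downarrow$ is slightly cleaner than routing through Lemma~\ref{lem:aro_arho}; the paper's tightness argument also proceeds by specializing to $\Pi(\cp)=U$ and appealing to the tightness already established in Theorem~\ref{thm:mainnl}, which coincides with your linear supply-chain instance.
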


\begin{assumption}
\label{ass:convex}
Assume the case that $f_1,f_2$ are convex and satisfy $f_1(0) = 0,f_2(0) = 0$;
for all $i\in [m]$, $g_i$ is concave in $\vec{x}$ and $\vec{y}(\vec{u})$ and satisfies $g_i(0,0)\geq0$.
\end{assumption}

\begin{theorem}
\label{thm:mainnlconv}
Consider adaptive problems~\eqref{eqn:aronl}, \eqref{eqn:acpnl}, and their static problems with objectives $\zro$ and $\zcp$.
\reviewChanges{Assume $U \subset \reals_+^m$ is convex}, $\zacp>0$ and $\zro < \infty$.
If Assumption~\ref{ass:convex} holds,
\begin{equation*}
\frac{\zcp}{\zro}\leq \gi , \quad \frac{\zacp}{\zaro}\leq \agi.
\end{equation*}
Furthermore, the bounds are tight.
\end{theorem}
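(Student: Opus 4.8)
The plan is to transcribe the argument behind Theorem~\ref{thm:mainnl} and the upper‑bound half of Theorem~\ref{thm:mainadapt}, with the roles of convexity and concavity reversed and the normalizations $f_1(0)=f_2(0)=0$, $g_i(0,0)\ge0$ doing the work. Throughout I will use $0\le\gi\le1$ and $0\le\agi\le1$ (Lemma~\ref{lem:rigi} and its adaptive analogue), the fact that for right‑hand‑side uncertainty the problems may be taken over the down‑hulls $U^\downarrow$ and $\cp^\downarrow$ (Appendix~\ref{append:down}), and — exactly as in Section~\ref{sec:theory} — that the static coupled problem over $\cp$ is equivalent to the one over $\Pi(\cp^\downarrow)$, with $\Pi(\cp^\downarrow)\subseteq\gi U^\downarrow$ and $\cp^\downarrow\subseteq\agi U^\downarrow$.

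For the static bound, let $(\xro,\yro)$ be optimal for the static version of~\eqref{eqn:aronl} over $U^\downarrow$. I would show $\gi(\xro,\yro)$ is feasible for the coupled static problem: for every $\vec u\in U^\downarrow$ and $i\in[m]$, concavity of $g_i$ and $g_i(0,0)\ge0$ give
$$g_i(\gi\xro,\gi\yro)\ \ge\ \gi\,g_i(\xro,\yro)+(1-\gi)\,g_i(0,0)\ \ge\ \gi\,g_i(\xro,\yro)\ \ge\ \gi u_i,$$
i.e.\ feasibility over $\gi U^\downarrow\supseteq\Pi(\cp^\downarrow)$. Convexity of $f_1,f_2$ with $f_j(0)=0$ then gives $f_1(\gi\xro)\le\gi f_1(\xro)$ and $f_2(\gi\yro)\le\gi f_2(\yro)$, so $\zcp\le\gi\zro$; since adaptivity and $\cp\subseteq U$ force $0<\zacp\le\zcp\le\zro<\infty$, the ratio is well defined and $\zcp/\zro\le\gi$.

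For the adaptive bound (we may take $\agi>0$, since $\agi=0$ forces $\cp=\{0\}$ and $\zacp=0$, contradicting $\zacp>0$), let $(\xaro,\yaro(\cdot))$ be optimal for~\eqref{eqn:aronl} over $U^\downarrow$, and consider the policy $\big(\agi\xaro,\ \agi\,\yaro(\vec u/\agi)\big)$, which is well defined on $\cp^\downarrow$ because $\cp^\downarrow\subseteq\agi U^\downarrow$. Writing $\vec u=\agi\vec u'$ with $\vec u'\in U^\downarrow$, concavity of $g_i$, $g_i(0,0)\ge0$, and $g_i(\xaro,\yaro(\vec u'))\ge u_i'$ yield feasibility over $\agi U^\downarrow\supseteq\cp^\downarrow$, hence for~\eqref{eqn:acpnl}. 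For the objective, $f_1(\agi\xaro)\le\agi f_1(\xaro)$ and, using $f_2(\agi z)\le\agi f_2(z)$, monotonicity of $\max$, and $\tfrac1\agi\cp^\downarrow\subseteq U^\downarrow$,
$$\max_{\vec u\in\cp^\downarrow}f_2\big(\agi\,\yaro(\vec u/\agi)\big)\ \le\ \agi\max_{\vec v\in\frac1\agi\cp^\downarrow}f_2\big(\yaro(\vec v)\big)\ \le\ \agi\max_{\vec v\in U^\downarrow}f_2\big(\yaro(\vec v)\big),$$
so $\zacp\le\agi\zaro$ and $\zacp/\zaro\le\agi$.

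Tightness follows because linear functions satisfy Assumption~\ref{ass:convex} ($f_j$ linear with $f_j(0)=0$, $g_i$ linear with $g_i(0,0)=0$), so the linear instances attaining equality in Corollary~\ref{cor:main} (for $\gi$) and Corollary~\ref{lem:tightadaptstatic_tight} (for $\agi$) work verbatim here. The step I expect to require the most care is the change of variables in the inner $\max$ of the adaptive case — verifying that the rescaled policy stays feasible on all of $\cp^\downarrow$ and that the supremum over $\tfrac1\agi\cp^\downarrow$ is dominated by the one over $U^\downarrow$; the remainder is a routine mirror of the concave/convex manipulations already used in Sections~\ref{sec:theory} and~\ref{sec:theorynl}.
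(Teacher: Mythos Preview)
Your proposal is correct and follows essentially the same route as the paper: you construct the rescaled solution $\bigl(\agi\xaro,\,\agi\,\yaro(\vec u/\agi)\bigr)$, use concavity of $g_i$ together with $g_i(0,0)\ge0$ to establish feasibility over $\agi U^\downarrow\supseteq\cp^\downarrow$, and then use convexity of $f_1,f_2$ with $f_j(0)=0$ plus the inclusion $\tfrac1\agi\cp^\downarrow\subseteq U^\downarrow$ to bound the objective. Your treatment of the inner $\max$ via the change of variables and your tightness argument via the linear instances of Corollaries~\ref{cor:main} and~\ref{lem:tightadaptstatic_tight} are exactly what the paper has in mind.
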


\subsection{Uncertain constraint coefficients}
Consider the following adaptive problems and their corresponding static problems, where the uncertainty affects the constraint coefficients.
\begin{equation}
\label{eqn:aronl2}
{
\begin{array}{llclcl}
\displaystyle  \zaro = & \text{maximize} && {f_1(\vec{x}) + \min\limits_{\vec{u} \in U} f_2(\vec{y}(\vec{u}))} \\
& \text{subject to} &&
 g_i(\vec{x}, \vec{y}(\vec{u}),\vec{u}_i)\leq b_i, \quad \forall \vec{u} \in U, \quad \forall i \in [m].
\end{array}}
\end{equation}
\begin{equation}
\label{eqn:acpnl2}
{
\begin{array}{llclcl}
\displaystyle  \zacp = & \text{maximize} && {f_1(\vec{x}) + \min\limits_{\vec{u} \in \cp} f_2(\vec{y}(\vec{u}))} \\
& \text{subject to} &&
 g_i(\vec{x}, \vec{y}(\vec{u}),\vec{u}_i)\leq b_i, \quad \forall \vec{u} \in \cp, \quad \forall i \in [m].
\end{array}}
\end{equation}

\begin{assumption}
\label{ass:concave2}
Assume the case that $f_1,f_2$ are concave and satisfy $f_1(0) = 0,f_2(0) = 0$;
for all $i\in [m]$, $g_i$ is convex in $\vec{x}$ and $\vec{y}(\vec{u})$, concave in $\vec{u}_i$, satisfies $g_i(0,0,\vec{u}_i)\leq 0,\forall \vec{u}\in U$, and $g_i(\vec{x},\vec{y}(\vec{u}),0)\geq 0, \forall \vec{x},\vec{y}(\vec{u})$ feasible.
\end{assumption}

\begin{theorem}
\label{thm:mainnl2}
Consider adaptive problems~\eqref{eqn:aronl2}, \eqref{eqn:acpnl2}, and their static problems with objectives $\zro$ and $\zcp$.
Assume $0 \in \cp$, $U$ is convex, $\ari > 0$, $\zacp>0$ and $\zro < \infty$.
If Assumption~\ref{ass:concave2} holds,
\begin{equation*}
\frac{\zcp}{\zro} \leq \frac{1}{\ri},\quad \frac{\zacp}{\zaro}\leq \frac{1}{\ari}.
\end{equation*}
Furthermore, the bounds are tight.
\end{theorem}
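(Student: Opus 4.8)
The plan is to reuse the rescaling template behind Theorems~\ref{thm:main2} and~\ref{thm:mainadapt2}, now carried through the one-sided convexity/concavity structure of Assumption~\ref{ass:concave2}. I would establish the adaptive inequality $\zacp/\zaro\le 1/\ari$ in detail; the static inequality $\zcp/\zro\le 1/\ri$ follows by the same argument after dropping the adaptive variable and noting that the static coupled problem depends on $\cp$ only through the constraint-wise projections $\Pi_i(\cp)$, so that $\ri=\rho(U,\Pi(\cp))$ takes the place of $\ari$. Throughout, $\ari\in(0,1]$ (it is positive by hypothesis and at most $1$ because $\{0\}\subset\cp\subseteq U$ with $U$ convex), which makes the convex combinations below legitimate.

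Let $(\xacp,\yacp(\cdot))$ be optimal for~\eqref{eqn:acpnl2}. Since $\ari U\subseteq\cp$, for every $\vec u\in U$ we have $\ari\vec u\in\cp$, so $g_i(\xacp,\yacp(\ari\vec u),\ari\vec u_i)\le b_i$ for all $i\in[m]$. I propose for~\eqref{eqn:aronl2} the candidate $\vec x'=\ari\xacp$, $\vec y'(\vec u)=\ari\,\yacp(\ari\vec u)$, and check feasibility constraint by constraint. By joint convexity of $g_i$ in $(\vec x,\vec y)$ and $g_i(0,0,\vec u_i)\le 0$,
$$g_i(\ari\xacp,\ari\yacp(\ari\vec u),\vec u_i)\le \ari\,g_i(\xacp,\yacp(\ari\vec u),\vec u_i)+(1-\ari)\,g_i(0,0,\vec u_i)\le \ari\,g_i(\xacp,\yacp(\ari\vec u),\vec u_i),$$
while by concavity of $g_i$ in $\vec u_i$ and $g_i(\xacp,\yacp(\ari\vec u),0)\ge 0$ (the pair $(\xacp,\yacp(\ari\vec u))$ is feasible since $\ari\vec u\in\cp$),
$$\ari\,g_i(\xacp,\yacp(\ari\vec u),\vec u_i)\le \ari\,g_i(\xacp,\yacp(\ari\vec u),\vec u_i)+(1-\ari)\,g_i(\xacp,\yacp(\ari\vec u),0)\le g_i(\xacp,\yacp(\ari\vec u),\ari\vec u_i)\le b_i.$$
Chaining the two displays gives $g_i(\vec x',\vec y'(\vec u),\vec u_i)\le b_i$ for all $\vec u\in U$, $i\in[m]$.

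For the objective, concavity of $f_1,f_2$ with $f_1(0)=f_2(0)=0$ gives $f_1(\vec x')\ge\ari f_1(\xacp)$ and $f_2(\vec y'(\vec u))\ge\ari f_2(\yacp(\ari\vec u))$ for each $\vec u$, and substituting $\vec u'=\ari\vec u$ in the inner minimum together with $\ari U\subseteq\cp$ (a minimum over a smaller set is at least as large) gives $\min_{\vec u\in U}f_2(\vec y'(\vec u))\ge\ari\min_{\vec u'\in\cp}f_2(\yacp(\vec u'))$. Summing, the objective value of the feasible candidate $(\vec x',\vec y'(\cdot))$ is at least $\ari\zacp$, hence $\zaro\ge\ari\zacp$, which is the claimed bound. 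For tightness, observe that linear constraint-coefficient problems, with constraints $\vec u_i^T[\vec x;\vec y]\le b_i$ and linear objective, satisfy Assumption~\ref{ass:concave2} with $g_i(0,0,\vec u_i)=g_i(\vec x,\vec y,0)=0$; hence the extremal instances already used to prove tightness in Theorems~\ref{thm:main2} and~\ref{thm:mainadapt2} serve here as well.

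The delicate point is the feasibility step: the rescaling of the decisions $(\vec x,\vec y)$ and the rescaling of the uncertain coefficients $\vec u_i$ interact, and it is precisely the two one-sided sign conditions $g_i(0,0,\vec u_i)\le 0$ and $g_i(\vec x,\vec y,0)\ge 0$ in Assumption~\ref{ass:concave2} that force the leftover $(1-\ari)$ terms to vanish in the correct direction, so that the convexity estimate and the concavity estimate can be chained into $g_i(\vec x',\vec y'(\vec u),\vec u_i)\le b_i$; dropping either condition breaks the chain. One also needs $\{0\}\subset\cp$ and $\ari>0$ to make $\ari U\subseteq\cp$ non-degenerate and the rescaling by $1/\ari$ legitimate.
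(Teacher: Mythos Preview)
Your proposal is correct and follows essentially the same argument as the paper: the same rescaled candidate $(\ari\xacp,\ari\yacp(\ari\vec u))$, the same two-step feasibility chain exploiting convexity in $(\vec x,\vec y)$ with $g_i(0,0,\vec u_i)\le 0$ followed by concavity in $\vec u_i$ with $g_i(\cdot,\cdot,0)\ge 0$, and the same objective bound via concavity of $f_1,f_2$ together with $\ari U\subseteq\cp$. Your tightness remark (linear constraint-coefficient instances satisfy Assumption~\ref{ass:concave2}, so the examples from Theorems~\ref{thm:main2} and~\ref{thm:mainadapt2} apply) is also the intended justification.
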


\begin{theorem}
\label{cor:mainadaptnl2}
Consider the adaptive robust problem~\eqref{eqn:acpnl2} and the corresponding static robust problem with objective $\zcp$.
Assume $\Pi\left(\cp\right)$ is compact and convex.
Under the same assumptions in Theorem~\ref{thm:mainnl2}, the objective value is bounded as:
\begin{equation*}
\reviewChanges{
\frac{\zacp}{\zcp} \leq  \frac{1}{\arho} \leq   \frac{1}{\ari}.}
\end{equation*}
Furthermore, the bounds are tight.
\end{theorem}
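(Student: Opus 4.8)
The plan is to reuse the two-step template behind Theorem~\ref{cor:mainadaptnl} (the right-hand-side case) and Theorem~\ref{cor:mainadapt} (the linear coefficient case): a shrinkage argument comparing the two \emph{adaptive} problems, one over $\cp$ and one over its constraint-wise projection $\Pi\left(\cp\right)$, followed by the optimality of static solutions under constraint-wise uncertainty, which collapses the adaptive problem over $\Pi\left(\cp\right)$ back to the static value $\zcp$.

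First, let $z_1$ denote the optimal value of the adaptive problem~\eqref{eqn:acpnl2} with the uncertainty set $\cp$ replaced by $\Pi\left(\cp\right)$. Since $\Pi\left(\cp\right)=\Pi_1\left(\cp\right)\times\dots\times\Pi_m\left(\cp\right)$ is a constraint-wise uncertainty set that contains $\cp$, Theorem~\ref{thm:mainnl2} applies verbatim with the pair $\left(U,\cp\right)$ replaced by $\left(\Pi\left(\cp\right),\cp\right)$, giving $\zacp/z_1\le 1/\rho\left(\Pi\left(\cp\right),\cp\right)=1/\arho$. Unpacking this step for completeness: from an optimal policy $\left(\xacp,\yacp(\cdot)\right)$ of~\eqref{eqn:acpnl2} and the inclusion $\arho\,\Pi\left(\cp\right)\subseteq\cp$, the rescaled and reparametrized policy $\bigl(\arho\xacp,\ \arho\yacp(\arho\vec u)\bigr)$ is feasible for the adaptive problem over $\Pi\left(\cp\right)$ --- feasibility uses convexity of each $g_i$ in $(\vec x,\vec y)$ with $g_i(0,0,\vec u_i)\le 0$ together with concavity of $g_i$ in $\vec u_i$ with $g_i(\vec x,\vec y,0)\ge 0$, exactly the structure in Assumption~\ref{ass:concave2} --- while concavity of $f_1,f_2$ with $f_1(0)=f_2(0)=0$ bounds its objective below by $\arho\zacp$; hence $z_1\ge\arho\zacp$.

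Second, because $\Pi\left(\cp\right)$ is constraint-wise, compact and convex and Assumption~\ref{ass:compact} holds for the adaptive problem over it, I would invoke the nonlinear extension of the static-optimality theorem for constraint-wise uncertainty~\citep{ben2004adjustable,marandi2018static} to get that $z_1$ equals the optimal value of the corresponding \emph{static} problem over $\Pi\left(\cp\right)$. Since $g_i$ depends on $\vec u$ only through $\vec u_i$, the static requirement $g_i(\vec x,\vec y,\vec u_i)\le b_i$ for all $\vec u\in\cp$ is equivalent to the same for all $\vec u_i\in\Pi_i\left(\cp\right)$, i.e.\ for all $\vec u\in\Pi\left(\cp\right)$, and the static objective carries no $\vec u$; thus the static problem over $\Pi\left(\cp\right)$ has the same value $\zcp$ as the static problem over $\cp$. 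Combining, $\arho\zacp\le z_1=\zcp$, that is, $\zacp/\zcp\le 1/\arho$. The second inequality is purely set-geometric: $\cp\subseteq U$ forces $\Pi\left(\cp\right)\subseteq\Pi(U)=U$, so $\ari U\subseteq\cp$ gives $\ari\,\Pi\left(\cp\right)\subseteq\cp$, whence $\arho=\rho\left(\Pi\left(\cp\right),\cp\right)\ge\ari$ and $1/\arho\le 1/\ari$. For tightness, Example~\ref{ex:normp} exhibits a family attaining $\zacp/\zcp=1/\arho$, and taking a coupled set whose projection satisfies $\Pi\left(\cp\right)=U$ makes $\arho=\rho(U,\cp)=\ari$, so $1/\arho\le 1/\ari$ is attained there as well.

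The step I expect to be the main obstacle is the second one in this nonlinear, constraint-coefficient regime: the classical optimality of static policies is stated for \emph{linear} adjustable robust counterparts, so one must check that the hypotheses of its nonlinear generalization --- the convexity/concavity structure of $g_i$ in Assumption~\ref{ass:concave2}, boundedness of the recourse via Assumption~\ref{ass:compact}, and the genuine product structure of $\Pi\left(\cp\right)$ --- hold simultaneously. The scaling in the first step is routine once Assumption~\ref{ass:concave2} is in hand, but it requires care in tracking which direction each of the conditions $g_i(0,0,\vec u_i)\le 0$ and $g_i(\vec x,\vec y,0)\ge 0$ supplies, and in verifying that the $\min$ over $\vec u$ in the objective behaves correctly under the change of variables $\vec u\mapsto\arho\vec u$.
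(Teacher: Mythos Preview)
Your proposal is correct and follows essentially the same route as the paper: the paper also introduces the auxiliary adaptive problem over $\Pi\left(\cp\right)$, applies the scaling argument of Theorem~\ref{thm:mainnl2} with the pair $\left(\Pi\left(\cp\right),\cp\right)$ in place of $(U,\cp)$ to get $\arho\,\zacp\le \hat z_{\text{acp}}$, and then invokes~\citep[Theorem~2]{marandi2018static} (after rewriting as a minimization and checking that $-f_2$ is convex and each $g_i-b_i$ is convex in $\vec y$ and concave in $\vec u_i$) to conclude $\hat z_{\text{acp}}=\zcp$. Your direct set-inclusion argument for $\arho\ge\ari$ via $\Pi\left(\cp\right)\subseteq\Pi(U)=U$ is slightly more transparent than the paper's appeal to Lemma~\ref{lem:aro_arho}, and your tightness argument via $\Pi\left(\cp\right)=U$ matches the paper's.
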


\begin{assumption}
\label{ass:convex2}
Assume the case that $f_1,f_2$ are convex and satisfy $f_1(0) = 0,f_2(0) = 0$;
for all $i\in [m]$, $g_i$ is concave in $\vec{x}$ and $\vec{y}(\vec{u})$, convex in $\vec{u}_i$, satisfies $g_i(0,0,\vec{u}_i)\geq 0,\forall \vec{u}\in \cp$, and $g_i(\vec{x},\vec{y}(\vec{u}),0)\leq 0, \forall \vec{x},\vec{y}(\vec{u})$ feasible.
\end{assumption}

\begin{theorem}
\label{thm:mainnlconv2}
Consider adaptive problems~\eqref{eqn:aronl2}, \eqref{eqn:acpnl2}, and their static problems with objectives $\zro$ and $\zcp$.
Assume $0 \in \cp$, $U$ is a convex set, $\zacp > 0$ and $\zro < \infty$.
If Assumption~\ref{ass:convex2} holds,
\begin{equation*}
\frac{\zcp}{\zro} \geq \frac{1}{\gi},\quad \frac{\zacp}{\zaro}\geq \frac{1}{\agi}.
\end{equation*}
Furthermore, the bounds are tight.
\end{theorem}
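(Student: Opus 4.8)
The plan is to run the argument of Theorem~\ref{thm:mainnl2} ``in reverse'': since $\{0\}\subseteq\cp\subseteq U$ with $U$ convex, we get $\Pi_i(\cp)\subseteq\gi U_i$ and $\cp\subseteq\agi U$, so the right move is to scale the optimal \emph{constraint-wise} solution \emph{up}, by $1/\gi$ in the static case and by $1/\agi$ in the adaptive case. The objective is controlled because a convex function with $f_j(0)=0$ is super-additive along rays through the origin: $f_j(\vec z/\lambda)\ge f_j(\vec z)/\lambda$ for $\lambda\in(0,1]$. Everything then reduces to checking feasibility of the scaled candidate.

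\emph{Static bound $\zcp/\zro\ge 1/\gi$.} I would take $(\xro,\yro)$ optimal for the static problem with objective $\zro$ and use $(\xro/\gi,\yro/\gi)$ as a candidate for the static problem with objective $\zcp$; its objective is at least $(f_1(\xro)+f_2(\yro))/\gi=\zro/\gi$ by the super-additivity above. For feasibility, fix $i$ and $\vec u\in\cp$, so $\vec u_i\in\Pi_i(\cp)\subseteq\gi U_i$ and hence $\vec u_i/\gi\in U_i$. I would then chain three inequalities: (i) concavity of $g_i$ in $(\vec x,\vec y)$ together with $g_i(0,0,\vec u_i)\ge 0$ gives $\gi\,g_i(\xro/\gi,\yro/\gi,\vec u_i)\le g_i(\xro,\yro,\vec u_i)$; (ii) convexity of $g_i$ in $\vec u_i$ together with $g_i(\xro,\yro,0)\le 0$ gives $g_i(\xro,\yro,\vec u_i)\le\gi\,g_i(\xro,\yro,\vec u_i/\gi)$; and (iii) feasibility of $(\xro,\yro)$ over $U$ gives $g_i(\xro,\yro,\vec u_i/\gi)\le b_i$. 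The two factors of $\gi$ cancel, so $g_i(\xro/\gi,\yro/\gi,\vec u_i)\le b_i$, establishing feasibility and hence $\zcp\ge\zro/\gi$.

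\emph{Adaptive bound $\zacp/\zaro\ge 1/\agi$.} Let $(\xaro,\yaro(\cdot))$ be optimal for~\eqref{eqn:aronl2}. Since $\cp\subseteq\agi U$, for $\vec u\in\cp$ we have $\vec u/\agi\in U$; I would set $\vec x'=\xaro/\agi$ and $\vec y'(\vec u)=\yaro(\vec u/\agi)/\agi$. A change of variables in the inner $\min$ over $\cp$ (using $\cp/\agi\subseteq U$) together with super-additivity of $f_1,f_2$ shows this policy has objective at least $(f_1(\xaro)+\min_{\vec w\in U}f_2(\yaro(\vec w)))/\agi=\zaro/\agi$. Feasibility is verified constraint by constraint exactly as in the static case, now invoking $g_i(0,0,\vec u_i)\ge 0$ for $\vec u\in\cp$, $g_i(\xaro,\yaro(\vec u/\agi),0)\le 0$, and feasibility of $(\xaro,\yaro(\vec u/\agi))$ for~\eqref{eqn:aronl2} at the realization $\vec u/\agi\in U$. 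Hence $\zacp\ge\zaro/\agi$. For tightness, one exhibits instances attaining equality --- affine $f_1,f_2,g_i$ with the sign conditions $g_i(0,0,\cdot)\ge 0$ and $g_i(\cdot,\cdot,0)\le 0$ arranged to hold, and with the uncertainty shrinking by the same factor along every dimension that appears in the objective --- along the lines of the constructions behind Corollary~\ref{cor:attain_bounds_upper} and Example~\ref{ex:boundl2}.

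The delicate step is the feasibility chain (i)--(iii): concavity in $(\vec x,\vec y)$ and convexity in $\vec u_i$ must be applied in exactly this order and at exactly these points so that the spurious factors of $\gi$ (resp.\ $\agi$) cancel, and so that the one-sided conditions $g_i(0,0,\cdot)\ge 0$ and $g_i(\cdot,\cdot,0)\le 0$ are legitimately invoked at the scaled --- but still feasible --- points; keeping straight the geometry of $\Pi_i(\cp)\subseteq\gi U_i$ versus $\cp\subseteq\agi U$ is what separates the static from the adaptive estimate. The rest (objective super-additivity, change of variables in the $\min$, and the tightness instances) is routine.
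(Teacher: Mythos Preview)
Your proposal is correct and follows essentially the same route as the paper: start from the optimal constraint-wise solution $(\xaro,\yaro(\cdot))$, scale it up by $1/\agi$ (resp.\ $1/\gi$ in the static case) with the change of variables $\vec y'(\vec u)=\yaro(\vec u/\agi)/\agi$, verify feasibility via the same three-step chain (concavity of $g_i$ in $(\vec x,\vec y)$ plus $g_i(0,0,\vec u_i)\ge 0$; convexity of $g_i$ in $\vec u_i$ plus $g_i(\cdot,\cdot,0)\le 0$; feasibility at $\vec u/\agi\in U$), and bound the objective using $f_j(\vec z/\lambda)\ge f_j(\vec z)/\lambda$ for convex $f_j$ with $f_j(0)=0$. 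Your parametrization --- working directly with $\vec u\in\cp$ and invoking $g_i(0,0,\vec u_i)\ge 0$ on $\cp$ --- is in fact slightly cleaner than the paper's, which phrases the chain over all $\vec u\in U$ and the intermediate set $W=\agi U$ before restricting to $\cp\subseteq W$.
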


\subsection{Uncertain constraint and objective}
\label{sec:cons_obj}
We now bound the effect of coupling in case of uncertain constraint coefficients and objective.
Consider the following adaptive problems and their corresponding static problems, where the uncertainty affects both the constraint coefficients and the objective.
\begin{equation}
\label{eqn:aronl3}
{
\begin{array}{llclcl}
\displaystyle  \zaro = & \text{maximize} && \min\limits_{\vec{u} \in U} ({f_1(\vec{x},\vec{u}) +  f_2(\vec{y}(\vec{u}),\vec{u})}) \\
& \text{subject to} &&
 g_i(\vec{x}, \vec{y}(\vec{u}),\vec{u}_i)\leq b_i, \quad \forall \vec{u} \in U, \quad \forall i \in [m].
\end{array}}
\end{equation}
\begin{equation}
\label{eqn:acpnl3}
{
\begin{array}{llclcl}
\displaystyle  \zacp = & \text{maximize} && \min\limits_{\vec{u} \in \cp} ({f_1(\vec{x},\vec{u}) +  f_2(\vec{y}(\vec{u}),\vec{u})}) \\
& \text{subject to} &&
 g_i(\vec{x}, \vec{y}(\vec{u}),\vec{u}_i)\leq b_i, \quad \forall \vec{u} \in \cp, \quad \forall i \in [m].
\end{array}}
\end{equation}

\begin{assumption}
\label{ass:concave3}
Assume the case that $f_1$ is concave in $\vec{x}$, convex in $\vec{u}$, $f_2$ is concave in $\vec{y}(\vec{u})$, convex in $\vec{u}$, and for all $\vec{u}\in U$, satisfy $f_1(0,\vec{u}) = 0,f_2(0,\vec{u}) = 0$;
for all $\vec{u}\in U$, $\vec{x},\vec{y}(\vec{u})$ feasible and $0 \leq \alpha \leq 1$, $f_1(\vec{x},\vec{u}) +  f_2(\vec{y}(\vec{u}),\vec{u}) \geq \alpha (f_1(\vec{x},\alpha\vec{u}) +  f_2(\vec{y}(\vec{u}),\alpha\vec{u}))
$;
for all $i\in [m]$, $g_i$ is convex in $\vec{x}$ and $\vec{y}(\vec{u})$, concave in $\vec{u}_i$, satisfies $g_i(0,0,\vec{u}_i)\leq 0,\forall \vec{u}\in U$, and $g_i(\vec{x},\vec{y}(\vec{u}),0)\geq 0, \forall \vec{x},\vec{y}(\vec{u})$ feasible.
\end{assumption}

\begin{theorem}
\label{thm:mainnl3}
Consider adaptive problems~\eqref{eqn:aronl3}, \eqref{eqn:acpnl3}, and their static problems with objectives $\zro$ and $\zcp$.
Assume $0 \in \cp$, $U$ is convex, $\ari > 0$, $\zacp>0$ and $\zro < \infty$.
If Assumption~\ref{ass:concave3} holds,
% then the objective value is bounded as:
\begin{equation*}
\frac{\zcp}{\zro} \leq \frac{1}{\ri^2},\quad \frac{\zacp}{\zaro}\leq \frac{1}{\ari^2}.
\end{equation*}
\end{theorem}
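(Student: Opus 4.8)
The plan is to adapt the rescaling template already used for Theorems~\ref{thm:mainnl2} and~\ref{thm:mainnlconv2}, with one extra twist to absorb the fact that the uncertainty now also appears in the objective. In those proofs one takes an optimal solution of the coupled problem, scales it (and, in the adaptive case, reparametrizes its dependence on $\vec{u}$) by the relevant shrinkage factor so that it becomes feasible for the constraint-wise problem, and then tracks how the objective degrades. The genuinely new point here is that the objective value of the scaled solution, evaluated against $U$, must be related back to $\zacp$, which lives over $\cp$; since the objective itself is a function of $\vec{u}$, this forces a \emph{second} application of scaling and yields the squared factor $1/\ari^2$ (resp.\ $1/\ri^2$).

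For the adaptive bound I would let $(\xacp,\yacp(\cdot))$ be optimal for~\eqref{eqn:acpnl3} and, recalling $\ari U\subseteq\cp$ from Definition~\ref{def:shrinkage_new}, propose the candidate $\hat{\vec{x}}=\ari\xacp$, $\hat{\vec{y}}(\vec{u})=\ari\,\yacp(\ari\vec{u})$ for~\eqref{eqn:aronl3}. Feasibility is checked as in Theorem~\ref{thm:mainnl2}: for $\vec{u}\in U$ we have $\ari\vec{u}\in\cp$, so $g_i(\xacp,\yacp(\ari\vec{u}),\ari\vec{u}_i)\le b_i$; concavity of $g_i$ in $\vec{u}_i$ with $g_i(\cdot,\cdot,0)\ge 0$ upgrades this to $g_i(\xacp,\yacp(\ari\vec{u}),\vec{u}_i)\le b_i/\ari$, and convexity of $g_i$ in $(\vec{x},\vec{y})$ with $g_i(0,0,\cdot)\le 0$ then gives $g_i(\hat{\vec{x}},\hat{\vec{y}}(\vec{u}),\vec{u}_i)\le\ari\cdot b_i/\ari=b_i$. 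For the objective, fix $\vec{u}\in U$ and apply the near-homogeneity inequality of Assumption~\ref{ass:concave3} at $(\hat{\vec{x}},\hat{\vec{y}}(\vec{u}))$ with $\alpha=\ari$, obtaining $f_1(\hat{\vec{x}},\vec{u})+f_2(\hat{\vec{y}}(\vec{u}),\vec{u})\ge\ari\big(f_1(\hat{\vec{x}},\ari\vec{u})+f_2(\hat{\vec{y}}(\vec{u}),\ari\vec{u})\big)$; then concavity of $f_1,f_2$ in their first argument with $f_1(0,\cdot)=f_2(0,\cdot)=0$ peels off a second $\ari$, so the right side is $\ge\ari^2\big(f_1(\xacp,\ari\vec{u})+f_2(\yacp(\ari\vec{u}),\ari\vec{u})\big)$. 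Taking the minimum over $\vec{u}\in U$ and substituting $\vec{w}=\ari\vec{u}$ (which ranges over $\ari U\subseteq\cp$) bounds this below by $\ari^2\zacp$, hence $\zaro\ge\ari^2\zacp$, i.e.\ $\zacp/\zaro\le 1/\ari^2$.

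The static bound is the same argument with $\ri$ in place of $\ari$: since each $g_i$ involves only the block $\vec{u}_i$, the coupled constraints are equivalent to imposing $\vec{u}_i\in\Pi_i(\cp)$, and $\ri U_i\subseteq\Pi_i(\cp)$ is exactly what makes the scaled solution feasible, while the two-step objective estimate proceeds verbatim and gives $\zro\ge\ri^2\zcp$. I would also record, as in the companion results, the families of norm-intersection sets (cf.\ Examples~\ref{ex:l1l2} and~\ref{ex:normp}) against which the estimate can be exercised.

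The main obstacle is twofold, and both parts are bookkeeping with the sign/curvature hypotheses. First, because the uncertainty now sits on the same side of the constraints as $(\vec{x},\vec{y})$, feasibility of the scaled solution is not automatic: it is precisely $g_i(0,0,\cdot)\le 0$, $g_i(\cdot,\cdot,0)\ge 0$, and concavity of $g_i$ in $\vec{u}_i$ that let the scaling of $(\vec{x},\vec{y})$ be "paid for" by the scaling of $\vec{u}_i$. Second, and new to this theorem, is pushing the scaling through an uncertain objective, so that the quantity $f_1(\hat{\vec{x}},\vec{u})+f_2(\hat{\vec{y}}(\vec{u}),\vec{u})$ over $\vec{u}\in U$ ends up evaluated over $\cp$ (resp.\ $\Pi(\cp)$); the near-homogeneity inequality of Assumption~\ref{ass:concave3} is the hypothesis engineered for exactly this, at the cost of the extra shrinkage factor. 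Keeping track of which uncertainty set—$U$, $\cp$, or $\Pi(\cp)$—each intermediate quantity is evaluated over, especially in the static case where the objective depends on all of $\vec{u}$ while the constraints are block-separable, is the delicate part of the write-up.
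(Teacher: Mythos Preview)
Your proposal is correct and follows essentially the same approach as the paper: you scale the optimal coupled solution by $\ari$ to obtain a feasible candidate for the constraint-wise problem (feasibility handled exactly as in Theorem~\ref{thm:mainnl2}), and then extract two factors of $\ari$ on the objective side---one from concavity of $f_1,f_2$ in their first argument together with $f_1(0,\cdot)=f_2(0,\cdot)=0$, and one from the near-homogeneity inequality of Assumption~\ref{ass:concave3}---before passing from $\ari U$ to $\cp$ in the minimization. The only cosmetic difference is that the paper applies concavity first and the near-homogeneity inequality second, whereas you reverse this order; both arrive at the same lower bound $\ari^2(f_1(\xacp,\ari\vec{u})+f_2(\yacp(\ari\vec{u}),\ari\vec{u}))$, so this is immaterial.
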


\begin{theorem}
\label{cor:mainadaptnl3}
Consider the adaptive robust problem~\eqref{eqn:acpnl3} and the corresponding static robust problem with objective $\zcp$.
Assume $\Pi\left(\cp\right)$ is compact and convex, \reviewChanges{and Assumption~\ref{ass:compact} holds for the adaptive problem under $\Pi\left(\cp\right)$.}
Under the same assumptions in Theorem~\ref{thm:mainnl3}, the objective value is bounded as:
\begin{equation*}
\frac{\zacp}{\zcp} \leq  \frac{1}{\arho^2} \reviewChanges{ \leq \frac{1}{\ari^2} }.
\end{equation*}
\end{theorem}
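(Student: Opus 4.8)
The plan is to run the adaptive estimate of Theorem~\ref{thm:mainnl3} with the constraint-wise set $\Pi\left(\cp\right)$ playing the role of $U$, and then to collapse the resulting auxiliary ``constraint-wise adaptive'' value down to $\zcp$; this is the analogue of Theorem~\ref{cor:mainadaptnl2}, with an extra factor of $\arho$ coming from the uncertain objective. First observe that $\cp\subseteq U$ and, since $U$ is constraint-wise, $\Pi(U)=U$; hence $\{0\}\subseteq\cp\subseteq\Pi\left(\cp\right)\subseteq U$, the set $\Pi\left(\cp\right)$ is itself a product (constraint-wise) set, and by Definition~\ref{def:shrinkage_new} $\rho\left(\Pi\left(\cp\right),\cp\right)=\arho$. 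Monotonicity of $\rho(\cdot,\cdot)$ in its first argument together with $\Pi\left(\cp\right)\subseteq U$ gives $\arho\ge\ari>0$. With these facts, the hypotheses of Theorem~\ref{thm:mainnl3} carry over to the pair $\bigl(\Pi\left(\cp\right),\cp\bigr)$ --- $0\in\cp$, $\Pi\left(\cp\right)$ convex, the relevant shrinkage factor $\arho$ positive, $\zacp>0$, the requisite finiteness conditions, and Assumption~\ref{ass:concave3}, whose hypotheses on $f_1,f_2,g_i$ only weaken when $U$ is replaced by the smaller $\Pi\left(\cp\right)$. Applying Theorem~\ref{thm:mainnl3} therefore yields
\begin{equation*}
\frac{\zacp}{\hat{z}_{\text{acp}}}\le\frac{1}{\arho^2},
\end{equation*}
where $\hat{z}_{\text{acp}}$ denotes the optimal value of the constraint-wise adaptive problem under $\Pi\left(\cp\right)$. (Unpacked, this reuses the construction of Theorem~\ref{thm:mainnl3}: from an optimal $\bigl(\xacp,\yacp(\vec u)\bigr)$ of~\eqref{eqn:acpnl3} one rescales the uncertainty by $\arho$, using $\arho\,\Pi\left(\cp\right)\subseteq\cp$, then rescales the decision variables to restore constraint feasibility --- one factor of $\arho$ --- and finally applies the joint inequality $f_1(\vec x,\vec u)+f_2(\vec y,\vec u)\ge\alpha\bigl(f_1(\vec x,\alpha\vec u)+f_2(\vec y,\alpha\vec u)\bigr)$ of Assumption~\ref{ass:concave3} with $\alpha=\arho$ --- the second factor.)

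Next I would show $\hat{z}_{\text{acp}}\le\zcp$. The static robust problem under $\Pi\left(\cp\right)$ has exactly the same feasible $(\vec x,\vec y)$ as the static problem under $\cp$, since constraint $i$ depends on $\vec u$ only through $\vec u_i$ and the projection $\Pi_i\left(\cp\right)$ is the same for both sets; and since $\cp\subseteq\Pi\left(\cp\right)$ its objective $\min_{\vec u\in\Pi\left(\cp\right)}(f_1+f_2)$ is dominated by $\min_{\vec u\in\cp}(f_1+f_2)$, so the static value under $\Pi\left(\cp\right)$ is at most $\zcp$. It remains to identify $\hat{z}_{\text{acp}}$ with that static value: because $\Pi\left(\cp\right)$ is compact, convex and constraint-wise and Assumption~\ref{ass:compact} holds for the adaptive problem under it, while the convex--concave structure of Assumption~\ref{ass:concave3} ($f_1+f_2$ concave in $\vec y$ and convex in $\vec u$, $g_i$ convex in $(\vec x,\vec y)$) permits the necessary minimax exchange, the optimality of static solutions under constraint-wise compact uncertainty~\citep[Theorem 2.1]{ben2004adjustable} --- in the form established in Appendix~\ref{append:proofnl} --- gives $\hat{z}_{\text{acp}}$ equal to the static value under $\Pi\left(\cp\right)$, hence $\le\zcp$. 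Chaining the two estimates, $\zacp\le\hat{z}_{\text{acp}}/\arho^2\le\zcp/\arho^2$, which is the first claimed bound; the second bound $1/\arho^2\le1/\ari^2$ is immediate from $\arho\ge\ari$.

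The step I expect to be the main obstacle is the identification $\hat{z}_{\text{acp}}=$ (static value under $\Pi\left(\cp\right)$) when the objective is itself uncertain: the classical optimality-of-static result is stated for a certain objective, and for $\min_{\vec u}\bigl(f_1(\vec x,\vec u)+f_2(\vec y(\vec u),\vec u)\bigr)$ an adaptive $\vec y(\vec u)$ can in principle outperform every static $\vec y$, so the constraint-wise structure alone is not enough. Closing this gap needs the full strength of Assumption~\ref{ass:concave3} --- concavity of $f_1+f_2$ in $\vec y$ and convexity in $\vec u$, together with the $\vec u$-separable feasible sets coming from the constraint-wise structure and the compactness of $\Pi\left(\cp\right)$ --- so that a Sion-type minimax argument applies; this is exactly where the appendix proof must be careful, and also why, unlike several earlier theorems, the bound here is not asserted to be tight. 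A minor, routine point is to confirm that the positivity and finiteness hypotheses inherited from Theorem~\ref{thm:mainnl3} remain valid after the substitution $U\mapsto\Pi\left(\cp\right)$.
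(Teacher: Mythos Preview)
Your approach matches the paper's: introduce the constraint-wise adaptive value $\hat z_{\text{acp}}$ under $\Pi(\cp)$, bound $\zacp\le \hat z_{\text{acp}}/\arho^2$ by the construction of Theorem~\ref{thm:mainnl3}, and then collapse $\hat z_{\text{acp}}$ to the static value. One difference in execution is worth noting. For the collapse step you invoke \citep[Theorem~2.1]{ben2004adjustable}, which is stated for linear problems with a certain objective; the paper instead rewrites the adaptive problem in epigraph form and applies \citep[Theorem~2]{marandi2018static}, whose hypotheses (constraint-wise compact convex uncertainty, Assumption~\ref{ass:compact}, objective concave in $\vec u$ and convex in $\vec y$, constraints convex in $\vec y$ and concave in $\vec u$) are exactly supplied by Assumption~\ref{ass:concave3}. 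Your diagnosis of the obstacle and the Sion-type fix is on the mark --- that is precisely what the Marandi--den Hertog result packages --- so the only adjustment is to cite the right tool rather than rederive it. A second, smaller point: the paper writes $\hat z_{\text{acp}}=\zcp$, whereas you argue $\hat z_{\text{acp}}=$ (static under $\Pi(\cp)$) $\le \zcp$ because $\min_{\vec u\in\Pi(\cp)}\le \min_{\vec u\in\cp}$ on the same feasible set; your version is the more careful one, and the inequality direction is all the proof needs.
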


\begin{assumption}
\label{ass:convex3}
Assume the case that for all $\vec{u}\in U$, $f_1$ is convex in $\vec{x}$, $f_2$ is convex in $\vec{y}(\vec{u})$, and satisfy $f_1(0,\vec{u}) = 0,f_2(0,\vec{u}) = 0$;
for all $\vec{u}\in U$, $\vec{x},\vec{y}(\vec{u})$ feasible, $f_1(\vec{x},\vec{u}) +  f_2(\vec{y}(\vec{u}),\vec{u}) \leq \agi (f_1(\vec{x},\agi\vec{u}) +  f_2(\vec{y}(\vec{u}),\agi\vec{u}))
$;
for all $i\in [m]$, $g_i$ is concave in $\vec{x}$ and $\vec{y}(\vec{u})$, convex in $\vec{u}_i$, satisfies $g_i(0,0,\vec{u}_i)\geq 0,\forall \vec{u}\in \cp$, and $g_i(\vec{x},\vec{y}(\vec{u}),0)\leq 0, \forall \vec{x},\vec{y}(\vec{u})$ feasible.
\end{assumption}

\begin{theorem}
\label{thm:mainnlconv3}
Consider adaptive problems~\eqref{eqn:aronl3}, \eqref{eqn:acpnl3}, and their static problems with objectives $\zro$ and $\zcp$.
Assume $0 \in \cp$, $U$ is a convex set, $\zacp > 0$ and $\zro < \infty$.
If Assumption~\ref{ass:convex3} holds,
\begin{equation*}
\frac{\zcp}{\zro} \geq \frac{1}{\gi^2},\quad \frac{\zacp}{\zaro}\geq \frac{1}{\agi^2}.
\end{equation*}
\end{theorem}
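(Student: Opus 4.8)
The plan is to reuse the rescaling construction behind Theorems~\ref{thm:mainnl2} and~\ref{thm:mainnlconv2}, but now to harvest \emph{two} factors of $1/\agi$ (resp.\ $1/\gi$): one from scaling the decision variables, exactly as in the constraint-coefficient case, and an additional one because the uncertainty also enters the objective through $f_1,f_2$. I would prove the adaptive bound $\zacp/\zaro\geq 1/\agi^2$ in full; the static bound $\zcp/\zro\geq 1/\gi^2$ then follows by the verbatim argument with $\vec{y}(\vec{u})$ held constant in $\vec{u}$ and $\cp,\agi$ replaced by $\Pi(\cp),\gi$ --- here $\gi=\agi$ since $U$ is a product set, so the scaling hypothesis in Assumption~\ref{ass:convex3} applies in that case too. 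By definition $\cp\subseteq\agi\,U$, so every $\vec{u}\in\cp$ is $\vec{u}=\agi\vec{v}$ for some $\vec{v}\in U$, and $\Pi_i(\cp)\subseteq\agi\,U_i$ block-wise.

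First I would fix an optimal solution $(\vec{x}^\star,\vec{y}^\star(\cdot))$ of the constraint-wise problem~\eqref{eqn:aronl3} and set the candidate coupled solution $\tilde{\vec{x}}=\vec{x}^\star/\agi$, $\tilde{\vec{y}}(\vec{u})=\vec{y}^\star(\vec{u}/\agi)/\agi$, which is well defined on $\cp$ since $\cp\subseteq\agi\,U$. For feasibility in~\eqref{eqn:acpnl3}, fix $\vec{u}=\agi\vec{v}\in\cp$ and chain two one-variable comparisons: concavity of $g_i$ in $(\vec{x},\vec{y})$ with $g_i(0,0,\vec{u}_i)\geq 0$ gives $g_i(\tilde{\vec{x}},\tilde{\vec{y}}(\vec{u}),\agi\vec{v}_i)\leq\frac{1}{\agi}g_i(\vec{x}^\star,\vec{y}^\star(\vec{v}),\agi\vec{v}_i)$, and then convexity of $g_i$ in $\vec{u}_i$ with $g_i(\vec{x}^\star,\vec{y}^\star(\vec{v}),0)\leq 0$ together with the constraint-wise feasibility $g_i(\vec{x}^\star,\vec{y}^\star(\vec{v}),\vec{v}_i)\leq b_i$ gives $g_i(\vec{x}^\star,\vec{y}^\star(\vec{v}),\agi\vec{v}_i)\leq\agi\,b_i$. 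Combining these yields $g_i(\tilde{\vec{x}},\tilde{\vec{y}}(\vec{u}),\vec{u}_i)\leq b_i$ for all $i$.

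Next I would bound the coupled objective at $(\tilde{\vec{x}},\tilde{\vec{y}})$ from below. For $\vec{u}=\agi\vec{v}\in\cp$, convexity of $f_1$ in its first argument with $f_1(0,\cdot)=0$ gives $f_1(\tilde{\vec{x}},\vec{u})\geq\frac{1}{\agi}f_1(\vec{x}^\star,\agi\vec{v})$, and likewise $f_2(\tilde{\vec{y}}(\vec{u}),\vec{u})\geq\frac{1}{\agi}f_2(\vec{y}^\star(\vec{v}),\agi\vec{v})$; applying the scaling hypothesis of Assumption~\ref{ass:convex3} with $\vec{x}=\vec{x}^\star$, $\vec{y}=\vec{y}^\star$ and $\vec{v}\in U$ in the role of $\vec{u}$ then gives $f_1(\vec{x}^\star,\agi\vec{v})+f_2(\vec{y}^\star(\vec{v}),\agi\vec{v})\geq\frac{1}{\agi}\big(f_1(\vec{x}^\star,\vec{v})+f_2(\vec{y}^\star(\vec{v}),\vec{v})\big)\geq\frac{1}{\agi}\zaro$. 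Hence $f_1(\tilde{\vec{x}},\vec{u})+f_2(\tilde{\vec{y}}(\vec{u}),\vec{u})\geq\zaro/\agi^2$ for every $\vec{u}\in\cp$, so the feasible candidate $(\tilde{\vec{x}},\tilde{\vec{y}})$ attains coupled objective at least $\zaro/\agi^2$, i.e.\ $\zacp\geq\zaro/\agi^2$.

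The one genuine difficulty is careful bookkeeping: three rescalings act at once --- the decision variables by $1/\agi$, the realization fed to $\vec{y}^\star$ by $1/\agi$, and the extra $1/\agi$ drawn from the objective via the scaling hypothesis --- so one must check that every structural condition of Assumption~\ref{ass:convex3} ($f_j(0,\cdot)=0$, $g_i(0,0,\cdot)\geq 0$, $g_i(\cdot,\cdot,0)\leq 0$, and the scaling inequality) is invoked only at arguments where it is assumed valid, in particular that $\agi\vec{v}_i\in\Pi_i(\cp)$ and $\agi\vec{v}\in\cp$ exactly when the sign conditions are used, and that $\agi>0$ so the divisions are legitimate.
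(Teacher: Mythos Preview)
Your proposal is correct and follows essentially the same approach as the paper: start from an optimal constraint-wise solution $(\vec{x}^\star,\vec{y}^\star(\cdot))$, scale to the candidate $(\vec{x}^\star/\agi,\,\vec{y}^\star(\vec{u}/\agi)/\agi)$, verify feasibility for~\eqref{eqn:acpnl3} via the concavity/convexity and sign conditions on $g_i$ (exactly the argument of Theorem~\ref{thm:mainnlconv2}), and then extract two factors of $1/\agi$ in the objective---one from convexity of $f_1,f_2$ in the decision variables together with $f_j(0,\cdot)=0$, and one from the scaling hypothesis in Assumption~\ref{ass:convex3}. One small wording point: for the static bound you need not ``replace $\cp$ by $\Pi(\cp)$''; it follows directly as the special case $\vec{y}(\vec{u})\equiv\vec{y}$ of the adaptive argument, together with your (correct) observation that $\gi=\agi$ since $U$ is a product set.
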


\begin{lemma}
\label{lem:tightadapt2}
The bounds constructed in Theorem~\ref{thm:mainnl3}, Theorem~\ref{cor:mainadaptnl3}, and Theorem~\ref{thm:mainnlconv3} are tight.
\end{lemma}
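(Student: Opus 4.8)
The plan is to prove tightness the same way Corollary~\ref{cor:main}, Corollary~\ref{lem:tightadaptstatic_tight}, and Corollary~\ref{lem:tightadaptmain} establish their tightness claims: by exhibiting, for each of the three bounds, an explicit instance that satisfies the relevant hypotheses --- Assumption~\ref{ass:concave3} for Theorems~\ref{thm:mainnl3} and~\ref{cor:mainadaptnl3}, and a companion instance satisfying Assumption~\ref{ass:convex3} for Theorem~\ref{thm:mainnlconv3} --- and on which the corresponding objective ratio exactly equals the stated bound. A small parametric family suffices: one choice of coefficients attains the upper bound $1/\ri^2$ (respectively $1/\ari^2$, $1/\arho^2$), and a mirror choice under Assumption~\ref{ass:convex3} attains the lower bound $1/\gi^2$ (respectively $1/\agi^2$).

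Since bilinear terms are simultaneously concave and convex in each argument, the natural candidates are \emph{multiplicative} instances --- the uncertain vector $\vec u$ entering both a constraint coefficient and the objective --- built on the norm-ball uncertainty sets already used in Examples~\ref{ex:l1l2},~\ref{ex:boundl2}, and~\ref{ex:normp}, for which $\ri,\ari,\gi,\agi,\arho$ are available in closed form. I would take a single robust constraint with a box constraint-wise set $U$ coupled with a $2$-norm (more generally, a $\q$-norm) ball, so that the passage from $U$ to $\cp$ is captured exactly by the relevant shrinkage factor along every relevant coordinate. The instance is designed so that two effects compound: (i) transferring the optimal solution between the coupled and constraint-wise problems requires rescaling it by $1/\ri$ (respectively $1/\ari$), exactly as in the proof of Theorem~\ref{thm:mainnl2}, contributing one factor; and (ii) the explicit dependence of the objective on $\vec u$ contributes an \emph{independent} second factor of the same size, because the minimiser of the objective over $\cp$ sits at a boundary point of $U$ that the coupling norm constraint cuts off. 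One then checks Assumption~\ref{ass:concave3} directly: concavity of $f_1,f_2$ and convexity in $\vec u$ are immediate for bilinear terms; $f_1(0,\vec u)=f_2(0,\vec u)=0$ holds by construction; the curvature and sign conditions on $g_i$ reduce to choosing nonnegative data; and the scaling inequality $f_1(\vec x,\vec u)+f_2(\vec y(\vec u),\vec u)\ge\alpha\bigl(f_1(\vec x,\alpha\vec u)+f_2(\vec y(\vec u),\alpha\vec u)\bigr)$ holds because, after the sign normalisation, its left-minus-right side factors as $(1-\alpha)$ times a nonnegative expression. The companion instance for Theorem~\ref{thm:mainnlconv3} swaps the roles (convex $f_1,f_2$, concave $g_i$, the opposite-direction scaling inequality with $\agi$).

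For Theorem~\ref{cor:mainadaptnl3} I would reuse the adaptive instance making $\zacp/\zaro=1/\ari^2$ tight and specialise it to the case $\Pi(\cp)=U$. Then the constraint-wise static problem under $\Pi(\cp)$ coincides with the static problem under $U$, which by Assumption~\ref{ass:compact} and the optimality of static solutions under constraint-wise uncertainty equals the adaptive problem under $U$; hence $\zcp=\zaro$, while $\arho=\ari$ in this case, so $\zacp/\zcp=\zacp/\zaro=1/\ari^2=1/\arho^2$ --- exactly the reasoning of Corollary~\ref{lem:tightadaptmain} carried through with squares. This simultaneously shows the secondary inequality $1/\arho^2\le 1/\ari^2$ is tight.

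The main obstacle is step (ii): constructing a single instance that attains equality in \emph{both} the constraint-side and the objective-side factors at once --- with no slack in either --- while respecting the comparatively rigid Assumption~\ref{ass:concave3}. In particular, $0\in\cp$ together with $\zacp>0$ forces the objective to contain a part that does not vanish at $\vec u=0$, so the $\vec u$-dependent term must be taken with indefinite sign (for example $\vec u^{T}\vec y$ with $\vec u,\vec y$ ranging over symmetric sets) and its worst case over $\cp$ must be a boundary point determined by the coupling norm rather than by the box; making the $\q$-norm algebra of the shrinkage factors line up precisely with that worst case, and verifying that the resulting data still passes the scaling inequality, is the delicate step. Everything else is the bookkeeping already carried out for the linear and single-sided nonlinear cases.
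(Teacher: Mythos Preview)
Your high-level strategy---build a small parametric family of instances, and for Theorem~\ref{cor:mainadaptnl3} specialise to the case $\Pi(\cp)=U$ so that $\zcp=\zaro$ and $\arho=\ari$---matches the paper exactly. The gap is in the concrete construction. A bilinear objective $f(\vec{x},\vec{u})=\vec{u}^{T}A\vec{x}$ cannot make the bound $1/\ari^{2}$ tight: trace the proof of Theorem~\ref{thm:mainnl3} and note that the second factor of $\ari$ enters only through the scaling inequality $f(\vec{x},\vec{u})\ge\alpha f(\vec{x},\alpha\vec{u})$ at $\alpha=\ari$. For bilinear $f$ this reads $\vec{u}^{T}A\vec{x}\ge\ari^{2}\,\vec{u}^{T}A\vec{x}$, which is strict whenever $\ari<1$ and the objective is positive---precisely the regime you need. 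Adding a $\vec{u}$-independent piece $c^{T}\vec{x}$ does not help: equality in $(1-\alpha)c^{T}\vec{x}+(1-\alpha^{2})\vec{u}^{T}A\vec{x}=0$ forces both terms to vanish, so the objective collapses to zero. You correctly flag the scaling inequality as something to ``verify,'' but what you are missing is that it must hold with \emph{equality} at the worst case, and bilinear functions are homogeneous of the wrong degree for that.

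The paper sidesteps this by taking the objective to be homogeneous of degree $-1$ in $\vec{u}$, namely $f_{1}(\vec{x},\vec{u})=c_{1}x_{1}/u_{1}+c_{2}x_{2}/u_{2}$, for which $\alpha f_{1}(\vec{x},\alpha\vec{u})=f_{1}(\vec{x},\vec{u})$ identically and the scaling inequality is never slack. The uncertainty is the box $U=[0,1]^{2}$ coupled by the simplex $u_{1}+u_{2}\le 1$, so $\ari=\arho=1/2$ and $\agi=1$; bilinear constraints $u_{i}y_{i}(\vec{u})\le 1$ together with $x_{i}\le y_{i}(\vec{u})$ supply the first $\ari$ factor from the feasibility side. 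Setting $c_{1}=c_{2}=1$ attains the upper bounds of Theorems~\ref{thm:mainnl3} and~\ref{cor:mainadaptnl3}, while $c_{1}=1,\,c_{2}=0$ attains the lower bound of Theorem~\ref{thm:mainnlconv3}. If you want to repair your plan, replace the bilinear objective by one positively homogeneous of degree $-1$ in $\vec{u}$; everything else you wrote goes through.
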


\paragraph{Comparison with results for linear problems.}
We discuss similarities and differences in the results when the problems are linear (Section~\ref{sec:theory} and Section~\ref{sec:theorycoeff}) versus nonlinear (Section~\ref{sec:theorynl}).
In proofs of this section, some inequalities result from properties of convexity and concavity.
As a result, we make assumptions on two separate cases based on convexity / concavity of functions, where either lower bounds or upper bound apply (not both at the same time).
Since linear functions are both convex and concave, we achieve the lower and upper bounds simultaneously when the problem is linear as a special case.
The mathematical expressions of the bounds remain the same for both linear and nonlinear problems under right hand side uncertainty, and respectively under constraint coefficient uncertainty.
We further extend to problems where uncertainty appears in both the objective and the constraint coefficient in Section~\ref{sec:cons_obj}.
The two occurrences cause a multiplication effect and characterize the bounds with the shrinkage factors squared, resulting in more potential of improvement from coupling.

\section{Computational experiments}
\label{sec:computation}
In this section, we study the benefit of uncertainty coupling in robust and adaptive robust optimization via numerical experiments.
We implement various solution methods to three computational problems under coupled uncertainty in supply chain management, portfolio optimization, and lot sizing in a network.
We demonstrate the numerical effect of the coupling relative to the theoretical bounds, analyze how coupling parameters affect the objectives practically, and compare the runtime of different solution methods.

\paragraph{Solution methods.}
We implement different solution methods to solve linear robust and adaptive robust problems under coupled uncertainty, for both cases of right hand side and coefficient uncertainty.
For static problems, we consider three methods: Constraint-wise formulation with direct projections, Robust counterpart, and Cutting plane algorithm.
For adaptive problems, we consider three methods: Robust counterpart with linear decision rules, Benders decomposition, Finite scenario approach.
We describe each of the solution methods in detail in Section~\ref{sec:solution} of the Appendix.
We execute the numerical tests on Intel Xeon E5-2650 in Python 3.6. The optimization problems are solved using the Gurobi solver~\citep{optimization2016gurobi}.

\paragraph{Performance metrics.}
We evaluate the objective improvement resulting from coupling, the effect of coupling parameters on the objectives, and the performance of different solution methods.
We obtain objectives of coupled RO and ARO problems relative to the objective of \acrlong{cw} problems, namely $\zcp/\zro$ and $\zacp/\zaro$, as well as the running time of different solution methods for problems of different dimensions.
We show the average relative objective and running time with one standard deviation as shaded areas in the plots.

\subsection{Supply chain management}
We consider a two-stage supply chain network, in which products are first shipped from $J$ sources through $K$ centers.
After demands at the stores are realized, products are shipped from the $K$ centers to $M$ stores. We have introduced the problem for a small network in Section~\ref{sub:supply_chain_example}, and we formulate a more general network.
The uncertainty $u_i \in \reals$ denotes the demand of store $i$.
Decision variables are $x_{jk}$, the quantity of products shipped from source $j$ to center $k$, and $y_{ki}(\vec{u})$, shipment from center $k$ to store $i$, which can be adjusted to realizations of the demand.
Let $c_{jk},t_{jk}$ denote the unit cost and capacity of the products shipped from source $j$ to center $k$.
Let $s_{ki},p_{ki}$ denote the unit cost and capacity of the products shipped from center $k$ to store $i$.
We have the following adaptive robust formulation:

\begin{equation*}
{
\begin{array}{llclcl}
\displaystyle \text{minimize} & \multicolumn{3}{l}{ \sum\limits_{j=1}^J \sum\limits_{i=1}^M  c_{jk} x_{jk} + \max\limits_{\vec{u}\in U} \sum\limits_{i=1}^M \sum\limits_{k=1}^K s_{ki} y_{ki}(\vec{u})
} \\
\text{subject to} & \sum\limits_{k=1}^K y_{ki}(\vec{u}) \geq u_i, && \forall \vec{u} \in U, \quad \forall i \in [M] \\
& 0 \leq x_{jk} \leq t_{jk}, && \forall j \in [J], \quad \forall k\in [K] \\
& 0 \leq y_{ki} \leq p_{ki}, && \forall i \in [M], \quad \forall k\in [K] \\
& \sum\limits_{j=1}^J x_{jk}  \geq \sum\limits_{i=1}^M y_{ki}(\vec{u})
, &&  \forall \vec{u} \in U, \quad \forall k\in [K].
\end{array}}
\end{equation*}
The objective is to minimize the total transportation cost. The first constraint requires that each store satisfies the demand under uncertainty. The second and third constraints denote the capacities for the amount of products shipped on each route. The fourth constraint ensures flow conservation such that the outflow from the centers is no more than the inflow to the centers in the network.

We use a box set for the \acrlong{cw} uncertainty set
\begin{equation*}
    U = \left\{0 \leq \vec{u} \leq 1\right\}.
\end{equation*}
The stores are divided into groups of two and three.
Stores within the same group are adjacent to each other, and customers either visit all of them together or only the first one.
As a result, all stores in each group have a common portion of uncertain demand and one of them has an additional uncertain portion of demand from another source.
Let $G$ denote the set of all groups.
For each group $l \in G$, we impose coupling constraints to model the relationship between the stores in the group $l$ with parameter $\alpha_l$:
$$u_l^1 \geq u_l^2+\alpha_l,$$ for each group of two stores and
$$u_l^1 \geq u_l^2+\alpha_l, \quad u_l^1 \geq u_l^3+\alpha_l,$$ for each group of three stores.
Moreover, the total demand in the stores is restricted as
\begin{equation*}
    \|\vec{u}\|_2 \leq \Gamma.
\end{equation*}
We then have a coupled uncertainty set
\begin{equation*}
    \cp = \left\{\vec{u} \mid 0 \leq \vec{u} \leq 1,\quad \|\vec{u}\|_2 \leq \Gamma,\quad u_l^1 \geq u_l^{2}+\alpha_l, \quad u_l^1 \geq u_l^{3}+\alpha_l,\quad \forall l \in G\right\}.
\end{equation*}
We also consider the corresponding static problem, when $\vec{y}(\vec{u})$ does not adapt to uncertainty $\vec{u}$.
The static and adaptive problems under constraint-wise uncertainty set have the same objective.
We study how parameters of the coupling uncertainty set $\alpha$ and $\Gamma$ affect the problem, and evaluate the objectives and running times of the solution methods.

\paragraph{Numerical example.}

For each dimension, we set $J=K=M$.
We obtain 50 problem instances by drawing $s_{ki} \sim U(0,5)$ and setting $t_{jk}=p_{ki}=10$.
We draw $c_{jk}$ from $U(0,5)$ if the path is feasible or from $U(0,5001)$ if the path is not available, each with probability 50\%.
To ensure feasibility to meet the demands, we set at least one available path from some source to each center.
Benders decomposition and cutting plane methods allow 1000 maximum number of iterations and $10^{-3}$ tolerance. Benders decomposition starts with the coupled static optimal solution and 100 random uncertainty starting points.

\paragraph{Coupling effect of parameter $\alpha.$}
When $\Gamma \geq \sqrt{m}$, only the coupling $u_l^1 \geq u_l^{2(3)}+\alpha_l, \forall l \in G$ affects the uncertainty set.
We let $\alpha_l = \alpha, \forall l \in G$.
In Figure~\ref{fig:sc_improv_alpha}, we show the objective of the coupled problem in different dimensions relative to the objective of the constraint-wise problem with different values of $\alpha \in [0,1]$.
From Example~\ref{ex:scrhoadapt},
$$1 - \alpha \leq \frac{\zacp}{\zaro} = \frac{\zcp}{\zro} \leq 1,$$
with the possible range shown in the brown shaded region.
When $0 <\alpha \leq 1,$ coupling reduces groupings of uncertainty set projections and the cost in proportion to $\alpha$ both computationally and theoretically.
Since the relative objective improvement reflects an average uncertainty set shrinkage among the groups of stores, the average improvement is independent of $M$ and the standard deviation decreases with $M$.

\begin{figure}[tb]
\begin{subfigure}{0.49\textwidth}
    \centering
    \includegraphics[width=\textwidth]{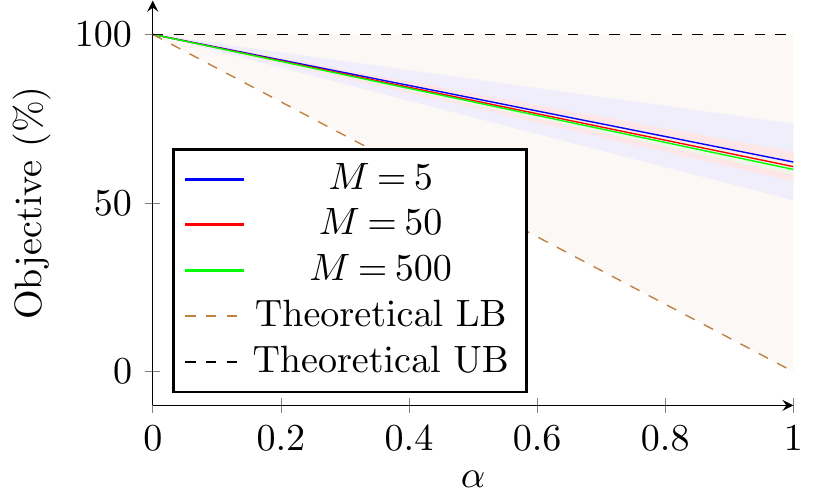}
\caption{Objectives vs $\alpha$ ($\Gamma=\sqrt{M})$.}
	\label{fig:sc_improv_alpha}
\end{subfigure}
\begin{subfigure}{0.49\textwidth}
    \centering
   \includegraphics[width=\textwidth]{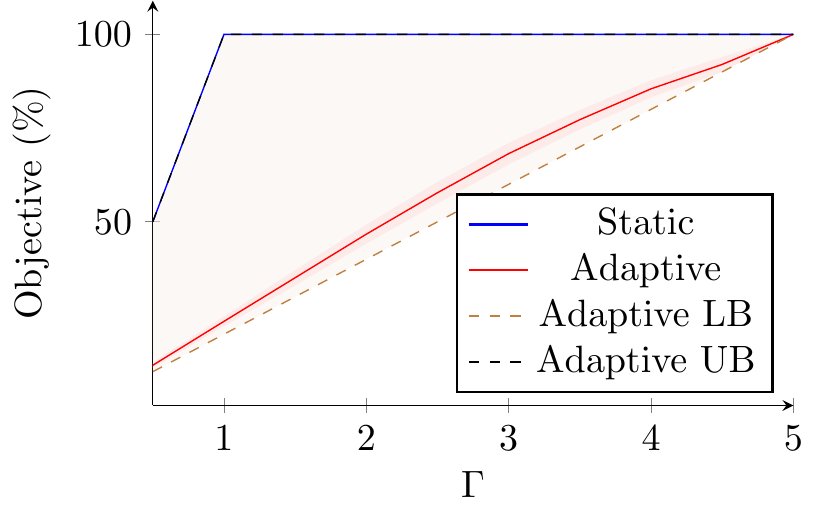}
\caption{Objectives vs $\Gamma$ ($M=25, \alpha=-1$).}
	\label{fig:sc_improv_gamma}
\end{subfigure}
\caption{Relative objectives of coupled problems vs coupling parameters in supply chain problems.}
\end{figure}

\paragraph{Coupling effect of parameter $\Gamma.$}
When $\alpha \leq -1$, \reviewChanges{we note that the constraints $u_l^1 \geq u_l^{2}+\alpha_l$ and $u_l^1 \geq u_l^{3}+\alpha_l$ hold true for all $0 \leq \vec{u} \leq 1$, so} only the coupling $\|\vec{u}\|_2 \leq \Gamma$ affects the uncertainty set.
In Figure~\ref{fig:sc_improv_gamma}, we show the relative objective of the coupled problem with different values of $\Gamma\in [0.5,5]$ for $M=25$.
From Example~\ref{ex:boundl2},
% The theoretical bounds are
$$\frac{\Gamma}{\sqrt{M}} \leq \frac{\zacp}{\zaro} \leq \min\left\{\Gamma,1\right\} = \frac{\zcp}{\zro}.$$
Coupling improves the static problem when $\Gamma<1$ and possibly improves the adaptive problem when $\Gamma < \sqrt{M}$.
As $\Gamma$ increases, the coupling affects the uncertainty set less, and the relative coupled objective as well as the theoretical lower bound for the adaptive case grow linearly with $\Gamma$.
We see that with some choices of parameters $\alpha$ and $\Gamma$, coupling can alleviate conservatism significantly.

\paragraph{Objectives and running times.}
For the case when both $\alpha$ and $\Gamma$ affect the objectives, we randomly draw problem instances with $\alpha_l \sim U(0.1,0.2)$ and $\Gamma \sim U(\sqrt{M}/2, 3\sqrt{M}/4)$.
In Figure~\ref{fig:sc}, we show the objective values and running times for different methods on problems with $M$ robust constraints, $M^2$ here-and-now variables, and $M^2$ adaptive variables.
Since the choice of $\Gamma$ scales with $\sqrt{m}$,
the relative objectives remain independent of $M$.
For 100\% of the static coupled problems, the robust counterpart and cutting planes give the same results
with the cutting planes taking longer time.
The coupled robust counterpart improves the objective while maintaining similar computational complexity as the constraint-wise robust counterpart.
Benders decomposition solves the coupled adaptive problem with larger objective improvement and additional computational effort.

\begin{figure}[tb]
\begin{subfigure}{0.49\textwidth}
\centering
\includegraphics[width=\textwidth]{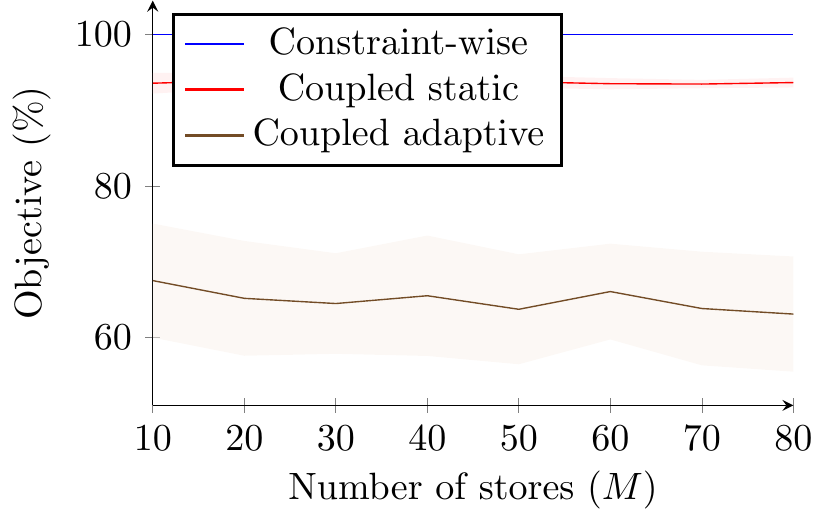}
	\end{subfigure}
\begin{subfigure}{0.49\textwidth}
    \centering
    \includegraphics[width=\textwidth]{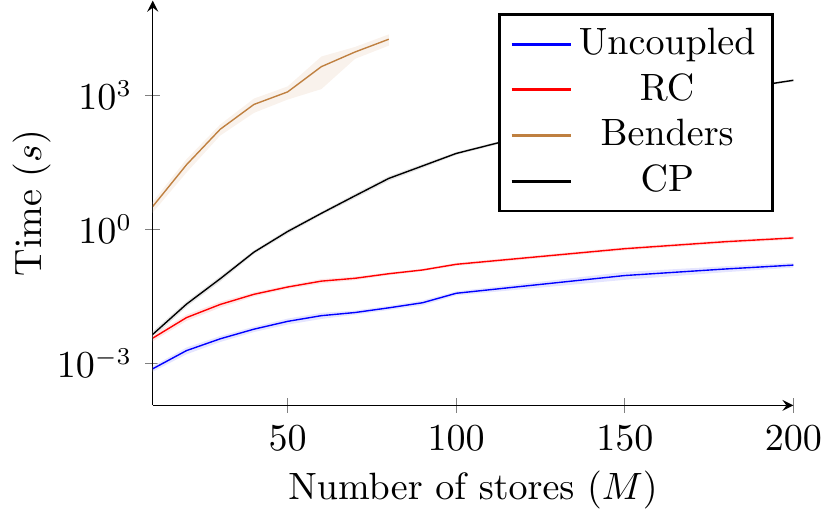}
\end{subfigure}
\caption{Relative objectives and running times for supply chain (minimization) problems.}
	\label{fig:sc}
\end{figure}

\subsection{Portfolio management}
We consider a portfolio optimization problem in which an investor chooses from $m$ different assets to invest in.
For $i \in [m],$ the investor decides to invest fraction $x_i$ of the total investments into asset $i$.
Both returns and prices of the assets are affected by a variety of uncertain sources.
The investor wants to maximize the returns robustly against uncertain scenarios.
For each asset $i$, we have the uncertainty
$\vec{u}_i = (\vec{v}_{i}, \vec{z}_i) \in \reals^8$,
where $\vec{v}_{i} \in \reals^4$ denotes common uncertain sources, including interest rate, S\&P 500 annual return, Commercial Construction Index, and Consumer Price Index,
and $\vec{z}_i$ denotes asset-specific uncertain sources, including supply, demand, dividend rate, and industry performance.
For each asset $i$, the uncertainty affects both the unit return $\overline{c}_i + \vec{Q}_i \vec{u}_i$ and unit price
$\overline{a}_i + \vec{P}_i \vec{u}_i$, where $\Vec{Q}_i \in \reals^8, \Vec{P}_i \in \reals^8, \overline{c}_i \in \reals, \overline{a}_i \in \reals$.
We use auxiliary variables $b_i \in \reals$ to track the amount spent on each asset $i$ and $t_i \in \reals$ to track the return on each asset.
Each asset belongs to one of the sectors $S_1,\dots,S_K$ with $g_k \in \reals$ denoting the limit of the total amount of assets invested in sector $S_k$.
We enforce a total budget limit $b \in \reals$.
The problem is formulated as follows:
\begin{equation*}
\begin{array}{llclcl}
\displaystyle \text{maximize} & \multicolumn{3}{l}{\sum\limits_{i=1}^m t_i} \\
\text{subject to}  &  t_i \leq (\overline{c}_i + \vec{Q}_i \vec{u}_i) x_i, && \forall u_i \in U_i, \quad \forall i \in [m]\\
& (\overline{a}_i + \vec{P}_i \vec{u}_i) x_i \leq b_i,
&& \forall u_i \in U_i, \quad \forall i \in [m]\\
& \sum\limits_{i=1}^m  b_i \leq b \\
& \sum\limits_{i \in S_k} x_i \leq g_k, &&\forall k\in [K]\\
& \sum\limits_{i=1}^m x_i = 1\\
& \vec{x} \geq 0.
\end{array}
\end{equation*}
The objective is to maximize the minimal total return of the assets under all uncertain scenarios, described in the first constraint. The second and third constraints restrict the total amount of money spent on the investment within a budget.
The fourth constraint encourages the investor to diversify their investment over different sectors.
The final constraints represent the decision variable as a fraction of the total investment.

For each asset $i$, we have a polyhedral uncertainty set $U_i$ that captures relationships between uncertain sources:
\begin{equation*}
    U_i = \left\{\vec{M}_i \vec{u}_i \leq \vec{s}_i\right\}, \quad \forall i \in [m],
\end{equation*}
where $\vec{M}_i \in \reals^{q \times 8}$ and $\vec{s}_i \in \reals^q$ with $q \in \reals$ denoting the number of constraints in the \acrlong{cw} uncertainty set $U_i$.

Since common uncertainties denote the same source, they should be the same for each constraint:
\begin{equation*}
    \vec{v}_1 = \dots = \vec{v}_m.
\end{equation*}
Then the coupled uncertainty set becomes
\begin{equation*}
    \cp = \left\{\vec{u} \mid \vec{u}_i = (\vec{v}_i, \vec{z}_i),\quad \vec{M}_i \vec{u}_i \leq \vec{s}_i, \quad \forall i \in [m], \quad \vec{v}_1 = \dots = \vec{v}_m\right\}.
\end{equation*}
Furthermore, idiosyncratic uncertainties are also related among \reviewChanges{similar assets. For example, consumer demand of products is influenced by substitution effects, which affects the performance and valuation of companies, and thereby influences asset returns. 
Companies from the same or related sectors may have similar industry performance, and company performances may be affected by peers and competitors.}
We characterize the couplings among asset-specific uncertainties by a polyhedral set with constraints
$$\vec{D} \vec{z} \leq \vec{d}.$$
The resulting further coupled uncertainty set is
\begin{equation*}
    \cp' = \left\{\vec{u} \mid \vec{u}_i = (\vec{v}_i, \vec{z}_i),\quad \vec{M}_i \vec{u}_i \leq \vec{s}_i, \quad \forall i \in [m], \quad \vec{v}_1 = \dots = \vec{v}_m, \quad \vec{D} \vec{z} \leq \vec{d}\right\}.
\end{equation*}

\paragraph{Numerical example.}

For each dimension $m$, we obtain 100 problem instances by drawing $c_{i} \sim U(80,100)$,  $P_{ij} \sim U(-0.5,0.5)$,  $Q_{ij} \sim U(-2.5,2.5)$, for all $i \in [m], j \in [8]$, and setting $b = 100, q = 30, K=5, g_k = 8/m$ for each sector $S_k$.
We first generate $\overline{\vec{M}} \in \reals^{q \times 8}$, $\vec{w} \in \reals^8$, and $\overline{\vec{s}} \in \reals^q$, with $\overline{{M}}_{jk} \sim U(-2.5,2.5),w_k \sim N(0,1), $ and $\overline{s}_{j} \sim \overline{\vec{M}} \vec{w} + U(0,100)$.
Then for each $i$, we generate $\vec{M}_i \sim \overline{\vec{M}} + U(-0.5,0.5), \vec{s}_i \sim \overline{\vec{s}} + U(-0.5,0.5)$, so that we have a polyhedral set for each $U_i$ which is slightly different from each other. They are not necessarily identical sets since each asset has different ranges of uncertain sources.
For each sector $S_k$, we further impose coupling constraints
$u_i \geq u_j+\alpha_j$ for some asset $i \in S_k$ and all $j \in S_k, j \neq i$ with $\alpha_j \sim U(0,1)$.
We set 1000 maximum number of iterations and $10^{-3}$ tolerance for cutting planes method.

\paragraph{Objectives and running times.}
In Figure~\ref{fig:pt}, we show the objective values and running times of the different methods on problems with $2m$ robust constraints and $m$ decision variables.
Enforcing the common sources $\vec{v}_i$ to be the same can obtain an improvement in the returns.
In addition, coupling between idiosyncratic uncertainties $\vec{z}_i$ can result in additional improvement.
Compared with the \acrlong{cw} RC, the cutting plane approach takes a shorter time, and coupled RC takes slightly longer.
The supply chain and portfolio optimization examples show that robust counterpart and cutting plane methods are tractable in high dimensions and neither one dominates the other in solving times.

\begin{figure}[tb]
\begin{subfigure}{0.49\textwidth}
\includegraphics[width=\textwidth]{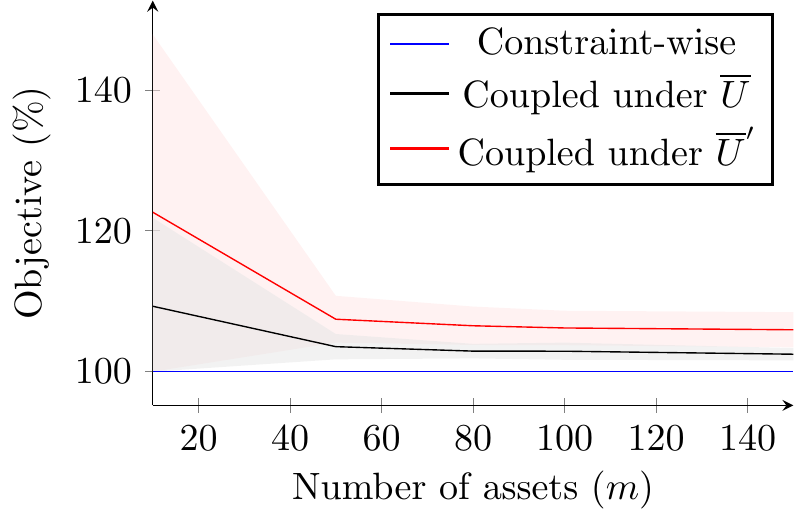}
\end{subfigure}
\begin{subfigure}{0.49\textwidth}
\includegraphics[width=\textwidth]{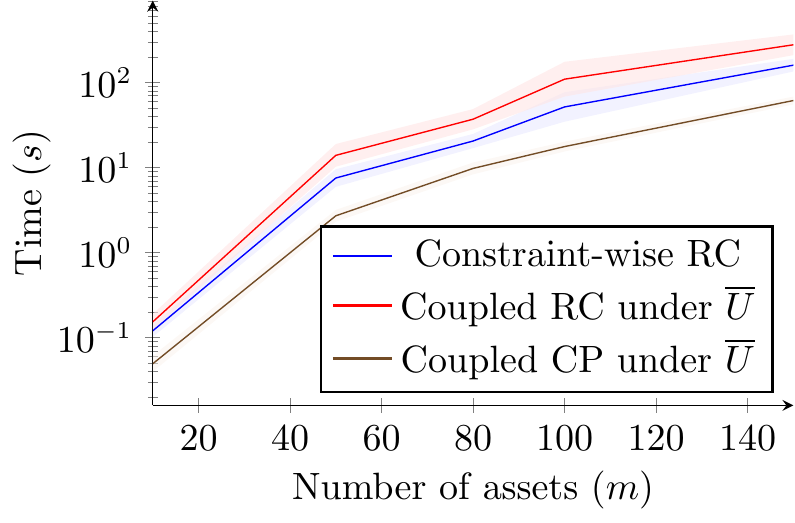}
\end{subfigure}
    \caption{Relative objectives and running times for portfolio optimization (maximization) problems.}
	\label{fig:pt}
\end{figure}

\subsection{Lot sizing on a network}
We consider lot sizing on a network, in which stock is first allocated to stores and then transported between stores to meet all demand.
The uncertainty $u_i \in \reals$ for store $i$ denotes the demand of store $i \in [m]$.
The first-stage decision $x_i$ for $i$ stores denotes stock allocation at each location,
and the second-stage decision $y_{ij}(\vec{u})$ denotes stock transportation from store $i$ to store $j$ after demand realization.
Let $c_i,t_{ij}$ denote the unit storage and shipment cost and let $b$ denote the storage capacity.
We have the following formulation:
\begin{equation*}
{
\begin{array}{llclcl}
\displaystyle \text{minimize} & \multicolumn{3}{l}{\sum\limits_{i \in [m]} c_i x_i + \tau
} \\
\text{subject to} & \sum\limits_{i,j \in [m]} t_{ij} y_{ij}(\vec{u}) \leq \tau , && \forall \vec{u} \in U\\
& \sum\limits_{j \in [m]}y_{ji}(\vec{u}) - \sum\limits_{j \in [m]}y_{ij}(\vec{u}) \geq u_i - x_i, && \forall \vec{u} \in U, \quad \forall i \in [m] \\
& y_{ij}(\vec{u}) \geq 0, && \forall \vec{u} \in U, \quad \forall i,j \in [m] \\
& 0 \leq x_{ij} \leq b, && \forall i,j \in [m].
\end{array}}
\end{equation*}
The objective is to minimize the sum of first-stage storage costs and the worst case second-stage transportation cost.
The first constraint represents the worst case second-stage cost, the second constraint requires the demand to be met for all stores, and the rest of the constraints impose nonnegativity and capacity restrictions.

We use a box set for the constraint-wise uncertainty set
\begin{equation*}
    U = \left\{\vec{u} \mid 0 \leq \vec{u} \leq 20\right\}.
\end{equation*}
We model the restriction of total demands among the stores with
\begin{equation*}
\sum\limits_{i\in [m]} u_i \leq 20 \sqrt{n}.
\end{equation*}
Then the coupled set is the budget set
\begin{equation*}
    \cp = \left\{\vec{u} \mid 0 \leq \vec{u} \leq 20, \quad \sum\limits_{i\in [m]} u_i \leq 20 \sqrt{n}\right\}.
\end{equation*}
From Theorem~\ref{thm:mainadapt} and Example~\ref{ex:norm}, we have the theoretical bounds
$$\frac{1}{\sqrt{m}} \leq \frac{\zacp}{\zaro} = \frac{\zacp}{\zcp} \leq 1.$$

\paragraph{Numerical example.}
For each dimension, we generate 50 problem instances with the numerical settings adopted from~\cite{bertsimas2016duality}.
We pick $m$ store locations uniformly at random from $[0,10]^2$.
Let $c_i=20$ for all $i \in [m]$, $b=20$, and let $t_{ij}$ be the Euclidean distance and $t_{ii}= 0$ for all $i \neq j \in [m]$.
We solve the adaptive problem under coupled uncertainty with different solution methods.
The fully adaptive solutions are obtained by enumerating all vertices of the uncertainty set and including an adaptive variable for each vertex.
Benders decomposition method allows 1000 maximum number of iterations, $10^{-3}$ tolerance, starts with the uncoupled robust optimal solution and 100 random uncertainties drawn from $u_i \sim U(0, 20/\sqrt{m})$ for outer approximation.
Finite scenario approach samples uncertainty values using Hit-and-run sampler in coupled uncertainty set and rescales sampled points to the boundary of the set.

\paragraph{Objectives and running times.}
In Figure~\ref{fig:lottime}, we show the objective values relative to the uncoupled problem and running times of different methods on problems  with $m+m^2+1$ robust constraints, $m$ here-and-now variables, and $m^2$ adaptive decision variables.
Since $\Pi_i(U) = \Pi_i\left(\cp\right)$, in the static case the coupled RO and uncoupled RO have the same objective, which is also the same as the adaptive objective under constraint-wise uncertainty.
In the adaptive case, coupling improves the objective, and the improvement increases with problem sizes.
Vertex enumeration gives fully adaptive solutions for small problem sizes.
Benders decomposition gives a close lower bound to the fully adaptive solution in less than 100 seconds.
Linear decision rule and sampling give upper and lower bounds on the objective with a longer time.
The adaptive objectives are relative to the uncoupled objective and the theoretical lower bound both scale with $1/\sqrt{m}$.
Therefore the improvement of coupling on adaptive solutions is significant when $m$ is large.

\begin{figure}[tb]
\begin{subfigure}{0.49\textwidth}
\includegraphics[width=\textwidth]{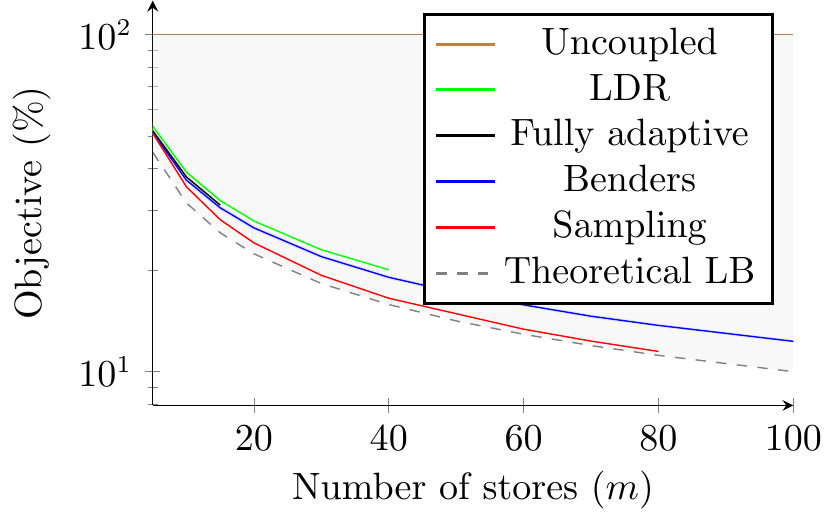}
\end{subfigure}
\begin{subfigure}{0.49\textwidth}
    \includegraphics[width=\textwidth]{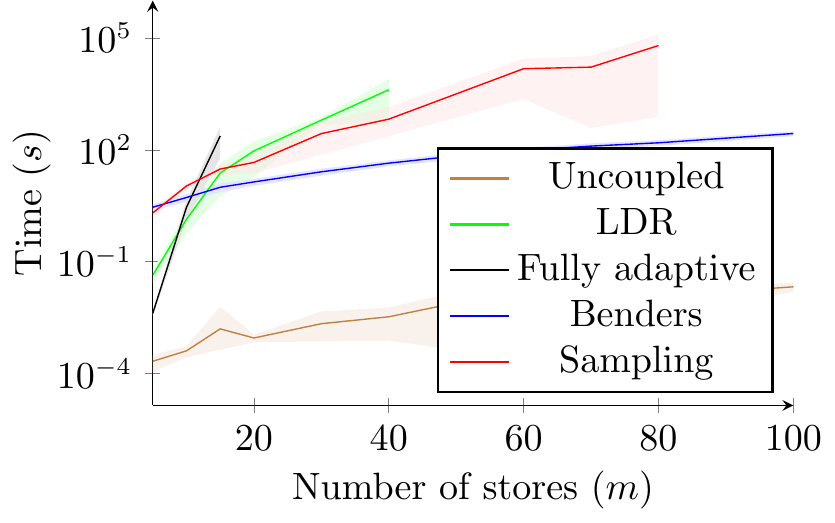}
\end{subfigure}
    \caption{Relative objectives and running times for lot sizing (minimization) problems.}
	\label{fig:lottime}
\end{figure}

\section{Conclusions}
\label{sec:conclusion}
In this paper, we study the benefit of the general uncertainty coupling framework in robust and adaptive robust optimization, where the relationship between uncertain parameters in different constraints is taken into account.
We show that the incorporation of uncertainty coupling can bring benefits from both theoretical and computational approaches.
On the theoretical side, we provide tight and easily computable upper and lower bounds on the improvement of the objective value resulting from coupling in both static and adaptive cases.
We further bound the improvement of adaptive over static solutions under coupled uncertainty.
The theoretical results apply for linear and nonlinear problems in cases when the uncertainties affect the right hand side, affect the coefficient of the constraints, and affect both objectives and constraints.
In particular, we provide closed-form theoretical bounds for special cases of uncertainty sets.
On the computational side, we evaluate the effect of coupling parameters on the improvement of linear RO and ARO problems through numerical examples in supply chain management, portfolio optimization, and lot sizing.
Computational experiments quantify the improvements relative to the theoretical bounds and test different solution methods for the coupled problems.
Computational results demonstrate that coupling can improve solutions for robust and adaptive robust optimization, sometimes significantly, that benefit practical applications.

\bibliography{bibliography.bib} % if more than one, comma separated

\begin{thebibliography}{48}
\providecommand{\natexlab}[1]{#1}
\providecommand{\url}[1]{\texttt{#1}}
\expandafter\ifx\csname urlstyle\endcsname\relax
  \providecommand{\doi}[1]{doi: #1}\else
  \providecommand{\doi}{doi: \begingroup \urlstyle{rm}\Url}\fi

\bibitem[Atamt{\"u}rk and Zhang(2007)]{atamturk2007two}
A.~Atamt{\"u}rk and M.~Zhang.
\newblock Two-stage robust network flow and design under demand uncertainty.
\newblock \emph{Operations Research}, 55\penalty0 (4):\penalty0 662--673, 2007.

\bibitem[Beck(2017)]{beck2017first}
A.~Beck.
\newblock \emph{First-order methods in optimization}.
\newblock Society for Industrial and Applied Mathematics, 2017.
\newblock URL \url{https://epubs.siam.org/doi/abs/10.1137/1.9781611974997}.

\bibitem[Ben-Tal and Nemirovski(1998)]{ben1998robust}
A.~Ben-Tal and A.~Nemirovski.
\newblock Robust convex optimization.
\newblock \emph{Mathematics of operations research}, 23\penalty0 (4):\penalty0
  769--805, 1998.

\bibitem[Ben-Tal and Nemirovski(1999)]{ben1999robust}
A.~Ben-Tal and A.~Nemirovski.
\newblock Robust solutions of uncertain linear programs.
\newblock \emph{Operations research letters}, 25\penalty0 (1):\penalty0 1--13,
  1999.

\bibitem[Ben-Tal and Nemirovski(2000)]{ben2000robust}
A.~Ben-Tal and A.~Nemirovski.
\newblock Robust solutions of linear programming problems contaminated with
  uncertain data.
\newblock \emph{Mathematical programming}, 88\penalty0 (3):\penalty0 411--424,
  2000.

\bibitem[Ben-Tal et~al.(2004)Ben-Tal, Goryashko, Guslitzer, and
  Nemirovski]{ben2004adjustable}
A.~Ben-Tal, A.~Goryashko, E.~Guslitzer, and A.~Nemirovski.
\newblock Adjustable robust solutions of uncertain linear programs.
\newblock \emph{Mathematical programming}, 99\penalty0 (2):\penalty0 351--376,
  2004.

\bibitem[Ben-Tal et~al.(2006)Ben-Tal, Boyd, and Nemirovski]{ben2006extending}
A.~Ben-Tal, S.~Boyd, and A.~Nemirovski.
\newblock Extending scope of robust optimization: Comprehensive robust
  counterparts of uncertain problems.
\newblock \emph{Mathematical Programming}, 107\penalty0 (1-2):\penalty0 63--89,
  2006.

\bibitem[Ben-Tal et~al.(2009)Ben-Tal, El~Ghaoui, and Nemirovski]{ben2009robust}
A.~Ben-Tal, L.~El~Ghaoui, and A.~Nemirovski.
\newblock \emph{Robust optimization}, volume~28.
\newblock Princeton University Press, 2009.

\bibitem[Ben-Tal et~al.(2015)Ben-Tal, Den~Hertog, and Vial]{ben2015deriving}
A.~Ben-Tal, D.~Den~Hertog, and J.-P. Vial.
\newblock Deriving robust counterparts of nonlinear uncertain inequalities.
\newblock \emph{Mathematical programming}, 149\penalty0 (1-2):\penalty0
  265--299, 2015.

\bibitem[Ben-Tal et~al.(2017)Ben-Tal, Brekelmans, den Hertog, and
  Vial]{ben2017global}
A.~Ben-Tal, R.~Brekelmans, D.~den Hertog, and J.-P. Vial.
\newblock Globalized robust optimization for nonlinear uncertain inequalities.
\newblock \emph{INFORMS Journal on Computing}, 29\penalty0 (2):\penalty0
  350--366, 2017.

\bibitem[Bertsimas and de~Ruiter(2016)]{bertsimas2016duality}
D.~Bertsimas and F.~J. de~Ruiter.
\newblock Duality in two-stage adaptive linear optimization: Faster computation
  and stronger bounds.
\newblock \emph{INFORMS Journal on Computing}, 28\penalty0 (3):\penalty0
  500--511, 2016.

\bibitem[Bertsimas and den Hertog(2022)]{bertsimas2022book}
D.~Bertsimas and D.~den Hertog.
\newblock \emph{Robust and adaptive optimization}.
\newblock Dynamic Ideas, 2022.

\bibitem[Bertsimas and Goyal(2010)]{bertsimas2010power}
D.~Bertsimas and V.~Goyal.
\newblock On the power of robust solutions in two-stage stochastic and adaptive
  optimization problems.
\newblock \emph{Mathematics of Operations Research}, 35\penalty0 (2):\penalty0
  284--305, 2010.

\bibitem[Bertsimas and Goyal(2013)]{bertsimas2013approximability}
D.~Bertsimas and V.~Goyal.
\newblock On the approximability of adjustable robust convex optimization under
  uncertainty.
\newblock \emph{Mathematical Methods of Operations Research}, 77\penalty0
  (3):\penalty0 323--343, 2013.

\bibitem[Bertsimas and Sim(2004)]{bertsimas2004price}
D.~Bertsimas and M.~Sim.
\newblock The price of robustness.
\newblock \emph{Operations research}, 52\penalty0 (1):\penalty0 35--53, 2004.

\bibitem[Bertsimas and Thiele(2006)]{bertsimas2006robust}
D.~Bertsimas and A.~Thiele.
\newblock A robust optimization approach to inventory theory.
\newblock \emph{Operations research}, 54\penalty0 (1):\penalty0 150--168, 2006.

\bibitem[Bertsimas and Tsitsiklis(1997)]{bertsimas1997introduction}
D.~Bertsimas and J.~N. Tsitsiklis.
\newblock \emph{Introduction to linear optimization}, volume~6.
\newblock Athena Scientific Belmont, MA, 1997.

\bibitem[Bertsimas et~al.(2011{\natexlab{a}})Bertsimas, Brown, and
  Caramanis]{bertsimas2011theory}
D.~Bertsimas, D.~B. Brown, and C.~Caramanis.
\newblock Theory and applications of robust optimization.
\newblock \emph{SIAM review}, 53\penalty0 (3):\penalty0 464--501,
  2011{\natexlab{a}}.

\bibitem[Bertsimas et~al.(2011{\natexlab{b}})Bertsimas, Goyal, and
  Sun]{bertsimas2011geometric}
D.~Bertsimas, V.~Goyal, and X.~A. Sun.
\newblock A geometric characterization of the power of finite adaptability in
  multistage stochastic and adaptive optimization.
\newblock \emph{Mathematics of Operations Research}, 36\penalty0 (1):\penalty0
  24--54, 2011{\natexlab{b}}.

\bibitem[Bertsimas et~al.(2012)Bertsimas, Litvinov, Sun, Zhao, and
  Zheng]{bertsimas2012adaptive}
D.~Bertsimas, E.~Litvinov, X.~A. Sun, J.~Zhao, and T.~Zheng.
\newblock Adaptive robust optimization for the security constrained unit
  commitment problem.
\newblock \emph{IEEE transactions on power systems}, 28\penalty0 (1):\penalty0
  52--63, 2012.

\bibitem[Bertsimas et~al.(2015)Bertsimas, Goyal, and Lu]{bertsimas2015tight}
D.~Bertsimas, V.~Goyal, and B.~Y. Lu.
\newblock A tight characterization of the performance of static solutions in
  two-stage adjustable robust linear optimization.
\newblock \emph{Mathematical Programming}, 150\penalty0 (2):\penalty0 281--319,
  2015.

\bibitem[Bertsimas et~al.(2018)Bertsimas, Gupta, and
  Kallus]{bertsimas_data-driven_2018}
D.~Bertsimas, V.~Gupta, and N.~Kallus.
\newblock Data-driven robust optimization.
\newblock \emph{Mathematical Programming}, 167\penalty0 (2):\penalty0 235--292,
  Feb. 2018.
\newblock ISSN 0025-5610, 1436-4646.

\bibitem[Birge and Louveaux(2011)]{birge2011introduction}
J.~R. Birge and F.~Louveaux.
\newblock \emph{Introduction to stochastic programming}.
\newblock Springer Science \& Business Media, 2011.

\bibitem[Chen and Zhang(2009)]{chen2009uncertain}
X.~Chen and Y.~Zhang.
\newblock Uncertain linear programs: Extended affinely adjustable robust
  counterparts.
\newblock \emph{Operations Research}, 57\penalty0 (6):\penalty0 1469--1482,
  2009.

\bibitem[Duran and Grossmann(1986)]{duran1986outer}
M.~A. Duran and I.~E. Grossmann.
\newblock An outer-approximation algorithm for a class of mixed-integer
  nonlinear programs.
\newblock \emph{Mathematical programming}, 36\penalty0 (3):\penalty0 307--339,
  1986.

\bibitem[El~Ghaoui and Lebret(1997)]{el1997robust}
L.~El~Ghaoui and H.~Lebret.
\newblock Robust solutions to least-squares problems with uncertain data.
\newblock \emph{SIAM Journal on Matrix Analysis and Applications}, 18\penalty0
  (4):\penalty0 1035--1064, 1997.

\bibitem[Esfahani and Kuhn(2018)]{mohajerin_esfahani_data-driven_2018}
P.~M. Esfahani and D.~Kuhn.
\newblock Data-driven distributionally robust optimization using the {
  Wasserstein} metric: performance guarantees and tractable reformulations.
\newblock \emph{Mathematical Programming}, 171:\penalty0 115--166, Sept. 2018.

\bibitem[Fischetti and Monaci(2009)]{Fischetti2009}
M.~Fischetti and M.~Monaci.
\newblock \emph{Light Robustness}, pages 61--84.
\newblock Springer Berlin Heidelberg, Berlin, Heidelberg, 2009.

\bibitem[Fletcher and Leyffer(1994)]{fletcher1994solving}
R.~Fletcher and S.~Leyffer.
\newblock Solving mixed integer nonlinear programs by outer approximation.
\newblock \emph{Mathematical programming}, 66\penalty0 (1-3):\penalty0
  327--349, 1994.

\bibitem[Fukuda(2003)]{fukuda2003cddlib}
K.~Fukuda.
\newblock Cddlib reference manual.
\newblock \emph{Report version 093a, McGill University, Montr{\'e}al, Quebec,
  Canada}, 2003.

\bibitem[Gorissen et~al.(2015)Gorissen, İhsan Yanıkoğlu, and {den
  Hertog}]{GORISSEN2015practical}
B.~L. Gorissen, İhsan Yanıkoğlu, and D.~{den Hertog}.
\newblock A practical guide to robust optimization.
\newblock \emph{Omega}, 53:\penalty0 124 -- 137, 2015.
\newblock ISSN 0305-0483.

\bibitem[{Hadjiyiannis} et~al.(2011){Hadjiyiannis}, {Goulart}, and
  {Kuhn}]{hadjiyiannis2011finite}
M.~J. {Hadjiyiannis}, P.~J. {Goulart}, and D.~{Kuhn}.
\newblock A scenario approach for estimating the suboptimality of linear
  decision rules in two-stage robust optimization.
\newblock In \emph{2011 50th IEEE Conference on Decision and Control and
  European Control Conference}, pages 7386--7391, Dec 2011.

\bibitem[Jaillet et~al.(2022)Jaillet, Jena, Ng, and Sim]{satisficing2}
P.~Jaillet, S.~D. Jena, T.~S. Ng, and M.~Sim.
\newblock Satisficing models under uncertainty.
\newblock \emph{INFORMS Journal on Optimization}, 4\penalty0 (4):\penalty0
  347--372, 2022.

\bibitem[Kall et~al.(1994)Kall, Wallace, and Kall]{kall1994stochastic}
P.~Kall, S.~W. Wallace, and P.~Kall.
\newblock \emph{Stochastic programming}.
\newblock Springer, 1994.

\bibitem[Long et~al.(2023)Long, Sim, and Zhou]{satisficing}
D.~Z. Long, M.~Sim, and M.~Zhou.
\newblock Robust satisficing.
\newblock \emph{Operations Research}, 71\penalty0 (1):\penalty0 61--82, 2023.

\bibitem[Marandi and Den~Hertog(2018)]{marandi2018static}
A.~Marandi and D.~Den~Hertog.
\newblock When are static and adjustable robust optimization problems with
  constraint-wise uncertainty equivalent?
\newblock \emph{Mathematical programming}, 170\penalty0 (2):\penalty0 555--568,
  2018.

\bibitem[Minkowski(1911)]{minkowski1911allegemeine}
H.~Minkowski.
\newblock Allegemeine lehz{\"a}tze {\"u}ber konvexe polyeder.
\newblock \emph{Ges. Abh., Leipzog-Berlin}, 1:\penalty0 103--121, 1911.

\bibitem[Nohadani and Sharma(2022)]{nohadani2022optimization}
O.~Nohadani and K.~Sharma.
\newblock Optimization under connected uncertainty.
\newblock \emph{INFORMS Journal on Optimization}, 2022.

\bibitem[Optimization(2020)]{optimization2016gurobi}
G.~Optimization.
\newblock Gurobi optimizer reference manual; gurobi optimization.
\newblock \emph{Inc.: Houston, TX, USA}, 2020.

\bibitem[Postek et~al.(2018)Postek, Ben-Tal, Den~Hertog, and
  Melenberg]{postek2018robust}
K.~Postek, A.~Ben-Tal, D.~Den~Hertog, and B.~Melenberg.
\newblock Robust optimization with ambiguous stochastic constraints under mean
  and dispersion information.
\newblock \emph{Operations Research}, 66\penalty0 (3):\penalty0 814--833, 2018.

\bibitem[Rockafellar(1970)]{rockafellar1970convex}
R.~T. Rockafellar.
\newblock \emph{Convex analysis}.
\newblock Number~28. Princeton university press, 1970.

\bibitem[Roos and {den Hertog}(2019)]{hertog2019reduce}
E.~Roos and D.~{den Hertog}.
\newblock Reducing conservatism in robust optimization.
\newblock \emph{INFORMS Journal on Computing}, 6 2019.
\newblock ISSN 1091-9856.

\bibitem[Shapiro et~al.(2014)Shapiro, Dentcheva, and
  Ruszczy{\'n}ski]{shapiro2014lectures}
A.~Shapiro, D.~Dentcheva, and A.~Ruszczy{\'n}ski.
\newblock \emph{Lectures on stochastic programming: modeling and theory}.
\newblock SIAM, 2014.

\bibitem[Smith(1984)]{smith1984efficient}
R.~L. Smith.
\newblock Efficient monte carlo procedures for generating points uniformly
  distributed over bounded regions.
\newblock \emph{Operations Research}, 32\penalty0 (6):\penalty0 1296--1308,
  1984.

\bibitem[Soyster(1973)]{soyster1973convex}
A.~L. Soyster.
\newblock Convex programming with set-inclusive constraints and applications to
  inexact linear programming.
\newblock \emph{Operations research}, 21\penalty0 (5):\penalty0 1154--1157,
  1973.

\bibitem[Tanoumand et~al.(2023)Tanoumand, Bodur, and
  Naoum-Sawaya]{tanoumanddata}
N.~Tanoumand, M.~Bodur, and J.~Naoum-Sawaya.
\newblock Data-driven distributionally robust optimization: Intersecting
  ambiguity sets, performance analysis and tractability.
\newblock \emph{Optimization
  Online~\url{https://optimization-online.org/?p=22567}}, 2023.

\bibitem[Yan{\i}ko{\u{g}}lu et~al.(2019)Yan{\i}ko{\u{g}}lu, Gorissen, and den
  Hertog]{YANIKOGLU2019survey}
{\.I}.~Yan{\i}ko{\u{g}}lu, B.~L. Gorissen, and D.~den Hertog.
\newblock A survey of adjustable robust optimization.
\newblock \emph{European Journal of Operational Research}, 277\penalty0
  (3):\penalty0 799--813, 2019.

\bibitem[Zhen et~al.(2018)Zhen, Den~Hertog, and Sim]{zhen2018adjustable}
J.~Zhen, D.~Den~Hertog, and M.~Sim.
\newblock Adjustable robust optimization via fourier--motzkin elimination.
\newblock \emph{Operations Research}, 66\penalty0 (4):\penalty0 1086--1100,
  2018.

\end{thebibliography}

\newpage

\ifpreprint
\appendix
\section*{Appendices}
\else
\begin{APPENDICES}
\fi

\section{Down-hull uncertainty sets}
\label{append:down}
\reviewChanges{We show that for robust and adaptive problems~\eqref{eqn:ro},\eqref{eqn:cp},\eqref{eqn:aro},\eqref{eqn:acp},\eqref{eqn:aronl},\eqref{eqn:acpnl}, we can replace $U$ and $\cp$ with their down-hulls without loss of generality.}
% We show that for robust and adaptive problems~\eqref{eqn:ro},\eqref{eqn:cp},\eqref{eqn:aro},\eqref{eqn:acp}, we can assume $U$ and $\cp$ to be down-monotone without loss of generality under the following definitions.

% \begin{corollary}
% \label{cor:down_proj}
% If a set $S \subseteq \reals_+^m$ is down-monotone, then its projection $\Pi(S) \subseteq \reals_+^m$ is also down-monotone.
% \end{corollary}
% \ifpreprint \begin{proof} \else
% \proof{Proof.}
% \fi
% Suppose $\vec{s} \in \Pi(S)$ and $\vec{t} \leq \vec{s}$, then for all $i \in [m]$, $s_i \in \Pi_i(S)$, $t_i \leq s_i$.
% Then there exist $s_1,\dots,s_{i-1},s_{i+1},\dots,s_m$ s.t. $(s_1,\dots,s_m) \in S$.
% Since $S$ is down-monotone, $(s_1,\dots,t_i,\dots,s_m) \in S$ and thus $t_i \in \Pi_i(S)$ for all $i \in [m]$. Thus $\vec{t} \in \Pi(S)$ and $\Pi(S)$ is down-monotone.
% \ifpreprint \end{proof} \else
% \Halmos
% \endproof
% \fi

\begin{lemma}
\label{lem:down}
Consider robust and adaptive problems~\eqref{eqn:ro},\eqref{eqn:cp},\eqref{eqn:aro},\eqref{eqn:acp},\reviewChanges{\eqref{eqn:aronl},\eqref{eqn:acpnl}}.
The objective values remain the same if the uncertainty sets $U,\cp$ are replaced with their down-hulls $U^\downarrow,\cp^\downarrow$. %as defined in Definition~\ref{def:downhull}.
\end{lemma}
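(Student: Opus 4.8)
The plan is to prove, for each of the right-hand-side formulations~\eqref{eqn:ro},\eqref{eqn:cp},\eqref{eqn:aro},\eqref{eqn:acp},\eqref{eqn:aronl},\eqref{eqn:acpnl}, the two inequalities $z \le z^{\downarrow}$ and $z^{\downarrow} \le z$, where $z$ is the objective value under $U$ (resp.\ $\cp$) and $z^{\downarrow}$ is the one under $U^{\downarrow}$ (resp.\ $\cp^{\downarrow}$). The whole argument rests on a single structural observation: in all these problems the uncertainty enters only through constraints of the form $g_i(\vec{x},\vec{y}(\vec{u})) \ge u_i$ (linear specializations included), whose left-hand side does not involve $\vec{u}$, so each constraint is monotone nondecreasing in $u_i$; and $\vec{u} \in \reals^m_+$.

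First I would dispatch the static problems~\eqref{eqn:ro},\eqref{eqn:cp}, where the objective $\vec{c}^T\vec{x}+\vec{d}^T\vec{y}$ does not depend on $\vec{u}$, so it suffices to show the feasible sets coincide. For every $i$ and every $\vec{u}\in U^{\downarrow}$ there is $\vec{s}\in U$ with $\vec{u}\le \vec{s}$, hence $u_i \le s_i$; together with $U \subseteq U^{\downarrow}$ this gives $\max_{\vec{u}\in U^{\downarrow}} u_i = \max_{\vec{u}\in U} u_i$. Thus $\vec{a}_i^T\vec{x}+\vec{g}_i^T\vec{y}\ge u_i$ holds for all $\vec{u}\in U^{\downarrow}$ if and only if it holds for all $\vec{u}\in U$, and likewise for $\cp$ versus $\cp^{\downarrow}$.

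Next, the adaptive problems~\eqref{eqn:aro},\eqref{eqn:acp},\eqref{eqn:aronl},\eqref{eqn:acpnl}. Since $U\subseteq U^{\downarrow}$, any solution feasible for the down-hull problem is feasible for the original one, and its original objective $f_1(\vec{x})+\max_{\vec{u}\in U} f_2(\vec{y}(\vec{u}))$ is no larger than its down-hull objective $f_1(\vec{x})+\max_{\vec{u}\in U^{\downarrow}} f_2(\vec{y}(\vec{u}))$; taking infima gives $z \le z^{\downarrow}$. For the reverse, fix any selection $\phi:U^{\downarrow}\to U$ with $\vec{u}\le \phi(\vec{u})$ for all $\vec{u}$ (one exists directly by Definition~\ref{def:downhull}), and given a solution $(\vec{x},\vec{y}(\cdot))$ feasible for the original problem, set $\vec{y}'(\vec{u}) = \vec{y}(\phi(\vec{u}))$ for $\vec{u}\in U^{\downarrow}$. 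Each constraint holds because $g_i(\vec{x},\vec{y}'(\vec{u})) = g_i(\vec{x},\vec{y}(\phi(\vec{u}))) \ge \phi(\vec{u})_i \ge u_i$, using feasibility of $(\vec{x},\vec{y}(\cdot))$ at $\phi(\vec{u})\in U$ and $\phi(\vec{u})_i \ge u_i$; and the objective satisfies $\max_{\vec{u}\in U^{\downarrow}} f_2(\vec{y}'(\vec{u})) = \max_{\vec{u}\in U^{\downarrow}} f_2(\vec{y}(\phi(\vec{u}))) \le \max_{\vec{w}\in U} f_2(\vec{y}(\vec{w}))$ since $\phi(\vec{u})\in U$. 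Hence $z^{\downarrow}\le z$, and the same reasoning applies verbatim to $\cp,\cp^{\downarrow}$.

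The main point to get right is this adaptive construction: one must replace $\vec{y}(\cdot)$ by the composed policy $\vec{y}\circ\phi$ rather than $\vec{y}$ itself. I would emphasize that this uses nothing about measurability (the $\vec{y}(\cdot)$ are arbitrary maps), nor any convexity/concavity of $f_1,f_2,g_i$ — it relies only on the right-hand-side, constraint-wise structure, which is exactly why the lemma holds uniformly across all the listed formulations but \emph{not} for the coefficient-uncertainty problems of Section~\ref{sec:theorycoeff}. Finally I would note that if the maxima in the objectives are not attained they are read as suprema, and every inequality above goes through unchanged.
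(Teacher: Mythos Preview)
Your proposal is correct and follows essentially the same approach as the paper: the paper also proves $z\le z^{\downarrow}$ from $\cp\subseteq\cp^{\downarrow}$ and, for the reverse inequality, defines a new policy $\vec{y}^{\downarrow}(\vec{t})=\yacp(\vec{s})$ for some $\vec{s}\in\cp$ with $\vec{s}\ge\vec{t}$, which is precisely your composed policy $\vec{y}\circ\phi$ with the selection $\phi$. The only cosmetic difference is that the paper keeps $\vec{y}^{\downarrow}(\vec{t})=\yacp(\vec{t})$ when $\vec{t}\in\cp$ and treats the static problems as the special case $\vec{y}(\vec{u})\equiv\vec{y}$, whereas you handle the static case by the coordinate-wise $\max$ argument directly.
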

\ifpreprint \begin{proof} \else
\proof{Proof.}
\fi
We show the result for\reviewChanges{~\eqref{eqn:acpnl}}.
Let $\zacp,z_{\rm acp}^\downarrow$ denote the objectives of the adaptive problems under $\cp$ and $\cp^\downarrow$.
As $\cp \subseteq \cp^\downarrow$, we have $\zacp \leq z_{\rm acp}^\downarrow$.
To show $\zacp\geq z_{\rm acp}^\downarrow$, let $\left(\xacp,\yacp(\vec{u})\right)$ be an optimal solution to\reviewChanges{~\eqref{eqn:acpnl}.
Then $$\vec{g}_i(\xacp,\yacp(\vec{u}))\geq u_i, \quad \forall \vec{u} \in \cp, \quad \forall i \in [m].$$}
For all $\vec{t}\in\cp^\downarrow$, let $ $
\begin{equation*}
\vec{y}^\downarrow(\vec{t}) =
    \begin{cases}
    \yacp(\vec{t}), & \text{if }\vec{t} \in \cp \\
    \yacp(\vec{s}) \text{ for some $\vec{s} \geq \vec{t}, \vec{s} \in \cp$},  & \text{if }\vec{t} \in \cp^\downarrow \setminus \cp.
    \end{cases}
\end{equation*}
Then for all $\vec{t}\in\cp^\downarrow$, there exists $\vec{s} \in \cp$ s.t.
$$\reviewChanges{\vec{g}_i(\xacp,\vec{y}^\downarrow(\vec{t}))
= \vec{g}_i(\xacp,\yacp(\vec{s}))}
\geq s_i \geq t_i, \quad \forall i \in [m].$$
Then $(\xacp,\vec{y}^\downarrow(\vec{t}))$ is feasible to the problem under $\cp^\downarrow$.
Let $\uacp = \arg\max\limits_{\vec{u}\in\cp} \reviewChanges{f_2( \yacp(\vec{u}))}$, $\vec{t}^\downarrow = \arg\max\limits_{\vec{t}\in\cp^\downarrow} \reviewChanges{f_2(\vec{y}^\downarrow(\vec{t}))}$, and there exists $\vec{s} \in \cp$ s.t.
$$\reviewChanges{\zacp^\downarrow \leq f_1(\xacp) + f_2(\vec{y}^\downarrow(\vec{t^\downarrow}))
= f_1(\xacp) +  f_2(\yacp(\vec{s}))
\leq f_1(\xacp) + f_2(\yacp\left(\uacp\right))} = \zacp.$$
Therefore, $\zacp = z_{\rm acp}^\downarrow$. \reviewChanges{The result for~\eqref{eqn:aronl} can be proved similarly and results for static problems follow as a special case with $\yacp(\vec{u}) = \yacp,\forall \vec{u} \in \cp$. The results for linear functions~\eqref{eqn:ro},\eqref{eqn:cp},\eqref{eqn:aro},\eqref{eqn:acp} also follow.}
\ifpreprint \end{proof} \else
\Halmos
\endproof
\fi

\section{Proof of results in Section~\ref{sec:theorycoeff}}
\label{append:proofcoeff}
\ifpreprint \begin{proof}[Proof of Theorem~\ref{thm:main2}] \else
\proof{Proof of Theorem~\ref{thm:main2}.}
\fi
\reviewChanges{Since $0 \in \cp$ and $U$ is convex, $\ri$ and $\gi$ exist.}
To show the upper bound,
Let $\xcp$ be an optimal solution to~\eqref{eqn:cp2}, since $V \subseteq \Pi\left(\cp\right)$, then it is also feasible to
\begin{equation*}
{
\begin{array}{llclcl}
\displaystyle  &\text{maximize} && {\vec{c}^T\vec{x}} \\
&\text{subject to} &&
\vec{u}_i^T\vec{x}\leq b_i, && \forall \vec{u} \in V, \quad \forall i \in [m].
\end{array}}
\end{equation*}
Then $\ri \xcp$ is feasible to~\eqref{eqn:ro2} since
$$\vec{u}_i^T\ri \xcp = (\ri\vec{u}_i)^T \xcp \leq  b_i, \quad \forall \vec{u} \in U, \quad \forall i \in [m].$$
Thus
\begin{equation*}
 \zro \geq \ri \zcp.
\end{equation*}
To show the lower bound,
let $\xro$ be an optimal solution to~\eqref{eqn:ro2}, then $\frac{1}{\gi} \xro$ is also a feasible solution to
\begin{equation*}
{
\begin{array}{llclcl}
\displaystyle  &\text{maximize} &&{\vec{c}^T\vec{x}} \\
&\text{subject to} & &
 \vec{u}_i^T\vec{x}\leq b_i, && \forall \vec{u} \in W, \quad \forall i \in [m], %\\
\end{array}}
\end{equation*}
since
$$\vec{u}_i^T \frac{1}{\gi} \xro
= \left(\frac{1}{\gi} \vec{u}_i\right)^T \xro
\leq  \frac{1}{\gi} b_i, \quad \forall \vec{u} \in U, \quad \forall i \in [m].$$
As $\Pi\left(\cp\right) \subseteq W$, thus
\begin{equation*}
\zcp \geq \zsi \geq \frac{1}{\gi} \zro.
\end{equation*}

To show the tightness of the bounds, consider the problem
\begin{equation*}
{
\begin{array}{llclcl}
\displaystyle \max  & c_1 x_1+c_2 x_2+c_3 x_3+c_4 x_4 &\vspace{3pt} \\
{\rm s.t.}
& u_1 x_1 + u_2 x_2 \leq 1 , && \forall (u_1,u_2) \in U_1 \\
& u_3 x_3 + u_4 x_4 \leq 1 , && \forall (u_3,u_4) \in U_2,
\end{array}}
\end{equation*}
under uncertainty sets from Example~\ref{ex:l1l2} with $\alpha = \beta=1$ and $p = 2$.
According to Theorem \ref{thm:main}, since $\ri = 1/\sqrt{2}$ and $\gi = 1$, we have
$1 \leq \zcp/\zro \leq \sqrt{2}$.
If $c_1=c_2=0,c_3=c_4=1$, then $\zro=\sqrt{2},\zcp=2$, $\zcp/\zro= \sqrt{2} = 1/\ri$.
If $c_1=c_2=1,c_3=c_4=0$, then $\zro=2,\zcp=2$, $\zcp/\zro =1 = 1/\gi$.
Therefore both bounds are tight.
\ifpreprint \end{proof} \else
\Halmos
\endproof
\fi

\ifpreprint \begin{proof}[Proof of Theorem~\ref{thm:mainadapt2}] \else
\proof{Proof of Theorem~\ref{thm:mainadapt2}.}
\fi
\reviewChanges{Since $\{0\} \in \cp$ and $U$ is convex, $\ri$ and $\gi$ exist.}
To show the upper bound,
let $\left(\xacp,\yacp(\vec{u})\right)$ be an optimal solution to~\eqref{eqn:acp2}.
% and $\uacp = \arg\min\limits_{\vec{u}\in\cp} \vec{d}^T \yacp(\vec{u})$.
Since $V \subseteq \cp$, then it is also feasible to
\begin{equation*}
{
\begin{array}{llclcl}
\displaystyle
& \text{maximize} && {\vec{c}^T\vec{x} + \min\limits_{\vec{u}\in V} \vec{d}^T \vec{y}(\vec{u})} \\
& \text{subject to} &&
\vec{u}_i^T
\begin{bmatrix*}[l] \vec{x}\\ \vec{y}(\vec{u}) \end{bmatrix*}
\leq b_i, && \forall \vec{u} \in V, \quad \forall i \in [m].
\end{array}}
\end{equation*}
\reviewChanges{Then $\ari{\xacp},\ari{\yacp\left(\ari \vec{u}\right)}$ is feasible to~\eqref{eqn:aro2} since
$$\vec{u}_i^T
\begin{bmatrix*}[l] \ari{\xacp}\\ \ari\yacp\left(\ari \vec{u}\right)\end{bmatrix*}
\leq  b_i, \quad \forall \vec{u} \in U, \quad \forall i \in [m]. $$
Thus
\begin{equation*}
\zaro \geq \ari{\vec{c}^T\xacp + \ari\min\limits_{\vec{u}\in U} \vec{d}^T \yacp(\ari \vec{u})} \geq  \ari{\vec{c}^T\xacp + \ari\min\limits_{\vec{u}\in \cp} \vec{d}^T \yacp(\vec{u})} = \ari \zacp.
\end{equation*}}

To show the lower bound, suppose $(\xaro,\yaro(\vec{u}))$ is an optimal solution to~\eqref{eqn:aro2}.
% and $\uaro = \arg  \min\limits_{\vec{u} \in U} \vec{d}^T \yaro(\vec{u})$.
\reviewChanges{Then $\frac{1}{\agi}(\xaro,\yaro(\vec{u}/\agi))$} is a feasible solution to
\begin{equation*}
{
\begin{array}{llclcl}
\displaystyle \zsi =
& \text{maximize} && {\vec{c}^T\vec{x}+ \min\limits_{\vec{u}\in W} \vec{d}^T \vec{y}(\vec{u})} \\
& \text{subject to} &&
\vec{u}_i^T
\begin{bmatrix*}[l] \vec{x}\\ \vec{y}(\vec{u}) \end{bmatrix*}
\leq  b_i, && \forall \vec{u} \in W, \quad \forall i \in [m],
\end{array}}
\end{equation*}
since
\reviewChanges{$$\vec{u}_i^T
\begin{bmatrix*}[l] {\xaro}\\ \yaro(\vec{u}/\agi)\end{bmatrix*}
\leq \agi b_i, \quad \forall \vec{u} \in W, \quad \forall i \in [m]. $$
As $\cp \subseteq W$, this solution is feasible also for~\eqref{eqn:acp2}, thus
\begin{equation*}
\zacp \geq  \frac{1}{\agi}{\vec{c}^T\xaro + \frac{1}{\agi}\min\limits_{\vec{u}\in \cp} \vec{d}^T\yaro (\vec{u}/\agi)} \geq \zsi \geq \frac{1}{\agi} \zaro.
\end{equation*}}
To show the tightness of the bounds, consider the problem
\begin{equation*}
{
\begin{array}{llclcl}
\displaystyle \max  & \min\limits_{\vec{u}\in U} c_1 y_1(\vec{u})+c_2 y_2(\vec{u}) &\vspace{3pt} \\
{\rm s.t.}
& u_1 y_1(\vec{u}) \leq 1 , && \forall \vec{u} \in U \\
& u_2 y_2(\vec{u}) \leq 1 , && \forall \vec{u} \in U,
\end{array}}
\end{equation*}
where
\begin{equation*}
    U = \left\{\vec{u} \mid 0 \leq \vec{u} \leq 1\right\},
\quad
    \overline{U} = \left\{\vec{u} \mid 0 \leq \vec{u} \leq 1, \quad 0 \leq u_1+u_2\leq 1\right\}.
\end{equation*}
According to Theorem \ref{thm:mainadapt}, since $\ari = 1/2$ and $\agi = 1$, we have
$1 \leq \zacp/\zaro \leq 2$.
The optimal solution for the adaptive variable is $y_1(\vec{u})=1/u_1, y_2(\vec{u})=1/u_2.$
If $c_1 = c_2 = 1$, then $\zaro= 2,\zacp=4$, $\zacp/\zaro= 2 = 1/\ari$.
If $c_1 = 1, c_2 = 0$, then $\zaro=\zacp=1$, $\zacp/\zaro =1 = 1/\agi$.
Therefore both bounds are tight.
\ifpreprint \end{proof} \else
\Halmos
\endproof
\fi

\ifpreprint \begin{proof}[Proof of Theorem~\ref{cor:mainadapt}]\else
\proof{Proof of Theorem~\ref{cor:mainadapt}.}
\fi
\reviewChanges{Let $\left(\xacp,\yacp(\vec{u})\right)$ be an optimal solution to~\eqref{eqn:acp2}. We construct a solution $\arho \left(\xacp,\yacp(\arho\vec{u})\right)$ feasible to the adaptive problem under $\Pi\left(\cp\right)$, and let the objective of this problem be $\hat{z}_{\text{acp}}$. 
Since $\Pi\left(\cp\right)$ is a constraint-wise and compact uncertainty set, and the adaptive problem under $\Pi\left(\cp\right)$ satisfies Assumption~\ref{ass:compact}, we have 
$\hat{z}_{\text{acp}} = \zcp$~\citep[Theorem 2.1]{ben2004adjustable}. 
Therefore, 
$$ \arho\zacp \leq \hat{z}_{\text{acp}}= \zcp.$$
Next, the bound on $\arho$ follows from Lemma~\ref{lem:aro_arho}. 
To show the tightness of the bounds, let $\zro, \zaro$ denote the objectives of the static and adaptive robust problems under $U$, and consider the case where $\Pi\left(\cp\right) = U$. By Theorem~\ref{thm:main2} and the above, we then have $\hat{z}_{\text{acp}} = \zaro = \zro = \zcp$, $\arho = \ari$, $\gi = 1$. Thus,
\begin{equation*}
    \frac{\zacp}{\zcp} = \frac{\zacp}{\zro} = \frac{\zacp}{\zaro} \leq \frac{1}{\ari} = \frac{1}{\arho} .
\end{equation*}
The tightness of the bound then follows from Theorem~\ref{thm:mainadapt2}.}
\ifpreprint \end{proof} \else
\Halmos
\endproof
\fi

\ifpreprint \begin{proof}[Proof of Lemma~\ref{lem:mp}] \else
\proof{Proof of Lemma~\ref{lem:mp}.}
\fi
Since \reviewChanges{$\{0\} \subset \cp\subset \reals_+^{mp}$ and $\cp$ is convex,}
let $\Pi\left(\cp\right) = \Pi_1\left(\cp\right) \times \dots \times \dots \Pi_m\left(\cp\right) \subseteq [0,\overline{d}_1] \times \dots \times [0,\overline{d}_{mp}]$ and
$(\overline{d}_1,0,\dots,0), \,\dots, (0,\dots,0,\overline{d}_{mp}) \in \cp$.
By the convexity of $\cp$, $1/mp \, (\overline{d}_1,\dots, \overline{d}_{mp}) = 1/mp \, ((\overline{d}_1,0,\dots,0)+ \dots+ (0,\dots,0,\overline{d}_{mp})) \in \cp$.
Then $1/mp \, \Pi\left(\cp\right) \subseteq 1/mp \, ([0,\overline{d}_1] \times \dots \times [0,\overline{d}_{mp}]) \subseteq \cp$.
Therefore $\arho \geq 1/mp$ by definition.
\ifpreprint \end{proof} \else
\Halmos
\endproof
\fi

\section{Proof of results in Section~\ref{sec:theorynl}}
\label{append:proofnl}

\ifpreprint \begin{proof}[Proof of Theorem~\ref{thm:mainnl}] \else
\proof{Proof of Theorem~\ref{thm:mainnl}.}
\fi
We prove the adaptive case and the static case result follows similarly as a special case. 
\reviewChanges{By Appendix~\ref{append:down}, without loss of generality, we can replace $U$, $\cp$ with their down-hulls in the following proof. }
Suppose $\left(\xacp,\yacp(\vec{u})\right)$ is an optimal solution to~\eqref{eqn:acpnl}.
% and $\uacp = \arg  \max\limits_{\vec{u} \in \cp} f_2(\yacp(\vec{u}))$.    
Then it is also feasible to the problem under $V$, thus
$$g_i(\xacp,\yacp\left(\reviewChanges{\ari\vec{u}}\right)) \geq \ari {u}_i, \quad \forall \vec{u} \in U,\quad \forall i \in [m].$$ 
As $0\leq\ari\leq 1$, then for all $\vec{u}\in U$ and $i \in [m]$:
\reviewChanges{\begin{align*}
\ari g_i\left(\frac{1}{\ari} \xacp,\frac{1}{\ari} \yacp\left(\ari\vec{u}\right)\right)
& = \ari g_i\left(\frac{1}{\ari} \xacp,\frac{1}{\ari} \yacp\left(\ari\vec{u}\right)\right) + (1-\ari) g_i(0,0) \\
& \geq g_i\left(\xacp,\yacp\left(\ari\vec{u}\right)\right)\\
& \geq \ari {u}_i,
\end{align*}}
where the equality follows because $g_i(0,0)= 0$; and
the first inequality follows because $g_i$ is convex in $\vec{x}$ and $\vec{y}(\vec{u})$.
Then $\left(\frac{1}{\ari} \xacp,\frac{1}{\ari} \yacp\left(\reviewChanges{\ari\vec{u}}\right)\right) $ is feasible to~\eqref{eqn:aronl}.
Then
\reviewChanges{
\begin{align*}
\zacp  &= f_1(\xacp) + \max\limits_{\vec{u} \in \cp}f_2(\yacp\left(\vec{u}\right))\\
&\geq \ari f_1(\xacp/\ari) + \max\limits_{\vec{u} \in \cp}\ari f_2(\yacp\left(\vec{u}\right)) + (1-\ari)(f_1(0)+f_2(0)))\\
&\geq \ari f_1(\xacp/\ari) + \max\limits_{\vec{u} \in U}\ari f_2(\yacp\left(\ari\vec{u}\right))\\
&\geq \ari \zaro
\end{align*}
}
% \begin{align*}
% \zaro & \leq  f_1\left(\frac{1}{\ari} \xacp\right) + f_2\left(\frac{1}{\ari} \yacp\left(\uacp\right)\right)   \\
% &\leq \frac{1}{\ari} \left(f_1(\xacp) + f_2(\yacp\left(\uacp\right))\right) + \frac{1-\ari}{\ari}(f_1(0)+f_2(0)) \\
% &= \frac{1}{\ari} \left(f_1(\xacp) + f_2(\yacp\left(\uacp\right))\right) \\
% % &\geq \ari (f_1(\xacp) + \min\limits_{\vec{u}\in \cp} f_2(\yacp(\vec{u})))\\
% &= \frac{1}{\ari} \zacp,
% \end{align*}
\reviewChanges{
The first inequality follows because $f_1,f_2$ are concave and $0 < \ari \leq 1$; the second inequality follows because $f_1(0) = 0,f_2(0) = 0$, and $\ari U \subseteq \cp$.}
\ifpreprint \end{proof} \else
\Halmos
\endproof
\fi

\ifpreprint \begin{proof}[Proof of Theorem~\ref{cor:mainadaptnl}] \else
\proof{Proof of Theorem~\ref{cor:mainadaptnl}.}
\fi
\reviewChanges{Let $\left(\xacp,\yacp(\vec{u})\right)$ be an optimal solution to~\eqref{eqn:acpnl}. We construct a solution $\left(\xacp,\yacp(\arho\vec{u})\right)/\arho$ feasible to the adaptive problem under $\Pi\left(\cp\right)$, and let the objective of this problem be $\hat{z}_{\text{acp}}$. 
Since $\Pi\left(\cp\right)$ is constraint-wise and compact, and the adaptive problem under $\Pi\left(\cp\right)$ has fixed recourse, we have 
$\hat{z}_{\text{acp}} = \zcp$~\citep[Remark 1]{marandi2018static}.
Therefore, 
$$ \zacp/\arho \geq \hat{z}_{\text{acp}}= \zcp.$$
Next, the bound on $\arho$ follows from Lemma~\ref{lem:aro_arho}. 
To show the tightness of the bounds, let $\zro, \zaro$ denote the objectives of the static and adaptive robust problems under $U$, and consider the case where $\Pi\left(\cp\right) = U$. Then, by Theorem~\ref{thm:mainnl} and the above, we have $\hat{z}_{\text{acp}}= \zaro=\zro=\zcp$, $\ari=\arho$, and thus
\begin{equation*}
    \frac{\zacp}{\zcp} = \frac{\zacp}{\zro} = \frac{\zacp}{\zaro} \geq {\ari} = {\arho} .
\end{equation*}
The tightness of the bound then follows from Theorem~\ref{thm:mainnl}.}
% Let $\zaro, \zro$ denote the objectives of~\eqref{eqn:acpnl} and its static problem.
% Since $\Pi\left(\cp\right)$ is constraint-wise and compact and~\eqref{eqn:acpnl} has fixed recourse, $\zaro=\zro$~\citep[Remark 1]{marandi2018static}.
% And as $\zcp = \zro$ by Theorem~\ref{thm:mainnl},
% \begin{equation*}
%     \frac{\zacp}{\zcp} = \frac{\zacp}{\zro} = \frac{\zacp}{\zaro} \geq \arho.
% \end{equation*}
% The tightness of the bound also follows from Theorem~\ref{thm:mainnl}.
\ifpreprint \end{proof} \else
\Halmos
\endproof
\fi

\ifpreprint \begin{proof}[Proof of Theorem~\ref{thm:mainnlconv}] \else
\proof{Proof of Theorem~\ref{thm:mainnlconv}.}
\fi
We prove the adaptive case and the static case result follows similarly as a special case.
Suppose \reviewChanges{$(\xaro,\yaro(\vec{u}))$} is an optimal solution to~\eqref{eqn:aronl}.
% , where $\uaro = \arg  \max\limits_{\vec{u} \in U} f_2(\yaro(\vec{u}))$.
%Then it is also feasible to the problem under $W$ as $W \subseteq U$ by Lemma~\ref{lem:ariagi}, and thus
Thus \reviewChanges{$$g_i(\xaro,\yaro(\vec{u}/\agi)) \geq u_i/\agi, \quad \forall \vec{u}\in \agi U,\quad \forall i \in [m].$$}
As $0 < \agi \leq 1$, then for all $\vec{u}\in U$ and $i \in [m]$:
\begin{align*}
g_i(\agi \xaro,\agi \yaro(\reviewChanges{\vec{u}/\agi}))
& \geq \agi g_i(\xaro,\yaro(\reviewChanges{\vec{u}/\agi})) + (1-\agi) g_i(0,0) \\
& \geq \agi g_i(\xaro,\yaro(\reviewChanges{\vec{u}/\agi}))\\
& \geq u_i,
\end{align*}
where the first inequality follows because $g_i$ is concave in $\vec{x}$ and $\vec{y}$; and
the second inequality follows because $g_i(0,0)\geq 0,\forall \vec{u}\in U$;

Then $(\agi\xaro,\agi\yaro(\reviewChanges{\vec{u}/\agi}))$ is feasible to the problem under $W$, thus also feasible to~\eqref{eqn:acpnl} as $\cp \subseteq W$.
Then
\reviewChanges{
\begin{align*}
\zacp &\leq f_1(\agi \xaro) + \max\limits_{\vec{u}\in \cp} f_2(\agi\yacp(\vec{u}/\agi))\\
&\leq \agi f_1( \xaro) + \max\limits_{\vec{u} \in \cp }f_2( \yacp(\vec{u}/\agi)) + (1-\agi)(f_1(0)+f_2(0))\\
&\leq \agi f_1( \xaro) + \max\limits_{\vec{u} \in \agi U }f_2( \yacp(\vec{u}/\agi))\\
&= \agi f_1( \xaro) + \max\limits_{\vec{u} \in U }f_2( \yacp(\vec{u}))\\
&= \agi \zaro.
\end{align*}
}
% \begin{align*}
% \zacp & \leq  f_1(\agi\xaro) + f_2(\agi\yaro(\uaro))    \\
% &\leq \agi (f_1(\xaro) + f_2(\yaro(\uaro))) + (1-\agi)(f_1(0)+f_2(0)) \\
% &= \agi(f_1(\xaro) + f_2(\yaro(\uaro)))\\
% % &\geq (f_1(\xaro) + \min\limits_{\vec{u}\in \cp} f_2(\yacp(\vec{u})))/\agi\\
% &= \agi\zaro,
% \end{align*}
\reviewChanges{The second inequality follows because $f_1,f_2$ are convex and $0 < \agi \leq 1$;
the third inequality follows because $f_1(0) = 0,f_2(0) = 0$ and $\agi U \subseteq \cp$.}
\ifpreprint \end{proof} \else
\Halmos
\endproof
\fi

\ifpreprint \begin{proof}[Proof of Theorem~\ref{thm:mainnl2}.] \else
\proof{Proof of Theorem~\ref{thm:mainnl2}.}
\fi
We prove the adaptive case and the static case result follows similarly as a special case.
Suppose $\left(\xacp,\yacp(\vec{u})\right)$ is an optimal solution to~\eqref{eqn:acpnl2}.
% and $\uacp = \arg  \min\limits_{\vec{u} \in \cp} f_2(\yacp(\vec{u}))$.
Then it is also feasible to the problem under $V$, thus
$$g_i\left(\xacp,\yacp\left(\reviewChanges{\ari \vec{u}}\right), \ari \vec{u}_i\right) \leq b_i, \quad \forall \vec{u} \in U,\quad \forall i \in [m].$$
As $0 < \ari \leq 1$, then for all $\vec{u}\in U$ and $i \in [m]$:
\begin{align*}
g_i\left(\ari \xacp,\ari \yacp\left(\reviewChanges{\ari \vec{u}}\right), \vec{u}_i\right)
& \leq \ari g_i\left(\xacp,\yacp\left(\reviewChanges{\ari \vec{u}}\right), \vec{u}_i\right) + (1-\ari) g_i(0,0,\vec{u}_i) \\
& \leq \ari g_i\left(\xacp,\yacp\left(\reviewChanges{\ari \vec{u}}\right), \vec{u}_i\right)\\
% & \leq \ari g_i(\xacp,\yacp\left(\uacp\right), \frac{1}{\ari} \vec{u}_i)\\
& \leq (1-\ari)g_i\left(\xacp,\yacp\left(\reviewChanges{\ari \vec{u}}\right),0\right)+ \ari g_i\left(\xacp,\yacp\left(\reviewChanges{\ari \vec{u}}\right),  \vec{u}_i\right)\\
& \leq g_i\left((1-\ari)(\xacp,\yacp\left(\reviewChanges{\ari \vec{u}}\right),0\right)+ \ari \left(\xacp,\yacp\left(\reviewChanges{\ari \vec{u}}\right), \vec{u}_i)\right)\\
& = g_i\left(\xacp,\yacp\left(\reviewChanges{\ari \vec{u}}\right),\ari \vec{u}_i\right)\\
& \leq b_i,
\end{align*}
where the first inequality follows because $g_i$ is convex in $\vec{x}$ and $\vec{y}(\vec{u})$;
the second inequality follows because $g_i(0,0,\vec{u}_i)\leq 0,\forall \vec{u}\in U$;
the third because $g_i(\vec{x},\vec{y(\vec{u})},0)\geq 0, \forall \vec{x},\vec{y}(\vec{u})$ feasible;
and the fourth because $g_i$ is concave in $\vec{u}_i$.

Then $\left(\ari\xacp,\ari\yacp\left(\reviewChanges{\ari \vec{u}}\right)\right)$ is feasible to~\eqref{eqn:aronl2}.
Then
\reviewChanges{
\begin{align*}
\zaro & \geq  f_1\left(\ari \xacp\right) + \min\limits_{\vec{u} \in U } f_2\left(\ari \yacp\left(\ari \vec{u}\right)\right)   \\
&\geq \ari \left(f_1\left(\xacp\right) + \min\limits_{\vec{u} \in U }f_2\left(\yacp\left(\ari \vec{u}\right)\right)\right) + (1-\ari)(f_1(0)+f_2(0)) \\
&\geq \ari \left(f_1\left(\xacp\right) + \min\limits_{\vec{u} \in \cp }f_2\left(\yacp\left(\vec{u}\right)\right)\right)\\
&= \ari \zacp,
\end{align*}}
% \begin{align*}
% \zaro & \geq  f_1\left(\ari \xacp\right) + f_2\left(\ari \yacp\left(\uacp\right)\right)   \\
% &\geq \ari \left(f_1\left(\xacp\right) + f_2\left(\yacp\left(\uacp\right)\right)\right) + (1-\ari)(f_1(0)+f_2(0)) \\
% &= \ari \left(f_1(\xacp\right) + f_2\left(\yacp\left(\uacp\right)\right)\\
% &= \ari \zacp,
% \end{align*}
where the second inequality follows because $f_1,f_2$ are concave and $0 < \ari \leq 1$;
\reviewChanges{and the third inequality follows because $f_1(0) = 0,f_2(0) = 0$, and $\ari U \subseteq \cp$.}
\ifpreprint \end{proof} \else
\Halmos
\endproof
\fi

\ifpreprint \begin{proof}[Proof of Theorem~\ref{cor:mainadaptnl2}] \else
\proof{Proof of Theorem~\ref{cor:mainadaptnl2}.}
\fi
Consider the constraint-wise adaptive problem
\begin{equation*}
{
\begin{array}{llclcl}
\displaystyle  \reviewChanges{\hat{z}_{\text{acp}}} = & \text{maximize} && {f_1(\vec{x}) + \min\limits_{\vec{u} \in \Pi\left(\cp\right)} f_2(\vec{y}(\vec{u}))} \\
& \text{subject to} &&
 g_i(\vec{x}, \vec{y}(\vec{u}),\vec{u}_i)\leq b_i, \quad \forall \vec{u} \in \Pi\left(\cp\right), \quad \forall i \in [m],
\end{array}}
\end{equation*}

which can be written as
\begin{equation}
\label{eq:adapt_zcp}
{
\begin{array}{llclcl}
\displaystyle \reviewChanges{-\hat{z}_{\text{acp}}} = & \text{minimize} && {-f_1(\vec{x}) + \max\limits_{\vec{u} \in \Pi\left(\cp\right)} -f_2(\vec{y}(\vec{u}))} \\
& \text{subject to} &&
 g_i(\vec{x}, \vec{y}(\vec{u}),\vec{u}_i) - b_i \leq 0, \quad \forall \vec{u} \in \Pi\left(\cp\right), \quad \forall i \in [m].
\end{array}}
\end{equation}
Note that the corresponding static problem is \reviewChanges{$\zcp$}.
\reviewChanges{Recall that $\Pi\left(\cp\right)$ is constraint-wise, compact, and convex, and
Assumption~\ref{ass:compact} holds. In addition,
$-f_2$ is convex, and 
for each $i \in [m]$, $g_i-b_i$ is convex in $\vec{y}(\vec{u})$ and concave in $\vec{u}$.
Then, $\hat{z}_{\text{acp}}=\zcp$~\citep[Theorem 2]{marandi2018static}.
Since $\arho \Pi\left(\cp\right) \subseteq \cp$, we can apply the proof technique from Theorem~\ref{cor:mainadapt} to establish
$$\arho\zacp \leq \hat{z}_{\text{acp}} = \zcp. $$
Next, $\arho \geq \ari$ follows from Lemma~\ref{lem:aro_arho}.
}
% And as $\zcp = \zro$ by Theorem~\ref{thm:mainnl},
% \begin{equation*}
%     \frac{\zacp}{\zcp} = \frac{\zacp}{\zro} = \frac{\zacp}{\zaro} \geq \arho.
% \end{equation*}
\reviewChanges{
To show the tightness of the bounds, let $\zro, \zaro$ denote the objectives of the static and adaptive robust problems under $U$, and consider the case where $\Pi\left(\cp\right) = U$. Then we have $\hat{z}_{\text{acp}}= \zaro = \zro = \zcp$, and $\arho = \ari$.
Then, \begin{equation*}
    \frac{\zacp}{\zcp} = \frac{\zacp}{\zro} = \frac{\zacp}{\zaro} \leq \frac{1}{\ari} = \frac{1}{\arho}.
\end{equation*}
}
The tightness of the bound then follows from Theorem~\ref{thm:mainnl2}.
\ifpreprint \end{proof} \else
\Halmos
\endproof
\fi

\ifpreprint \begin{proof}[Proof of Theorem~\ref{thm:mainnlconv2}] \else
\proof{Proof of Theorem~\ref{thm:mainnlconv2}.}
\fi
We prove the adaptive case and the static case result follows similarly as a special case.
Suppose $(\xaro,\yaro(\reviewChanges{\vec{u}}))$ is an optimal solution to~\eqref{eqn:aronl2}.
% , where $\uaro = \arg  \min\limits_{\vec{u} \in U} f_2(\yaro(\vec{u}))$. 
Thus
$$g_i(\xaro,\yaro(\reviewChanges{\vec{u}}),  \vec{u}_i) \leq b_i, \quad \forall \vec{u} \in U,\quad \forall i \in [m].$$
As $0 < \agi \leq 1$, then for all $\vec{u}\in U$ and $i \in [m]$:
\reviewChanges{
\begin{align*}
\agi g_i\left(\frac{\xaro}{\agi}, \frac{\yaro(\vec{u})}{\agi}, \agi\vec{u}_i\right)
& \leq \agi g_i\left(\frac{\xaro}{\agi}, \frac{\yaro(\vec{u})}{\agi}, \agi\vec{u}_i\right) + (1-\agi)g_i(0, 0, \agi\vec{u}_i)\\
& \leq  g_i(\xaro,\yaro(\vec{u}), \agi\vec{u}_i)\\
&  \leq \agi g_i(\xaro,\yaro(\vec{u}), \vec{u}_i) + (1-\agi)g_i(\xaro,\yaro(\vec{u}),0)\\
& \leq \agi g_i(\xaro,\yaro(\vec{u}), \vec{u}_i)\\
& \leq \agi b_i,
\end{align*}}
where the first inequality follows because $g_i(0,0,\vec{u}_i)\geq 0,\forall \vec{u}\in U$;
the second inequality follows because $g_i$ is concave in $\vec{x}$ and $\vec{y}$ \reviewChanges{and $0 < \agi \leq 1$};
the third because $g_i$ is convex in $\vec{u}_i$;
and the fourth because $g_i(\vec{x},\vec{y}(\vec{u}),0)\leq 0, \forall \vec{x},\vec{y}(\vec{u})$ feasible.

Then $\left(\frac{\xaro}{\agi}, \frac{\yaro(\reviewChanges{\vec{u}/\agi})}{\agi}\right)$ is feasible to the problem under $W$, thus also feasible to~\eqref{eqn:acpnl2} as $\cp \subseteq W$.
Then
\reviewChanges{
\begin{align*}
\agi \zacp & \geq  \agi f_1\left(\frac{\xaro}{\agi}\right) + \agi \min\limits_{\vec{u} \in \cp} f_2\left(\frac{\yaro(\vec{u}/\agi)}{\agi}\right)   \\
&\geq  \agi f_1\left(\frac{\xaro}{\agi}\right) + \agi \min\limits_{\vec{u} \in \agi U} f_2\left(\frac{\yaro(\vec{u}/\agi)}{\agi}\right)   \\
&= \agi f_1\left(\frac{\xaro}{\agi}\right) + \agi \min\limits_{\vec{u} \in U} f_2\left(\frac{\yaro(\vec{u})}{\agi}\right) + (1-\agi)(f_1(0)+f_2(0))  \\
&\geq f_1(\xaro) + \min\limits_{\vec{u} \in U}f_2(\yaro(\vec{u}))\\
&= \zaro,
\end{align*}
where the second inequality follows because $\cp \subseteq \agi U$; the first equality follows because $f_1(0) = 0,f_2(0) = 0$; and the
third inequality follows because $f_1,f_2$ are convex and $0 < \agi \leq 1$.
}
\ifpreprint \end{proof} \else
\Halmos
\endproof
\fi

\ifpreprint \begin{proof}[Proof of Theorem~\ref{thm:mainnl3}] \else
\proof{Proof of Theorem~\ref{thm:mainnl3}.}
\fi
We prove the adaptive case and the static case result follows similarly as a special case.
Suppose $\left(\xacp,\yacp(\vec{u})\right)$ is an optimal solution to~\eqref{eqn:acpnl3}.
% and $\uacp = \arg  \min\limits_{\vec{u} \in \cp} f_1(\xacp,\vec{u}) + f_2(\yacp(\vec{u}),\vec{u})$.
We have proved in Theorem~\ref{thm:mainnl2} that $(\ari\xacp,\ari\yacp\left(\reviewChanges{\ari\vec{u}}\right))$ is feasible to~\eqref{eqn:aronl3}.
Let $\uaro = \arg  \min\limits_{\vec{u} \in U} f_1(\ari\xacp,\vec{u}) + f_2(\ari\yacp\left(\reviewChanges{\ari \vec{u}}\right),\vec{u})$.
Then
\begin{align*}
\zaro & \geq  f_1(\ari \xacp,\uaro) + f_2(\ari \yacp\left(\reviewChanges{\ari\uaro}\right),\uaro)   \\
&\geq \ari (f_1(\xacp,\uaro) + f_2(\yacp\left(\reviewChanges{\ari\uaro}\right),\uaro)) + (1-\ari)(f_1(0,\uaro)+f_2(0,\uaro)) \\
&= \ari (f_1(\xacp,\uaro) + f_2(\yacp\left(\reviewChanges{\ari\uaro}\right),\uaro))\\
% &\geq \ari (f_1(\xacp) + \min\limits_{\vec{u}\in \cp} f_2(\yacp(\vec{u})))\\
&\geq \ari^2 (f_1(\xacp,\ari\uaro) + f_2(\yacp\left(\reviewChanges{\ari\uaro}\right),\ari\uaro))\\
&\geq \ari^2 \min\limits_{\vec{u} \in \cp} (f_1(\xacp,\vec{u}) + f_2(\yacp\left(\reviewChanges{\vec{u}}\right),\vec{u}))\\
&= \ari^2 \zacp,
\end{align*}
where the second inequality follows because $f_1,f_2$ are concave in $\vec{x},\vec{y}(\vec{u})$ and $0 < \ari \leq 1$;
the first equality follows because $f_1(0,\vec{u}) = 0,f_2(0,\vec{u}) = 0,\forall \vec{u}\in U$;
 the third inequality follows because for all $\vec{u}\in U$, $\vec{x},\vec{y}(\vec{u})$ feasible and $0 \leq \alpha \leq 1$, $f_1(\vec{x},\vec{u}) +  f_2(\vec{y}(\vec{u}),\vec{u}) \geq \alpha (f_1(\vec{x},\alpha\vec{u}) +  f_2(\vec{y}(\vec{u}),\alpha\vec{u}))$;
 and the fourth inequality follows because $\ari \uaro \in V \subseteq \cp$.
\ifpreprint \end{proof} \else
\Halmos
\endproof
\fi

\ifpreprint \begin{proof}[Proof of Theorem~\ref{cor:mainadaptnl3}] \else
\proof{Proof of Theorem~\ref{cor:mainadaptnl3}.}
\fi
Consider the constraint-wise adaptive problem
\begin{equation*}
{
\begin{array}{llclcl}
\displaystyle  \hat{z}_{\text{acp}} = & \text{maximize} && \min\limits_{\vec{u} \in \Pi\left(\cp\right)} ({f_1(\vec{x},\vec{u}) +  f_2(\vec{y}(\vec{u}),\vec{u})}) \\
& \text{subject to} &&
 g_i(\vec{x}, \vec{y}(\vec{u}),\vec{u}_i)\leq b_i, \quad \forall \vec{u} \in \Pi\left(\cp\right), \quad \forall i \in [m],
\end{array}}
\end{equation*}

which can be expressed as
\begin{equation*}
{
\begin{array}{llclcl}
\displaystyle  \reviewChanges{-\hat{z}_{\text{acp}} }= & \text{minimize} && t
\\
& \text{subject to} &&
{-f_1(\vec{x},\vec{u}) +  -f_2(\vec{y}(\vec{u}),\vec{u}) \leq t}, \quad \reviewChanges{\forall \vec{u} \in \Pi\left(\cp\right)}, \\
& && g_i(\vec{x}, \vec{y}(\vec{u}),\vec{u}_i) - b_i \leq 0, \quad \forall \vec{u} \in \Pi\left(\cp\right), \quad \forall i \in [m],
\end{array}}
\end{equation*}
and 
\reviewChanges{note that the corresponding static problem has objective $\zcp$.
Recall that $\Pi\left(\cp\right)$ is constraint-wise, compact, and convex, and
Assumption~\ref{ass:compact} holds. In addition, 
$-f_1(\vec{x},\vec{u}) -f_2(\vec{y}(\vec{u}),\vec{u})$ is concave in $\vec{u}$ and convex in $\vec{y}(\vec{u})$,
for each $i \in [m]$, and $g_i-b_i$ is convex in $\vec{y}(\vec{u})$ and concave in $\vec{u}$.
Then, $\hat{z}_{\text{acp}}=\zcp$~\citep[Theorem 2]{marandi2018static}.}
\reviewChanges{Since $\arho \Pi\left(\cp\right) \subseteq \cp$, we can apply the proof technique from Theorem~\ref{thm:mainnl3} to establish
$$\arho^2\zacp \leq \hat{z}_{\text{acp}} = \zcp. $$
Next, $\arho \geq \ari$ follows from Lemma~\ref{lem:aro_arho}.}
% And as $\zcp = \zro$ by Theorem~\ref{thm:mainnl},
% \begin{equation*}
%     \frac{\zacp}{\zcp} = \frac{\zacp}{\zro} = \frac{\zacp}{\zaro}\leq  \frac{1}{\arho^2}.
% \end{equation*}
% % The tightness of the bound also follows from Theorem~\ref{thm:mainnl3}.
\ifpreprint \end{proof} \else
\Halmos
\endproof
\fi

\ifpreprint \begin{proof}[Proof of Theorem~\ref{thm:mainnlconv3}] \else
\proof{Proof of Theorem~\ref{thm:mainnlconv3}.}
\fi
We prove the adaptive case and the static case result follows similarly as a special case.
Suppose $(\xaro,\yaro(\vec{u}))$ is an optimal solution to~\eqref{eqn:aronl3} and $\uaro = \arg  \min\limits_{\vec{u} \in U} f_1(\xro,\vec{u})+ f_2(\yaro(\reviewChanges{\vec{u}/\agi})),\vec{u})$.
We have proved in Theorem~\ref{thm:mainnlconv2} that $\left(\frac{\xaro}{\agi}, \frac{\yaro(\reviewChanges{\vec{u}/\agi)}}{\agi}\right)$ is feasible to~\eqref{eqn:acpnl3}.
Let $\uacp = \arg  \min\limits_{\vec{u} \in \cp} f_1\left(\frac{\xaro}{\agi},\vec{u}\right) + f_2\left(\frac{\yacp\left(\reviewChanges{\vec{u}/\agi)}\right)}{\agi},\vec{u}\right)$.
Then
\reviewChanges{
\begin{align*}
\agi \zacp & \geq  \agi f_1\left(\frac{\xaro}{\agi},\uacp\right) + \agi f_2\left(\frac{\yaro(\reviewChanges{\uacp/\agi})}{\agi},\uacp\right)    \\
&= \agi \left(f_1\left(\frac{\xaro}{\agi},\uacp\right) + f_2\left(\frac{\yaro(\reviewChanges{\uacp/\agi})}{\agi},\uacp\right) \right)   + (1-\agi)(f_1(0,\uacp)+f_2(0,\uacp)) \\
&\geq f_1(\xaro,\uacp) + f_2(\yaro(\reviewChanges{\uacp/\agi}),\uacp)  \\
&\geq (f_1(\xaro,\uacp/\agi) + f_2(\yaro(\reviewChanges{\uacp/\agi}),\uacp/\agi))/\agi \\
&\geq \min\limits_{\vec{u} \in U} (f_1(\xacp,\vec{u}) + f_2(\yacp\left(\vec{u}\right),\vec{u}))/\agi\\
&= \zaro/\agi,
\end{align*}
where the first equality follows because $f_1(0,\vec{u}) = 0,f_2(0,\vec{u}) = 0,\forall \vec{u}\in U$;  the second inequality follows because $f_1,f_2$ are convex in $\vec{x},\vec{y}(\vec{u})$ and $0 < \agi \leq 1$; }
 the third inequality follows because for all $\vec{u}\in U$, $\vec{x},\vec{y}(\vec{u})$ feasible, $f_1(\vec{x},\vec{u}) +  f_2(\vec{y}(\vec{u}),\vec{u}) \leq \agi (f_1(\vec{x},\agi\vec{u}) +  f_2(\vec{y}(\vec{u}),\agi\vec{u}))
$;
 and the fourth inequality follows because $\uacp/\agi \in  U$.
\ifpreprint \end{proof} \else
\Halmos
\endproof
\fi

\ifpreprint \begin{proof}[Proof of Lemma~\ref{lem:tightadapt2}] \else
\proof{Proof of Lemma~\ref{lem:tightadapt2}.}
\fi
Consider the problem
\begin{equation*}
{
\begin{array}{llclcl}
\displaystyle \max  & \min\limits_{\vec{u}\in U} c_1 x_1 / u_1 +c_2 x_2 / u_2 &\vspace{3pt} \\
{\rm s.t.} & x_1 \leq y_1(\vec{u}) , && \forall \vec{u} \in U\\
 & x_2 \leq y_2(\vec{u}) , && \forall \vec{u} \in U\\
& u_1 y_1(\vec{u}) \leq 1 , && \forall \vec{u} \in U \\
& u_2 y_2(\vec{u}) \leq 1 , && \forall \vec{u} \in U
\end{array}}
\end{equation*}

Let the constraint-wise uncertainty set be
\begin{equation*}
    U = \left\{(u_1,u_2) \mid 0 \leq u_i \leq 1, \quad \forall i \in [2]\right\}.
\end{equation*}
Let the coupling set be
\begin{equation*}
    C = \left\{(u_1,u_2) \mid 0 \leq u_1+u_2\leq 1\right\}.
\end{equation*}
Then the coupled uncertainty set is
\begin{equation*}
    \overline{U} = \left\{(u_1,u_2) \mid 0 \leq u_1+u_2\leq 1, \quad 0 \leq u_i \leq 1, \quad \forall i \in [2]\right\}.
\end{equation*}
We have $U = \Pi(\cp)$, $\ari = \arho = 1/2$ and $\agi = 1$, and thus
$1 \leq \zacp/\zaro \leq 4, \zacp/\zcp \leq 4$.
The optimal solution for the adaptive variable is $y_1(\vec{u})=1/u_1, y_2(\vec{u})=1/u_2.$
If $c_1 = 1, c_2 = 1$, then $\zaro=\zcp =2,\zacp=8$, $\zacp/\zaro=  4 = 1/\ari^2, \zacp/\zcp =4=1/\arho^2$.
If $c_1 = 1, c_2 = 0$, then $\zaro=\zacp=1$, $\zacp/\zaro  =1 = 1/\agi^2$.
Therefore the bounds are tight.
\ifpreprint \end{proof} \else
\Halmos
\endproof
\fi

\section{Solution methods}
\label{sec:solution}
In this section, we describe different solution methods to solve robust and adaptive robust problems under coupled uncertainty, for both cases of right hand side and coefficient uncertainty.
\subsection{Robust optimization}
We describe one method for the simpler case of right hand side uncertainty, and three solution methods to solve problem~\eqref{eqn:cp2} under coefficient uncertainty.

\paragraph{Problem under right hand side uncertainty.}
To solve robust problem~\eqref{eqn:cp}, we first solve~\eqref{eq:maxu} to obtain $\overline{d}_i, \forall i \in [m]$, and then solve the problem
\begin{equation*}
{
\begin{array}{llclcl}
\displaystyle \zcp =
& \text{minimize} && {\vec{c}^T\vec{x} + \vec{d}^T\vec{y}} \\
& \text{subject to} &&
 \vec{a}_i^T\vec{x} + \vec{g}_i^T\vec{y}\geq \overline{d}_i, \quad \forall i \in [m].
\end{array}}
\end{equation*}

\paragraph{Constraint-wise formulation with direct projections.}
\label{subsec:dirproj}
Typically, robust counterpart of the problem is formulated constraint-wise by including the RC for each robust constraint.
The coupled problem~\eqref{eqn:cp2} is equivalent to the constraint-wise problem
\begin{equation*}
{
\begin{array}{llclcl}
\displaystyle \text{maximize} & \multicolumn{3}{l}{\vec{c}^T \vec{x}} \\
\text{subject to} & \vec{u}_i^T \vec{x} \leq b_i && \vec{u}_i \in \Pi_i\left(\cp\right), \quad \forall i \in [m],
\end{array}}
\end{equation*}
where $\Pi_i\left(\cp\right)$ is the projection of $\cp$ onto the $i_{\rm th}$ constraint.
We can solve the problem by first computing the projections $\Pi_i\left(\cp\right)$ and then solving the RC
\begin{equation*}
\label{rc}
{
\begin{array}{llclcl}
\displaystyle \text{maximize} & \multicolumn{3}{l}{\text{  }\vec{c}^T \vec{x}} \\
\text{subject to} &     \max\limits _{\vec{u}_i \in \Pi_i\left(\cp\right)}  \vec{u}_i^T \vec{x} \leq b_i, && \quad \forall i \in [m].
\end{array}}
\end{equation*}
The efficiency of this method depends on the efficiency to compute the projection of the uncertainty set.
For polyhedral uncertainty set $\cp$, we can use the library cddlib~\citep{fukuda2003cddlib} to compute polyhedron projection by Fourier-Motzkin elimination~\citep[Section 2.8]{bertsimas1997introduction}.
Since the projection of a set is often intractable to compute especially in high dimensions, we next describe two alternative methods that do not require computation of projections.

\paragraph{Robust counterpart.}
We construct a matrix $\vec{e}_i = \begin{bmatrix*}[r] 0 & \dots & \vec{I}_p & 0 & \dots & 0 \end{bmatrix*}\in \reals^{p \times mp}$ such that $\vec{e}_i \vec{u} = \vec{u}_i$.
Recall that the support function~\citep[Section 13]{rockafellar1970convex} of a set $S$ is
\begin{equation*}
    \delta^{\star} (\vec{y}|S) = \sup\limits_{\vec{u}\in S} \left\{\vec{y}^T\vec{u}\right\}.
\end{equation*}

\begin{theorem}
\label{thm:rc}
The Robust counterpart of~\eqref{eqn:cp2} for each constraint $i$ is
\begin{equation*}
    \begin{cases}
    \delta^\star(\vec{y}_i^1 \mid U) + \delta^\star(\vec{y}_i^2 \mid \cpc)  \leq b_i \\
    \vec{y}_i^1+\vec{y}_i^2 = \vec{e}_i^T \vec{x},
    \end{cases}
\end{equation*}
where $\vec{y}_i^1, \vec{y}_i^2 \in \reals^{mp}.$
\end{theorem}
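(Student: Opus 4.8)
The plan is to reduce the robust constraint to a support-function inequality and then split the support function of the intersection $\cp = U\cap\cpc$ into an infimal convolution. First I would observe that, using $\vec{u}_i = \vec{e}_i\vec{u}$, the $i^{\text{th}}$ robust constraint of~\eqref{eqn:cp2} can be written as $(\vec{e}_i^T\vec{x})^T\vec{u}\le b_i$ for all $\vec{u}\in\cp$, which by definition of the support function is exactly $\delta^\star(\vec{e}_i^T\vec{x}\mid\cp)\le b_i$. So the task is to express $\delta^\star(\vec{e}_i^T\vec{x}\mid U\cap\cpc)$ in terms of $\delta^\star(\cdot\mid U)$ and $\delta^\star(\cdot\mid\cpc)$.

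Next I would invoke the classical convex-analytic identity that the support function of an intersection of closed convex sets is the infimal convolution of the individual support functions, $\delta^\star(\vec{z}\mid U\cap\cpc)=\inf\{\delta^\star(\vec{y}^1\mid U)+\delta^\star(\vec{y}^2\mid\cpc)\ :\ \vec{y}^1+\vec{y}^2=\vec{z}\}$, together with the fact that the infimum is attained under a mild regularity condition — e.g. $\operatorname{ri}(U)\cap\operatorname{ri}(\cpc)\neq\emptyset$, which holds here since $\cp=U\cap\cpc$ is a nonempty (and, for uncertainty sets, bounded) convex set (see \citet[Section~16]{rockafellar1970convex}). Applying this with $\vec{z}=\vec{e}_i^T\vec{x}$, the constraint $\delta^\star(\vec{e}_i^T\vec{x}\mid\cp)\le b_i$ becomes: there exist $\vec{y}_i^1,\vec{y}_i^2\in\reals^{mp}$ with $\vec{y}_i^1+\vec{y}_i^2=\vec{e}_i^T\vec{x}$ and $\delta^\star(\vec{y}_i^1\mid U)+\delta^\star(\vec{y}_i^2\mid\cpc)\le b_i$, which is precisely the claimed reformulation. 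The converse direction is immediate and does not need regularity: for any decomposition $\vec{y}_i^1+\vec{y}_i^2=\vec{e}_i^T\vec{x}$ and any $\vec{u}\in\cp$ we have $(\vec{e}_i^T\vec{x})^T\vec{u}=(\vec{y}_i^1)^T\vec{u}+(\vec{y}_i^2)^T\vec{u}\le\delta^\star(\vec{y}_i^1\mid U)+\delta^\star(\vec{y}_i^2\mid\cpc)\le b_i$, since $\vec{u}\in U$ and $\vec{u}\in\cpc$.

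The main obstacle I anticipate is the forward direction: passing from $\delta^\star(\vec{e}_i^T\vec{x}\mid\cp)\le b_i$ to the existential statement requires the infimal convolution to be \emph{exact} (the infimum attained), which is why the regularity hypothesis on $U$ and $\cpc$ must be stated explicitly rather than treated as automatic; if one only wants conservative safe approximations, attainment can be dropped at the cost of an $\varepsilon$. I would close by noting that this result simply lifts the constraint-wise robust-counterpart machinery of \citet{ben2009robust} and \citet{GORISSEN2015practical} to the coupled setting: the novelty is that the single uncertain vector $\vec{u}$ feeding constraint $i$ must simultaneously lie in the constraint-wise description $U$ and in the coupling description $\cpc$, and the infimal-convolution split of the support function is exactly what encodes this joint membership in the reformulated constraint.
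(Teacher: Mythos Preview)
Your proposal is correct and follows essentially the same route as the paper: rewrite $\vec{u}_i^T\vec{x}\le b_i$ for all $\vec{u}\in\cp$ as $\delta^\star(\vec{e}_i^T\vec{x}\mid\cp)\le b_i$, then split the support function of $\cp=U\cap\cpc$ via infimal convolution. The only differences are cosmetic---the paper routes through the projection $\Pi_i(\cp)$ as an intermediate step and cites \citet{ben2015deriving} and \citet{beck2017first} for the infimal-convolution identity, whereas you go directly and are more explicit about the relative-interior regularity needed for attainment, a point the paper leaves implicit in its references.
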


\ifpreprint \begin{proof} \else
\proof{Proof.}
\fi
For each constraint $i \in [m]$, the robust counterpart is
\begin{equation*}
  \delta^\star(\vec{x} \mid \Pi_i\left(\cp\right)) \leq b_i.
\end{equation*}

The support function~\citep[Section 13]{rockafellar1970convex}  is  $$\delta^\star\left(\vec{x} \mid \Pi_i\left(\cp\right)\right)= \sup\limits_{\vec{u}_i \in \Pi_i\left(\cp\right)} \left\{\vec{x}^T \vec{u}_i \right\}  = \sup\limits_{\vec{u} \in \cp} \left\{\vec{x}^T \vec{e}_i \vec{u} \right\}
 = \delta^\star(\vec{e}_i^T\vec{x}\mid \cp).$$

Since $\cp = U \cap \cpc$, we know $\delta^\star\left(\vec{y} \mid \cp\right) = \min\limits_{y_1,y_2} \left\{\delta^\star(\vec{y}^1 |U) + \delta^\star(\vec{y}^2 |\cpc) \mid \vec{y}^1+\vec{y}^2=\vec{y}\right\}$~\citep{ben2015deriving}.
By the conjugate on the infimal convolution result from Theorem 4.17 of \cite{beck2017first}, the RC for each constraint $i \in [m]$ is
\begin{equation*}
    \begin{cases}
    \delta^\star(\vec{y}_i^1 \mid U) + \delta^\star(\vec{y}_i^2 \mid \cpc)  \leq b_i \\
    \vec{y}_i^1+\vec{y}_i^2 = \vec{e}_i^T \vec{x}.
    \end{cases}
\end{equation*}
\ifpreprint \end{proof} \else
\Halmos
\endproof
\fi
We can then compute the RC for different types of $U$.
We note that we utilize the support function of the intersection of sets to derive the robust counterpart~\citep{ben2015deriving}.
However, for constraint-wise problems the intersection set is $U_i$ for uncertainty in constraint $i$, whereas for coupled problems the intersection set is $\cp$ on the space of all constraints.
For polyhedral $U$ and $\cpc$, the extra numbers of variables and constraints in the coupled RC compared with the \acrlong{cw} RC are both $m\ell$, where $\ell$ denotes the number of coupling constraints in $C$.

\paragraph{Robust counterpart for polyhedral coupling set.}
Let $\cpc = \left\{\vec{u} \mid \vec{D} \vec{u} \leq \vec{d} \right\}$, where $\vec{D} \in \reals^{l \times mp}, \vec{d} \in \reals^l$. An illustration is shown in Figure~\ref{fig:polycpc}.
The RC of~\eqref{eqn:cp2} for each constraint $i$ is
\begin{equation*}
    \begin{cases}
      \delta^\star(\vec{y}_i^1 \mid U) + \vec{d}^T \vec{z}_i  \leq b_i \\
    \vec{D}^T \vec{z}_i = \vec{y}_i^2\\
    \vec{z}_i \geq 0\\
    \vec{y}_i^1+\vec{y}_i^2 = \vec{e}_i^T \vec{x},
    \end{cases}
\end{equation*}
where $\vec{y}_i^1, \vec{y}_i^2 \in \reals^{mp},$ $\vec{z}_i \in \reals^l$.

\paragraph{Robust counterpart for polyhedral coupled uncertainty set.}
Let $\cp$ be a polyhedron denoted by $\left\{\vec{u} \mid  \vec{Q} \vec{u} \leq \vec{h}\right\}$, where $\vec{Q} \in \reals^{(q+l) \times mp}, \vec{h} \in \reals^{q+l}$.
An illustration is shown in Figure~\ref{fig:polycp}.
Then the RC is
\begin{equation*}
    \begin{cases}
 \vec{h}^T \vec{z}_i  \leq b_i \\
    \vec{Q}^T \vec{z}_i = \vec{e}_i^T\vec{x}\\
    \vec{z}_i \geq 0,
    \end{cases}
\end{equation*}
where $\vec{z}_i \in \reals^{q+l}$.

\begin{figure}
    \centering
\begin{subfigure}{0.49\textwidth}
    \centering
\includegraphics[width=0.72\textwidth]{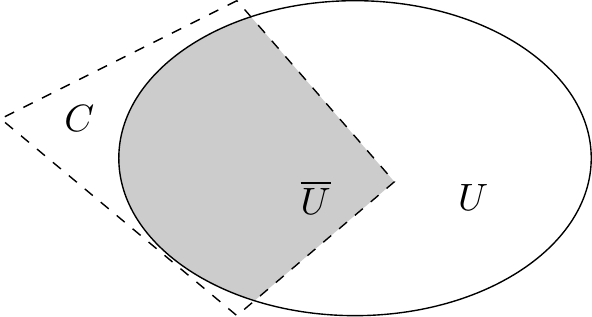}
\caption{Polyhedral coupling set.}
\label{fig:polycpc}
\end{subfigure}
\begin{subfigure}{0.49\textwidth}
    \centering
\includegraphics[width=0.52\textwidth]{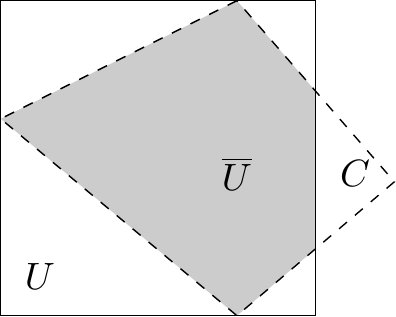}
\caption{Polyhedral coupled set.}
\label{fig:polycp}
\end{subfigure}
\caption{Special cases of coupled uncertainty sets.}
\end{figure}

\paragraph{Cutting plane algorithm.}
Since only a subset of uncertainty values for the constraints will be binding at a robust optimal solution, the cutting plane method generates constraints with uncertainty values as needed.
The cutting plane algorithm is described in Algorithm~\ref{cp},
where the master problem is
\begin{equation}
{
\begin{array}{llclcl}
\displaystyle \text{maximize} & \multicolumn{3}{l}{\vec{c}^T \vec{x}} \\
\text{subject to} & \vec{u}_i^T \vec{x} \leq b_i, && \forall \vec{u} \in U_t, \quad \forall i \in [m].
\end{array}}
\label{eqn:master}
\end{equation}

\begin{algorithm}
\caption{Cutting plane algorithm}
\label{cp}
\begin{algorithmic}[1]
\State Given the nominal value $\vec{u}_0: U_0 \gets \left\{\vec{u}_0\right\}$, tolerance $ \epsilon$, and $t \gets 1$, initiate the master problem~\eqref{eqn:master}
\Repeat
\State Solve the master problem and obtain a solution $ \vec{x}^\star_t$
\State $\phi_i(\vec{u}, \vec{x}^\star_t) \gets \vec{u}_i^T  \vec{x}^\star_t - b_i, \quad \forall i \in [m]$
\State $\vec{u}_t \gets \arg \max \limits_{i \in [m]} \max \limits_{\vec{u} \in \cp}  \phi_i(\vec{u}, \vec{x}^\star_t)$
\Comment{Maximize violation}
\State $U_t \gets \left\{\vec{u}_0,\dots,\vec{u}_t\right\}$
\State $t \gets t+1$
\Until{$\max \limits_{i \in [m]} \max \limits_{\vec{u} \in \cp} \phi_i(\vec{u}, \vec{x}^\star_t) < \epsilon$}
\Comment{No violation}
\end{algorithmic}

\end{algorithm}

\subsection{Adaptive robust optimization}
We describe three solution methods to solve the adaptive problem under coupled uncertainty for both cases of right hand side and coefficient uncertainty.

\paragraph{Robust counterpart with linear decision rules.}
We derive the robust counterpart with linear decision rules $\vec{y}(\vec{u}) = \vec{z} + \vec{V}\vec{u}$ for both cases of coupled problems.
For the case of right hand side uncertainty, $\vec{z} \in \reals^{n_2}, \vec{V} \in \reals^{n_2 \times m}$.
\begin{theorem}
\label{thm:rcadapt1}
The Robust counterpart of~\eqref{eqn:acp} under linear decision rules is
\begin{equation*}
{
\begin{array}{llclcl}
\displaystyle   \rm{minimize} & \multicolumn{3}{l}{\vec{c}^T\vec{x} + \tau} \\
\rm{subject} \text{ } \rm to &
\vec{d}^T \vec{z} - \tau +  \delta^\star(\vec{h}^1 \mid U) + \delta^\star(\vec{h}^2 \mid \cpc)  \leq 0\\
 & \vec{h}^1+\vec{h}^2 = \vec{V}^T \vec{d}\\
 & -\vec{a}_i^T\vec{x} - \vec{g}_i^T \vec{z} + \delta^\star\left(\vec{w}_i^1 \mid U\right) + \delta^\star\left(\vec{w}_i^2 \mid \cpc\right)  \leq 0, && \forall i \in [m] \\
 & \vec{w}_i^1+\vec{w}_i^2 = \vec{e}_i - \vec{V}^T \vec{g}_i, && \forall i \in [m],
\end{array}}
\end{equation*}
where $\vec{h}^1, \vec{h}^2, \vec{w}_i^1, \vec{w}_i^2, \vec{e}_i \in \reals^{m}.$
\end{theorem}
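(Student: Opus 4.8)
The plan is to substitute the linear decision rule $\vec{y}(\vec{u}) = \vec{z} + \vec{V}\vec{u}$ directly into problem~\eqref{eqn:acp}, reduce both the worst-case objective term and each semi-infinite constraint to a single support-function inequality, and then split that support function across the two sets $U$ and $\cpc$ whose intersection defines $\cp$.

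First I would plug in the recourse. The objective becomes $\vec{c}^T\vec{x} + \vec{d}^T\vec{z} + \max_{\vec{u}\in\cp}(\vec{V}^T\vec{d})^T\vec{u} = \vec{c}^T\vec{x} + \vec{d}^T\vec{z} + \delta^\star(\vec{V}^T\vec{d}\mid\cp)$; introducing an epigraph variable $\tau$ turns this into minimizing $\vec{c}^T\vec{x} + \tau$ subject to $\vec{d}^T\vec{z} - \tau + \delta^\star(\vec{V}^T\vec{d}\mid\cp) \leq 0$. Similarly, writing $\vec{e}_i$ for the standard basis vector so that $\vec{e}_i^T\vec{u} = u_i$, the $i$-th robust constraint $\vec{a}_i^T\vec{x} + \vec{g}_i^T(\vec{z}+\vec{V}\vec{u}) \geq u_i$ for all $\vec{u}\in\cp$ rewrites as $(\vec{e}_i - \vec{V}^T\vec{g}_i)^T\vec{u} \leq \vec{a}_i^T\vec{x} + \vec{g}_i^T\vec{z}$ for all $\vec{u}\in\cp$, i.e. $-\vec{a}_i^T\vec{x} - \vec{g}_i^T\vec{z} + \delta^\star(\vec{e}_i - \vec{V}^T\vec{g}_i\mid\cp)\leq 0$.

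Next I would invoke the support-function identity for intersections, $\delta^\star(\vec{y}\mid U\cap\cpc) = \min\{\,\delta^\star(\vec{y}^1\mid U) + \delta^\star(\vec{y}^2\mid\cpc) : \vec{y}^1 + \vec{y}^2 = \vec{y}\,\}$, which follows from the conjugate of an infimal convolution~\citep{ben2015deriving},~\citep[Theorem 4.17]{beck2017first}. Applying it with $\vec{y} = \vec{V}^T\vec{d}$ for the objective constraint and $\vec{y} = \vec{e}_i - \vec{V}^T\vec{g}_i$ for each robust constraint introduces the auxiliary variables $\vec{h}^1,\vec{h}^2$ and $\vec{w}_i^1,\vec{w}_i^2$ together with the equalities $\vec{h}^1+\vec{h}^2 = \vec{V}^T\vec{d}$ and $\vec{w}_i^1+\vec{w}_i^2 = \vec{e}_i - \vec{V}^T\vec{g}_i$; since a "$\min$" sitting on the left of a "$\leq 0$" can be replaced by an existential choice of the splitting, we arrive exactly at the claimed reformulation, with $\vec{h}^1,\vec{h}^2,\vec{w}_i^1,\vec{w}_i^2,\vec{e}_i\in\reals^m$ because the right-hand-side uncertainty is $m$-dimensional.

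The main obstacle is the decomposition step: the infimal-convolution conjugate is only exact (and the minimum attained) under a constraint qualification such as $\mathrm{ri}(U)\cap\mathrm{ri}(\cpc)\neq\emptyset$, so I would carry this as a standing assumption in the support-function calculus, noting that it holds automatically when $U$ and $\cpc$ are polyhedral or have overlapping interiors, covering the uncertainty sets used in the paper. The remaining pieces — the epigraph reformulation and the affine substitution — are routine equivalences.
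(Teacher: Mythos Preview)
Your proposal is correct and follows essentially the same approach as the paper: the paper omits the proof of Theorem~\ref{thm:rcadapt1} entirely, stating only that it is ``similar'' to the proof of Theorem~\ref{thm:rcadapt}, and that proof proceeds exactly as you do---substitute the affine rule, rewrite each semi-infinite constraint as a support-function inequality over $\cp$, and then invoke the intersection identity $\delta^\star(\cdot\mid U\cap\cpc)=\min\{\delta^\star(\vec{y}^1\mid U)+\delta^\star(\vec{y}^2\mid\cpc):\vec{y}^1+\vec{y}^2=\vec{y}\}$ via~\cite{ben2015deriving} and~\citep[Theorem~4.17]{beck2017first}. Your explicit mention of the relative-interior constraint qualification is an extra bit of rigor the paper leaves implicit, but otherwise the arguments coincide.
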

For the case of coefficient uncertainty, we consider a simpler problem with linear decision rules:
\begin{equation}
\label{eqn:ldrcp}
{
\begin{array}{llclcl}
\displaystyle  & \text{maximize} && \multicolumn{3}{l}{\vec{c}^T\vec{x}} \\
&\text{subject to} &&
\vec{u}_i^T\vec{x} + \vec{g}_i^T\left(\vec{z} + \vec{V}\vec{u}\right) \leq b_i, && \forall \vec{u} \in \cp, \quad \forall i \in [m],
\end{array}}
\end{equation}
where we have a fixed recourse $\vec{g_i} \in \reals^{n_2}$, and $\vec{z} \in \reals^{n_2}, \vec{V} \in \reals^{n_2 \times mp}, \vec{u} \in \reals^{mp}$.

\begin{theorem}
\label{thm:rcadapt}
The Robust counterpart of~\eqref{eqn:ldrcp} for each constraint $i$ is
\begin{equation*}
    \begin{cases}
  \vec{g}_i^T \vec{z} + \delta^\star\left(\vec{w}_i^1 \mid U\right) + \delta^\star\left(\vec{w}_i^2 \mid \cpc\right)  \leq b_i \\
    \vec{w}_i^1+\vec{w}_i^2 = \vec{e}_i^T \vec{x} + \vec{V}^T \vec{g}_i,
    \end{cases}
\end{equation*}
where $\vec{w}_i^1, \vec{w}_i^2 \in \reals^{mp}.$
\end{theorem}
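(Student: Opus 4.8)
The plan is to follow the argument of Theorem~\ref{thm:rc} essentially verbatim, the only genuinely new ingredient being the substitution of the affine decision rule. First I would plug $\vec{y}(\vec{u}) = \vec{z} + \vec{V}\vec{u}$ into the $i$-th constraint of~\eqref{eqn:ldrcp}, turning it into $\vec{u}_i^T\vec{x} + \vec{g}_i^T\vec{z} + \vec{g}_i^T\vec{V}\vec{u} \le b_i$ for all $\vec{u}\in\cp$. Because the recourse $\vec{g}_i$ is fixed (it does not depend on $\vec{u}$), this expression is \emph{affine} in $\vec{u}$, hence a single support-function inequality rather than a semi-infinite family of $\vec{u}$-bilinear ones. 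Writing $\vec{u}_i = \vec{e}_i\vec{u}$ with the block-selection matrix $\vec{e}_i$ introduced before Theorem~\ref{thm:rc}, the $\vec{u}$-linear part is $(\vec{e}_i^T\vec{x})^T\vec{u} + (\vec{V}^T\vec{g}_i)^T\vec{u}$, so the constraint becomes equivalent to $\vec{g}_i^T\vec{z} + \delta^\star(\vec{e}_i^T\vec{x} + \vec{V}^T\vec{g}_i \mid \cp) \le b_i$.

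Next I would apply the support-function-of-an-intersection identity already used in the proof of Theorem~\ref{thm:rc}: since $\cp = U\cap\cpc$ with $U,\cpc$ convex (and one of them polyhedral, or their relative interiors meeting, which supplies the needed constraint qualification), $\delta^\star(\vec{y}\mid\cp) = \min\{\delta^\star(\vec{w}^1\mid U) + \delta^\star(\vec{w}^2\mid\cpc) : \vec{w}^1+\vec{w}^2 = \vec{y}\}$ by~\citep{ben2015deriving} together with the infimal-convolution conjugation result (Theorem~4.17 of~\cite{beck2017first}). Substituting $\vec{y} = \vec{e}_i^T\vec{x} + \vec{V}^T\vec{g}_i$, and observing that $\vec{g}_i^T\vec{z} + \min_{\vec{w}^1+\vec{w}^2=\vec{y}}(\cdots)\le b_i$ holds if and only if there \emph{exist} $\vec{w}_i^1,\vec{w}_i^2\in\reals^{mp}$ with $\vec{w}_i^1+\vec{w}_i^2 = \vec{y}$ and $\vec{g}_i^T\vec{z} + \delta^\star(\vec{w}_i^1\mid U) + \delta^\star(\vec{w}_i^2\mid\cpc)\le b_i$, I recover exactly the claimed system for each $i\in[m]$.

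Both directions of this final equivalence are routine bookkeeping: from feasibility of the substituted constraint the minimizing split attains the bound, while conversely any admissible split is an upper bound on the minimum and hence on $\delta^\star(\vec{y}\mid\cp)$. The only step that is not purely mechanical is the validity of the intersection identity, i.e.\ checking the constraint qualification on $U$ and $\cpc$; since these sets are assumed convex and in all our applications (polyhedral or norm-based) one of them is polyhedral, it holds, exactly as in Theorem~\ref{thm:rc}. I would also flag that the fixed-recourse hypothesis is essential: it is precisely what keeps the substituted constraint affine in $\vec{u}$, so that the support-function calculus applies, and dropping it reintroduces $\vec{u}$-bilinear terms that break the reduction.
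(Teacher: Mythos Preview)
Your proposal is correct and follows essentially the same approach as the paper: substitute the affine rule, rewrite $\vec{u}_i^T\vec{x} = (\vec{e}_i^T\vec{x})^T\vec{u}$ to obtain the affine-in-$\vec{u}$ constraint $\vec{g}_i^T\vec{z} + (\vec{e}_i^T\vec{x} + \vec{V}^T\vec{g}_i)^T\vec{u} \le b_i$, recognize the support-function form, and then split via the intersection identity from~\citep{ben2015deriving}. Your additional remarks on the constraint qualification and the role of fixed recourse are accurate and slightly more explicit than the paper, but the argument is the same.
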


\ifpreprint \begin{proof} \else
\proof{Proof.}
\fi
Since $\vec{e}_i \vec{u} = \vec{u}_i$, each constraint $i \in [m]$ in problem \eqref{eqn:ldrcp} can be rewritten as
\begin{equation*}
\vec{g}_i^T\vec{z}
+ \left(\vec{e}_i^T\vec{x} + \vec{V}^T\vec{g}_i\right)^T \vec{u} \leq b_i, \quad \forall \vec{u} \in \cp.
\end{equation*}
The robust counterpart is
\begin{equation*}
\vec{g}_i^T\vec{z} + \delta^\star\left(\vec{e}_i^T \vec{x}  + \vec{V}^T\vec{g}_i \mid \cp\right) \leq b_i.
\end{equation*}

Since $\cp = U \cap \cpc$, the RC for each constraint $i \in [m]$ is
\begin{equation*}
    \begin{cases}
  \vec{g}_i^T \vec{z} + \delta^\star\left(\vec{w}_i^1 \mid U\right) + \delta^\star\left(\vec{w}_i^2 \mid \cpc\right)  \leq b_i \\
    \vec{w}_i^1+\vec{w}_i^2 = \vec{e}_i^T \vec{x} + \vec{V}^T \vec{g}_i.
    \end{cases}
\end{equation*}
\ifpreprint \end{proof} \else
\Halmos
\endproof
\fi
The proof of Theorem~\ref{thm:rcadapt1} is similar and we omit it here.
We can then compute the RC for different types of $U$.
\paragraph{Robust counterpart with linear decision rule for polyhedral coupling set.}
Let $\cpc = \left\{\vec{u} \mid \vec{D} \vec{u} \leq \vec{d} \right\}$, where $\vec{D} \in \reals^{l \times mp}, \vec{d} \in \reals^l$.
%An illustration is shown in Figure~\ref{fig:polycpc}.
The RC of~\eqref{eqn:cp2} for each constraint $i$ is
\begin{equation*}
    \begin{cases}
      \vec{g}_i^T \vec{z} +  \delta^\star\left(\vec{w}_i^1 \mid U\right) + \vec{d}^T \vec{t}_i  \leq b_i \\
    \vec{D}^T \vec{t}_i = \vec{w}_i^2\\
    \vec{t}_i \geq 0\\
    \vec{w}_i^1+\vec{w}_i^2 = \vec{e}_i^T \vec{x}+ \vec{V}^T \vec{g}_i,
    \end{cases}
\end{equation*}
where $\vec{w}_i^1, \vec{w}_i^2 \in \reals^{mp},$ $\vec{t}_i \in \reals^l$.

\paragraph{Robust counterpart with linear decision rule for polyhedral coupled uncertainty set.}
Let $\cp$ be a polyhedron denoted by $\left\{\vec{u} \mid  \vec{Q} \vec{u} \leq \vec{h}\right\}$, where $\vec{Q} \in \reals^{(q+l) \times mp}, \vec{h} \in \reals^{q+l}$.
The RC is
\begin{equation*}
    \begin{cases}
      \vec{g}_i^T \vec{z} + \vec{h}^T \vec{t}_i  \leq b_i \\
    \vec{Q}^T \vec{t}_i = \vec{e}_i^T\vec{x}+ \vec{V}^T \vec{g}_i,\\
    \vec{t}_i \geq 0,
    \end{cases}
\end{equation*}
where $\vec{t}_i \in \reals^{q+l}$.

\paragraph{Benders decomposition.}
A two-level algorithm is described in~\cite{bertsimas2012adaptive} to solve two-stage adaptive robust problems.
The outer level uses a Benders decomposition type cutting plane algorithm to obtain the first-stage decision with cuts generated from the inner level, which uses an outer approximation (OA) algorithm~\citep{duran1986outer,fletcher1994solving} to solve the second-stage bilinear optimization problem.
We refer the reader to~\cite{bertsimas2012adaptive} for a complete description of the algorithm.
One option to initiate the outer level is with a static robust solution under the coupled or uncoupled uncertainty set.
Since the bilinear problem is nonconvex, we can use multiple starts for the OA algorithm by sampling multiple uncertainties from the coupled set.

\paragraph{Finite scenario approach.}
A finite scenario approach is proposed by~\cite{hadjiyiannis2011finite}, to replace the original uncertainty set by a finite discrete subset and the adaptive variable is selected to adapt to each realization of the uncertainty.
For some problems with small sizes, using the set of the extreme points of the uncertainty set can give a fully adaptive solution.
When such vertex enumeration is not feasible, different sampling methods can be used to select a finite subset, such as Hit-and-run sampler~\citep{smith1984efficient} for polyhedral coupled uncertainty set.
Since the important scenarios are more likely to occur at the boundary of the uncertainty set, we
implement a variation that rescales sampled points to the boundary of the set, for example by enforcing the coupling constraint to be tight.

\ifpreprint \else
\end{APPENDICES}
\fi

\end{document}